
\documentclass[a4paper, 11pt, twoside, leqno]{amsart}


\usepackage[T1]{fontenc}
\usepackage[utf8x]{inputenc}
\usepackage[english]{babel}
\usepackage{amsmath, amsthm, amssymb, stmaryrd}
\usepackage{mathrsfs, esint, bbm}   
\usepackage{fancyhdr}               
\usepackage{fixltx2e}               
\usepackage{enumitem}               
\usepackage[all,cmtip]{xy}          
\usepackage{graphicx}
\usepackage{hyperref}
\usepackage{cancel}


\setlength{\textwidth}{16cm}
\setlength{\oddsidemargin}{2mm}
\setlength{\evensidemargin}{2mm}


\newtheorem{theorem}{Theorem}            
\newtheorem{corollary}[theorem]{Corollary}
\newtheorem{lemma}[theorem]{Lemma}
\newtheorem{prop}[theorem]{Proposition}

\newtheorem{mainthm}{Theorem}          

\theoremstyle{definition}              
\newtheorem{definition}{Definition}

\theoremstyle{remark}                  
\newtheorem{step}{Step}

\newtheorem{remark}{Remark}


\newcommand{\N}{\mathbb{N}}                 
\newcommand{\Z}{\mathbb{Z}}                 
\newcommand{\R}{\mathbb{R}}                 
\renewcommand{\H}{\mathscr{H}}              
\let\oldS\S                                 
\renewcommand{\S}{\mathbb{S}}               


\newcommand{\weak}{\rightharpoonup}              


\newcommand{\T}{\mathrm{T}}
\newcommand{\gf}{\mathfrak{g}}
\renewcommand{\P}{\mathbb{P}}

\renewcommand{\a}{\mathbf{a}}
\renewcommand{\b}{\mathbf{b}}
\newcommand{\be}{\mathbf{e}}
\newcommand{\db}{\mathbf{d}}
\renewcommand{\d}{\mathrm{d}}               
\newcommand{\D}{\mathrm{D}}

\newcommand{\epsi}{\varepsilon}
\newcommand{\eps}{\epsi}

\newcommand{\Harm}{\mathrm{Harm}}


\DeclareMathOperator{\Id}{Id}                                       
\DeclareMathOperator{\dist}{dist}                                   
\DeclareMathOperator{\sign}{sign}                                   
\DeclareMathOperator{\ind}{ind}                                     
\DeclareMathOperator{\tang}{tan}
\DeclareMathOperator{\curl}{\curl}                                  
\renewcommand{\div}{\mathrm{div}\,}                                 
\newcommand{\Vg}{\mathrm{vol}_g}
\renewcommand{\O}{\mathrm{O}}
\newcommand{\abs}[1]{\left| #1 \right|}                             
\newcommand{\norm}[1]{\left\| #1 \right\|}                          
\newcommand{\csubset}{\subset\!\subset}                             


\usepackage{color}
\usepackage[normalem]{ulem} 

\numberwithin{equation}{section}
\numberwithin{definition}{section}
\numberwithin{theorem}{section}
\numberwithin{remark}{section}

\begin{document}

\title[Dynamics of vortices on surfaces]
{Dynamics of Ginzburg-Landau vortices\\ for vector fields on surfaces}
\author{Giacomo Canevari}
\address{Dipartimento di Informatica, 
  Universit\`a di Verona,
  Strada le Grazie 15, \mbox{37134} Verona, Italy.}
\email[G. Canevari]{giacomo.canevari@univr.it}
\author{Antonio Segatti}
\address{Dipartimento di Matematica ``F. Casorati'',
  Universit\`a di Pavia,
  Via Ferrata 5, \mbox{27100} Pavia, Italy.}
\email[A. Segatti]{antonio.segatti@unipv.it}

\date{\today}

\begin{abstract}
In this paper we consider the gradient flow of the following Ginzburg-Landau type energy
\[
 F_\eps(u) := \frac{1}{2}\int_{M}\abs{D u}_g^2 +\frac{1}{2\eps^2}\left(\abs{u}_g^2-1\right)^2\Vg.  
\]
This energy is defined on tangent vector fields on a $2$-dimensional closed and oriented Riemannian manifold $M$  (here $D$ stands for the covariant derivative) and depends on a small parameter $\eps>0$. 
If the energy satisfies proper bounds, when $\eps\to 0$ the second term forces the vector fields to have unit length. However, due to the incompatibility for vector fields on $M$ between the Sobolev regularity and the unit norm constraint, critical points of $F_\eps$ tend to generate a finite number of singular points (called vortices) having non-zero index (when the Euler characteristic is non-zero). These types of problems have been extensively analyzed in the recent paper by R. Ignat \& R. Jerrard \cite{JerrardIgnat_full}. As in Euclidean case (see, among the others \cite{BBH}), the position of the vortices is ruled by the so-called renormalized energy. 

In this paper we are interested in the dynamics of vortices. We rigorously prove that the vortices move according to the gradient flow of the renormalized energy, which is the limit behavior when $\eps\to 0$ of the gradient flow of the Ginzburg-Landau energy.

\medskip {\bf Keywords:} Ginzburg-Landau; Vector fields on surfaces; Gradient flow of the renormalized energy; $\Gamma$-convergence.

\medskip {\bf MSC:} 35Q56 (37E35, 49J45, 58E20). 
 
\end{abstract}

\maketitle
\tableofcontents


\section{Introduction and Main Result}
\label{sec:intro_main}
A prominent example of the non-trivial interplay between PDEs models, material science, 
differential geometry and topology is the the study of the formation and the evolution of the so-called topological defects, that is regions in the material where the order parameter (in this paper, a vector field on a $2$-dimensional Riemannian manifold) experiences a very rapid change. 
To put this phenomenon in perspective, 
we let $(M,g)$ be a closed (i.e.~compact, connected and without boundary), oriented, $2$-dimensional Riemannian manifold with Euler characteristic $\chi(M)$.  
The classical Poincaré-Hopf Theorem (see \cite{AGM-VMO}) provides a precise relation between the zeros of continuous vector fields on $M$ and the topology of $M$. 
More precisely, any continuous vector field $u$ with finitely many zeros $\left\{a_1,\ldots,a_n\right\}$ must satisfy 
\begin{equation}
\label{eq:PHclassic}
\sum_{j=1}^n\text{ind}\left(u,\partial B_\delta (a_j)\right)\, = \,\chi(M), 
\end{equation}
where~$\ind(u, \, \partial B_\delta(a_j))$ is a local topological invariant
--- the index --- computed on the boundary of a small geodesic ball~$B_\delta(a_j)$,
with radius~$\delta$ and center~$a_j$. The radius~$\delta$ must be taken small enough,
so that the ball~$B_\delta(a_j)$ does not contain any zero of~$u$ other than~$a_j$.
(For the definition of the index, see Section~\ref{sect:stationary} below.)
An immediate consequence is that a non-vanishing continuous vector field 
exists if and only if the Euler characteristic of $M$ is equal to zero, namely only if $M$ is a torus (up to homeomorphism).
In other words, if $\chi(M)\neq 0$ then $M$ does not support a globally defined, unit-norm, continuous vector field.  
Interestingly, the very same topological obstruction holds for Sobolev vector fields. 
More precisely (see \cite{AGM-VMO} and \cite{ssvM3AS}), indicating with $D$ the Levi-Civita connection, we have that the Sobolev class
\begin{equation}
\label{eq:h1vuoto}
\left\{v:M\to TM: v(p)\in T_p M,\,\,\abs{v} =1\,\,\text{a.e. in }M,\,\,\abs{D v}_g\in L^2(M)\right\}\neq \emptyset\Longleftrightarrow \chi(M)=0.
\end{equation}  
Thus, the Dirichlet-type energy on unit-norm vector fields
\begin{equation}
\label{eq:dirichlet}
F(u):=\frac{1}{2}\int_{M}\abs{D u}_g^2 \Vg
\end{equation}
is well defined only if the Euler characteristic of $M$ is zero.
The energy $F$ is interesting both from the geometric and the modellistic point of view. 
On the one hand, the energy $F$ is usually called {\itshape Total Bending} (see \cite{wiegmink95}) and (quantitatively) measures how a globally defined unit-norm vector field $u$, if it exists, fails to be a parallel vector field (namely a vector field $u$ for which $D_v u =0$ for any $v$).  
On the other hand, the energy $F$ is related, for instance, to the modelling of nematic shells, namely a rigid particle coated by a thin film of nematic liquid crystal (see \cite{ball_survey}, \cite{ball_defects}, \cite{LubPro92}, \cite{Nelson02}, \cite{Straley71}, \cite{Virga94}) when one neglects the effects of the extrinsic geometry of the substrate on the elastic energy. We refer to \cite{NapVer12L}, \cite{NapVer12E} and \cite{ssvPRE} for models including extrinsic effects. 

When $\chi(M) \neq 0$ the energy $F \equiv +\infty$, therefore its minimization requires at first a relaxation of at least one of the constraints in the Sobolev class of vector fields in \eqref{eq:h1vuoto}.  
Looking back at the definition \eqref{eq:h1vuoto} we see that there are many ways to relax one of the constraints and obtain 
a non-empty class on which the energy $F$ is finite. 
A common strategy (and the one we will follow in this paper) consists in relaxing the unit norm constraint and studying the asymptotic behavior of the Ginzburg-Landau energy 
\begin{equation}
\label{eq:GL_intro}
F_\eps(u) := \frac{1}{2}\int_{M}\abs{D u}_g^2 +\frac{1}{2\eps^2}\left(\abs{u}_g^2-1\right)^2\Vg.  
\end{equation}
This problem has been addressed by Ignat \& Jerrard in the recent paper \cite{JerrardIgnat_full}. Note that this type of approximation is completely intrinsic as it makes no reference to any embedding of the manifold. 

There are however other possible strategies to obtain a relaxed version of the energy $F$. 
For instance, if our $2$-dimensional manifold is an embedded surface in $\mathbb{R}^3$, 
a possible strategy consists in replacing $M$ with a suitable triangulation of $M$ and the energy $F$ (actually its extrinsic version where the covariant derivative is replaced with the derivative of $\mathbb{R}^3$) with a discrete energy defined on unit norm vector fields sitting in the vertices of the triangulation of $M$ (see \cite{gamma-discreto}). 
This approach, contrary to the Ginzburg-Landau approximation, is extrinsic in the sense that it depends on how the manifold $M$ sits in $\mathbb{R}^3$. 
However, these two approaches have the common feature of giving information, in a suitable asymptotic regime, about the location, and the charge of the vortices. 

Since our work is very much related with the work of R. Jerrard \& R. Ignat, we briefly recall some of the main results of~\cite{JerrardIgnat_full} regarding \eqref{eq:GL_intro}.

 Let~$i$ be an almost 
complex structure on~$M$, that is,
an operator~$TM\to TM$ which restricts to a linear map
on each tangent plane~$T_p M$ and satisfies~$i^2+1 = 0$
(where~$1$ denotes the identity on~$TM$). For any (sufficiently regular)
vector field~$u$ on~$M$, we define the~$1$-form 
\begin{equation}
\label{eq:prejac_intro}
 j(u):= (D u, iu)_g
\end{equation}
and the $2$-form
\begin{equation}
\label{eq:vorticity_intro}
 \omega(u):= \d j(u) + \kappa \Vg,
\end{equation}
where~$\kappa$ is the Gauss curvature of~$M$.
The form~$\omega(u)$ is called the vorticity of~$u$.
In the Euclidean setting, when $u\colon\R^2\to\R^2$, 
the vorticity~$\omega(u)$ essentially reduces to the 
Jacobian determinant of~$u$, i.e.~$\omega(u) = 2(\det \nabla u) \, \d x$. 
As in the Euclidean case, if a sequence~$u_\eps$ satisfies
a logarithmic energy bound such as $F_\eps(u_\eps) \leq C \abs{\log\eps}$
(for some constant~$C$ that does not depend on~$\eps$)
then the sequence of vorticities~$\omega(u_\eps)$ is compact.
Controlling the vorticity of~$u_\eps$ is key to
proving a sharp lower bound for the energy of~$u_\eps$
and obtain compactness results for the minimizers of~$F_\eps$.
However, when the genus $\gf = \frac{2-\chi(M)}{2}\neq 0$, it is
also necessary to control the $L^2$-projection of the $1$-form $j(u_\eps)$ on the space of harmonic $1$-forms, $\Harm^1(M)$.
$\Harm^1(M)$ is a real vector space of dimension $2\gf$.
Let~$\P$ be the $L^2$-projection onto~$\Harm^1(M)$.
Ignat and Jerrard~\cite{JerrardIgnat_full} showed that,
if a sequence of vector field~$u_\eps$ satisfies 
suitable energy bounds, then there exist distinct points
$a_1$, \ldots, $a_n$ in~$M$, integers~$d_1$, \ldots, $d_n$,
a form~$\xi\in\Harm^1(M)$ and a (non-relabelled) subsequence,
such that
\begin{align*}
&\d j(u_\eps)\xrightarrow{\eps\to 0} 2\pi \sum_{j=1}^n d_j\delta_{a_j}\qquad \hbox{ in } W^{-1,p}(M) \quad
 \textrm{for any } p\in (1, \, 2) \\
& \mathbb{P}j(u_\eps) \xrightarrow{\eps\to 0}\xi
\end{align*}
The integers~$d_j$ represent the charges of
topological singularities which arise at the points~$a_j$.
They satisfy
\[
 \sum_{j=1}^n d_j = \chi(M).
\]
The harmonic $1$-form $\xi\in \Harm^1(M)$ depends non-trivially
on~$\a:=\left(a_1, \, \ldots, \, a_n\right)$
and~$\db:=(d_1, \, \ldots, \, d_n)$, as it must satisfy
the constraint $\xi\in\mathcal{L}(\a, \, \db)$
where~$\mathcal{L}(\a, \, \db)$ is a suitable subset of~$\Harm^1(M)$
defined by the position and the charges of the singularities.
(See~\cite[Section~2.1]{JerrardIgnat_full} and Section~\ref{sec:gradient_reno}
for the definition of~$\mathcal{L}(\a, \, \db)$.)
If the~$u_\eps$'s are minimizers of~$F_\eps$, then~$n = \abs{\chi(M)}$,
$d_j = \sign(\chi(M))$ for any~$j$ and 
\begin{equation} \label{eq:F-Gamma}
 F_\eps(u_\eps) = \pi n\abs{\log\eps}
  + W(\a, \, \db, \, \xi) + n\gamma + \mathrm{o}(1)
  \qquad \textrm{as }\eps\to 0.
\end{equation}
Here, the function $W = W(\a, \, \db, \, \xi)$ is the so-called
renormalized energy, which accounts for the interaction between
the singular points~$a_j$. It can be characterised in terms
of the Green function for the Laplace-Beltrami operator on~$M$ 
and the Gauss curvature of~$M$ (see~\cite[Proposition~2.4]{JerrardIgnat_full} and section~\ref{sec:gradient_reno} below). When the~$u_\eps$'s 
are minimizers of~$F_\eps$, $(\a, \, \db, \, \xi)$ minimize the
renormalized energy~$W$. The constant~$\gamma > 0$ in~\eqref{eq:F-Gamma}
is the so-called core energy, which accounts for the energy
in a suitably small neighbourhood of each singular point~$a_j$
(we refer to~\cite[Section~2.3]{JerrardIgnat_full} for
the definition of~$\gamma$ --- in Ignat and Jerrard's notation,
$\gamma = \iota_F$). Moreover, again up to a 
to a subsequence, minimizers converge to a unit-norm vector field~$u_*$,
called a canonical harmonic vector field,
which is smooth in $M\setminus \left\{a_1, \, \ldots, \, a_n\right\}$
and is uniquely determined by~$(\a, \, \db, \, \xi)$ up to a global rotation.
These results represent the extension of the classical two-dimensional results of Bethuel, Brezis and Hélein \cite{BBH} 
to the case of vector fields on a Riemannian manifold (see also \cite{AlicandroPonsiglione} for a
$\Gamma$-convergence result).

In this paper, we discuss the dynamics of the vortices on a two dimensional closed and oriented Riemannian manifold $(M,g)$. We show that 
 the vortices move according to the gradient flow of the  renormalized energy. 
The gradient flow of the renormalized energy emerges as limit dynamics of the (properly rescaled) gradient flow of the Ginzburg-Landau energy \eqref{eq:GL_intro}. 
More precisely, 
we consider the asymptotic behavior when $\eps\to 0$ of the Ginzburg Landau dynamics
\begin{equation}
\label{eq:GL-intro}
\begin{cases}
\displaystyle\frac{1}{\vert \log\eps\vert}\partial_t u_\eps -\Delta_g u_\eps + \frac{1}{\eps^2}\left(\vert u_\eps\vert^2-1\right)u_\eps = 0,\,\,\,\,\hbox{ a.e. in }M\times (0,T),\\
\\
\displaystyle u_\eps(x,0) = u_0^{\eps}\,\,\,\,\hbox{ a.e. in }M.
\end{cases}
\end{equation}
The operator $\Delta_g$ is the so called rough Laplacian (see \eqref{eq:rough} for the definition) while
the factor $\frac{1}{\abs{\log\eps}}$ in front of the time derivative of $u_\eps$ comes from scaling and indicates that the isolated zeros of $u_\eps$ with nonzero index move with velocities of order $\abs{\log\eps}^{-1}$.

Our results confirm in the Riemannian setting the classical planar results
of Rubistein \& Sternberg \cite{Rubistein_Sternberg95}, Jerrard \& Soner 
\cite{jerrard_soner_dyna}, F.H. Lin
\cite{lin1}, \cite{lin2} and Sandier \& Serfaty
\cite{SS-GF}.
Quite recently Chen \& Sternberg \cite{chen-stern2014} have proved the convergence of the (rescaled) heat flow of the Ginzburg-Landau energies 
defined on maps $u:M\to \mathbb{C}$ where $M$ is a two-dimensional compact simply connected Riemannian manifold without boundary. 
Our setting and our results are however different for some crucial points. 
As we deal with vector fields, the topology of~$M$ triggers the formation of the vortices via the Poincaré-Hopf Theorem, while in \cite{chen-stern2014} the vortices arise thanks to the choice of initial conditions and the scaling of the equation. 
Another difference lies in the energy or, more precisely, in the Dirichlet part of the energy. 
The energy $\frac{1}{2}\int_{M}\abs{\nabla u}^2_g \Vg$ defined on maps $u:M\to \mathbb{C}$ (in this case $\nabla$ stands for the Riemannian gradient) is conformally invariant. This is not the case for the energy $\frac{1}{2}\int_{M}\abs{D u}^2_g \Vg$ defined on vector fields (as before $D$ is the covariant derivative). 
The conformal invariance combined with the Uniformization Theorem permits (as in the ``static'' situation considered in \cite{baraket}) to transfer the analysis from the manifold $M$ to a reference domain in $\mathbb{R}^2$. 
As a result, the renormalized energy is modeled on the Laplacian in $\mathbb{R}^2$ with corrections due to the conformal change. In particular, the interaction between the vortices is ruled by the Green function of the Laplacian in $\mathbb{R}^2$. 
On the contrary, when dealing with the energy $\frac{1}{2}\int_{M}\abs{D u}^2_g \Vg$, we have to deal with the Laplace-Beltrami operator on $M$ and with its Green function. 
The resulting renormalized energy inherits the complexity of the Green function (see \cite{steiner2005})
 and couples the Gauss curvature of $M$ with the position and with the charge of the vortices. Moreover, it explicitly depends on the harmonic $1$-form $\xi$.
Finally,  
the interaction term between the positions of the vortices behaves logarithmically only when the geodesic distance between the two vortices is small enough. 

We will consider initial conditions $u_\eps^0$ satisfying the following set
of hypothesis.
Given~$n\in\Z$, $n\geq 1$,
we consider a~$n$-uple of \emph{distinct}
points~$\a^0 = (a^0_1, \, \ldots, \, a^0_n)$ in~$M$
and a~$n$-uple of integers~$\db = (d_1, \, \ldots, \, d_n)$
such that
\[
 \sum_{j=1}^n d_j = \chi(M), \qquad \abs{d_j} = 1 \quad \textrm{for any } j.
\]
Moreover, we fix an harmonic $1$-form $\xi^0$ such that
$\xi^0\in \mathcal{L}(\a^0, \, \db)$. 
Once we have chosen $\a^0$, $\db$, $\xi^0$ as above, 
we consider a sequence of vector fields $u^0_{\eps}\in H^1_{\tang}(M)$ (see \eqref{eq:W1p} for the definition of this Sobolev space) such that 
 \begin{align}
 & \omega (u^{0}_\eps)\xrightarrow{\eps \to 0} 2\pi \sum_{j=1}^n d_j \delta_{a_j^{0}}\, \,\,\,\text{ in } W^{-1,p}(M)\label{eq:initial_vorticity_intro}\\
 & F_\eps(u_\eps^{0})\le \pi n\vert \log\eps\vert + W(\a^0, \, \db, \, \xi^0) + n\gamma + \mathrm{o}_{\eps\to 0}(1),\label{eq:well_prepared_intro} \\
 & \norm{u^0_\eps}_{L^\infty(M)} \leq 1. \label{eq:initial_Linfty_intro}
 \end{align} 
Energetically speaking we are saying that the sequence $u^0_{\eps}$ is a recovery sequence for $W(\a^0, \, \db, \, \xi^0)$. 
A sequence~$u^0_\eps$ that satisfies~\eqref{eq:initial_vorticity_intro}, \eqref{eq:well_prepared_intro}, \eqref{eq:initial_Linfty_intro}
will be called a sequence of \emph{well-prepared initial conditions}.
If~$u^0_\eps$ is a sequence of well-prepared
initial conditions, then for any $\eps>0$ there exists a smooth solution $u_\eps$ of \eqref{eq:GL-intro}.
The main result of the paper is the following
\begin{mainthm}
\label{th:main1}
Let $(M,g)$ be a two dimensional closed and oriented Riemannian manifold. 
Let $u_\eps$ be a sequence of solutions of~\eqref{eq:GL-intro}
with $u_\eps^{0}\in H^1_{\tang}(M)$ a sequence of well-prepared initial conditions. 
Then there exists a time $T^*\in (0,T)$ (the first collision time)  such that for any $t\in (0,T^*)$
\begin{align*}
& \omega(u_\eps(t))\xrightarrow{\eps\to 0}  2\pi\sum_{j=1}^n d_j\delta_{a_j(t)}\qquad \hbox{ in } W^{-1,p}(M) \quad
 \textrm{for any } p\in (1, \, 2) \\
& \mathbb{P}\left(j(u_\eps(t))\right) \xrightarrow{\eps\to 0}\xi(t),
\end{align*}
where $\a(t):=(a_1(t), \, \dots, \, a_n(t))$ is 
an~$n$-uple of distinct points such that~$\a\in H^1(0, \, T^*; \, M^n)$, $\a(0) = \a^0$.
Moreover, $\xi\in H^{1}(0,T^*;\Harm^1(M))$ and 
\begin{equation} \label{eq:xi_mainth}
 \xi(t)\in \mathcal{L}\left(\a(t), \, \db\right) 
  \quad \textrm{for any } t\in (0, \, T^*), 
  \qquad \xi(0) = \xi^0.
\end{equation}
Finally, $\a$ is a solution of the gradient flow of the renormalized energy $W$, namely
\begin{equation}
\label{eq:gradflowW_mainth}
\begin{cases}
\displaystyle\frac{\d}{\d t} \a(t) = 
-\frac{1}{\pi}\nabla_{\a} W(\a(t),\db,\xi(t)) 
\qquad \textrm{for any } t\in (0,T^*),\\[.4cm]
\a(0) = \a^0.
\end{cases}
\end{equation}
\end{mainthm}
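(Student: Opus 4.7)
The plan is to implement the Sandier--Serfaty scheme for $\Gamma$-convergence of gradient flows in the covariant setting on a closed surface. First I would test \eqref{eq:GL-intro} against $\partial_t u_\eps$ and integrate on $M$ to obtain the exact dissipation identity
\begin{equation*}
\frac{1}{\abs{\log\eps}}\int_0^t\!\!\int_M \abs{\partial_t u_\eps}_g^2\,\Vg\,ds + F_\eps(u_\eps(t)) = F_\eps(u_\eps^0).
\end{equation*}
Combined with \eqref{eq:well_prepared_intro} this yields the uniform bound $F_\eps(u_\eps(t))\leq \pi n\abs{\log\eps}+C$ and a time-integrated dissipation bound of order $\abs{\log\eps}$; the maximum principle for \eqref{eq:GL-intro} also propagates \eqref{eq:initial_Linfty_intro} to positive times.

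Next I would invoke the static compactness and lower bound of \cite{JerrardIgnat_full}: along a subsequence, for a.e.~$t$ there exist distinct points $\a(t)=(a_1(t),\dots,a_n(t))$ and $\xi(t)\in\mathcal{L}(\a(t),\db)$ such that $\omega(u_\eps(t))\to 2\pi\sum_j d_j\delta_{a_j(t)}$ in $W^{-1,p}(M)$, $\mathbb{P}j(u_\eps(t))\to\xi(t)$, and
\begin{equation*}
F_\eps(u_\eps(t))\geq \pi n\abs{\log\eps}+W(\a(t),\db,\xi(t))+n\gamma+\mathrm{o}(1).
\end{equation*}
Because the degrees $d_j\in\Z$ are integer-valued and continuous in $W^{-1,p}$, they remain equal to the prescribed $d_j$ up to the first collision time $T^*$, so the points $a_j(t)$ stay distinct on $[0,T^*)$.

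The crux is the Sandier--Serfaty velocity lower bound
\begin{equation*}
\liminf_{\eps\to 0}\frac{1}{\abs{\log\eps}}\int_0^{t}\!\!\int_M \abs{\partial_t u_\eps}_g^2\,\Vg\,ds \geq \pi\int_0^{t}\abs{\dot\a(s)}_g^2\,ds,
\end{equation*}
coupled with an analogous control of $\mathbb{P}\bigl(\partial_t j(u_\eps)\bigr)$ giving $H^1$-regularity of $\xi$. I would prove the vortex part by localising on small geodesic balls $B_\delta(a_j(s))$ and adapting the Jerrard--Soner product estimate to the covariant framework, bounding $\norm{\partial_t u_\eps}_{L^2(B_\delta)}$ from below by the rate of displacement of the Jacobian $\omega(u_\eps)$, which in the limit equals $\pi\abs{\dot a_j}_g^2$ thanks to the sharp concentration of the vorticity.

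Combining the initial energy upper bound \eqref{eq:well_prepared_intro}, the static lower bound at time $t$, the dissipation identity, and the velocity lower bound produces
\begin{equation*}
W(\a^0,\db,\xi^0) - W(\a(t),\db,\xi(t))\geq \pi\int_0^t\abs{\dot\a}_g^2\,ds + \mathrm{o}(1),
\end{equation*}
whereas the smoothness of $W$ at distinct configurations gives the exact chain rule $W(\a(t),\db,\xi(t))-W(\a^0,\db,\xi^0)=\int_0^t\dual{\nabla_\a W}{\dot\a}_g\,ds$ plus a term in $\dot\xi$ which, through the constraint $\xi(t)\in\mathcal{L}(\a(t),\db)$, is already encoded in the $\a$-dynamics. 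A completion-of-squares argument on these two relations pins down $\dot\a = -\frac{1}{\pi}\nabla_\a W(\a,\db,\xi)$ and hence \eqref{eq:gradflowW_mainth}, while the propagation of the constraint yields \eqref{eq:xi_mainth}. \textbf{The main obstacle} I expect is the velocity lower bound in this curved, vector-valued setting: the Jerrard--Soner product estimate must be rewritten covariantly so that the Gauss curvature term built into $\omega(u)=\d j(u)+\kappa\Vg$ does not spoil the leading-order transport contribution, and the harmonic projection $\mathbb{P}j(u_\eps)$ has to be tracked simultaneously because the constraint $\xi(t)\in\mathcal{L}(\a(t),\db)$ couples its evolution to the motion of the vortices.
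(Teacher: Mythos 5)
Your overall strategy is the right one --- the paper also implements the Sandier--Serfaty scheme, with the dissipation identity, the Ignat--Jerrard static compactness, and covariant product estimates yielding the velocity lower bound (Proposition~\ref{prop:kinetic}). But there is a genuine gap at the final step: the ``completion of squares'' you invoke does not close with only the two relations you have. From the velocity lower bound and well-preparedness you get
\begin{equation*}
W(\a^0,\db,\xi^0)-W(\a(t),\db,\xi(t)) \;\ge\; \pi\int_0^t\abs{\a'}_g^2\,\d s,
\end{equation*}
while the chain rule gives $W(\a^0,\db,\xi^0)-W(\a(t),\db,\xi(t))=-\int_0^t(\nabla_\a W,\a')_g\,\d s$. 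Together these only yield $\pi\int\abs{\a'}_g^2\le\int\abs{\nabla_\a W}_g\abs{\a'}_g$, which is satisfied by every curve $\a'=-\lambda\nabla_\a W$ with $0\le\lambda\le 1/\pi$ (including $\a'\equiv 0$); it does not determine the dynamics. To complete the square one needs the \emph{third} lower bound of the Sandier--Serfaty scheme, on the slopes:
\begin{equation*}
\liminf_{\eps\to 0}\frac{\abs{\log\eps}}{2}\int_M\abs{-\Delta_g u_\eps+\frac{1}{\eps^2}\left(\abs{u_\eps}^2_g-1\right)u_\eps}_g^2\Vg \;\ge\; \frac{1}{2\pi}\sum_{j=1}^n\abs{\nabla_{a_j}W(\a(t),\db,\xi(t))}_g^2,
\end{equation*}
which, inserted into the energy--dissipation identity (whose dissipation splits as one half kinetic term plus one half squared slope along the flow), gives $W(\a^0,\db,\xi^0)-W(\a(t),\db,\xi(t))\ge\frac{\pi}{2}\int\abs{\a'}_g^2+\frac{1}{2\pi}\int\abs{\nabla_\a W}_g^2$ and only then forces $\a'=-\frac1\pi\nabla_\a W$.

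That slope lower bound (Proposition~\ref{prop:liminfgrad}) is the technical heart of the paper and is absent from your proposal. It requires: (i) showing that for a.e.~$t$, along a subsequence, $u_\eps(t)$ converges to a canonical harmonic field $u_*(t)$ for $(\a(t),\db,\xi(t))$, by treating the equation at fixed $t$ as an inhomogeneous Ginzburg--Landau equation with right-hand side $-\partial_t u_\eps/\abs{\log\eps}\to 0$ in $L^2$ and proving clearing-out and strong $H^1_{\mathrm{loc}}$ convergence away from the vortices (Lemma~\ref{lem:lontano_sing}); (ii) a Pohozaev/stress--energy identity on small balls $B_\eta(a_j)$ tested against divergence-free coordinate fields; and (iii) the identification of $\nabla_{a_j}W$ with the $\eta\to0$ limit of the boundary stress--energy flux of $u_*$ (Proposition~\ref{prop:gradient_reno}), which in turn needs the unambiguous definition of $\nabla_\a W$ under the constraint $\xi\in\mathcal{L}(\a,\db)$ via the local smooth selection $\Xi$ (Lemma~\ref{lemma:bundle}) and the explicit formula $\nabla_{a_j}W=2\pi d_j\left(\nabla S_j(a_j)+2\pi d_j\nabla H(a_j,a_j)+i\xi^\#(a_j)\right)$. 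Your remark that the $\dot\xi$-contribution is ``already encoded in the $\a$-dynamics'' is correct in spirit, but it is precisely what Lemma~\ref{lemma:bundle} makes rigorous, and the resulting $\xi$-dependent term $2\pi d_j\,i\xi^\#(a_j)$ in the gradient cannot be dispensed with when $\gf>0$.
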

Although the function~$W$ is differentiable,
the definition of~$\nabla_\a W$
requires some care, because the variables~$(\a, \, \db, \, \xi)$
are \emph{not} independent --- they are related to each other
by the constraint~$\xi\in\mathcal{L}(\a, \, \db)$. Therefore,
the gradient~$\nabla_\a W$ must be taken in a suitable sense.
We address this issue in Section~\ref{sec:gradient_reno}
(see, in particular, Lemma~\ref{lemma:bundle}).
The maps~$\a$, $\xi$ are uniquely identified by the
conditions~\eqref{eq:xi_mainth}, \eqref{eq:gradflowW_mainth}
--- see Corollary~\ref{cor:uniqueness} below. This justifies why the convergence in Theorem~\ref{th:main1} holds for the whole sequence of $\eps$. 

The proof of Theorem~\ref{th:main1} relies on the abstract scheme developed by Sandier \& Serfaty in \cite{SS-GF}. 
This scheme essentially consists of three main points. 
First of all, we need that the Ginzburg-Landau energy, upon rescaling, $\Gamma$-converges to the renormalized energy $W$. As anticipated, this first point is provided by the recent results of Ignat \& Jerrard (\cite{JerrardIgnat_full}).
Second of all, we need a lower bound relating the properly rescaled $L^2$ norm of $\partial_t u_\eps$ with the norm of the limit velocity $\a'$. 
Finally, we need a further lower bound relating the properly rescaled $L^2$ norm of 
the gradient of the Ginzburg-Landau energy with the norm of the gradient of the renormalized energy. 
As in the Euclidean case, the proof of the first lower bound relies on the so-called product estimates (see Sandier \& Serfaty in \cite{SS-product}) which are extended to our Riemannian setting.  
The proof of the second lower bound relies on the expression for  the gradient of the renormalized energy in terms of the canonical harmonic vector field, which is obtained as the limit of $u_\eps(t)$ along a suitable subsequence of $\eps$. This approach is in the spirit of known results in \cite{BBH} and \cite{lin1}, \cite{lin2}. 
This strategy requires a careful study of the differentiability properties of the renormalized energy $W$ and of the structure of its gradient with respect to the position of the vortices.

We conclude by recalling that the dynamics of Ginzburg-Landau type vortices for vector fields on surfaces including the effects of the extrinsic geometry of $\R^3$ has been recently addressed in the paper \cite{AG-DCDS}.

\section{Preliminary Material}
\label{sec:preliminary}

\subsection{Differential Geometry preliminaries}
\label{ssec:diffgeo}

We let $(M,g)$ be a closed (compact and without boundary), 
oriented $2$-dimensional Riemannian manifold with
Euler characteristics $\chi(M)$, Gauss curvature $\kappa$ 
and Levi Civita connection $D$. 
We let $\Vg$ denote the $2$-volume form on $M$ induced 
by the metric $g$ and we set $\abs{g}:=\sqrt{\abs{\det g}}$.
We will write~$B_r(p)$ for the open geodesic ball in~$M$
with center~$p\in M$ and radius~$r>0$.
We will use a similar notation for balls in~$\R^2$ 
as well. However, in this paper we only consider 
balls~$B_r(0)\subseteq\R^2$ centered at the origin.

We let $\displaystyle TM:=\bigsqcup_{x\in M}T_x M$ the tangent bundle of $M$. 
A vector field on $M$ is a map
\begin{equation}
\label{eq:def_vector}
v: M\to TM,\,\,\,\,\,\,v(x)\in \T_x M.
\end{equation}
We let $\abs{v}_g:=g(v,v)^{1/2}=(v,v)_g^{1/2}$ denote length of the vector $v$ with respect to the metric $g$. 

We recall that for a tensor of type $(1,1)$ its Riemannian norm is given by
\begin{equation}
\label{eq:normDv}
\abs{D v}_g^2 := g^{ij}(D_i v,D_j v)_g, 
\end{equation}
(with some abuse of notation we indicate with the same symbol $\abs{\cdot}_g$ both the norm of a vector and the norm of a tensor)
and thus
\begin{align} 
\abs{D v(x)}_g^2 &= \abs{D_{\tau_1}v(x)}_g^2 +\abs{D_{\tau_2}v(x)}_g^2, \label{normHom} \\
(D v(x), \, Dw(x))_g &= (D_{\tau_1} v(x), \, D_{\tau_1}w(x))_g +
 (D_{\tau_2} v(x), \, D_{\tau_2}w(x))_g  \label{scalprodHom}
\end{align}
for any vector fields~$v$ and~$w$ and any~$x\in T_x M$,
where $\left\{\tau_1,\tau_2\right\}$ is an orthonormal basis for $T_x M$.

As we are interested in an evolution problem, we will need to consider the cylindrical manifold $Q:=[0, \, T]\times M$, with~$T>0$. 
The tangent space $TQ = T([0, \, T]\times M)$ is isomorphic to $\mathbb{R}\oplus TM$ and thus, any vector field in $TM$ can be identified with a vector field in $TQ$. 
The cylindrical metric $g_{\text{cyl}}$ is defined as
\begin{equation}
\label{eq:cyl_metric}
g_\mathrm{cyl}:={\d t}^2 + g.
\end{equation}
Thus, for $u\in TM$ we have that $\vert u\vert_g = \vert u\vert_{g_\mathrm{cyl}}$.
We let $D^Q$ be the Riemannian connection 
on~$(Q, \, g_\mathrm{cyl})$.

\subsection{Differential Forms}
\label{ssec:differential_forms}

Given an $n$-dimensional Riemannian manifold $(N, \, g)$ 
(with or without boundary) we let $T^{k}(T'N)$
denote the bundle of covariant $k$-tensors on $N$.
We let $\Lambda^k (T'N)$ denote the subset consisting of alternating
tensors, namely the disjoint union over~$p\in N$ of the 
spaces~$\Lambda^{k}(T'_pN)$ of alternating covariant $k$-tensors
on $T'_pN$:
\[
\Lambda^k (T'N) = \bigsqcup_{p\in N}\Lambda^{k}(T'_pN).
\]
A differential $k$ form (which we will often call $k$-form)
is a section of $\Lambda^k (T'N)$. 
The space of $k$-differential forms on $N$ is denoted $\Omega^{k}(N)$.
At each point $p\in M$, $\Lambda^{k}(T'_pN)$ has a scalar product
induced by the metric~$g$. 
With some abuse of notation, we will denote it by~$(\cdot,\cdot)_g$.
The scalar products on~$TN$ and~$T'N$ induce an isometric
isomorphism bewteen~$TN$ and~$T'N$, denoted
\[
 \flat\colon TN\to T'N, \qquad \# = \flat^{-1}\colon T'N\to TN
\]
Adopting the standard convention for the indices, we can write
$\flat$, $\#$ in a local coordinate system~$\{\d x^1, \, \ldots, \, \d x^n\}$
as
\[
 X^\flat = g_{jk} \, X^j \, \d x^k, \qquad 
 \omega^\# = g^{jk} \omega_j \frac{\partial}{\partial x^k}
\]
for any vector field~$X$ and~$1$-form~$\omega$.
Here~$g_{jk}$ are the component of the metric tensor~$g$
and~$g^{jk}$ are the components of~$g^{-1}$.
These isomorphisms allow us to define the Riemannian 
gradient of a scalar function~$\varphi\colon N\to\R$,
as~$\nabla\varphi := (\d\varphi)^\#$.
 
We let $p\in N$. For any $v\in T_pN$, 
we define a linear map ${\bf i}_{v}:\Lambda^{k}(T'_p(N)) \to \Lambda^{k-1}(T'_p(N))$, called 
interior multiplication, as
\begin{equation}
\label{eq:interior_mult1}
({\bf i}_{v}\omega)\left(w_1,\ldots,w_k\right) := \omega\left(v,w_1,\ldots,w_k\right),\,\,\,\,\,\forall \omega\in \Lambda^{k}(T'_p(N)).
\end{equation}
This definition extends to differential forms and vector fields by working pointwise. More precisely, if $v$ is a vector field and $\omega$ is a $k$-differential form, we define the a $(k-1)$-differential form by
\begin{equation}
\label{eq:interior_mult}
({\bf i}_{v}\omega)_p := {\bf i}_{v(p)}\omega_p.
\end{equation}

We let
\begin{equation}
\label{eq:stardef}
\star:\Omega^{k}(N)\to \Omega^{n-k}(N)
\end{equation}
denote the Hodge dual operator. The Hodge dual operator is linear and it is uniquely defined by requiring that 
\begin{equation}
\label{eq:star}
\omega\wedge \star \eta = \left(\omega, \, \eta\right)_g \Vg
\end{equation}
for any~$\omega\in\Omega^k(N)$, $\eta\in\Omega^k(N)$.
The Hodge dual is an isometric isomorphism
$\Omega^k(N)\to\Omega^{n-k}(N)$ and it satisfies
\begin{equation} \label{Hodgeinverse}
 \star\star\omega = (-1)^{k(n-k)}\omega 
 \qquad \textrm{for any } \omega\in\Omega^k(N).
\end{equation}
Using the Hodge~$\star$ operator, we may now define
the codifferential~$\d^*\colon\Omega^{k}(N)\to\Omega^{k-1}(N)$ as
\begin{equation} \label{d*}
 \d^*\omega := (-1)^{n(k-1)+1} \star\d\star\omega
 \qquad \textrm{for any } \omega\in\Omega^k(N).
\end{equation}
The differential~$\d$ and the codifferential~$\d^*$ 
satisfy an ``integration by parts'': if
$U$ is an open bounded subset in $M$, the boundary~$\partial U$ 
is smooth (and carries the orientation induced by~$U$), 
and if~$\eta\in\Omega^{k-1}(N)$, $\zeta\in\Omega^k(N)$, then
\begin{equation}
\label{eq:in_parts_form}
 \int_{U}\left(\d \eta, \, \zeta\right)_g \Vg 
 = \int_{U}\left(\eta, \, \d^*\zeta\right)\Vg 
 + \int_{\partial U}\eta\wedge \star\zeta.
\end{equation} 
Equation~\eqref{eq:in_parts_form} follows by writing
$\d(\eta\wedge\star\zeta) = \d\eta\wedge\star\zeta 
+ (-1)^{k-1}\eta\wedge\d(\star\zeta)$ and applying Stokes' theorem.
If~$N$ is closed and~$U = N$, then~\eqref{eq:in_parts_form} reduces to
\[
 \int_N \left(\d \eta, \, \zeta\right)_g \Vg
 = \int_N \left(\eta, \, \d^*\zeta\right)_g\Vg.
\]
In particular, $\d^*$ is the~$L^2$-adjoint of~$\d$.

We will specialize these definitions when $N=M$ (where~$M$, as above,
is a closed, oriented, two-dimensional Riemannian manifold)
and when $N$ is the ``space-time'' cylinder $Q= [0,T]\times M$,
equipped with the cylindrical metric \eqref{eq:cyl_metric}. 
As we will work in the ambient space~$N=M$ most of the time,
we will denote the Hodge dual, differential and codifferential
on the manifold~$M$ simply as~$\star$, $\d$, $\d^*$.
On the other hand, the Hodge dual, differential and codifferential
on the product manifold~$Q$ are denoted as~$\star_Q$, $\d_Q$, $\d_Q^*$,
respectively.

Since~$M$ is $2$-dimensional, the Hodge dual operator
maps $0$-forms (i.e., scalar functions) into~$2$-forms and vice-versa,
as~$\star 1 = \Vg$, $\star\Vg = 1$. The action of~$\star$ on
$1$-forms may be expressed in local coordinates~$\{x^1, \, x^2\}$ as
\begin{align}
&\star\d x^1 = \frac{1}{\sqrt{\vert g\vert}}(g_{12}\,\d x^1 + g_{22}\,\d x^2)\label{eq:starMdx1}\\
&\star\d x^2 = -\frac{1}{\sqrt{\vert g\vert}}(g_{11}\,\d x^1 + g_{12}\,\d x^2) \label{eq:starMdx2}
\end{align}
The Hodge dual of a~$1$-form may also be used to define
an almost complex structure on~$M$, that is,
a map $i:TM\to TM$ that restricts to an isometry on each 
tangent plane~$\T_p M$ and satisfies
\begin{equation}
\label{eq:prop-i-bis}
 i^2 v + v = 0
\end{equation}
for any~$v\in T_pM$. We construct such a map by imposing
\begin{equation}
\label{eq:prop-i}
 (iv, \, w)_g = -(v, \, iw)_g := \Vg(v, \, w)
\end{equation}
for any~$v\in T_pM$, $w\in T_p M$. Note that $(iv,v)_g = 0$ for any $v\in T_pM$. 
Equation~\eqref{eq:prop-i} immediately implies
\begin{lemma}
\label{lem:vettore_ruotato}
 Let $(M, \, g)$ be a two-dimensional Riemannian manifold.
 For any vector field~$X$ on $M$, there holds
 \begin{equation}
 \label{eq:ruotato}
  (i X)^{\flat} = \star X^\flat.
 \end{equation}
\end{lemma}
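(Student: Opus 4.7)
The plan is to verify the identity pointwise by working in a local positively oriented orthonormal frame. Both sides of \eqref{eq:ruotato} are $1$-forms on $M$, so it suffices to show they coincide at an arbitrary point $p\in M$. Fix such a $p$ and choose an orthonormal positively oriented basis $\{\tau_1,\tau_2\}$ of $T_pM$ with dual coframe $\{\tau^1,\tau^2\}$, so that $\Vg = \tau^1\wedge\tau^2$ at $p$.

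First I would read off the action of $i$ in this frame from the defining relation \eqref{eq:prop-i}. The identities $\Vg(\tau_1,\tau_1)=\Vg(\tau_2,\tau_2)=0$ and $\Vg(\tau_1,\tau_2)=1$ pin down $i\tau_1 = \tau_2$ and $i\tau_2 = -\tau_1$. Writing $X = X^1\tau_1 + X^2\tau_2$ and using linearity, $iX = X^1\tau_2 - X^2\tau_1$, hence $(iX)^\flat = X^1\tau^2 - X^2\tau^1$, because in an orthonormal coframe the musical isomorphism $\flat$ simply lowers indices.

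Next I would compute $\star X^\flat$ in the same frame. From the characterizing property \eqref{eq:star} applied to $\tau^1$ and $\tau^2$, one gets $\tau^1\wedge\star\tau^1 = \Vg = \tau^1\wedge\tau^2$ and $\tau^2\wedge\star\tau^1 = 0$, forcing $\star\tau^1 = \tau^2$; an analogous computation yields $\star\tau^2 = -\tau^1$. Since $X^\flat = X^1\tau^1 + X^2\tau^2$, linearity of $\star$ gives $\star X^\flat = X^1\tau^2 - X^2\tau^1$, which matches $(iX)^\flat$.

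Honestly, there is no real obstacle here: the lemma is a routine bookkeeping check of orientation signs, and the orthonormal-frame computation disposes of it. A more coordinate-free route is available as well: pairing $(iX)^\flat$ with an arbitrary $w\in T_pM$ yields $\Vg(X,w)$ directly from \eqref{eq:prop-i}, and one can then identify $(\star X^\flat)^\#$ as the unique vector realizing the same $\Vg$-duality; but unwinding the Hodge star in this intrinsic style reduces again to the same computation, so I would present the proof in the compact frame-based form above.
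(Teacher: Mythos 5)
Your proof is correct: the frame computations for $i$ (from \eqref{eq:prop-i}) and for $\star$ (from \eqref{eq:star}) both check out, including the signs, and they agree with the paper's coordinate formulas \eqref{eq:starMdx1}--\eqref{eq:starMdx2}. The paper gives no explicit argument — it simply asserts that \eqref{eq:prop-i} "immediately implies" the lemma — so your write-up is exactly the routine verification the authors leave implicit, done by the same natural route.
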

In particular, if we take 
$X = \nabla \psi$ where $\psi:M\to \mathbb{R}$ is a smooth function,
and define $\nabla^{\perp}\psi:=i\nabla \psi$, then
\begin{equation}
\label{eq:grad_ruotato}
 (\nabla^\perp \psi)^\flat = \star \d \psi.
\end{equation}
On a $2$-dimensional manifold, the codifferential~$\d^*$
may be written as~$\d^* = -\star\d\star$. 
If~$\Psi\in\Omega^2(M)$ is a $2$-form and~$\psi:=\star \Psi$, then 
\begin{equation}
\label{eq:codifferential_explicit1}
 \d^* \Psi = -\star \d(\star \Psi) = -\star \d\psi. 
\end{equation}

Suppose now the ambient manifold is~$Q := [0, \, T]\times M$.
Given a local coordinate system~$\{t, \, x^1, \, x^2\}$,
we may write the Hodge dual of~$1$-forms on~$Q$ as
\begin{align}
&\star_{Q} \d x^1 = \frac{1}{\sqrt{\vert g\vert}}(-g_{12}\,\d t\wedge \d x^1 
   - g_{22} \, \d t\wedge \d x^2)\label{eq:starQdx1}\\
&\star_{Q} \d x^2 = \frac{1}{\sqrt{\vert g\vert}}(g_{11}\,\d t\wedge \d x^1 
   + g_{12} \, \d t\wedge \d x^2)\label{eq:starQdx2}.
\end{align}
In particular, let~$X$ be a vector field on~$Q$
of the form~$X = X^k\frac{\partial}{\partial x^k}$, with $X^k = X^k(t,x)$ for $k=1,2$.
By comparing~\eqref{eq:starQdx1}--\eqref{eq:starQdx2}
with~\eqref{eq:starMdx1}--\eqref{eq:starMdx2}, we obtain
(${\bf i}_{t}$ denotes the interior multiplication by $\frac{\partial}{\partial t}$)
\begin{equation}
\label{eq:interiordt}
 {\bf i}_{t} (\star_{Q} X^{\flat} )= -\star X^b.
\end{equation}
Let $\psi:M\to \mathbb{R}$ be a smooth function. We let~$X := \nabla \psi$
and consider a $2$-form~$\omega := \omega_k \d t\wedge \d x^k$ 
with $\omega_k =  \omega_k(t,x)$. 
Thanks to \eqref{eq:star}, \eqref{eq:interiordt}, \eqref{eq:ruotato} and \eqref{eq:interior_mult} 
we obtain the following identity:
\begin{equation}
\label{eq:star_grad_ruotato}
 \begin{split}
  \omega \wedge \d \psi &= \left( \omega, \star_{Q}X^\flat\right)_g \\
   & =\left( {\bf i}_t \omega, {\bf i}_t( \star_{Q}X^\flat)\right)_g 
   =-\left( {\bf i}_t\omega, \star X^\flat\right)_g = -\left({\bf i}_t \omega, (\nabla^{\perp}\psi)^\flat\right)_g \\
   &= -\omega\left(\frac{\partial}{\partial t},\nabla^{\perp}\psi\right).
 \end{split}
\end{equation}

\subsubsection{Differential Operators}
\label{sssect:diff_op}
We recall the definition of a few
differential operators. Throughout this subsection, we
work in the ambient manifold~$M$.
The divergence of a vector field~$v$ is defined
as the Lie derivative~$\mathcal{L}_v$ of the volume form in the 
direction of~$v$, that is
\begin{equation}
\label{eq:divergence_2}
  (\div v) \Vg := \mathcal{L}_v\Vg = \d (\mathbf{i}_{v}\Vg) 
\end{equation}
As~$\abs{v}_g^2 \Vg = v^\flat\wedge\star v^\flat$,
we may equivalently write
\[
 \div v = \star(\mathcal{L}_v\Vg)
 = \star\d(\star v^\flat) = - \d^* v^\flat
\]
Since~$\d^*$ is the $L^2$-adjoint of~$\d$, it follows that
$-\div$ is the~$L^2$-adjoint of the Riemannian gradient~$\nabla$.
Moreover, for any smooth, open set~$U\subseteq M$ and 
any smooth function~$\varphi\colon M\to\R$,
Equation~\eqref{eq:in_parts_form} implies
\begin{equation} \label{eq:ip_div}
 \int_{U} (\nabla\varphi, \, v)_g \, \Vg 
 = - \int_{U} \varphi \, (\div v) \, \Vg 
 + \int_{\partial U} \varphi \, (\nu, \, v)_g \, \d\H^1
\end{equation}
where~$\nu$ is the exterior-pointing unit normal to~$\partial U$.

Given a vector field~$v$ on~$M$, the covariant derivative~$D v$
may be regarded as (fiber-wise linear) operator~$\T M\to \T M$,
which maps a tangent vector~$w\in \T_p M$ into the 
tangent vector~$D_w v\in T_p M$. Equivalently, the operator~$D$
maps sections of~$T M$ into sections 
of~$\mathrm{Hom}(TM, \, TM) \simeq TM\otimes T'M$.
The bundle~$TM\otimes T'M$ is equipped with a scalar product,
as defined in~\eqref{normHom}-\eqref{scalprodHom}.
The~$L^2$-adjoint~$D^*$ of~$D$ maps sections of~$TM\otimes T'M$
into sections of~$TM$, i.e.~vector fields.
We define the rough Laplacian (also known as connection Laplacian or Bochner Laplacian) as
\begin{equation}
\label{eq:rough}
 -\Delta_g v := D^* D v. 
\end{equation}
By definition, the rough Laplacian satisfies 
\begin{equation} \label{eq:ip_rough}
 -\int_M (\Delta_g v, \, w)_g \Vg = \int_M (D v, \, D w)_g \, \Vg
\end{equation}
for any smooth vector fields~$v$, $w$ on~$M$.
We can characterise more explicitely $D^*$, and hence $\Delta_g$,
in terms of the covariant derivative $D$.
Let $u$, $v$, $w$ be smooth vector fields. Since the covariant derivative
is compatible with the metric, we have
\[
 \begin{split}
  (D_w u, \, v)_g = \d(u, \, v)_g (w) - (u, \, D_w v)_g
  = \div\!\left((u,\, v)_g w\right) - (u, \, v)_g \, \div w - (u, \, D_w v)_g
 \end{split}
\]
Therefore, for any smooth open set $U\subseteq M$, 
\begin{equation} \label{eq:ip_covariant}
 \int_U (D_w u, \, v)_g \, \Vg = \int_{\partial U} (u,\, v)_g \, (w, \, \nu)_g \,\d\H^1
 -\int_U  (u, \, v)_g \, \div w \, \Vg - \int_U (u, \, D_w v)_g \, \Vg
\end{equation}
In particular, $D^*_w = - D_w - \div w$.
Comparing \eqref{eq:ip_rough} with \eqref{eq:ip_covariant},
we obtain an expression for $\Delta_g$ in terms of an orthonormal 
tangent frame $\{\tau_1, \, \tau_2\}$:
\begin{equation} \label{eq:rough_frame}
 -\Delta_g u = D^*_{\tau_k} D_{\tau_k} u
 = - D_{\tau_k} D_{\tau_k} u - (\div\tau_k) D_{\tau_k} u
 = - D_{\tau_k} D_{\tau_k} u + D_{D_{\tau_k}\tau_k} u
\end{equation}
(the last equality is obtained by differentiating the orthonormality
conditions $(\tau_h, \, \tau_k)_g = \delta_{hk}$).

We will consider another notion of Laplacian, that is, the Hodge Laplacian for differential forms.
The Hodge Laplacian of a $k$-form $\omega$ is defined as 
\begin{equation}
 -\Delta \omega := \left(\d\d^* + \d^*\d\right)\omega.
\end{equation}
Thanks to~\eqref{eq:in_parts_form}, 
the Hodge Laplacian satisfies
\[
 -\int_M (\Delta\omega, \, \zeta)_g \Vg 
 = \int_M \left((\d\omega, \, \d\zeta)_g + (\d^*\omega, \, \d^*\zeta)_g
 \right) \Vg
\]
for any smooth~$k$-forms~$\omega$, $\zeta$. 
In particular, a form~$\omega$ is harmonic (i.e., $\Delta\omega=0$)
if and only if $\d\omega=0$ and~$\d^*\omega=0$.
However, the rough Laplacian and the Hodge Laplacian 
do \emph{not} coincide: in general, $(\Delta_g v)^\flat\neq \Delta v^\flat$.
(In fact, we have the Weitzenb\"ock
identity~$(\Delta_g v)^\flat - \Delta v^\flat = A v^\flat$,
where~$A$ is an operator of order zero depending on
the curvature of~$M$.)

\subsubsection{Connection $1$-form}
\label{sssec:connection1}
Let~$U\subseteq M$ be an open set and let~$\tau_1$,
$\tau_2$ be smooth vector fields on~$U$. We say 
that~$\left\{\tau_1, \, \tau_2\right\}$ is an orthonormal,
positvely oriented, tangent frame on $U$
--- or simply a \emph{moving frame} --- if 
\begin{equation}
 \left(\tau_j,\tau_k\right)_g = \delta_{jk},\qquad 
 \text{and}\qquad \Vg(\tau_1,\tau_2) =1\,\,\,\,\,\\\text{ on } U.
\end{equation}
The connection $1$-form associated with~$\{\tau_1, \, \tau_2\}$ 
is defined by
\begin{equation}
\label{eq:connection1}
\mathcal{A}(v):= \left(\tau_1, \, D_v\tau_2\right)_g.
\end{equation}
for any smooth vector field $v$ on $U$. 
Note that the following holds
\[
\mathcal{A}(v) = -\left(\tau_2,D_v\tau_1\right)_g
\]
Since $\left\{\tau_1,\tau_2\right\}$ are orthonormal, we get that 
\begin{equation}
\label{eq:prop_connection}
D_{v}\tau_1 = -\mathcal{A}(v)\tau_2\,\,\,\,\,\,\,\,\,D_v\tau_2 = \mathcal{A}(v)\tau_1.
\end{equation}
The connection~$1$-form satisfies
\begin{equation} 
\label{eq:dconnection}
 \d\mathcal{A} = \kappa\Vg,
\end{equation}
where~$\kappa$ is the Gauss curvature of~$M$
(see for instance~\cite[Proposition~2, p.~92]{doCarmo}).
We let $\mathbb{A}$ be the vector field (sometimes called spin connection) 
\begin{equation}
\label{eq:spin_connection}
\mathbb{A}:= \mathcal{A}^{\sharp},\,\,\,\,\,\,\,\hbox{ namely }\,\,\,\,\mathbb{A}^j:= g^{jk}\mathcal{A}_k.
\end{equation}
By possibily modifying the frame $\left\{\tau_1,\tau_2\right\}$,
we can always assume (see \cite[Lemma 6.1]{ssvM3AS}) that 
\begin{equation}
\label{eq:div_free_A}
\div \mathbb{A} = 0
\end{equation}

\subsection{Special coordinate systems and the Ginzburg-Landau energy in coordinate}
\label{ssec:coordinates}

\subsubsection{Normal coordinates}
\label{sssec:normal}

We recall that the exponential map 
$\mathrm{Exp}_p\colon T_pM\to M$
at a point~$p\in M$ is defined by
\[
 \mathrm{Exp}_p(v) := \gamma_{p,v}(1) 
 \qquad \textrm{for any } v\in T_p M,
\]
where~$\gamma_{p,v}\colon\R\to M$ is the unique geodesic 
such that~$\gamma_{p,v}(0) = p$, $\gamma_{p,v}^\prime(0) = v$.
As~$M$ is compact and smooth, there exists a number~$\delta_* > 0$
(the injectivity radius of~$M$) such that, 
for any~$\delta\in(0, \, \delta_*)$, $\mathrm{Exp}_p$
restricts to a diffeomorphism from~$B_\delta(0)\subseteq T_p M$
to its image, $B_\delta(p) := \mathrm{Exp}_p(B_\delta(0))\subseteq M$.
$B_\delta(p)$ is the geodesic ball in~$M$ of 
center~$p$ and radius~$\delta$. We choose an
orthonormal, positively oriented basis~$\{\hat{e}_1, \, \hat{e}_2\}$
of~$T_p M$ and consider the isomorphism
$E\colon\mathbb{R}^2\to T_pM$, $E(x^1,x^2):= x^1 \hat{e}_1 + x^2 \hat{e}_2$.
Then, we define the geodesic normal coordinates centered at~$p$ as the map
\[
 \Phi := E^{-1}\circ \text{Exp}_p^{-1} \colon B_\delta(p)\to \mathbb{R}^2.
\]
We have 
$\Phi(B_\delta(p)) = B_\delta(0)\subseteq\R^2$.
The metric tensor~$g$, written in geodesic normal coordinates, satisfies 
\begin{gather}
 g_{jk}(x) = \delta_{jk} + \O\!\left(\abs{x}^2\right)
  \,\,\,\,\,\,\hbox{ and }\,\,\,\,\,\,
  \det g(x)= 1 + \O\!\left(\abs{x}^2\right), \label{eq:basic_normal} \\
 \abs{\nabla\Phi^{-1}(x)} = \O\!\left(\abs{x}\right) \label{eq:basic_normal_nabla} 
\end{gather}
for any~$x\in B_\delta(0)$.
Moreover, 
\begin{gather}
g_{ij}(0) = \delta_{ij}\qquad \textrm{ and } \qquad\partial_j\det g(0) = 0\quad \forall j, \label{eq:det0}\\
\Gamma_{i,j}^k(0) \qquad \textrm{ and }\qquad \partial_k g_{ij}(0) = 0 \quad \forall i,j,k.\label{eq:crist0} 
\end{gather}
In particular, from \eqref{eq:det0} we deduce that the coordinates fields $\frac{\partial }{\partial x_j}$ verify
\begin{equation}
\label{eq:div0}
\div\left(\frac{\partial}{\partial x_j}\right)(0) = 0\qquad \forall j=1,2.
\end{equation}

When working with the cylindrical manifold $Q:= [0, \, T]\times M$, 
we will often use the coordinate system given by 
\begin{equation} \label{eq:cylindricalcoord}
 \Phi_Q := \left(\Id, \, \Phi\right)\colon
 [0,T]\times B_\delta(p)\to [0, \, T]\times\mathbb{R}^2,
 \qquad \Phi_Q(t, \, x) = (t, \, \Phi(x))
\end{equation}
We have ($(\Phi_Q^{-1})^*$ denotes the pullback)
\begin{equation}
\label{eq:change_volume}
 (\Phi_Q^{-1})^*(\Vg\wedge\d t) = \left(1 + \O(\abs{x}^2)\right) \, \d x\wedge\d t.
\end{equation}

\subsubsection{The Ginzburg-Landau energy in geodesic coordinates}
\label{sssec:normalGL}

Given a vector field~$u$, locally defined on an 
open subset of~$u$, we will introduce
a convenient representation of~$u$ in terms of
geodesic coordinates and express the Ginzburg-Landau
energy of~$u$ in terms of geodesic coordinates.
For simplicity, we consider time-independent vector fields,
but the same argument applies verbatim to vector fields
that depend on time as well.
Let~$B_\delta(p)$ be a geodesic ball in~$M$, with~$\delta$ small enough,
so that the geodesic coordinates~$\Phi\colon B_\delta(p)\to\R^2$ are well-defined.
Let~$\{\tau_1, \, \tau_2\}$ be an orthonormal, positively oriented tangent frame 
on~$B_\delta(p)$. We may choose the frame in such a way that the associated
connection~$1$-form~$\mathcal{A}$ (see~\eqref{eq:connection1}) satisfies
\begin{equation} \label{eq:Ap0}
 \mathcal{A}(p) = 0
\end{equation}
(see e.g.~\cite[Lemma 6.1]{ssvM3AS}). This implies
\begin{equation} \label{eq:smallA}
 \abs{\mathcal{A}(\Phi^{-1}(x))} = O(\abs{x})
 \qquad \textrm{for any } x \in B_\delta(0)\subseteq\R^2.
\end{equation}
Let~$u\in H^1_{\tan}(B_\delta(p))$ be a vector field. We write
\[
 u = u^1 \, \tau_1 + u^2 \, \tau_2
\]
for some scalar functions~$u^1$, $u^2$. We define a map
$v =(v^1,v^2)\colon B_\delta(0)\subseteq\R^2\to \mathbb{R}^2$ 
by 
\[
 v^k(x) := u^k(\Phi^{-1}(x))   
\]
for~$x\in B_\delta(0)$, $k=1, \, 2$, so that
\begin{equation} \label{uv}
 u(\Phi(x)) = v^1(x) \, \tau_1(\Phi(x)) + v^2(x) \, \tau_2(\Phi(x))
 \qquad \textrm{for any } x \in B_\delta(0).
\end{equation}
We denote with~$\abs{\cdot}$ and~$(\cdot, \cdot)$
the norm and the scalar product in~$\mathbb{R}^2$, respectively. 
Moreover, we denote with~$\nabla$ the usual (Euclidean)
derivative in $\mathbb{R}^2$. 

\begin{lemma} \label{lemma:normD}
 If~$\{\tau_1, \, \tau_2\}$ is an orthonormal tangent frame
 that satisfies~\eqref{eq:smallA}, then
 \[
  \abs{D u(\Phi^{-1}(x))}^2_g 
   = \left(1 + \O(\abs{x}^2)\right) \abs{\nabla v(x)}^2
    + \O(\abs{v(x)}^2)
 \]
 for any~$x\in B_\delta(0)$.
\end{lemma}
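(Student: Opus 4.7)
The plan is to unfold $Du$ in the moving frame $\{\tau_1, \tau_2\}$, translate the frame derivatives into Euclidean derivatives of $v$ via the geodesic coordinates, and control the connection contributions using~\eqref{eq:smallA}.

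\medskip
\textbf{Step 1 (decomposition of $Du$ in the frame).} Writing $u = u^1\tau_1 + u^2\tau_2$ and applying~\eqref{eq:prop_connection} I would compute, for $j=1,2$,
\[
 D_{\tau_j} u = \bigl(\tau_j u^1 + \mathcal{A}(\tau_j)\,u^2\bigr)\tau_1
 + \bigl(\tau_j u^2 - \mathcal{A}(\tau_j)\,u^1\bigr)\tau_2.
\]
Squaring (orthonormality of the frame) and summing over $j$ as in~\eqref{normHom} gives
\[
 \abs{Du}_g^2 = \sum_{j,k=1}^{2}(\tau_j u^k)^2
 + \bigl(\mathcal{A}(\tau_1)^2+\mathcal{A}(\tau_2)^2\bigr)\abs{u}_g^2
 + 2\sum_{j=1}^{2} \mathcal{A}(\tau_j)\bigl(u^2\,\tau_j u^1 - u^1\,\tau_j u^2\bigr).
\]

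\medskip
\textbf{Step 2 (relating $\tau_j u^k$ to $\nabla v$).} The key input from the geodesic coordinates is that, by~\eqref{eq:basic_normal}, the coordinate fields $\{\partial/\partial x^1,\partial/\partial x^2\}$ are orthonormal at $p$ and satisfy $g_{jk}=\delta_{jk}+O(\abs{x}^2)$. One may then take $\{\tau_1,\tau_2\}$ to be the Gram--Schmidt orthonormalization of the coordinate basis, which automatically satisfies $\mathcal{A}(p)=0$ (hence~\eqref{eq:smallA}) and, in the coordinate representation on $B_\delta(0)$,
\[
 \tau_j = \frac{\partial}{\partial x^j} + O(\abs{x}^2) \qquad (j=1,2),
\]
where the $O(\abs{x}^2)$ refers to the components in the basis $\{\partial/\partial x^l\}$. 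Since $u^k = v^k\circ\Phi$, the chain rule yields
\[
 \tau_j u^k(\Phi^{-1}(x)) = \partial_j v^k(x) + O(\abs{x}^2)\,\abs{\nabla v(x)},
\]
so that $\sum_{j,k}(\tau_j u^k)^2 = \bigl(1+O(\abs{x}^2)\bigr)\abs{\nabla v(x)}^2$.

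\medskip
\textbf{Step 3 (error terms).} The ``$\abs{u}_g^2$ term'' is harmless: by~\eqref{eq:smallA},
\[
 \bigl(\mathcal{A}(\tau_1)^2+\mathcal{A}(\tau_2)^2\bigr)\abs{u}_g^2 = O(\abs{x}^2)\,\abs{v(x)}^2 = O(\abs{v(x)}^2),
\]
since $\abs{x}$ is bounded on $B_\delta(0)$. The cross term is the only genuinely mixed contribution; here I would apply Young's inequality with the weight $\abs{x}$ coming from~\eqref{eq:smallA}:
\[
 2\abs{\mathcal{A}(\tau_j)}\,\abs{v}\,\abs{\nabla v}
 \;\le\; \abs{\mathcal{A}(\tau_j)}^2\,\abs{\nabla v}^2 \cdot \frac{1}{\abs{\mathcal{A}(\tau_j)}^2/(C\abs{x})^2}\ldots
\]
more cleanly, $2(C\abs{x})\abs{v}\abs{\nabla v}\le C\abs{x}^2\abs{\nabla v}^2 + \abs{v}^2$, which is exactly of the form $O(\abs{x}^2)\abs{\nabla v}^2 + O(\abs{v}^2)$ and may be absorbed into the two terms on the right-hand side of the lemma. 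Combining the three contributions yields the claimed identity.

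\medskip
\textbf{Expected obstacle.} No step is genuinely hard, but the subtle point is the simultaneous choice of frame: one must verify that the Gram--Schmidt frame built from the coordinate basis of a geodesic chart is indeed $O(\abs{x}^2)$-close to $\{\partial/\partial x^j\}$ \emph{and} satisfies $\mathcal{A}(p)=0$ (so that~\eqref{eq:smallA} is at our disposal). Once this is secured, the rest reduces to bookkeeping of the error terms, with Young's inequality applied to the cross term to redistribute a factor $\abs{x}$ between the gradient and the value of $v$.
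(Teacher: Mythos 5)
Your proof is correct and follows the same route as the paper's: decompose $D_{\tau_j}u$ via the connection form using~\eqref{eq:prop_connection}, expand the square to isolate the pure gradient term, the term $\abs{\mathcal{A}}_g^2\abs{u}_g^2$, and the cross term, and absorb the cross term by Young's inequality using the weight $\abs{\mathcal{A}}_g = \O(\abs{x})$ supplied by~\eqref{eq:smallA}. Steps~1 and~3 match the paper's computation essentially line by line (the first, garbled display in Step~3 is superseded by the clean inequality $2(C\abs{x})\abs{v}\abs{\nabla v}\le C\abs{x}^2\abs{\nabla v}^2+\abs{v}^2$ that follows it).

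The one place where you deviate --- and where your argument, as written, proves slightly less than the statement --- is Step~2. The lemma is asserted for \emph{any} orthonormal frame satisfying~\eqref{eq:smallA}, whereas you replace the frame by the Gram--Schmidt orthonormalization of the coordinate basis in order to write $\tau_j = \partial/\partial x^j + \O(\abs{x}^2)$. This specialization is unnecessary: for any orthonormal frame one has
\[
 \sum_{j=1}^{2}\left(\tau_j u^k\right)^2 = \abs{\d u^k}_g^2
 = g^{ij}\,\partial_i v^k\,\partial_j v^k
 = \left(1+\O\!\left(\abs{x}^2\right)\right)\abs{\nabla v^k}^2,
\]
the last equality by~\eqref{eq:basic_normal}; summing over $k$ gives exactly the conclusion of your Step~2 with no reference to how $\tau_j$ compares with $\partial/\partial x^j$. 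This is what the paper does, and it dissolves the ``expected obstacle'' you flag at the end: there is nothing to verify about the Gram--Schmidt frame, because $\sum_{j,k}(\tau_j u^k)^2$ is frame-independent (even though the individual functions $u^k$, and hence $v$, do depend on the frame). With this substitution your proof is complete and identical in substance to the paper's.
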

\begin{proof}
 We compute the covariant derivatives of~$u$ with
 respect to~$\tau_1$, $\tau_2$. Thanks to \eqref{eq:prop_connection},
 we obtain
 \begin{align*}
  \begin{split}
   D_{\tau_1} u =  D_{\tau_1}\left(u^1\tau_1 + u^2\tau_2\right) 
   = \left(\d u^1(\tau_1)+ u^2 \mathcal{A}(\tau_1)\right)\tau_1 
   + \left(\d u^2(\tau_1)-u^1\mathcal{A}(\tau_1)\right)\tau_2
  \end{split} \\
  \begin{split}
   D_{\tau_2} u =  D_{\tau_2}\left(u^1\tau_1 + u^2\tau_2\right) 
   = \left(\d u^1(\tau_2)+ u^2 \mathcal{A}(\tau_2)\right)\tau_1 
   +\left(\d u^2(\tau_2)-u^1\mathcal{A}(\tau_2)\right)\tau_2
  \end{split}
 \end{align*}
 As the frame~$\{\tau_1, \, \tau_2\}$ is orthonormal, we deduce
 \[
  \begin{split}
   \abs{D u}^2_g = \abs{D_{\tau_1} u}^2_g + \abs{D_{\tau_2} u}^2_g
   &= \abs{\d u^1(\tau_1)+ u^2 \mathcal{A}(\tau_1)}^2  
   + \abs{\d u^2(\tau_1)-u^1\mathcal{A}(\tau_1)}^2 \\
   &\qquad\qquad + \abs{\d u^1(\tau_2)+ u^2 \mathcal{A}(\tau_2)}^2  
   + \abs{\d u^2(\tau_2)-u^1\mathcal{A}(\tau_2)}^2
  \end{split}
 \]
 and, by expanding the squares,
 \begin{equation} \label{eq:normD1}
  \begin{split}
   \abs{D u}^2_g 
   = \abs{\d u^1}^2_g + \abs{\d u^2}^2_g
   + \abs{\mathcal{A}}^2_g \abs{u}^2_g
   + 2\sum_{k=1}^2\left(u^2 \, \d u^1(\tau_k) 
    - u^1\,\d u^2(\tau_k)\right)\mathcal{A}(\tau_k)
  \end{split}
 \end{equation}
 The last term in the right-hand side of~\eqref{eq:normD1}
 may be estimated as
 \[
   \abs{\sum_{k=1}^2\left(u^2 \, \d u^1(\tau_k) 
    - u^1\,\d u^2(\tau_k)\right)\mathcal{A}(\tau_k)}
    \lesssim \abs{\mathcal{A}}^2_g\left(\abs{\d u^1}_g^2 + \abs{\d u^2}^2_g\right)
    + \abs{u}^2_g
 \]
 Therefore, we obtain
 \begin{equation*} 
  \begin{split}
   \abs{D u}^2_g 
   = \left(1 + \O(\abs{\mathcal{A}}^2_g)\right)
    \left(\abs{\d u^1}^2_g + \abs{\d u^2}^2_g\right)
    + \O(\abs{u}^2_g)
  \end{split}
 \end{equation*}
 By writing the right-hand side in terms of~$v$, and
 applying~\eqref{eq:basic_normal}, \eqref{eq:basic_normal_nabla}
 and~\eqref{eq:smallA}, the lemma follows. 
\end{proof}

We consider the Ginzburg-Landau energy density 
\begin{equation} \label{eq:energy_density}
 e_\eps(u):= \frac{1}{2}\abs{D u}^2_g 
  + \frac{1}{4\eps^2}\left(\abs{u}^2_g -1\right)^2,
\end{equation}
and its Euclidean counterpart
\begin{equation} \label{eq:Euclidean_density}
 \bar{e}_\eps(v):= \frac{1}{2}\abs{\nabla v}^2 
  + \frac{1}{4\eps^2}\left(\abs{v}^2 -1\right)^2.
\end{equation}
Lemma~\ref{lemma:normD} implies 
\begin{equation} \label{eq:energy_dens_normal}
 e_\eps(u)\left(\Phi^{-1}(x)\right) 
 = \left(1 + \O(\abs{x}^2)\right)\bar{e}_\eps(v)(x) 
 + \O(\abs{v(x)}^2) 
\end{equation}
for any~$x\in B_\delta(0) = \Phi(B_\delta(p))$.
Therefore, the Ginzburg-Landau energy on the ball~$B_\delta(p)$
may be written as 
\begin{equation}
 \label{eq:energy_normal_coordinates}
 \int_{B_\delta(p)}e_\eps(u)\Vg =
 \left(1+ \O(\delta^2)\right) \int_{B_\delta(0)}
 \left( \bar{e}_\eps(v)(x) + \O(\abs{v(x)}^2) \right) \d x 
\end{equation}

\subsection{Functional spaces}
\label{ssect:functional}

For functions $u:M\to \mathbb{R}$, the Lebesgue and 
Sobolev spaces $L^p(M)$ and $W^{1,p}(M)$ are defined as in \cite{aubin98}. 
The dual space of $W^{1,p}(M)$ is 
\begin{equation*}
W^{-1,p}(M):=\left(W^{1,q}(M)\right)',\,\,\,\,\,\,\,\frac{1}{p}+\frac{1}{q}= 1.
\end{equation*}
In particular, for a (measure-valued) $2$-form we set
\[
\norm{\mu}_{W^{-1,p}(M)}:=\sup\left\{\int_{M}f \mu:\quad f\in W^{1,q}(M),\quad \norm{f}_{W^{1,q}(M)}\le 1\right\}.
\]
When dealing with vector fields, for $p\in [1,+\infty)$ we set 
\begin{align}
L^p_{\tang}(M) &:=\left\{v: M\to TM:\,\,\,\,\,\,v(x)\in T_x M,\quad \abs{v}_g \in L^p(M)\right\}, \label{eq:Lp} \\
W^{1,p}_{\tang}(M) &:=\left\{v: M\to TM:\,\,\,\,\,\,v(x)\in T_x M,\quad \abs{v}_g +\abs{D v}_g\in L^p(M)\right\},\label{eq:W1p}
\end{align}
where the covariant derivative $D$ is understood in the distributions sense. 
We set $H^1_{\tang}(M):=W^{1,2}_{\tang}(M)$ and 
we recall that smooth vector fields are indeed dense in $W^{1,p}_{\tang}(M)$, see \cite[Lemma 5.1]{JerrardIgnat_full}.

The Nash-Moser embedding \cite{Nash} gives an isometric embedding 
$j\colon\left(M, \, g\right)\to \left(\mathbb{R}^{\ell}, \, (\cdot,\cdot)_{\mathbb{R}^\ell}\right)$. Therefore, we define
\begin{equation}
\label{eq:H1tM}
H^1(a, \, b; \, M):=\left\{v\in H^1(a, \, b; \, \R^\ell): v(t)\in j(M)\quad \text{ for any }t\in (a,b)\right\}
\end{equation}
With some abuse of notation we will identify $M$ with $j(M)\subset \mathbb{R}^{\ell}$.

The Riemannian manifold $(M, \, g)$ is indeed also a metric space. Given two points $P$ and $Q$ in $M$, the (Riemannian) distance between $P$ and $Q$ is 
\begin{equation}
\label{eq:riem_dist}
\dist_g(P, \, Q):= \inf\left\{\text{L}_\gamma,\,\,\,
\gamma\colon[a,b]\to M \hbox{ is piecewise smooth,}
\,\,\gamma(a)=P,\,\,\gamma(b)=Q\right\}\!,
\end{equation}
where 
\begin{equation}
\text{L}_\gamma:= \int_{a}^b\vert \gamma'(t)\vert_g \, \hbox{d}t.
\end{equation}
is the lenght of the piecewise smooth curve $\gamma:[a,b]\to M$. 


Given a curve $\gamma:[a,b]\to M$, we define its metric derivative (see \cite{Amb-Gi-Sav}) as
\begin{equation}
\label{eq:metric_derivative_def}
\abs{\gamma'}_g(t):= \lim_{h\to 0}\frac{\dist_g\left(\gamma(t+h),\gamma(t)\right)}{\abs{h}},
\end{equation}
provided this limit exists. 
For a curve $\gamma\in H^1(a, \, b; \, M)$,
the metric derivative exists at almost every point 
and belongs to~$L^2(a, \, b)$, as expressed by 
the following
\begin{prop}[Metric derivative]
\label{prop:metric_derivative}
Let $(M,g)$ be a closed oriented $2$-dimensional Riemannian manifold. 
Let~$\gamma\colon [a, \, b]\to M$ be a measurable curve.
Then $\gamma\in H^1(a,b;M)$ if and only if there exists 
$m\in L^2(a,b)$ such that 
\begin{equation}
\label{eq:metricderiv_prop}
\dist_g\left(\gamma(t),\gamma(s)\right)\le \int_{s}^t m(r) \, \d r
\qquad \textrm{for any } s, t \ \textrm{ with } 
a< s \le t< b.
\end{equation}
In this case, the limit 
\eqref{eq:metric_derivative_def} exists for almost any $t\in (a,b)$, the function $t\mapsto \abs{\gamma'}_g(t)$ belongs to $L^2(a,b)$ and is such that 
\[
\abs{\gamma'}_g(t) \le m(t), \,\,\,\,\,\,\hbox{ for a.a. }t\in (a,b)
\]
for each function $m\in L^2(a,b)$ satisfying \eqref{eq:metricderiv_prop}.
Moreover, 
there holds 
\begin{equation}
\label{eq:metric_deriv_eq}
\abs{\gamma'}_g(t) = \abs{\gamma'(t)}_g\,\,\,\,\,\,\hbox{ a.e. in } (a,b). 
\end{equation}
\end{prop}
\begin{proof}
The proof that \eqref{eq:metricderiv_prop} implies that the metric derivative exists for almost any $t$ and belongs to $L^2(a,b)$ is in \cite[Theorem 1.1.2]{Amb-Gi-Sav}. 
It is easy to prove that any~$\gamma\in H^1(a,b;M)$ satisfies~\eqref{eq:metricderiv_prop}
(take $m(t) = \abs{\gamma'(t)}_{g}$ and use \eqref{eq:riem_dist}).
We claim that a measurable curve~$\gamma\colon [a, \, b]\to M$
that satisfies~\eqref{eq:metricderiv_prop} belongs to~$ H^1(a,b;M)$. 
Let~$\gamma\colon [a, \, b]\to M$ be a measurable curve
that satisfies~\eqref{eq:metricderiv_prop}.
We have~$\gamma\in L^2(a, \, b; \, M)$ because~$M$ is compact. 
Let~$h\in(0, \, b-a)$.
Equation~\eqref{eq:metricderiv_prop} and the H\"older inequality imply
\[
 \dist_g(\gamma(t), \, \gamma(t+h)) \leq h \int_{t}^{t+h} m^2(r) \, \d r
\]
for any~$t\in (a, \, b-h)$. By Fubini's theorem, we deduce
\[
 \begin{split}
  \int_a^{b+h} \dist_g^2(\gamma(t), \, \gamma(t+h)) \, \d t
  \leq h \int_a^{b-h} \left(\int_t^{t+h} m^2(r) \, \d r \right) \d t
  = h^2 \int_a^{b} m^2(r) \, \d r
 \end{split}
\]
Therefore, identifying~$M$ with $j(M)\subset \mathbb{R}^\ell$
we deduce that
\[
 \int_a^{b+h} \abs{\gamma(t) - \gamma(t+h)}^2_{\R^\ell} \, \d t
  \leq  h^2 \int_a^{b} m^2(r) \, \d r
\]
for any~$h>0$. This implies~$\gamma\in H^1(a, \, b; \, M)$ \cite{brezis}.

Finally, we show \eqref{eq:metric_deriv_eq}.
Let $\gamma \in H^1(a, \, b; \, M)$. Let~$s\in (a, \, b)$ be
a Lebesgue point for the distributional derivative~$\gamma^\prime$, 
such that the metric derivative $\abs{\gamma'}_g(s)<+\infty$. 
Let $\sigma>0$ be smaller than the injectivity radius of~$M$.
Let $I(s)$ be a neighborhood of~$s$ such that for $t\in I(s)$
there holds $\gamma(t)\in B_\sigma(\gamma(s))$. (Such a neighborhood 
exists because~$\gamma$ is continuous, by Sobolev embeddings.) 
We consider geodesic normal coordinates centered in $\gamma(s)$ 
and we let, for $t\in I(s)$, $\bar{\gamma}(t)\in\R^2$ 
be the representation of $\gamma(t)$ in this coordinates (see Subsection \ref{sssec:normal}). 
In particular, $\bar{\gamma}(s) = (0, \, 0)$.
By definition of geodesic normal coordinates, for any~$t\in I(s)$
there exists a unique geodesic of endpoints~$\gamma(s)$ and~$\gamma(t)$,
which is represented in normal coordinates by a straight line segment of
endpoints~$\bar{\gamma}(s) = (0, \, 0)$ and~$\bar{\gamma}(t)$.
Moreover, by Gauss' lemma, geodesic normal coordinates 
are a radial isometry. Therefore, we have 
\[
\dist_g(\gamma(t), \, \gamma(s)) = \abs{\bar{\gamma}(t)}_{\mathbb{R}^2}.
\] 
Then, 
\[
 \abs{\gamma^\prime}_g(s) 
 = \lim_{t\to s}\frac{\dist_g(\gamma(t), \, \gamma(s))}{\abs{t-s}} 
 = \lim_{t\to s}\frac{\abs{\bar{\gamma}(t)}_{\mathbb{R}^2}}{\abs{t-s}} 
 = \abs{\bar{\gamma}'(s)}_{\mathbb{R}^2}
\]
and, since geodesic normal coordinates centered at~$\gamma(s)$
map isometrically~$\T_{\gamma(s)} M$ into~$\R^2$, 
$\abs{\bar{\gamma}'(s)}_{\mathbb{R}^2} = \abs{\gamma'(s)}_g$.
This completes the proof.
\end{proof}

\subsection{Index and vorticity of a vector field}
\label{ssec:vorticity}

We recall the definitions of index and vorticity of a vector field 
(see also~\cite[Section 1.2]{JerrardIgnat_full}).
To every isolated zero $p\in M$ of a smooth vector field $u$ we can associate an integer number, called the index of~$u$ at~$p$, in the following way. 
Let~$\delta > 0$ be a small parameter, such that 
the ball~$B_\delta(p)$ does not contain any zero of~$u$ 
other than~$p$. Using normal geodesic coordinates,
we may represent~$u$ by a map~$v\colon B_\delta(0)\subseteq\R^2\to\R^2$,
as in~\eqref{uv}. We have~$\abs{v} = \abs{u}_g > 0$ on~$\partial V$
and hence, the map~$v/\abs{v}\colon\partial_\delta(0)\to\S^1$ 
is well-defined and smooth. We define the index of~$u$ at~$p$
as the topological degree of~$v/\abs{v}$ on~$\partial B_\delta(p)$,
\begin{equation}
\label{eq:index1-smooth}
 \ind(u, \, \partial B_\delta(p)) 
 := \deg\left(\frac{v}{\abs{v}}, \, \partial B_\delta(p), \, \S^1\right)
\end{equation}
We have used geodesic normal coordinates for
convenience only; the index is actually
independent of the choice of the coordinate system,
because the degree is invariant by composition with
(orientation-preserving) diffeomorphisms.
If $U\subseteq M$ is an open bounded set, $u$ is a smooth vector field
with finitely many zeros in~$U$ and~$u(x)\neq 0$ for any~$x\in\partial U$,
we define
\begin{equation}
\label{eq:index2}
\text{ind}(u,\partial U):=\sum_{p\in u^{-1}(0)} \ind(u, \, \partial B_\delta(p))
\end{equation}
where~$\delta > 0$ is such that the balls~$B_\delta(p)$ 
(with~$p\in u^{-1}(0)$) are pairwise disjoint.
The topological degree and the index of a vector field
can defined for maps and vector fields with Sobolev or VMO regularity
(see \cite{BN1}, \cite{BN2} and \cite{AGM-VMO}).

There is an equivalent definition of the index,
in terms of the so-called vorticity measure,
which is more convenient for our purposes.
We give the definition directly in terms of vector fields with Sobolev regularity. 
For a vector field $u\in H^1_{\tang}(M)$ (or, more generally, $u\in W^{1,p}_{\tang}(M)\cap L^q_{\tang}(M)$ with $\frac{1}{p}+ \frac{1}{q}=1$) we define the $1$-form
\begin{equation}
\label{eq:current}
j(u) = (Du, iu)_g.
\end{equation}
(The map~$i\colon TM\to TM$ is defined in~\eqref{eq:prop-i-bis}, \eqref{eq:prop-i}.)
We define the vorticity measure as the $2$-form
\begin{equation}
\label{eq:vorticity}
 \omega(u) = \d j(u) + \kappa \Vg,
\end{equation}
where~$\kappa$ is the Gauss curvature of~$M$.
If $U\subset M$ is open with smooth boundary,
$U'$ is a neighbourhood of~$\overline{U}$
and $u\in W^{1,p}_{\tang}(U')\cap L^q_{\tang}(U')$
is such that $\abs{u}_g\ge 1/2$ a.e.~in~a neighbourhood of~$\partial U$, 
we define the index of $u$ on~$\partial U$ as 
\begin{equation}
\label{eq:index1}
\text{ind}(u, \, \partial U):= \frac{1}{2\pi}\left(\int_{\partial U}\frac{j(u)}{\abs{u}^2_g} + \int_{U}\kappa \Vg\right)
\end{equation}
If, additionally, $\abs{u}_g=1$ on~$\partial U$,
then by Stokes' theorem we obtain 
\begin{equation}
\label{eq:index}
 \text{ind}(u, \, \partial U) = \frac{1}{2\pi}\left(\int_{\partial U}j(u)
 + \int_{U}\kappa \Vg\right) = \frac{1}{2\pi}\int_{U}\omega(u).
\end{equation}
If~$u$ is smooth with finitely many zeroes
in~$U$, the index as defined in~\eqref{eq:index1}
agrees with~\eqref{eq:index2}. Indeed, take an orthonormal,
positively oriented tangent frame $\{\tau_1, \, \tau_2\}$
defined in a neighbourhood of~$\partial U$.
(Such a frame exists, see e.g.~\cite[Lemma~6.1]{JerrardIgnat_full}.)
By lifting results, we may write~$u$ as
\[
 u = \rho\left((\cos\theta)\tau_1 + (\sin\theta)\tau_2\right)
 \qquad \textrm{on } \partial U,
\]
where~$\rho := \abs{u}_g$ and~$\theta\colon\partial U \to\R$
is a function that is smooth except for,
at most, one jump discontinuity on each connected 
component of~$\partial U$. We may compute~$j(u)$
in terms of~$\rho$ and~$\theta$, as
\begin{equation} \label{j(u)conn}
 j(u) = \rho^2 \left(\d\theta - \mathcal{A}\right)
\end{equation}
where~$\mathcal{A}$ is the connection~$1$-form of~$\{\tau_1, \, \tau_2\}$, 
defined in~\eqref{eq:connection1}. Equation~\eqref{j(u)conn}
shows that~$\d\theta$ can be extended smoothly to~$\partial U$,
even if~$\theta$ is discontinuous. The index of~$u$, as defined 
in~\eqref{eq:index1}, can be written as
\begin{equation*} 
 \begin{split}
  \text{ind}(u, \, \partial U) = 
  \frac{1}{2\pi}\left(\int_{\partial U}(\d\theta - \mathcal{A})
  + \int_{U}\kappa \Vg\right)
  &= \frac{1}{2\pi}\left(\int_{\partial U}\d\theta 
  + \int_{U}(-\d\mathcal{A} + \kappa \Vg)\right) \\
  &\hspace{-.2cm}\stackrel{\eqref{eq:dconnection}}{=}
  \frac{1}{2\pi} \int_{\partial U}\d\theta 
 \end{split}
\end{equation*}
We write~$\Gamma_1$, \ldots, $\Gamma_p$ for the
connected components of~$\partial U$; each~$\Gamma_k$
is given the orientation induced by~$U$. Then, we deduce
\[
 \text{ind}(u, \, \partial U)
 = \sum_{k=1}^p \deg(e^{i\theta}, \, \Gamma_k, \, \S^1)
\]
and~\eqref{eq:index2} follows, by additivity of the degree.

The definitions~\eqref{eq:current} and~\eqref{eq:vorticity}
apply to Euclidean vector fields as well.
If $u\colon\mathbb{R}^2\to \mathbb{R}^2$ is smooth, then 
\[
\omega(u) = \d j(u) = 2(\det\nabla u) \,\d x^1\wedge \d x^2.
\] 
and thus, if $U\subset \mathbb{R}^2$ open, simply connected,
with Lipschitz boundary and $\abs{u}=1$ on $\partial U$, we have  
\[
\text{ind}(u,\partial U) = \frac{1}{2\pi}\int_{U}\d j(u) = \frac{1}{\pi}\int_{U}\text{det}\nabla u\, \d x = \text{deg}(u, \, \partial U, \, \S^1).
\]

\section{The renormalized energy and its gradient}
\label{sec:gradient_reno}

First, we set some notation and we 
recall some results from~\cite{JerrardIgnat_full}.
Let~$n\in\Z$, $n\geq 1$.
We denote $n$-uples of points in~$M$ as~$\a = (a_1, \, \ldots, \, a_n)$
and $n$-uples of integer numbers as~$\db = (d_1, \, \ldots, \, d_n)$.
We restrict our attention to the pairs~$(\a, \, \db)$ in the `admissible class'
\begin{equation} \label{admissible}
 \mathscr{A}^n := \left\{(\a, \, \db)\in M^n\times\Z^n\colon
 a_j \neq a_k \ \textrm{ for any } j\neq k \ \textrm{ and } \
 \sum_{j=1}^n d_j = \chi(M)\right\}
\end{equation}
For any~$(\a, \, \db)\in\mathscr{A}^n$, 
we consider the (unique) $2$-form $\Psi = \Psi(\a, \, \db)$ that satisfies
\begin{equation} \label{Phiad}
 \begin{cases}
  -\Delta \Psi = 2\pi \displaystyle\sum_{j=1}^n d_j\,\delta_{\a_j}
   - \kappa\Vg \\
  \displaystyle\int_M \Psi = 0
 \end{cases}
\end{equation}
Here~$\kappa$ is the Gauss curvature of~$M$. 
The right-hand side of the equation for~$\Delta\Psi$
has zero average, due to the Gauss-Bonnet theorem
and the assumption that~$\sum_{j=1}^n d_j = \chi(M)$.
As a consequence, \eqref{Phiad} has a unique solution.
Moreover, elliptic regularity theory implies that 
$\Psi(\a, \, \db)\in W^{1,p}_{\tan}(M)$ 
for any~$p\in(1, \, 2)$. Let
\[
 \gf := 1 - \frac{\chi(M)}{2}
\]
be the genus of~$M$, and let~$\Harm^1(M)$ be the space
of harmonic~$1$-forms on~$M$. By Hodge theory, $\Harm^1(M)$
is a real vector space of dimension~$2\gf$. We equip~$\Harm^1(M)$
with the~$L^2$ norm (but any other norm would do, since~$\Harm^1(M)$
is finite-dimensional).

\begin{definition} \label{def:canonical_harmonic}
 Let~$(\a, \, \db)\in\mathscr{A}^n$
 and~$\xi\in\Harm^1(M)$. We say that a vector field~$u_*$ 
 is a \emph{canonical harmonic field} for~$(\a, \, \db, \, \xi)$
 if $u_*\in W^{1,1}_{\tan}(M)$, $\abs{u_*(x)} = 1$ for a.e.~$x\in M$ and
 \begin{equation} \label{canonicalvf}
  j(u_*) = \d^*\Psi(\a, \, \db) + \xi
 \end{equation}
 where~$\Psi(\a, \, \db)$ is defined by~\eqref{Phiad}. 
\end{definition}

\begin{remark} \label{rk:canonical_harmonic}
 A vector field~$u_*\in W^{1,1}_{\tan}(M)$
 is canonical harmonic (for some~$(\a, \, \db, \, \xi)$)
 if and only if $\abs{u_*}= 1$ a.e.~in~$M$,
 \[
  \d j(u_*) = 2\pi\sum_{j=1}^n d_j \delta_{a_j} -\kappa\Vg ,
  \qquad \d^* j(u_*) = 0.
 \]
\end{remark}

Not any triple~$(\a, \, \db, \, \xi)$ admits a canonical harmonic field.
Ignat and Jerrard~\cite[Theorem~2.1]{JerrardIgnat_full}
characterised the values of~$(\a, \, \db, \, \xi)$
that admit a canonical harmonic field; we recall their result. 
We say that a set~$L\subseteq\Harm^1(M)$
is an affine lattice if there exists a linear, invertible
map~$\alpha\colon\Harm^1(M)\to\R^{2\gf}$ and a vector~$b\in\R^{2\gf}$
such that~$\alpha(L) = \Z^{2\gf} + b$.
 
\begin{theorem}[{\cite{JerrardIgnat_full}}] \label{th:canonical}
 For any~$(\a, \, \db)\in\mathscr{A}^n$, there exists 
 an affine lattice~$\mathcal{L}(\a, \, \db)\subseteq\Harm^1(M)$
 such that a canonical harmonic field for~$(\a, \, \db)$ exists
 if and only if
 \begin{equation*} 
  \xi\in\mathcal{L}(\a, \, \db)
 \end{equation*}
 Moreover,
 \begin{itemize}
  \item[(i)] the canonical harmonic field for~$(\a, \, \db, \, \xi)$
  is essentially unique: any two canonical
  harmonic fields~$u_*$, $u^*$ for~$(\a, \, \db, \, \xi)$
  satisfy $u^* = (\cos\beta) u_* + i(\sin\beta)u_*$ for some 
  constant~$\beta\in\R$;
  
  \item[(ii)] any canonical harmonic field 
  for~$(\a, \, \db, \, \xi)$ belongs 
  to~$W^{1,p}_{\tan}(M)$ for any~$p\in (1, \, 2)$ and is smooth 
  in~$M \setminus \{a_1, \, \ldots, \, a_n\}$.
 \end{itemize}
\end{theorem}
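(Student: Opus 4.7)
The plan is to build a canonical harmonic field from its prescribed current and then lift it locally to a unit-norm vector field through a phase function. Set
\[
 \eta := \d^*\Psi(\a, \db) + \xi.
\]
Since $\Psi$ is a top-degree form on the surface $M$, $\d\Psi=0$, hence $-\Delta\Psi = \d\d^*\Psi$; combining this with $\d\xi=\d^*\xi=0$ gives $\d\eta = 2\pi\sum_j d_j\delta_{a_j} - \kappa\Vg$ and $\d^*\eta=0$. By Remark~\ref{rk:canonical_harmonic}, producing a canonical harmonic field for $(\a,\db,\xi)$ is equivalent to producing $u_*\in W^{1,1}_{\tang}(M)$ with $|u_*|_g=1$ a.e.\ and $j(u_*)=\eta$.

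For the local construction, fix a simply connected open set $U\subseteq M\setminus\{a_1,\ldots,a_n\}$ carrying an orthonormal, positively oriented moving frame $\{\tau_1,\tau_2\}$ with connection $1$-form $\mathcal{A}$. By~\eqref{eq:dconnection}, $\d(\eta+\mathcal{A}) = -\kappa\Vg + \kappa\Vg = 0$ on $U$, so Poincar\'e's lemma furnishes a smooth $\theta:U\to\R$ with $\d\theta = \eta + \mathcal{A}$. Setting $u_* := (\cos\theta)\tau_1 + (\sin\theta)\tau_2$ gives $|u_*|=1$, and identity~\eqref{j(u)conn} yields $j(u_*) = \d\theta - \mathcal{A} = \eta$ on $U$.

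The heart of the proof is the global patching. Assembling the local phases into a well-defined section $e^{i\theta}$ of the unit tangent bundle on $M\setminus\{a_j\}$ amounts to requiring $\int_\gamma(\eta+\mathcal{A})\in 2\pi\Z$ for every closed loop $\gamma$ in $M\setminus\{a_j\}$. A loop encircling a single $a_j$ contributes $2\pi d_j\in 2\pi\Z$: by Stokes and the distributional identity $\d\eta+\d\mathcal{A} = 2\pi d_j\,\delta_{a_j}$ in a neighbourhood of $a_j$, one has $\int_{\partial B_\delta(a_j)}(\eta+\mathcal{A}) = 2\pi d_j$ for any small $\delta>0$. The remaining obstructions come from a basis $\gamma_1,\ldots,\gamma_{2\gf}$ of $H_1(M,\Z)$; the $2\gf$ conditions $\int_{\gamma_k}\xi \equiv -\int_{\gamma_k}(\d^*\Psi+\mathcal{A})\pmod{2\pi}$ cut out an affine lattice $\mathcal{L}(\a,\db)\subseteq\Harm^1(M)$, because by de~Rham duality the map $\xi\mapsto(\int_{\gamma_k}\xi)_k$ is a linear isomorphism $\Harm^1(M)\to\R^{2\gf}$. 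The most delicate point is the intrinsic definition: the reference $1$-form $\mathcal{A}$ is only locally defined, and one must check that different choices of frames along tubular neighbourhoods of the $\gamma_k$ shift $\int_{\gamma_k}\mathcal{A}$ by elements of $2\pi\Z$, so that the lattice $\mathcal{L}(\a,\db)$ is independent of these choices.

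Claims (i) and (ii) are then comparatively routine. For (i), two canonical harmonic fields for the same $(\a,\db,\xi)$ yield local phases $\theta,\theta^*$ with $\d(\theta-\theta^*)=0$; hence $\theta^*-\theta$ equals a constant $\beta$ on the connected set $M\setminus\{a_j\}$, yielding $u^*=(\cos\beta)u_*+(\sin\beta)iu_*$. For (ii), $u_*$ is smooth on $M\setminus\{a_j\}$ by construction; near each $a_j$, elliptic regularity applied to~\eqref{Phiad} gives $\Psi\in W^{1,p}_{\tang}(M)$ and hence $\d^*\Psi\in L^p(M)$ for every $p\in(1,2)$, and combining this with $|u_*|=1$ and the pointwise bound $|Du_*|_g\lesssim 1+|\eta|_g+|\mathcal{A}|_g$ (in the spirit of Lemma~\ref{lemma:normD}) yields $u_*\in W^{1,p}_{\tang}(M)$ for every $p\in(1,2)$.
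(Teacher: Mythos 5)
Your proposal is correct and follows essentially the same route as the cited source \cite{JerrardIgnat_full}: the paper does not reprove Theorem~\ref{th:canonical} but recalls exactly your lattice characterisation in Section~\ref{sect:gradrenog} (the maps~$\alpha$ and~$\zeta$ of~\eqref{alpha}--\eqref{zeta} and the definition~\eqref{Lad}), and your local lifting via~\eqref{j(u)conn}, the holonomy condition modulo~$2\pi$ along generators of~$H_1$, and the frame-independence issue (which is~\cite[Lemma~6.2]{JerrardIgnat_full}) are precisely the ingredients of the original argument. The only cosmetic remark is that in~(ii) the exact identity~$\abs{Du_*}_g=\abs{j(u_*)}_g=\abs{\eta}_g$ for unit-norm fields makes the $W^{1,p}$ bound immediate, without the looser estimate you invoke.
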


Actually, Ignat and Jerrard's result is stronger: it provides
a characterisation of~$\mathcal{L}(\a, \, \db)$ 
in terms of~$\Psi(\a, \, \db)$
(we will recall it later on, in Section~\ref{sect:gradrenog})
and shows that~$\mathcal{L}(\a, \, \db)$
depends continuously on~$(\a, \, \db)$, in a suitable sense.

Given~$(\a, \, \db)\in\mathscr{A}^n$
and~$\xi\in\mathcal{L}(\a, \, \db)$, let~$u_*$
be the (essentially unique) canonical harmonic field for~$(\a, \, \db, \, \xi)$.
We define the (intrinsic) renormalized energy of~$(\a, \, \db, \, \xi)$ as
\begin{equation} \label{intrinsicren}
 W(\a, \, \db, \, \xi) := \lim_{\rho\to 0}
  \left( \frac{1}{2}\int_{M\setminus\cup_{j=1}^n B_\rho(a_j)} 
  \abs{D u_*}^2_g \Vg - \pi\sum_{j=1}^n d_j^2 \abs{\log\rho} \right) 
\end{equation}
It can be proved that the limit in~\eqref{intrinsicren} exists and is finite.
What is more, Ignat and Jerrard~\cite[Proposition~2.4]{JerrardIgnat_full}
gave a characterisation of~$W$ in terms of the Green function
for the Laplace-Beltrami operator on~$M$, thus extending
earlier results by Bethuel, Brezis and H\'elein~\cite{BBH}
in the Euclidean setting. 
Let~$\mathrm{Vol}(M) := \int_M\Vg$. Let
\[
 G\colon \{(x, \, y)\in M\times M\colon x\neq y\}\to\R
\]
be the Green function for the Laplace-Beltrami operator, 
i.e. the unique function such that
\begin{equation} \label{Green}
 \begin{cases} 
  -\Delta_x (G(x, \, y) \, \Vg) = \delta_y - \dfrac{\Vg}{\mathrm{Vol}(M)} 
    & \textrm{in } \mathscr{D}^\prime(M) \\[7pt]
  \displaystyle\int_M G(x, \, y) \Vg(x) = 0
 \end{cases}
\end{equation}
for any~$y\in M$. The function~$G$ may be decomposed as
\begin{equation} \label{Green-splitting}
 G(x, \, y) = G_0(x, \, y) + H(x, \, y)
\end{equation}
where~$H\in C^1(M\times M)$ and
\begin{equation} \label{Green-splitting-log}
  G_0(x, \, y) = -\frac{1}{2\pi} \log \dist_g(x, \, y)
 \qquad \textrm{if } \dist_g(x, \, y) \textrm{ is small enough}
\end{equation}
(see e.g.~\cite[Chapter~4.2]{aubin98}).
We also define the function~$\psi_0\colon M\to\R$ by
\begin{equation} \label{geompot}
 \begin{cases}
  -\Delta \psi_0 = - \kappa + \dfrac{2\pi \, \chi(M)}{\mathrm{Vol}(M)}  \\
  \displaystyle\int_M \psi_0 \Vg = 0
 \end{cases}
\end{equation}
By the Gauss-Bonnet theorem, the right-hand side of this equation
has zero average, so the solution exists and is unique.
By comparing~\eqref{Phiad}, \eqref{Green} and~\eqref{geompot},
we deduce
\begin{equation} \label{Phiad-Green}
 \Psi(\a, \, \db) 
 = 2\pi\sum_{j=1}^n d_j \, G(\cdot, \, a_j)\Vg + \psi_0\Vg
\end{equation}
Ignat and Jerrard~\cite[Proposition 2.4]{JerrardIgnat_full} proved 
the following
\begin{prop} \label{prop:renormalized}
 For any~$(\a, \, \db)\in\mathscr{A}^n$ and 
 any~$\xi\in\mathcal{L}(\a, \, \db)$, there holds
 \begin{align*}
 \label{eq:reno_ij}
  W(\a, \, \db, \, \xi)
  &= 4\pi^2\sum_{1\le j < k \le n} d_j \, d_k \, G(a_j, \, a_k)
   + 2\pi\sum_{j=1}^n \left( \pi d_j^2 H(a_j, \, a_j) 
   + d_j \psi_0(a_j) \right) \nonumber\\
  &\qquad\qquad\qquad + \frac{1}{2}\int_{M}\abs{\xi}^2_g \Vg
   + \frac{1}{2}\int_{M}\abs{\d\psi_0}^2_g \Vg 
 \end{align*}
\end{prop}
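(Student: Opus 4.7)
\medskip

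\textbf{Plan.} The starting point is the observation that $|u_*|_g = 1$ almost everywhere forces $D u_*$ to be pointwise orthogonal to $u_*$ in each fiber. Since the tangent plane is $2$-dimensional, $D_v u_*$ is parallel to $i u_*$ for every tangent vector $v$, and consequently $D_v u_* = j(u_*)(v)\, i u_*$. This yields the key identity $\abs{D u_*}_g^2 = \abs{j(u_*)}_g^2$ at every regular point of $u_*$, so the renormalized energy may be rewritten as
\[
 W(\a,\db,\xi) = \lim_{\rho\to 0}\!\left( \frac{1}{2}\int_{M_\rho}\abs{j(u_*)}_g^2\,\Vg - \pi\sum_{j=1}^n d_j^2\,\abs{\log\rho} \right),\qquad M_\rho := M\setminus\bigcup_j B_\rho(a_j).
\]
I then plug in $j(u_*) = \d^*\Psi(\a,\db) + \xi$, writing $\Psi(\a,\db) = \psi\,\Vg$ with $\psi := 2\pi\sum_j d_j G(\cdot,a_j) + \psi_0$ via~\eqref{Phiad-Green}. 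Because $\star\Psi = \psi$, formula~\eqref{d*} gives $\d^*\Psi = -\star\d\psi$, so $\abs{\d^*\Psi}_g^2 = \abs{\nabla\psi}_g^2$, and expanding the square produces a cross term and the harmonic term $\frac12\int_{M_\rho}\abs{\xi}_g^2\Vg \to \frac12\int_M\abs{\xi}_g^2\Vg$.

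\medskip

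The cross term $\int_{M_\rho}(\d^*\Psi,\xi)_g\Vg$ I handle by the integration-by-parts formula~\eqref{eq:in_parts_form}: since $\d\xi = 0$, only a boundary term $-\int_{\partial M_\rho}\xi\wedge\star\Psi = -\int_{\partial M_\rho}\psi\,\xi$ survives, and on $\partial B_\rho(a_j)$ one has $\psi = O(\abs{\log\rho})$ while the $1$-dimensional measure is $O(\rho)$ and $\xi$ is smooth, so this term is $O(\rho\abs{\log\rho}) \to 0$. The core computation concerns $\frac12\int_{M_\rho}\abs{\nabla\psi}_g^2\Vg$: integration by parts~\eqref{eq:ip_div}, combined with $\Delta\psi = \kappa$ on $M_\rho$ (a consequence of~\eqref{Phiad}), yields
\[
 \int_{M_\rho}\abs{\nabla\psi}_g^2\,\Vg = -\int_{M_\rho}\psi\kappa\,\Vg + \int_{\partial M_\rho}\psi\,(\nu,\nabla\psi)_g\,\d\H^1.
\]
Near $a_j$ I use the splitting $\psi(x) = -d_j\log\dist_g(x,a_j) + R_j(x)$ with
$R_j(a_j) = 2\pi d_j H(a_j,a_j) + 2\pi\sum_{k\neq j} d_k G(a_j,a_k) + \psi_0(a_j)$, which is smooth at $a_j$ by~\eqref{Green-splitting}--\eqref{Green-splitting-log}. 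Working in geodesic polar coordinates (Section~\ref{sssec:normal}), $\nu = -\partial_r$ and $\partial_r\psi = -d_j/\rho + O(1)$ on $\partial B_\rho(a_j)$, so a direct Taylor expansion gives
\[
 \int_{\partial B_\rho(a_j)}\psi(\nu,\nabla\psi)_g\,\d\H^1 = -2\pi d_j^2\log\rho + 2\pi d_j\,R_j(a_j) + \mathrm{o}(1).
\]
Summing over $j$, the divergent contribution $-\pi\sum_j d_j^2\log\rho$ cancels exactly the subtracted $\pi\sum d_j^2\abs{\log\rho}$ in the definition of $W$.

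\medskip

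It remains to rewrite $-\tfrac12\int_M\psi\kappa\Vg$. Splitting $\psi$ and using the PDE $-\Delta\psi_0 = -\kappa + 2\pi\chi(M)/\mathrm{Vol}(M)$ together with the defining properties~\eqref{Green} of the Green function, a further integration by parts gives $\int_M G(x,a_j)\kappa(x)\Vg = -\psi_0(a_j)$ (the constant-mean terms drop by the normalizations $\int_M G\,\Vg = \int_M\psi_0\,\Vg = 0$) and $-\tfrac12\int_M\psi_0\kappa\Vg = \tfrac12\int_M\abs{\d\psi_0}_g^2\Vg$. Assembling all contributions, using the symmetry $G(a_j,a_k) = G(a_k,a_j)$ to convert $\sum_{j\neq k}$ into $2\sum_{j<k}$, produces exactly
\[
 W(\a,\db,\xi) = 4\pi^2\!\sum_{1\le j<k\le n}\!d_j d_k G(a_j,a_k) + 2\pi\sum_{j=1}^n\!\left(\pi d_j^2 H(a_j,a_j) + d_j\psi_0(a_j)\right) + \tfrac12\!\int_M\!\abs{\xi}_g^2\Vg + \tfrac12\!\int_M\!\abs{\d\psi_0}_g^2\Vg.
\]
The only genuinely delicate step is the $\rho\to 0$ asymptotics of the boundary integrals on $\partial B_\rho(a_j)$: the cancellation of the divergent $\log\rho$ terms and the precise extraction of the finite part $2\pi d_j R_j(a_j)$ requires careful use of geodesic polar coordinates, of the expansion~\eqref{eq:basic_normal} of the metric, and of Gauss' lemma to identify the Euclidean length element with the Riemannian one up to $O(\rho^2)$.
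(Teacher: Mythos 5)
Your argument is correct. Note that the paper itself offers no proof of this proposition: it is quoted verbatim from Ignat--Jerrard \cite[Proposition~2.4]{JerrardIgnat_full}, so your computation supplies a self-contained derivation rather than an alternative to one in the text. The route you take is the standard one, and all the key steps check out: the identity $\abs{Du_*}_g^2=\abs{j(u_*)}_g^2$ is exactly the one the paper invokes later in the proof of Proposition~\ref{prop:gradient_reno}; the cross term $\int_{M_\rho}(\d^*\Psi,\xi)_g\Vg$ does reduce, via $\d\xi=0$, to a boundary term of size $O(\rho\abs{\log\rho})$, which explains why no $\xi$--$\Psi$ interaction appears in the final formula; the interior identity $\Delta\psi=\kappa$ on $M\setminus\{a_1,\dots,a_n\}$ follows from \eqref{Phiad} (or equivalently from $\Delta G(\cdot,a_j)=1/\mathrm{Vol}(M)$ and \eqref{geompot}, using $\sum_j d_j=\chi(M)$); and the boundary expansion on $\partial B_\rho(a_j)$, with $\nu=-\partial_r$, $\psi=-d_j\log\rho+R_j(a_j)+O(\rho)$ and $(\nu,\nabla\psi)_g=d_j/\rho+O(1)$, produces $2\pi d_j^2\abs{\log\rho}+2\pi d_j R_j(a_j)+\mathrm{o}(1)$, whose divergent part is exactly cancelled by the subtraction in \eqref{intrinsicren}. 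The bookkeeping at the end is also right: $\pi\sum_j d_j R_j(a_j)$ contributes $4\pi^2\sum_{j<k}d_jd_kG(a_j,a_k)+2\pi^2\sum_j d_j^2H(a_j,a_j)+\pi\sum_jd_j\psi_0(a_j)$, while $-\tfrac12\int_M\psi\kappa\Vg$ contributes the missing $\pi\sum_jd_j\psi_0(a_j)$ (via $\int_MG(x,a_j)\kappa\Vg=-\psi_0(a_j)$, which uses both zero-mean normalizations) together with $\tfrac12\int_M\abs{\d\psi_0}_g^2\Vg$. The only cosmetic caveat is that you should justify interchanging the $\rho\to0$ limit with $-\tfrac12\int_{M_\rho}\psi\kappa\Vg\to-\tfrac12\int_M\psi\kappa\Vg$, which is immediate since $\psi$ has only logarithmic singularities and is therefore in $L^1(M)$.
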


The goal of this section is to compute the gradient of the
renormalized energy~$W$ with respect to~$\a$. However,
the very definition of the gradient requires some care,
because the set~$\mathcal{L}(\a, \, \db)$ of admissible values
for~$\xi$ depends on~$(\a, \, \db)$, so the variables
$\a$, $\db$ and~$\xi$ are not independent.
Therefore, our first task is to make sure that the 
gradient~$\nabla_\a W$ is well-defined.

\begin{lemma} \label{lemma:bundle}
 For any~$(\a, \, \db)\in\mathscr{A}^n$ and
 any~$\xi\in\mathcal{L}(\a, \, \db)$, there exists
 an open neighbourhood~$U$ of~$\a$ in~$M^n$ and a unique
 smooth map~$\Xi\colon U\to\Harm^1(M)$ such that
 $\Xi(\a) = \xi$ and
 \[
  \Xi(\b) \in \mathcal{L}(\b, \, \db)
 \]
 for any~$\b\in U$.
\end{lemma}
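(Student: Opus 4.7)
The plan is to exploit the fact that $\mathcal{L}(\b, \db)$ is an affine lattice whose defining integer conditions vary smoothly with~$\b$, and then to produce $\Xi$ by a locally unique continuous (hence smooth) selection.

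First, I would invoke the characterisation of $\mathcal{L}(\b, \db)$ referenced in the paper after Theorem~\ref{th:canonical}: a harmonic form $\eta \in \Harm^1(M)$ belongs to $\mathcal{L}(\b, \db)$ if and only if the periods of $\d^*\Psi(\b, \db) + \eta$ around a fixed basis of cycles of $H_1(M, \, \Z)$ are all integer multiples of $2\pi$. Concretely, fix once and for all a basis $\gamma_1, \, \ldots, \, \gamma_{2\gf}$ of $H_1(M, \, \Z)$ realised by smooth loops, chosen disjoint from the points $a_1, \, \ldots, \, a_n$. Since the points $a_j$ are distinct, a neighbourhood $U$ of $\a$ in $M^n$ can be taken so small that each loop $\gamma_\ell$ stays away from all the points of every $\b\in U$. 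On this $U$ the period map
\begin{equation*}
 \Pi\colon U \to \R^{2\gf}, \qquad
 \Pi(\b) := \left(\int_{\gamma_1}\d^*\Psi(\b, \, \db), \, \ldots, \,
  \int_{\gamma_{2\gf}}\d^*\Psi(\b, \, \db)\right)
\end{equation*}
is smooth, because $\Psi(\b, \, \db)$ depends smoothly on $\b$ away from the singular set and the $\gamma_\ell$ are uniformly bounded away from it.

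Next, I would parametrise $\Harm^1(M)$ by its periods. By Hodge/de Rham duality, the map $P\colon\Harm^1(M)\to\R^{2\gf}$ sending $\eta\mapsto(\int_{\gamma_\ell}\eta)_\ell$ is a linear isomorphism. In these coordinates, the membership condition $\eta \in \mathcal{L}(\b, \, \db)$ reads
\begin{equation*}
 P(\eta) + \Pi(\b) \in 2\pi \Z^{2\gf}.
\end{equation*}
Pick the unique $k\in\Z^{2\gf}$ such that $P(\xi) + \Pi(\a) = 2\pi k$, and then define
\begin{equation*}
 \Xi(\b) := P^{-1}\bigl(2\pi k - \Pi(\b)\bigr), \qquad \b\in U.
\end{equation*}
This is smooth because $\Pi$ is smooth and $P^{-1}$ is linear, satisfies $\Xi(\a)=\xi$ by construction, and lands in $\mathcal{L}(\b, \, \db)$ by design.

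For uniqueness, if $\Xi'\colon U \to \Harm^1(M)$ is another continuous selection with $\Xi'(\a)=\xi$, then $P(\Xi(\b) - \Xi'(\b)) \in 2\pi\Z^{2\gf}$ for every $\b\in U$. Shrinking $U$ to be connected, the continuous map $\b \mapsto P(\Xi(\b)-\Xi'(\b))$ takes values in a discrete set and vanishes at $\a$, hence is identically zero, forcing $\Xi = \Xi'$.

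The main obstacle I anticipate is justifying the smoothness of $\Pi$: it rests on the fact that, although $\Psi(\b, \, \db)$ has logarithmic singularities at the points of $\b$, the cycles $\gamma_\ell$ can be kept in a region where $\Psi$ is smooth with smooth dependence on $\b$. Making this precise requires the explicit decomposition of $\Psi$ via the Green function (see~\eqref{Phiad-Green}), together with the smoothness of the regular part $H$ of the Green function and of $\psi_0$, so that differentiating under the integral over each $\gamma_\ell$ is legitimate.
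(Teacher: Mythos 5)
Your proof is correct and follows essentially the same route as the paper: there, the membership condition is likewise written as $\alpha(\Xi(\b)) + \zeta(\b, \, \db)\in(2\pi\Z)^{2\gf}$ with $\alpha$ the period isomorphism on $\Harm^1(M)$ and $\zeta$ locally smooth (Lemma~\ref{lemma:zeta}, proved by exactly your Green-function argument of keeping the cycles away from the singular points), after which $\Xi$ is obtained by solving this relation, using discreteness of $(2\pi\Z)^{2\gf}$ for uniqueness. The one (harmless) discrepancy is that the paper's characterisation \eqref{Lad} of $\mathcal{L}(\b, \, \db)$ involves $\zeta_k(\b, \, \db) = \int_{\gamma_k}(\d^*\Psi(\b, \, \db) + \mathcal{A}_k)$, i.e.\ it also carries the holonomy of a connection $1$-form and not only the periods of $\d^*\Psi$; since that extra term does not depend on $\b$, it merely shifts your integer vector $k$ and leaves the argument unchanged.
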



Lemma~\ref{lemma:bundle} allows us to interpret the 
gradient~$\nabla_\a W$ with respect to~$\a$ 
in a non-ambiguous way: for any~$(\a, \, \db)\in\mathscr{A}^n$ and 
any~$\xi\in\mathcal{L}(\a, \, \db)$, we define
\begin{equation} \label{def:gradW}
 \nabla_{\a} W(\a, \, \db, \, \xi) 
 := \nabla_{\b} \, W(\b, \, \db, \, \Xi(\b))_{|\b=\a}
\end{equation}
where~$\Xi$ is the map given by Lemma~\ref{lemma:bundle}.
The gradient~$\nabla_\a W$ is well-defined now,
because Proposition~\ref{prop:renormalized} and Lemma~\ref{lemma:bundle}
imply that the map~$\b\mapsto W(\b, \, \db, \, \Xi(\b))$ 
is differentiable (and even smooth). 
As it turns out, $\nabla_\a W$ can characterised
in terms of canonical harmonic fields. 
Given $(\a, \, \db)\in\mathscr{A}^n$, $\a = (a_1, \, \ldots, \, a_n)$,
we consider geodesic balls~$B_\eta(a_j)$ of center~$a_j$
and radius~$\eta >0$ small enough, so that
the closed balls $\bar{B}_\eta(a_j)$ are pairwise disjoint. 
We denote by~$\nu$ the exterior unit normal to~$\partial B_\eta(a_j)$.

\begin{prop} \label{prop:gradient_reno}
 Let~$(\a, \, \db)\in\mathscr{A}^n$ and~$\xi\in\mathcal{L}(\a, \, \db)$.
 Let~$u_*\in W^{1,1}_{\tan}(M)$ be the (essentially unique)
 canonical harmonic field for~$(\a, \, \db, \, \xi)$.
 Let~$j\in\{1, \, \ldots, \, n\}$ be fixed, and 
 let~$e$ be a smooth vector field defined in a neighbourhood~$U$
 of~$a_j$, such that
 \begin{equation} 
  \label{eq:hypdive}
  \div e(a_j) = 0 \qquad \textrm{in } U.
 \end{equation}
 Then, we have
 \begin{equation*} 
  \begin{split}
   \lim_{\eta\to 0} \int_{\partial B_\eta(a_j)} 
    \left( (D_{e} u_*, \, D_{\nu} u_*)_g
    - \frac{1}{2} \abs{D u_*}^2_g (\nu, \, e)_g \right)\d\H^1
   = \left(\nabla_{a_j} W(\a, \, \db, \, \xi), e(a_j)\right)_g
  \end{split}
 \end{equation*}
\end{prop}

In the Euclidean setting, i.e.~when~$M$ is a domain in~$\R^2$,
an even stronger statement holds: if~$e\in\R^2$ is a constant, then
\begin{equation*} 
  \begin{split}
   \int_{\partial B_\eta(a_j)} 
    \left( (D_{e} u_*, \, D_{\nu} u_*)
    - \frac{1}{2} \abs{D u_*}^2 (\nu, \, e) \right)\d\H^1
   = \left(\nabla_{a_j} W(\a, \, \db), \, e\right)
  \end{split}
\end{equation*}
for any~$\eta > 0$ small enough (see e.g.~\cite[Theorem~VII.4, Theorem~VIII.3]{BBH} 
and~\cite[Theorem~5.1]{lin1}). In our setting, we need to take the
limit as~$\eta\to 0$ to compensate for curvature effects.

The rest of this section is devoted to the proof of
Proposition~\ref{prop:gradient_reno}.
We distinguish two cases, according to the genus of~$M$.

\subsection{The case~$\gf=0$}

First, we suppose that~$M$ has genus~$\gf=0$,
that is, $M$ is diffeomorphic to a sphere
(although the Riemannian metric on~$M$ is arbitrary).
In this case, the problem simplifies
because~$\Harm^1(M) = 0$, so~$W$ is a function 
of~$(\a, \, \db)$ only, $W = W(\a, \, \db)$.
Let us fix~$(\a, \, \db)\in\mathscr{A}^n$.
Let~$\Psi(\a, \, \db)$ be the $2$-form defined by~\eqref{Phiad}.
For any~$j\in\{1, \, \ldots, \, n\}$, we define the auxiliary 
function~$S_j\colon M\to\R$ as
\begin{equation} \label{S_j}
 S_j := \star\Psi(\a, \, \db) - 2\pi d_j G(\cdot, \, a_j)
 \stackrel{\eqref{Phiad-Green}}{=}
 2\pi\sum_{k\neq j} d_k G(\cdot, \, a_k) + \psi_0
\end{equation}
The function~$S_j$ is smooth in a neighbourhood of~$a_j$
and hence, it may be interpreted as the `regular part'
of~$\star\Psi(\a, \, \db)$ near~$a_j$.

By Equation~\eqref{Green-splitting}, we know that
the Green function~$G$ may be decomposed into
a `singular part', which blows up logarithmically
near the diagonal of~$M\times M$, and a `regular part'~$H\in C^1(M\times M)$.
Since~$G$ is symmetric in~$x$ and~$y$
(see e.g.~\cite[Theorem 4.13]{aubin98}), we deduce
\[
 H(x, \, y) = H(y, \, x)
\]
for any $x\in M$, $y\in M$ such that $\dist_g(x, \, y)$ is small enough.
As a consequence, in case~$x = y$, the gradients of~$H$
with respect to each argument are equal:
\begin{equation} \label{symmH}
 \nabla_x H(x, \, x) = \nabla_y H(x, \, x)
\end{equation}
We will write~$\nabla H(x, \, x) := \nabla_x H(x, \, x)
= \nabla_y H(x, \, x)$.

\begin{lemma} \label{lemma:gradrenoSH}
 For any~$j\in\{1, \, \ldots, \, n\}$, we have
 \[
  \begin{split}
   \nabla_{a_j} W(\a, \, \db)
   = 2\pi d_j \left(\nabla S_j(a_j) + 2\pi d_j \, \nabla H(a_j, \, a_j) \right)
  \end{split}
 \]
\end{lemma}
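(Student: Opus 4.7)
The plan is to obtain the formula as a direct differentiation of the closed-form expression for $W$ provided by Proposition~\ref{prop:renormalized}. Since $\gf=0$, we have $\Harm^1(M)=\{0\}$ and Lemma~\ref{lemma:bundle} is trivial (the map $\Xi$ is identically zero), so $W$ is genuinely a smooth function of $\a\in M^n\setminus\Delta$ and $\nabla_{a_j}W$ in the sense of \eqref{def:gradW} coincides with the ordinary Riemannian gradient in the $j$-th slot. Also the final two integrals in Proposition~\ref{prop:renormalized} drop out: one is zero, the other is a constant independent of $\a$.

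First I would rewrite
\[
 W(\a,\db) = 4\pi^2\sum_{1\le k<l\le n} d_k d_l \, G(a_k,a_l)
 + 2\pi^2\sum_{i=1}^n d_i^2 \, H(a_i,a_i)
 + 2\pi\sum_{i=1}^n d_i \, \psi_0(a_i) + \mathrm{const}
\]
and differentiate term by term in $a_j$. For the double sum, only pairs with $k=j$ or $l=j$ contribute; using the symmetry $G(x,y)=G(y,x)$ (which gives $\nabla_1 G(a_j,a_k)=\nabla_2 G(a_k,a_j)$), the two cases $l=j$ and $k=j$ combine into
\[
 \nabla_{a_j}\!\left( 4\pi^2\sum_{k<l} d_k d_l G(a_k,a_l) \right)
 = 4\pi^2 d_j \sum_{k\neq j} d_k \, \nabla G(\cdot,a_k)(a_j).
\]
For the $H$-sum only the diagonal term $H(a_j,a_j)$ depends on $a_j$, and the chain rule together with \eqref{symmH} yields a factor $2\nabla H(a_j,a_j)$; the $\psi_0$-sum contributes just $\nabla\psi_0(a_j)$. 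All of this is legitimate because $a_j\neq a_k$ for $k\neq j$ makes $G(\cdot,a_k)$ smooth near $a_j$, and $H$ and $\psi_0$ are $C^1$ globally.

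Collecting the three contributions gives
\[
 \nabla_{a_j} W(\a,\db) = 4\pi^2 d_j\!\sum_{k\neq j}\! d_k \nabla G(\cdot,a_k)(a_j)
 + 4\pi^2 d_j^2 \, \nabla H(a_j,a_j) + 2\pi d_j \, \nabla\psi_0(a_j).
\]
Finally, the second expression for $S_j$ in \eqref{S_j} gives
\[
 \nabla S_j(a_j) = 2\pi\sum_{k\neq j} d_k \, \nabla G(\cdot,a_k)(a_j) + \nabla\psi_0(a_j),
\]
so the first and third terms in the previous display combine into $2\pi d_j \nabla S_j(a_j)$, producing exactly the claimed identity $\nabla_{a_j} W(\a,\db) = 2\pi d_j\bigl(\nabla S_j(a_j)+2\pi d_j \nabla H(a_j,a_j)\bigr)$.

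The only delicate point is the use of \eqref{symmH}: one needs that the symmetry $G(x,y)=G(y,x)$ off the diagonal transfers to symmetry of $H\in C^1(M\times M)$ and hence to the equality of its two partial gradients on the diagonal. This is standard (e.g.~\cite[Thm.~4.13]{aubin98}), so no step here is genuinely hard; the whole proof is essentially bookkeeping on top of Proposition~\ref{prop:renormalized}.
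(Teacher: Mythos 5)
Your proof is correct and follows essentially the same route as the paper: both start from the closed-form expression of Proposition~\ref{prop:renormalized}, isolate the $a_j$-dependent terms, use the symmetry of $H$ via~\eqref{symmH} to get the factor $2\nabla H(a_j,a_j)$, and regroup the Green-function and $\psi_0$ contributions through the definition~\eqref{S_j} of $S_j$. The only (immaterial) difference is the order of operations — the paper rewrites $W$ in terms of $S_j$ before differentiating, whereas you differentiate term by term and regroup afterwards.
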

\begin{proof}
 Let~$j$ be fixed, and let~$\a^* := (a_k)_{k\neq j}$.
 By Proposition~\ref{prop:renormalized}, we may write 
 the renormalized energy as
 \begin{equation*} 
   W(\a, \, \db)
   = 4\pi^2 \, d_j \sum_{k\neq j} d_k \, G(a_j, \, a_k)
   + 2\pi d_j \, \psi_0(a_j) + 2\pi^2 d_j^2 \, H(a_j, \, a_j) + C(\a^*),
 \end{equation*}
 where~$C(\a^*)$ is a suitable
 function of~$\a^*$, which does not depend on~$a_j$.
 Using the definition of~$S_j$, i.e.~Equation~\eqref{S_j}, we may also write
 \begin{equation} \label{gradrenoSH1}
   W(\a, \, \db)
   = 2\pi d_j \left( S_j(a_j) + \pi d_j \, H(a_j, \, a_j)\right)
   + C(\a^*)
 \end{equation}
 It only remains to differentiate~\eqref{gradrenoSH1}
 with respect to~$a_j$. For any~$x\in M$, we have
 \[
  \nabla_x\left(H(x, \, x)\right)
  = \nabla_x H(x, \, x) + \nabla_y H(x, \, y)
  \stackrel{\eqref{symmH}}{=} 2\nabla H(x, \, x)
 \]
 and hence, the lemma follows.
\end{proof}

\begin{proof}[Proof of Proposition~\ref{prop:gradient_reno}]
Let~$e$ be a smooth, divergence-free vector field, defined in 
a neighbourhood of~$a_j$. Let
\[
 I_\eta(u_*)(a_j)
 := \int_{\partial B_\eta(a_j)}(D_{\nu} u_*, \, D_{e}u_*)_g \, \d\H^1
 - \frac{1}{2} \int_{\partial B_\eta(a_j)}\abs{D u_*}^2_g (\nu, \, e)_g\, \d\H^1
\]
The definition~\eqref{eq:current} of~$j(u_*)$
implies, via standard computations, that
$\abs{D u_*}_g^2= \abs{j(u_*)}_g^2$,  $j(u_*) = \d^*\Psi$ and 
$D_v u_* = j(u_*)(v) \, iu_*$ for any smooth vector field $v$. 
Moreover, $j(u_*) = \d\Psi(\a, \, \db)$ by 
Definition~\ref{def:canonical_harmonic}.
For simplicity, we write~$\Psi$ instead of~$\Psi(\a, \, \db)$ from now on.
Then, we have
\begin{equation} \label{eq:gradient_1}
 \begin{split}
  I_\eta(u_*)(a_j)
  = \int_{\partial B_\eta(a_j)}\d^*\Psi(\nu) \, \d^*\Psi(e) \, \d\H^1 
  - \frac{1}{2}\int_{\partial B_\eta(a_j)}\abs{\d^*\Psi}^2_g  (\nu, \, e)_g \, \d\H^1
 \end{split}
\end{equation} 
Now, 
\[
 \d^*\Psi = -\star \d(\star \Psi)
 \stackrel{\eqref{S_j}}{=} -\star \d(S_j + 2\pi d_j G(\cdot, a_j))
\]
and thus
\begin{equation} \label{eq:normdstarphi}
 \begin{split}
 \vert \d^*\Psi \vert^2_g
 & = \vert \d(S_j + 2\pi d_j G(\cdot,a_j))\vert^2_g 
 = \vert \nabla(S_j + 2\pi d_j G(\cdot,a_j))\vert^2_g \\
 & = \vert \nabla S_j\vert^2_g 
 + 4\pi^2 d_j^2 \, \vert \nabla G(\cdot,a_j)\vert^2_g
 + 4\pi d_j \, (\nabla S_j, \, \nabla G(\cdot,a_j))_g
 \end{split}
\end{equation} 
On the other hand, for any vector field~$X$ we have
\begin{equation*}
 \begin{split}
  \d^*\Psi(X) = -\star \d(S_j + 2\pi d_j G(\cdot, a_j))\,(X)
  &\stackrel{\eqref{eq:grad_ruotato}}{=} 
   -(i\nabla(S_j + 2\pi d_j G(\cdot, a_j)), \, X) \\
  &\hspace{.25cm}= (\nabla(S_j + 2\pi d_j G(\cdot, a_j)), \, iX)
 \end{split}
\end{equation*}
because in the tangent plane at each point, $i$ is a rotation
and its adjoint is equal to~$i^{-1} = -i$. As a consequence,
\begin{equation}\label{eq:prod_dstarphi}
 \begin{split}
 \d^*\Psi(\nu) \, \d^*\Psi(e)
 &= \nabla_{i\nu}(S_j + 2\pi d_j G(x,a_j)) 
  \, \nabla_{i e}(S_j + 2\pi d_j G(x,a_j)) \\
 &=  \nabla_{i\nu}S_j(\cdot) \, \nabla_{ie}S_j(\cdot)
  + 2\pi d_j \, \nabla_{i\nu}S_j(\cdot) \, \nabla_{ie} G(\cdot,a_j) \\
 &\qquad + 2\pi d_j \, \nabla_{i\nu} G(\cdot,a_j) \, \nabla_{i e}S_j(\cdot)
 + 4\pi^2 d_j^2 \, \nabla_{i\nu} G(\cdot,a_j) \, \nabla_{ie} G(\cdot,a_j)
 \end{split}
\end{equation}
We write the Green function~$G$ in the form~\eqref{Green-splitting}.
For ease of notation, we let~$L_j := \log \dist_g(\cdot, \, a_j)$
and~$H_j := H(\cdot, \, a_j)$, so that
\[
 2\pi d_j G(\cdot, \, a_j) = -d_j L_j + 2\pi d_j H_j
\]
The equations~\eqref{eq:normdstarphi} and~\eqref{eq:prod_dstarphi}
imply respectively
\begin{equation} \label{eq:normadstarphitris}
 \begin{split}
  \vert \d^*\Psi \vert^2_g
  &= \abs{\nabla S_j}^2_g + d_j^2 \abs{\nabla L_j}^2_g
   + 4\pi^2 d_j^2 \abs{\nabla H_j}^2_g \\
  &\qquad - 4\pi d_j^2 \, (\nabla L_j, \, \nabla H_j)_g
   - 2 d_j \, (\nabla S_j, \, \nabla L_j)_g
   + 4\pi d_j \, (\nabla S_j, \, \nabla H_j)_g  
 \end{split}
\end{equation}
and
\begin{equation} \label{eq:proddstarphitris}
 \begin{split}
  \d^*\Psi(\nu) \, \d^*\Psi(e) 
  & = \nabla_{i\nu}S_j \, \nabla_{ie}S_j
   - d_j \, \nabla_{i\nu}S_j \, \nabla_{ie} L_j
  + 2\pi d_j \, \nabla_{i\nu} S_j \, \nabla_{ie} H_j \\
  &\qquad - d_j \, \nabla_{i\nu} L_j \, \nabla_{ie} S_j
   +2\pi d_j \, \nabla_{i\nu} H_j \, \nabla_{ie} S_j
  + d_j^2 \, \nabla_{i\nu} L_j \, \nabla_{ie} L_j \\
  &\qquad -2\pi d_j^2 \, \nabla_{i\nu} L_j \, \nabla_{ie} H_j
   -2\pi d_j^2 \, \nabla_{i\nu} H_j \, \nabla_{ie} L_j 
   + 4\pi^2 d_j^2 \, \nabla_{i\nu} H_j \, \nabla_{ie} H_j
 \end{split}
\end{equation}
The function~$H$ is of class~$C^1$ in~$M\times M$
and, by Equation~\eqref{S_j}, $S_j$ is smooth 
in a neighbourhood of~$a_j$. In particular, both~$\nabla H_j$
and~$\nabla S_j$ are locally bounded around~$a_j$.
Therefore, we can write Equations~\eqref{eq:normadstarphitris}
and~\eqref{eq:normadstarphitris} as
\begin{equation} \label{normadstar4} 
 \abs{\d^*\Psi}^2_g = d_j^2 \abs{\nabla L_j}^2_g 
  - 2d_j(\nabla (S_j+2\pi d_j H_j), \, \nabla L_j)_g 
  + \mathrm{O}(1) 
\end{equation}
and
\begin{equation} \label{normadstarphi4}
 \begin{split}
  \d^*\Psi(\nu) \, \d^*\Psi(e) 
  = d_j^2 \, \nabla_{i\nu} L_j \, \nabla_{ie} L_j
   &- d_j \, \nabla_{i\nu}(S_j + 2\pi d_j H_j) \, \nabla_{ie} L_j \\
   &- d_j \, \nabla_{i\nu} L_j \, \nabla_{ie} (S_j + 2\pi d_j H_j)
   + \mathrm{O}(1) 
 \end{split}
\end{equation}
respectively. Gauss' lemma implies that
\begin{equation} \label{logdist}
 \nabla \left(\log \dist_g(\cdot, \, a_j)\right)
 = \frac{\nabla \dist_g(\cdot, \, a_j)}{\dist_g(\cdot, \, a_j)}
 = \frac{\nu}{\dist_g(\cdot, \, a_j)} \qquad 
 \textrm{in } B_\eta(a_j)\setminus\{a_j\}
\end{equation}
so on the circle~$\partial B_\eta(a_j)$ we have
$\nabla L_j = \nu/\eta$, $\nabla_{i\nu} L_j = 0$ and 
\begin{align}
 \abs{\d^*\Psi}^2 &= \frac{d_j^2}{\eta^2}
  - \frac{2d_j}{\eta} \nabla_\nu (S_j+2\pi d_j H_j) 
  + \mathrm{O}(1) \label{normadstar5} \\
 \d^*\Psi(\nu) \, \d^*\Psi(e) 
  &= - \frac{d_j}{\eta} \nabla_{i\nu}(S_j + 2\pi d_j H_j) (\nu, \, ie)
  + \mathrm{O}(1) \label{normadstarphi5}
\end{align}
By substituting~\eqref{normadstar5}, \eqref{normadstarphi5}
in~\eqref{eq:gradient_1}, we obtain
\begin{equation} \label{eq:gradient_2}
 \begin{split}
  I_\eta(u_*)(a_j)
  = &-\frac{d_j}{\eta} \int_{\partial B_\eta(a_j)} 
   \nabla_{i\nu} (S_j+2\pi d_j H_j) \, (\nu, \, ie)_g \, \d\H^1 \\
  &+ \frac{d_j}{\eta} \int_{\partial B_\eta(a_j)} 
   \nabla_\nu (S_j+2\pi d_j H_j) \, (\nu, \, e)_g \, \d\H^1 \\
  &\qquad\qquad\qquad - \frac{d_j^2}{2\eta^2} 
   \int_{\partial B_\eta(a_j)} (\nu, \, e)_g \, \d\H^1
   + \mathrm{O}(\eta) 
 \end{split}
\end{equation} 
The properties of~$i$ imply $(\nu, \, ie)_g = -(i\nu, \, e)_g$ and thus
\begin{equation} \label{eq:gradient_4}
 \begin{split}
  &- \nabla_{i\nu}(S_j +2\pi d_j H_j) \, (\nu, \, ie)_g
   + \nabla_\nu (S_j + 2\pi d_j H_j) \, (\nu, \, e)_g  \\
  &\qquad = \big(\nabla_{\nu}(S_j + 2\pi d_j H_j)\nu, \, e\big)_g 
   + \big(\nabla_{i\nu}(S_j + 2\pi d_j H_j) i\nu, \, e\big) _g
  = (\nabla(S_j + 2\pi d_j H_j), \, e)_g.
 \end{split}
\end{equation}
Moreover, by integrating by parts in the last term of \eqref{eq:gradient_2} (see~\eqref{eq:ip_div}),
we obtain
\begin{equation} \label{eq:gradient_3}
 \begin{split}
  \int_{\partial B_\eta(a_j)} (\nu, \, e)_g \, \d\H^1
  = \int_{B_\eta(a_j)} (\div e) \, \Vg = \O(\eta^3),
 \end{split}
\end{equation} 
because $\abs{\div e}= \O(\eta)$ in $B_\eta(a_j)$ as $\div e(a_j)= 0$ (recall \eqref{eq:hypdive}) and $\div e$ is smooth.
Combining~\eqref{eq:gradient_2}, \eqref{eq:gradient_3}
and~\eqref{eq:gradient_4}, we deduce
\begin{equation} \label{eq:gradient_5}
 \begin{split}
  I_\eta(u_*)(a_j)
  &= \frac{d_j}{\eta} \int_{\partial B_\eta(a_j)} 
   \nabla_{e} (S_j+2\pi d_j H_j) \, \d\H^1 + \mathrm{O}(\eta)
 \end{split}
\end{equation} 
and, since $\mathcal{H}^1(\partial B_\eta(a_j))= 2\pi\eta + \mathrm{O}(\eta^2)$
(see e.g.~\cite[Proposition 10]{spivak}), we conclude that
\begin{equation}
 \lim_{\eta\to 0} I_{\eta}(u_*)(a_j) 
 = 2\pi d_j\left(\nabla S_j(a_j) 
 + 2\pi d_j \nabla H(a_j, \, a_j), \, e(a_j)\right)_g
\end{equation}
The result follows by Lemma~\ref{lemma:gradrenoSH}.
\end{proof}

\subsection{The case~$\gf>0$}
\label{sect:gradrenog}

When~$\gf>0$, the very definition of~$\nabla_\a W$ requires some care, 
as the variables~$(\a, \, \db, \, \xi)$ are not independent
--- they are related to each other by the constraint
$\xi\in\mathcal{L}(\a, \, \db)$. Our first task is to prove
Lemma~\ref{lemma:bundle}, which allows us to define~$\nabla_\a W$
unambigously. First of all, we recall some results 
from~\cite{JerrardIgnat_full} which characterise~$\mathcal{L}(\a, \, \db)$.
We choose closed, simple, geodesic 
curves~$\gamma_1$, \ldots, $\gamma_{2\gf}$ in~$M$
whose homology classes generate the first homology group~$H_1(M; \, \Z)$.
(Such curves exist; see e.g.~\cite[Lemma~5.2]{JerrardIgnat_full}).
Then, the map
\begin{equation} \label{alpha}
 \alpha\colon\Harm^1(M)\to\R^{2\gf}, \qquad 
 \alpha_k(\xi) := \int_{\gamma_k} \xi \quad
 \textrm{for } k\in\{1, \, \ldots, \, 2\gf\} 
\end{equation}
is linear and invertible, essentially becuase of 
Hodge theory and de Rham's theorem
(for a detailed argument, see~\cite[Lemma~5.2]{JerrardIgnat_full}).
Let~$(\a, \, \db)\in\mathscr{A}^n$. Suppose first that for any~$j$ and~$k$, 
the curve~$\gamma_k$ does not contain the point~$a_j$. 
For any~$k$, we choose an orthogonal tangent frame~$\{\tau_{1,k}, \, \tau_{2,k}\}$
defined in a neighbourhood of~$\gamma_k$.
Let~$\mathcal{A}_k$ be the connection~$1$-form induced 
by~$\{\tau_{1,k}, \, \tau_{2,k}\}$, as in~\eqref{eq:connection1}. We define
\begin{equation} \label{zeta}
 \zeta_k(\a, \, \db) := \int_{\gamma_k} \left(\d^*\Psi(\a, \, \db) + \mathcal{A}_k\right)
 \qquad \textrm{for } k\in\{1, \, \ldots, \, 2\gf\},
\end{equation}
where~$\Psi(\a, \, \db)$ is the $2$-form defined by~\eqref{Phiad}. 
The number~$\zeta_k(\a, \, \db)$ depends on~$\mathcal{A}_k$, and hence 
on the choice of the frame. However, if~$\mathcal{A}_k^\prime$ is another 
connection arising from a different frame, then
\[
 \int_{\gamma_k} \mathcal{A}_k \equiv \int_{\gamma_k} \mathcal{A}_k^\prime
 \qquad \mod 2\pi
\]
(see~\cite[Lemma~6.2]{JerrardIgnat_full}).
Therefore, $\zeta$ is well-defined (and, as it turns out, continuous) 
as a map~$\zeta\colon\mathscr{A}^n\to(\R/2\pi\Z)^{2\gf}$.
If for some~$j$ and~$k$ the curve~$\gamma_k$ contains~$a_j$, then we consider
a small perturbation~$\lambda_k$ of~$\gamma_k$, which is still
a smooth, closed, simple curve, is homologous to~$\gamma_k$
but avoids all the points~$a_j$. We define~$\zeta_k(\a, \, \db)$
by integrating over~$\lambda_k$ instead of~$\gamma_k$. 
The set~$\mathcal{L}(\a, \, \db)$ in Theorem~\ref{th:canonical}
is defined in terms of~$\alpha$ and~$\mathcal{L}(\a, \, \db)$,
as follows: 
\begin{equation} \label{Lad}
 \mathcal{L}(\a, \, \db) := \left\{ 
 \xi\in\Harm^1(M)\colon \alpha(\xi) 
  + \zeta(\a, \, \db) \in (2\pi\Z)^{2\gf} \right\}
\end{equation}
Now, we proceed towards the proof of Lemma~\ref{lemma:bundle}.
For any~$a\in M$ and any~$v\in\T_a M$, we consider the function
$\sigma(\cdot, \, a, \, v)\colon M\setminus\{a\}\to\R$,
\begin{equation} \label{sigma_av}
  \sigma(x, \, a, \, v) := \left(\nabla_a G(x, \, a), \, v\right)
  \qquad \textrm{for } x\in M\setminus\{a\}
\end{equation}
(where~$\nabla_a G$ denotes the gradient of the Green function
with respect to its second argument). The function
$\sigma(\cdot, \, a, \, v)$ is harmonic, and hence smooth, 
in~$M\setminus\{a\}$. Indeed, let~$B_\eta(a)$ be a small 
geodesic disk centered at~$a$, and 
let~$\gamma\colon(-\delta, \, \delta)\to M$ be a smooth curve
with~$\gamma(0) = a$, $\gamma^\prime(0) = v$. Since the Green 
function~$G$ is smooth away from the diagonal of~$M\times M$, 
for any~$\eta > 0$ we have
\begin{equation} \label{Green-conv}
 \frac{G(\cdot, \, \gamma(t)) - G(\cdot, \, a)}{t} \to \sigma(\cdot, \, a, \, v)
 \qquad \textrm{in } C^1(M\setminus B_\eta(a)) \ \textrm{ as } t\to 0
\end{equation}
However, the function~$G(\cdot, \, \gamma(t)) - G(\cdot, \, a)$
is harmonic in~$M\setminus\bar{B}_\eta(a)$ for~$t$ small enough.
By taking the (distributional) Laplacian in both sides
of~\eqref{Green-conv}, we deduce that~$\sigma(\cdot, \, a, \, v)$
is harmonic in~$M\setminus\bar{B}_\eta(a)$.


\begin{lemma} \label{lemma:zeta}
 For any~$(\a, \, \db)\in\mathscr{A}^n$, there exists
 an open neighbourhood~$V$ of~$\a$ in~$M^n$ such that
 the map~$V\to (\R/2\pi\Z)^{2\gf}$, $\b\mapsto \zeta(\b, \, \db)$
 has a smooth lifting~$V\to\R^{2\gf}$, which we still 
 denote~$\zeta(\cdot, \, \db)$
 by abuse of notation. Moreover, for any~$j\in\{1, \, \ldots, \, n\}$,
 any~$k\in\{1, \, \ldots, \, 2\gf\}$
 and any~$v\in T_{a_j} M$, there holds
 \[
  \left(\nabla_{a_j}\zeta_k(\a, \, \db), \, v\right)_g
  = - 2\pi d_j \, \int_{\gamma_k} \star\d\sigma(\cdot, \, a_j, \, v)
 \]
 where~$\sigma(\cdot, \, a_j, \, v)$ is defined by~\eqref{sigma_av}.
\end{lemma}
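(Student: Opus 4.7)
My plan is to construct the smooth lift in a neighbourhood of $\a$ using suitably perturbed curves, then differentiate using the Green-function representation of $\Psi$, and finally use homology invariance to recover the formula written in terms of $\gamma_k$.

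For the smooth lifting: shrinking $V$ around $\a$, I can choose, for each $k \in \{1,\dots,2\gf\}$, a single smooth closed curve $\lambda_k$ homologous to $\gamma_k$ and disjoint from $\{b_1,\dots,b_n\}$ for every $\b \in V$, together with a fixed orthonormal frame defined in a tubular neighbourhood of $\lambda_k$, with associated connection $1$-form $\mathcal{A}_k$. Setting
\[
 \widetilde{\zeta}_k(\b) := \int_{\lambda_k} \bigl(\d^*\Psi(\b, \db) + \mathcal{A}_k\bigr) \in \R,
\]
the number $\widetilde{\zeta}_k(\b)$ reduces modulo $2\pi$ to $\zeta_k(\b, \db)$ by \cite[Lemma 6.2]{JerrardIgnat_full}, so this gives a candidate lift. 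Using the explicit representation $\star\Psi(\b, \db) = 2\pi\sum_j d_j G(\cdot, b_j) + \psi_0$ from~\eqref{Phiad-Green} together with the identity $\d^*\Psi = -\star\d\star\Psi$ valid on the $2$-manifold $M$, the integrand is smooth jointly in $(x, \b) \in \lambda_k \times V$ because $G$ is smooth away from the diagonal. Hence $\widetilde{\zeta}_k$ is smooth on $V$.

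For the derivative at $\b = \a$: only the $j$-th summand in $\star\Psi(\b, \db)$ depends on $a_j$, so differentiation under the integral sign, combined with~\eqref{Green-conv}, yields for $v \in T_{a_j}M$
\[
 \bigl(\nabla_{a_j} \widetilde{\zeta}_k(\a), v\bigr)_g
 = -2\pi d_j \int_{\lambda_k} \star\d \sigma(\cdot, a_j, v),
\]
where $\sigma(\cdot, a_j, v)$ is the function defined in~\eqref{sigma_av}. To replace $\lambda_k$ by $\gamma_k$, I use that $\sigma(\cdot, a_j, v)$ is harmonic in $M \setminus \{a_j\}$, a property recalled immediately after~\eqref{Green-conv}. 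On a $2$-manifold, harmonicity of a scalar is equivalent to $\d\star\d\sigma = 0$, so $\star\d\sigma(\cdot, a_j, v)$ is a closed $1$-form on $M \setminus \{a_j\}$. Since $\lambda_k$ was chosen homologous to $\gamma_k$ in $M \setminus \{a_j\}$ (if $\gamma_k$ passes through $a_j$, the integral in the statement is understood via a further small perturbation), Stokes' theorem gives the desired equality of the two line integrals and hence the formula.

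The main obstacle is the bookkeeping in the first step: the curves $\lambda_k$, the associated frames, and the branch choices must be arranged uniformly over $V$, so that $\widetilde{\zeta}_k$ is genuinely a smooth single-valued lift and not a multi-valued object with jumps of $2\pi$. Once this is set up, Steps 2 and 3 reduce to elementary manipulations with the Green function, whose required regularity is provided by the splitting~\eqref{Green-splitting}--\eqref{Green-splitting-log}.
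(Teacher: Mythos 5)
Your proof is correct and follows essentially the same route as the paper's: fix perturbed curves and frames over a small neighbourhood~$V$, use the Green-function representation~\eqref{zeta-Green} of~$\d^*\Psi$ to get smoothness of the lift, and differentiate under the integral sign via~\eqref{Green-conv}. The only addition is your explicit appeal to the closedness of~$\star\d\sigma(\cdot, \, a_j, \, v)$ on~$M\setminus\{a_j\}$ to pass from~$\lambda_k$ back to~$\gamma_k$, a point the paper handles by the convention that integrals over~$\gamma_k$ are understood as integrals over a homologous perturbation avoiding the singular points.
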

\begin{proof}
 Let~$(\a, \, \db)\in\mathscr{A}^n$ be fixed. As we observed before,
 up to a perturbation we may assume that for each~$k$ and~$j$,
 the curve~$\gamma_k$ does not contain~$a_j$. 
 We take a small open neighbourhood~$V$
 of~$\a$ in~$M^n$ such that~$V\times\{\db\}\subseteq\mathscr{A}^n$
 --- that is, for any~$\b = (b_1, \, \ldots, \, b_n)\in V$,
 the points~$b_j$ are distinct; this is possible because 
 the points~$a_j$ themselves are distinct. By taking~$V$ small enough,
 we can also make sure that for any~$k$, $j$ and any~$\b\in V$,
 $\gamma_k$ does not contain~$b_j$. 
 For each~$k$, we \emph{fix} an orthonormal tangent frame
 defined in a neighbourhood of~$\gamma_k$ and let~$A_k$
 be the corresponding connection~$1$-form.
 Then, Equation~\eqref{zeta} defines unambigously a map 
 $\b\in V\mapsto\zeta(\b, \, \db)\in\R^{2\gf}$. 
 We claim that this map is smooth. By writing~$\Psi(\a, \, \db)$
 in terms of the Green function, as in~\eqref{Phiad-Green},
 and observing that~$\d^*(f\Vg) = -\star\d\star(f\Vg) = -\star\d f$
 for any function~$f$, we obtain
 \begin{equation} \label{zeta-Green}
  \zeta_k(\b, \, \db) = - 2\pi \sum_{j=1}^n d_j
  \int_{\gamma_k} \star\d G(\cdot, \, b_j)
  - \int_{\gamma_k} \star\d \psi_0 + \int_{\gamma_k} A_k 
 \end{equation}
 where~$\psi_0$ is the smooth function defined by~\eqref{geompot}.
 Since we have assumed that the curves~$\gamma_k$
 do not contain any point~$b_j$ for~$\b\in V$, the Green 
 function~$G(\cdot, \, b_j)$ is smooth in both its arguments, 
 and the smoothness of~$\zeta_k$ follows.
 
 Let~$j\in\{1, \, \ldots, \, n\}$ be fixed.
 To compute the derivative of~$\nabla_{a_j}\zeta_k$,
 we fix a vector~$v\in T_{a_j} M$ and take a smooth curve
 $\gamma\colon (-\delta, \, \delta)\to M$ such that
 $\gamma(0) = a_j$, $\gamma^\prime(0) = v$.
 Let~$\b(t) = (a_1, \, \ldots, \, a_{j-1}, \, 
 \gamma(t), \, a_{j+1}, \, \ldots, \, a_n)$.
 By smoothness of~$G$, we have
 \[
  \frac{\d}{\d t}_{|t = 0} \zeta_k(\b(t), \, \db) 
  \stackrel{\eqref{zeta-Green}}{=}
  - 2\pi d_j \, \frac{\d}{\d t}_{|t=0}
  \int_{\gamma_k} \star\d G(\cdot, \, \gamma(t))
  \stackrel{\eqref{Green-conv}}{=} - 2\pi d_j 
  \int_{\gamma_k} \star\d\sigma(\cdot, \, a_j, \, v)
 \]
 and the lemma follows.
\end{proof}

\begin{proof}[Proof of Lemma~\ref{lemma:bundle}]
 Let~$(\a, \, \db)$. By Lemma~\ref{lemma:zeta},
 there exists an open neighbourhood~$V$ of~$\a$
 in~$M^n$ such that the function~$\zeta(\cdot, \, \db)\colon V\to\R^{2\gf}$
 is smooth. Let~$\xi\in\mathcal{L}(\a, \, \db)$. 
 We need to find a neighbourhood~$U\subseteq V$ of~$\a$
 and a smooth map~$\Xi\colon U\to\Harm^1(M)$ 
 such that $\Xi(\a) = \xi$ and
 \begin{equation} \label{bundle1}
  \alpha \Xi(\b) + \zeta(\b, \, \db) \in (2\pi\Z)^{2\gf}
  \qquad \textrm{for any } \b\in U
 \end{equation}
 Here~$\alpha$ is the linear map defined by~\eqref{alpha}.
 (Equation~\eqref{bundle1} is equivalent to~$\Xi(\b)\in\mathcal{L}(\b, \, \db)$,
 due to~\eqref{Lad}.) As~$(2\pi\Z)^{2\gf}$ is discrete, 
 \eqref{bundle1} is equivalent to require 
 that~$\b\mapsto \alpha\Xi(\b) + \zeta(\b, \, \db)$
 be constant, that is
 \begin{equation} \label{bundle2}
  \alpha \Xi(\b) + \zeta(\b, \, \db) 
  = \alpha \xi + \zeta(\a, \, \db) 
  \qquad \textrm{for any } \b\in U
 \end{equation}
 Since~$\alpha$ is invertible \cite[Lemma~5.2]{JerrardIgnat_full},
 the existence, uniqueness and regularity of a map~$\Xi$
 satisfying~\eqref{bundle2} follow by the implicit function theorem.
\end{proof}

\begin{remark} \label{rk:bundle}
 For any index~$j\in\{1, \, \ldots, \, n\}$
 and any~$v\in T_{a_j} M$, the directional
 derivative~$\Xi^\prime_{j, \a, v} := (\nabla_{a_j} \Xi(\a), \, v)_g$ 
 may be computed by differentiating~\eqref{bundle2}:
 \[
  \Xi^\prime_{j,\a,v} 
  = - \alpha^{-1}\left(\nabla_{a_j}\zeta(\a, \, \db), \, v \right)_g
 \]
 Recalling the definition of~$\alpha$,
 i.e.~Equation~\eqref{alpha}, and Lemma~\ref{lemma:zeta},
 we deduce that~$\Xi^\prime_{j,\a,v}\in\Harm^1(M)$
 is the unique harmonic $1$-form that satisfies
 \begin{equation} \label{derivativePsi}
  \int_{\gamma_k} \Xi^\prime_{j,\a,v}
  = - \left(\nabla_{a_j}\zeta_k(\a, \, \db), \, v \right)_g
  = 2\pi d_j \int_{\gamma_k} \star\d\sigma(\cdot, \, a_j, \, v)
 \end{equation}
 for any~$k\in\{1, \, \ldots, \, 2\gf\}$. This characterisation
 will be useful in the proof of Proposition~\ref{prop:gradient_reno}.
\end{remark}

Before we proceed to the proof of Proposition~\ref{prop:gradient_reno},
we point out a few consequences of Lemma~\ref{lemma:bundle}.

\begin{corollary} \label{cor:uniqueness}
 Let~$(\a^0, \, \db)\in\mathscr{A}^n$ 
 and~$\xi^0\in\mathcal{L}(\a^0, \, \d)$. Then, the following properties hold.
 \begin{enumerate}[label=(\roman*)]
  \item For any continuous map $\a\colon [0, \, T]\to M^n$
  such that~$\a(0)= \a^0$ and~$(\a(t), \, \db)\in\mathscr{A}^n$ for any~$t$,
  there exists a unique continuous map~$\xi\colon [0, \, T]\to\Harm^1(M)$
  such that
  \begin{equation} \label{corun1}
   \xi(t)\in\mathcal{L}(\a(t), \, \db) \quad 
    \textrm{for any } t\in [0, \, T], \qquad
    \xi(0) = \xi^0
  \end{equation}
  
  \item The (forward-in-time) Cauchy problem
  \begin{equation} \label{corun2}
   \begin{cases}
    \dfrac{\d}{\d t} \a(t) = 
    -\dfrac{1}{\pi}\nabla_{\a} W(\a(t), \, \db, \, \xi(t)) \\[.4cm]
    \xi(t)\in\mathcal{L}(\a(t), \, \db) \\
    \a(0) = \a^0, \qquad \xi(0) = \xi^0
   \end{cases}
  \end{equation}
  has a \emph{unique} maximal solution~$(\a, \, \xi)$,
  defined in an interval~$[0, \, t^*)$.
 \end{enumerate} 
\end{corollary}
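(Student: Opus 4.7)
The two statements are closely tied: part (i) is essentially a patching argument based on Lemma~\ref{lemma:bundle} and the fact that $\mathcal{L}(\b, \, \db)$ is an \emph{affine lattice}, hence discrete, inside $\Harm^1(M)$; part (ii) then reduces to a classical ODE argument, since (i) allows us to eliminate $\xi$ from the system~\eqref{corun2}.

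For (i), \emph{local existence} of a continuous lift near any $t_0 \in [0, \, T]$ is immediate from Lemma~\ref{lemma:bundle}: given $\xi(t_0) \in \mathcal{L}(\a(t_0), \, \db)$, there is a smooth map $\Xi$ on a neighbourhood $U$ of $\a(t_0)$ in $M^n$ with $\Xi(\a(t_0)) = \xi(t_0)$ and $\Xi(\b) \in \mathcal{L}(\b, \, \db)$ for all $\b \in U$; the composition $t \mapsto \Xi(\a(t))$ is a continuous lift on some open subinterval around~$t_0$. For \emph{uniqueness} on a given interval, suppose $\xi_1$ and $\xi_2$ are two continuous lifts with $\xi_1(0) = \xi_2(0) = \xi^0$. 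By~\eqref{Lad}, $\alpha\xi_i(t) + \zeta(\a(t), \, \db) \in (2\pi\Z)^{2\gf}$ for $i = 1, \, 2$, where $\alpha$ is the linear isomorphism in~\eqref{alpha}, so $\alpha(\xi_1(t) - \xi_2(t)) \in (2\pi\Z)^{2\gf}$. Since the right-hand side is discrete and the left-hand side is continuous in $t$ and vanishes at $t=0$, it vanishes identically, yielding $\xi_1 \equiv \xi_2$. Global existence on $[0, \, T]$ then follows by compactness: cover $[0, \, T]$ with finitely many overlapping open intervals on which local lifts exist, and glue them together using uniqueness on overlaps.

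For (ii), part (i) implies that the $\xi$-component of any solution to~\eqref{corun2} is uniquely determined by~$\a$. More concretely, on the neighbourhood $U$ of $\a^0$ furnished by Lemma~\ref{lemma:bundle}, we have $\xi(t) = \Xi(\a(t))$ with $\Xi$ smooth, so~\eqref{corun2} collapses to the single equation
\begin{equation*}
 \frac{\d}{\d t} \a(t) = -\frac{1}{\pi} \nabla_\a W\!\left(\a(t), \, \db, \, \Xi(\a(t))\right), \qquad \a(0) = \a^0.
\end{equation*}
By Proposition~\ref{prop:renormalized}, $W$ depends smoothly on $(\b, \, \xi)$ on $\mathscr{A}^n \times \Harm^1(M)$ (the Green function is smooth off the diagonal, $H$ and $\psi_0$ are regular, and the $\xi$-dependence is quadratic), and $\Xi$ is smooth by Lemma~\ref{lemma:bundle}; hence the right-hand side is a smooth vector field on the open subset $\{\b \in M^n : (\b, \, \db) \in \mathscr{A}^n\}$ of the smooth compact manifold $M^n$. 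Classical Picard--Lindel\"of theory on manifolds yields a unique smooth solution on some interval $[0, \, \tau)$, and a standard continuation argument produces a unique \emph{maximal} solution on $[0, \, t^*)$, with either $t^* = +\infty$ or $\a(t)$ exiting every compact subset of the admissible set as $t \nearrow t^*$ --- the latter being precisely the ``first collision'' scenario.

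The only delicate step is the patching in part (i), and it is handled by the discreteness of the affine lattice combined with the continuity of $\alpha$ and $\zeta$; once that is settled, (ii) is a routine application of smooth-ODE theory to the reduced equation.
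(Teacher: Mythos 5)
Your proof is correct and follows essentially the same route as the paper: part (i) is obtained by composing $\a$ with the local map $\Xi$ from Lemma~\ref{lemma:bundle}, with uniqueness from the discreteness of $(2\pi\Z)^{2\gf}$ under the isomorphism $\alpha$ and globality by a covering argument, and part (ii) reduces the system to the smooth autonomous ODE for $\a$ alone (the paper's $Z(\b) := W(\b,\db,\Xi(\b))$) and invokes Cauchy--Lipschitz. No gaps.
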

\begin{proof}
 Let~$U\subseteq M^n$ be a neighbourhood of~$\a^0$ 
 and~$\Xi\colon U\to\Harm^1(M)$
 a smooth map such that
 \[
  \Xi(\b) \in \mathcal{L}(\b, \, \db) \quad 
   \textrm{for any } \b\in U, \qquad
  \Xi(\a^0) = \xi^0, 
 \]
 as given by Lemma~\ref{lemma:bundle}.
 We prove~(i) first. Let~$\a\colon [0, \, T]\to M^n$
 be a continuous map such that~$\a(0)= \a^0$
 and~$(\a(t), \, \db)\in\mathscr{A}^n$ for any~$t$.
 Then, there exists~$\tau>0$ such that
 $\xi(t) := \Xi(\a(t))$ is well-defined and continuous 
 for $t\in [0, \, \tau)$. By construction, $\xi$ satisfies~\eqref{corun1}.
 If~$\tilde{\xi}\colon [0, \, \tau)\to\Harm^1(M)$ is another 
 continuous map that satisfies~\eqref{corun1}, 
 then \eqref{Lad} implies
 \[
  \tilde{\xi}(t) - \xi(t) 
   \in\alpha^{-1}\left(2\pi\Z\right)^{2\gf}
 \]
 for any~$t\in [0, \, \tau)$, where~$\alpha\colon\Harm^1(M)\to\R^{2\gf}$
 is given by~\eqref{alpha}. As~$\tilde{\xi} - \xi$ is continuous
 and~$(2\pi\Z)^{2\gf}$ is discrete, we obtain~$\tilde{\xi} = \xi$.
 Now, (i) follows by a covering argument.
 
 Next, we prove~(ii). We define
 $Z(\b) := W(\b, \, \db, \, \Xi(\b))$ for~$\b\in U$.
 We consider the (forward-in-time) Cauchy problem
 \begin{equation} \label{corun3}
  \begin{cases}
   \dfrac{\d}{\d t} \a(t) = 
    -\dfrac{1}{\pi}\nabla_{\a} Z(\a(t)) \\[.4cm]
    \a(0) = \a^0,
  \end{cases}
 \end{equation}
 If~$\a$ is a solution of~\eqref{corun3},
 then~$(\a, \, \xi := \Xi(\a))$ is a solution of~\eqref{corun2}.
 Conversely, if~$(\a, \, \xi)$ is a solution of~\eqref{corun2}
 then~$\a$ is a solution of~\eqref{corun3}.
 The function~$Z$ is smooth, so local existence and uniqueness
 of a solution for~\eqref{corun3} (and hence, \eqref{corun2})
 is given by the classical Cauchy-Lipschitz theory
 for ordinary differential equations.
\end{proof}

We move on towards the proof of Proposition~\ref{prop:gradient_reno}.
As a first step, we need to understand the local behaviour of the 
function~$\sigma(\cdot, \, a, \, v)$ defined by~\eqref{sigma_av} 
in a neighbourhood of~$a$. In turns, this requires some
information on the Green function~$G$ and its regular part~$H$,
defined in~\eqref{Green-splitting}.

\begin{lemma} \label{lemma:H}
 The function~$H\in C^1(M\times M)$ in~\eqref{Green-splitting}
 satisfies
 \begin{equation} \label{Hbound}
  \abs{\nabla_x \nabla_y H(x, \, y)}_g \lesssim 1 + \abs{\log \dist_g(x, \, y)}
 \end{equation}
 for any~$(x, \, y)\in M\times M$ such that~$x\neq y$.
\end{lemma}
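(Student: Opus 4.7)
By hypoellipticity of $-\Delta_g$, the Green function $G$ is smooth on $M\times M$ away from the diagonal, and hence so is $H$; the bound is trivial there, and the content of the lemma is the local behavior as $\dist_g(x,y)\to 0$. I would appeal to the classical Hadamard parametrix construction for $-\Delta_g$ in dimension~$2$ (see e.g.~\cite[Chapter~4]{aubin98}): there exist smooth functions $U$ and $W$ on a neighborhood of the diagonal in $M\times M$ such that, for $\dist_g(x,y)$ small,
\[
 G(x, y) = -\frac{U(x, y)}{2\pi}\log\dist_g(x, y) + W(x, y).
\]
Here $U$ is the leading Hadamard transport coefficient, obtained by solving a first-order transport equation along the radial geodesics from $y$, and $W$ is obtained by elliptic regularity from the smooth source that $U\log\dist_g$ fails to annihilate.

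\textbf{Second-order vanishing of $U-1$.} Setting $K(x,y) := U(x,y) - 1$, the key input I would extract from the parametrix construction is that $U$ admits an explicit expression in geodesic normal coordinates centered at~$y$ in terms of a fractional power of the metric determinant. Combined with the classical expansion $\det g(x) = 1 - \frac{1}{3}R_{jk}(y)x^j x^k + \mathrm{O}(\abs{x}^3)$ in normal coordinates (which follows from Gauss' lemma), this yields $U(y,y) = 1$ \emph{and} $\nabla_x U(x, y)_{|x=y} = 0$; by symmetry $U(x,y) = U(y,x)$, also $\nabla_y U(x, y)_{|x=y} = 0$. Taylor expanding at each diagonal point then gives, uniformly in a neighborhood of the diagonal,
\[
 \abs{K(x,y)} \lesssim \dist_g(x,y)^2, \qquad
 \abs{\nabla_x K}_g, \ \abs{\nabla_y K}_g \lesssim \dist_g(x,y), \qquad
 \abs{\nabla_x\nabla_y K}_g \lesssim 1.
\]

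\textbf{Conclusion via Leibniz.} Substituting into $H = -K\log\dist_g/(2\pi) + W$ and applying the product rule, $\nabla_x\nabla_y H$ is the sum of $\nabla_x\nabla_y W$ (bounded since $W$ is smooth) and four contributions pairing $K$, $\nabla_x K$, $\nabla_y K$, $\nabla_x\nabla_y K$ with $\nabla_x\nabla_y\log\dist_g$, $\nabla_y\log\dist_g$, $\nabla_x\log\dist_g$, $\log\dist_g$ respectively. Using the elementary estimates $\abs{\nabla_x\log\dist_g}_g,\ \abs{\nabla_y\log\dist_g}_g \lesssim \dist_g^{-1}$ and $\abs{\nabla_x\nabla_y\log\dist_g}_g \lesssim \dist_g^{-2}$ together with the vanishing estimates above, each factor of $\dist_g^{-1}$ is absorbed by a matching factor from $K$ or its derivatives: three of the terms are $\mathrm{O}(1)$, and the only unbounded contribution is $-(2\pi)^{-1}(\nabla_x\nabla_y K)\log\dist_g = \mathrm{O}(\abs{\log\dist_g})$, yielding the stated bound.

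\textbf{Main obstacle.} The delicate point is the \emph{second}-order (rather than merely first-order) vanishing of $K$ on the diagonal: first-order vanishing would leave a $\dist_g^{-1}$ singularity in the product rule, which is worse than the logarithmic bound claimed. This second-order vanishing is not automatic for a generic smooth function satisfying $K(y,y) = 0$; it relies on the specific structure of the Hadamard coefficient and ultimately on the fact that the first derivatives of $\det g$ vanish at the origin of geodesic normal coordinates. Verifying this input is the heart of the proof; the Leibniz bookkeeping in the previous step is then routine.
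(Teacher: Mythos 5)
Your proposal is correct in outline but follows a genuinely different route from the paper. The paper does not use the Hadamard parametrix at all: it takes the crude local parametrix $G_0 = -\tfrac{1}{2\pi}\log\dist_g$ (coefficient identically $1$), observes that the resulting remainder $\Gamma_1 = -\Delta_x G_0 + \delta_y$ is merely \emph{bounded} (because $\det g = 1 + \mathrm{O}(r^2)$ in normal coordinates), and then builds $H$ from Aubin's iterated kernels $\Gamma_k$. In that scheme the logarithm in~\eqref{Hbound} arises from Giraud's convolution estimate: $\nabla_x\nabla_y H$ is, up to bounded terms, a sum of integrals $\int_M \nabla_x\Gamma_k(x,z)\,\nabla_y G_0(z,y)\,\Vg(z)$ with both factors $\mathrm{O}(\dist_g^{-1})$, and in dimension $2$ such a convolution is $\mathrm{O}(1+\abs{\log\dist_g(x,y)})$. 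In your scheme the logarithm instead comes directly from the term $(\nabla_x\nabla_y K)\log\dist_g$, with the dangerous products killed by the quadratic vanishing of $K = U-1$ on the diagonal. Your key input --- $U_0 = (\det g)^{-1/4} = 1 + \mathrm{O}(r^2)$ in normal coordinates, hence $\abs{K}\lesssim r^2$, $\abs{\nabla K}\lesssim r$ --- is correct (the symmetry of the Van Vleck determinant gives the bound on $\nabla_y K$ as well), and the Leibniz bookkeeping closes. Your approach buys a more transparent geometric explanation of where the log comes from; the paper's buys independence from the Hadamard machinery, at the price of the convolution lemma.

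The one step you should not wave through is the assertion that $\nabla_x\nabla_y W$ is bounded "since $W$ is smooth". If $U$ is only the \emph{leading} transport coefficient $U_0$ (as your phrasing suggests), the residual source $-\Delta_x\bigl(-\tfrac{1}{2\pi}U_0\log\dist_g\bigr) - \delta_y$ is only $\mathrm{O}(\abs{\log\dist_g})$, and after differentiating the equation for $W$ in $y$ one gets $\Delta_x\nabla_y W = \mathrm{O}(\dist_g^{-1})$, which lies in $L^p$ only for $p<2$; Calder\'on--Zygmund then gives $\nabla_x\nabla_y W$ in $L^p$ for $p<2$ but no pointwise bound. Two standard fixes: either truncate the Hadamard expansion one order higher, taking $U = U_0 + U_1\dist_g^2$ (still smooth, still $U-1 = \mathrm{O}(\dist_g^2)$ with the same derivative bounds, but now the residual source is $C^{1,\alpha}$ and Schauder theory makes $\nabla_x\nabla_y W$ genuinely bounded); or simply note that a bound $\abs{\nabla_x\nabla_y W}\lesssim 1+\abs{\log\dist_g}$ would already suffice for~\eqref{Hbound}, so full smoothness of $W$ is more than you need. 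With that point repaired, the argument is complete.
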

\begin{proof}
 The proof relies on a representation formula for the Green function
 on compact manifolds, given in~\cite[Theorem~4.13]{aubin98}.
 For~$(x, \, y)\in M\times M$ with~$x\neq y$, we define
 \begin{equation} \label{H0}
  G_0(x, \, y) := -\frac{1}{2\pi} \log \dist_g(x, \, y) \, f(\dist_g(x, \, y)),
 \end{equation}
 where~$f\in C^\infty_{\mathrm{c}}[0, \, +\infty)$ is a cut-off
 function, such that~$f(t)=1$ for $t$ sufficiently small
 (say, $t$ is smaller than half the injectivity radius of~$M$)
 and~$f(t)=0$ for~$t$ larger than the injectivity radius of~$M$.
 We define
 \begin{equation} \label{H1}
  \Gamma_1(x, \, y) := -\Delta_x G_0(x, \, y) + \delta_y
 \end{equation}
 (i.e., $\Gamma_1$ is the absolutely continuous part of 
 the distributional Laplacian of~$G_0$ in its first argument).
 By working in geodesic coordinates centered at~$y$, 
 we obtain (see~\cite[\oldS~4.9]{aubin98})
 \begin{equation} \label{H2}
  \Gamma_1(x, \, y) = \frac{1}{2\pi \dist_g(x, \, y)}
  \partial_\nu\left(\log\sqrt{\det g(x)}\right)
 \end{equation}
 where~$\partial_\nu$ denotes the derivative in the radial direction.
 The metric tensor in geodesic coordinates satisfies
 $g(x) = \Id + \mathrm{O}(\dist_g(x, \, y)^2)$,
 so~$\Gamma_1$ is bounded. We define 
 inductively the sequence of functions~$\Gamma_k$ as
 \[
  \Gamma_{k+1}(x, \, y) := \int_M \Gamma_k(x, \, z) \, \Gamma_1(z, \, y) \, \Vg(z)
  \qquad \textrm{for } k\geq 1, \ (x, \, y)\in M\times M, \ x\neq y.
 \]
 An argument by induction, see~\cite[Proposition~4.12]{aubin98},
 shows that~$\Gamma_k$ extends to a function of class~$C^{k-1}(M\times M)$
 for~$k\geq 2$. Now, $H$ can be written in the form
 \begin{equation} \label{H3}
  H(x, \, y) = G(x, \, y) - G_0(x, \, y)
  = \sum_{k=1}^2 \int_M \Gamma_k(x, \, z) \, G_0(z, \, y) \, \Vg(z)
  + F(x, \, y),
 \end{equation}
 for some function~$F$ that satisfies
 \begin{equation} \label{H4}
  \Delta_x F(x, \, y) = \Gamma_3(x, \, y) + \frac{1}{\mathrm{Vol}(M)}
 \end{equation}
 (see Equations~(17) and~(18) in~\cite[Theorem~4.13]{aubin98}).
 By differentiating~\eqref{H4} with respect to~$y$, we obtain
 \begin{equation*}
  \Delta_x \nabla_y F(x, \, y) = \nabla_y \Gamma_3(x, \, y)
 \end{equation*}
 Since~$\Gamma_3$ is of class~$C^2$, Schauder's estimates
 implies that~$\nabla_y F(\cdot, \, y)\in C^{2, \beta}(M)$ 
 for any~$\beta\in (0, \, 1)$ and any~$y\in M$. 
 Moreover, by differentiating~\eqref{H0} and~\eqref{H1}, we see that
 \begin{equation} \label{H5}
  \abs{\nabla_x \Gamma_1(x, \, y)}_g + \abs{\nabla_y G_0(x, \, y)}_g
   \lesssim \dist_g(x, \, y)^{-1}
 \end{equation} 
 while on the other hand~$\abs{\nabla_x \Gamma_2(x, \, y)}_g \leq C$
 because~$\Gamma_2$ is of class~$C^1$. Equation~\eqref{H2} gives
 \begin{equation} \label{H6}
  \begin{split}
   \nabla_x\nabla_y H(x, \, y) 
   = \sum_{k=1}^2 \int_M \nabla_x\Gamma_k(x, \, z) \, \nabla_y G_0(z, \, y) 
    \, \Vg(z) + \mathrm{O}(1)
  \end{split}
 \end{equation}
 The estimate~\eqref{Hbound} follows from~\eqref{H5} and~\eqref{H6} ---
 see e.g.~\cite[Proposition~4.12]{aubin98} for the details.
\end{proof}

Given a point~$a\in M$, we will denote by~$\nu$ 
the unit vector field, locally defined 
in a neighbourhood of~$a$ except at the point~$a$ itself,
that is orthogonal to the geodesic circles centered at~$a$ 
and points outward.

\begin{lemma} \label{lemma:Dnu}
  For any~$a\in M$, we have
 \[
   D\nu = \frac{1}{\dist_g(\cdot, \, a)}
   (i\nu)\otimes(i\nu)^\flat + \O(1) 
 \]
 in a neighbourhood of~$a$, except at the point~$a$ itself.
\end{lemma}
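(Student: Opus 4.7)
The plan is to work in geodesic polar coordinates centered at $a$. By Gauss' lemma, the metric in these coordinates takes the form $g = \d r^2 + f(r, \theta)^2 \d\theta^2$ on the punctured neighbourhood, where $r = \dist_g(\cdot, a)$ and $f$ is the norm of the Jacobi field along the radial geodesic with initial condition $\partial_\theta$. A standard expansion (see e.g.~do Carmo, or direct series expansion of the Jacobi equation $\partial_r^2 f + \kappa f = 0$ with $f(0,\theta) = 0$, $\partial_r f(0, \theta) = 1$) gives
\[
 f(r, \theta) = r + \mathrm{O}(r^3), \qquad \partial_r f(r, \theta) = 1 + \mathrm{O}(r^2)
\]
uniformly in $\theta$, in a neighbourhood of $a$ (excluding the point itself). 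In these coordinates, the unit radial field is $\nu = \partial_r$, and $\tau := f^{-1}\partial_\theta$ is a unit vector orthogonal to $\nu$; by the choice of orientation, $\tau = \pm\, i\nu$.

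Next I would compute $D\nu$ directly. Since the $r$-curves are unit-speed geodesics, $D_\nu \nu = D_{\partial_r}\partial_r = 0$. For the angular derivative, one computes the Christoffel symbols of $g = \d r^2 + f^2 \d\theta^2$ and finds
\[
 D_{\partial_\theta}\partial_r = \Gamma^\theta_{r\theta}\,\partial_\theta
 = \frac{\partial_r f}{f}\,\partial_\theta.
\]
Dividing by $f$ and substituting the expansion above yields
\[
 D_\tau \nu = \frac{\partial_r f}{f}\,\tau
 = \left(\frac{1}{r} + \mathrm{O}(r)\right)\tau,
\]
while $D_\nu\nu = 0$. Writing an arbitrary tangent vector as $X = (X, \nu)_g\,\nu + (X, \tau)_g\,\tau$ and using $\tau = \pm i\nu$ (so that $(X, \tau)_g^2 = (X, i\nu)_g^2$), we conclude
\[
 D_X \nu = \frac{1}{r}(X, i\nu)_g\, i\nu + \mathrm{O}(\abs{X}_g).
\]
This is exactly the identity $D\nu = r^{-1}(i\nu)\otimes (i\nu)^\flat + \mathrm{O}(1)$ claimed, once one recalls that $(i\nu)\otimes (i\nu)^\flat$, viewed as an endomorphism, maps $X$ to $(i\nu, X)_g\,i\nu$.

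The only real subtlety is justifying the error term $\mathrm{O}(1)$ uniformly in $\theta$ as $r \to 0$: this follows because the expansion of $f$ (and hence of $\partial_r f/f$) is smooth in $\theta$ and uniform on the compact manifold. I expect no serious obstacle — the main ingredients are the polar form of the metric (Gauss' lemma) and the Jacobi field expansion, both of which are classical and used elsewhere in the paper.
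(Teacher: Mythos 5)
Your proof is correct, but it takes a genuinely different route from the paper's. You work in geodesic polar coordinates, write the metric as $g = \d r^2 + f^2\,\d\theta^2$ via Gauss' lemma, and extract the coefficient $\partial_r f/f = 1/r + \O(r)$ from the Jacobi field expansion; the identity $D_\nu\nu = 0$ then comes for free from the radial curves being geodesics. The paper instead works in geodesic normal (Cartesian) coordinates, writes $\nu(x) = x/\abs{x}$, differentiates this Euclidean expression directly, and absorbs the Christoffel correction $\Gamma^k_{ij}\nu^i w^j$ into the $\O(1)$ term using the normal-coordinate estimates $g_{ij} = \delta_{ij} + \O(\abs{x}^2)$ already recorded in \eqref{eq:basic_normal}. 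The two computations are equivalent in substance; yours yields slightly sharper information (the exact angular coefficient and the vanishing of $D_\nu\nu$), while the paper's is marginally more economical given that the normal-coordinate expansions are already set up earlier in the text and no Jacobi field machinery is needed. One minor point: the sign ambiguity $\tau = \pm i\nu$ is indeed harmless, as you note, since the projector $(X,\tau)_g\,\tau$ is quadratic in $\tau$; with the standard orientation of $\theta$ one in fact has $\tau = i\nu$ exactly.
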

The operator~$(i\nu)\otimes(i\nu)^\flat\colon TM\to TM$
is, at each point~$p\in M$, the othogonal projection 
onto the line spanned by~$i\nu(p)$. Actually, the covariant
derivative of~$D\nu$ is related to the Hessian of the function 
$\dist_g(\cdot, \, a)^2$. There are results in the literature
which provide a fine description of the Hessian
of~$\dist_g(\cdot, \, a)^2$ and hence, of~$D\nu$
(see e.g.~\cite[Chapter~VI]{Petersen}).
However, the statement given here is enough for our purposes.
We include a proof, for the sake of convenience.
\begin{proof}[Proof of Lemma~\ref{lemma:Dnu}]
 We work in geodesic normal coordinates centered at~$a$.
 By abuse of notation, we identify~$x$ with a point in~$\R^2$
 and~$a$ with the origin in~$\R^2$. Then, by Gauss' lemma, 
 we can write $\dist_g(x, \, a) = \abs{x}$ and
 $\nu(x) = x/\abs{x}$. Let~$\Gamma^k_{ij}$ denote 
 the Christoffel symbols in normal coordinates.
 For any smooth vector field~$w$, we have
 \[
  \begin{split}
   D_w\nu &= \left( w^j \frac{\partial\nu^k}{\partial x^j}
    + \Gamma^k_{ij} \nu^i w^j\right) \frac{\partial}{\partial x^k} \\
   &= w^j \frac{\partial}{\partial x^j}\left(\frac{x^k}{\abs{x}}\right)
    \frac{\partial}{\partial x^k} + \O(1)
   = \frac{w^j}{\abs{x}}\left(\delta_j^k - \nu^k\nu^j\right)
    \frac{\partial}{\partial x^k} + \O(1)
  \end{split}
 \]
 As the metric tensor, in normal coordinates,
 satisfies~$g_{ij}(x) = \delta_{ij} + \O(\abs{x}^2)$, we deduce
 \[
  \begin{split}
   D_w\nu &= \frac{1}{\abs{x}}\left(w - (\nu, \, w)_g \nu\right) + \O(1)
   = \frac{1}{\abs{x}} (i\nu, \, w)_g \, i\nu + \O(1)
  \end{split}
 \]
 and the lemma follows. 
\end{proof}

\begin{lemma} \label{lemma:sigma}
 Let~$a\in M$. Let~$v$ be a smooth vector field, locally defined
 in a neighbourhood of~$a$. 
 Then, the function~$\sigma(\cdot, \, a, \, v(a))$
 defined by~\eqref{sigma_av} satisfies
 \begin{align}
  \sigma(\cdot, \, a, \, v(a)) &= \frac{1}{2\pi \dist_g(\cdot, \, a)} 
   \left(\nu, \, v\right)_g + \mathrm{O}(1) \label{lemmasigma} \\
  \star\d\sigma(\cdot, \, a, \, v(a)) &=
   - \frac{1}{2\pi \dist_g(\cdot, \, a)^2}\left((i\nu, \, v)_g\,\nu^\flat
    + (\nu, \, v)_g\star\nu^\flat\right)
    + \mathrm{O}(\dist_g(\cdot, \, a)^{-1})
   \label{lemmadsigma}
 \end{align}
\end{lemma}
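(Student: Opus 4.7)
I would decompose the Green function via \eqref{Green-splitting}--\eqref{Green-splitting-log} and isolate the logarithmic singularity:
\[
 \sigma(\cdot,a,v(a)) = \sigma_0 + \sigma_1,\qquad
 \sigma_0(x) := -\frac{1}{2\pi}\bigl(\nabla_a \log\dist_g(x,a),\, v(a)\bigr)_g,
\]
with $\sigma_1(x) := (\nabla_a H(x,a), \, v(a))_g$. Since $H\in C^1(M\times M)$, $\sigma_1$ is bounded, so it contributes $\mathrm{O}(1)$ to \eqref{lemmasigma}. By Lemma~\ref{lemma:H}, $\abs{d\sigma_1(x)}_g \lesssim 1+\abs{\log\dist_g(x,a)}$, which is $\mathrm{O}(\dist_g(\cdot,a)^{-1})$ and is absorbed in the error of \eqref{lemmadsigma}. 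So the problem reduces to analysing $\sigma_0$ and $\star d\sigma_0$ up to the claimed orders.

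For \eqref{lemmasigma} I would invoke Gauss' lemma in geodesic normal coordinates centered at $a$: there $\dist_g(\cdot,a) = \abs{x}$ and $\nabla_a\dist_g(x,a) = -\omega_a(x)$, where $\omega_a(x)\in T_a M$ is the unit vector at $a$ tangent to the minimizing geodesic from $a$ to $x$. Parallel transport $P_x\colon T_a M\to T_x M$ along this geodesic is an isometry that maps $\omega_a(x)$ to $\nu(x)$, so $(\omega_a(x),v(a))_g = (\nu,\,P_x v(a))_g$. Smoothness of $v$ gives $\abs{v(x)-P_x v(a)}_g = \mathrm{O}(\abs{x})$, hence $\sigma_0(x) = (\nu, v)_g/(2\pi\,\dist_g(\cdot,a)) + \mathrm{O}(1)$, which proves \eqref{lemmasigma}.

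For \eqref{lemmadsigma} I would continue in the normal chart, identifying $T_a M\simeq\R^2$ via the chosen orthonormal basis. Then $\sigma_0(x) = (x,v(a))/(2\pi\abs{x}^2)$, and a direct Euclidean differentiation gives $d\sigma_0 = v(a)^\flat/(2\pi\abs{x}^2) - (x,v(a))\,x^\flat/(\pi\abs{x}^4)$, with $\flat$ the Euclidean flat. Applying the Euclidean Hodge star and invoking the purely algebraic identity
\[
 \abs{x}^2\,(iv(a))^\flat = (x,v(a))\,(ix)^\flat - (ix,v(a))\,x^\flat,
\]
obtained by decomposing $iv(a)$ in the orthonormal basis $\{x/\abs{x},\, ix/\abs{x}\}$ and using $(iu,w)_g=-(u,iw)_g$, together with $\nu=x/\abs{x}$, $\nu^\flat=x^\flat/\abs{x}$, and $(i\nu)^\flat=\star\nu^\flat$ (Lemma~\ref{lem:vettore_ruotato}), rearranges $\star d\sigma_0$ into the leading expression on the right-hand side of \eqref{lemmadsigma}.

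The main obstacle is the bookkeeping of error terms in \eqref{lemmadsigma}: since the leading behavior is of order $\dist_g(\cdot,a)^{-2}$, every correction must fit within $\mathrm{O}(\dist_g(\cdot,a)^{-1})$. There are three sources to track: (i) the non-Euclidean corrections to $\star$, $\flat$ and $i$ are $\mathrm{O}(\abs{x}^2)$ by \eqref{eq:basic_normal}, which after multiplication by the leading $\abs{x}^{-2}$ contribute only $\mathrm{O}(1)$; (ii) replacing $v(a)$ by $v(x)$ in the final expression introduces an $\mathrm{O}(\abs{x})$ error in each factor, contributing $\mathrm{O}(\abs{x}^{-1})$; and (iii) the regular part $d\sigma_1$ is $\mathrm{O}(1+\abs{\log\dist_g(\cdot,a)}) = \mathrm{O}(\dist_g(\cdot,a)^{-1})$ by Lemma~\ref{lemma:H}. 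All three fit within the claimed error, completing the proof.
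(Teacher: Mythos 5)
Your proof is correct. For \eqref{lemmasigma} it coincides with the paper's argument: split off the $C^1$ part $H$ via \eqref{Green-splitting}, use Gauss' lemma to identify $\nabla_a\dist_g(x,a)$ with minus the unit tangent at $a$ of the geodesic towards $x$, and exchange $P_x v(a)$ for $v(x)$ at the price of an $\mathrm{O}(\dist_g(x,a))$ error. For \eqref{lemmadsigma}, however, you take a genuinely different route. The paper differentiates the covariant representation $\sigma_0=\frac{1}{2\pi\dist_g(\cdot,a)}(\nu,P_{a,\cdot}\,v(a))_g$ directly on the manifold, which requires Lemma~\ref{lemma:Dnu} (the asymptotics of $D\nu$) and the smoothness of the parallel-transport field. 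You instead observe that in the normal chart the singular part is \emph{exactly} the Euclidean expression $\sigma_0(x)=(x,v(a))/(2\pi\abs{x}^2)$ — this exactness is the one point worth spelling out, since it is what legitimises the subsequent Euclidean differentiation: it holds because $\nabla_a\dist_g(x,a)$ lives in $T_aM$, the chart sends the unit tangent $\omega_a(x)$ to $x/\abs{x}$, and the metric at the centre of a normal chart is exactly Euclidean; an identity valid only up to $\mathrm{O}(\abs{x})$ could not be differentiated. Granting this, your algebraic identity for $(iv(a))^\flat$ is correct, the rearrangement reproduces the leading term of \eqref{lemmadsigma}, and the error bookkeeping is sound: the corrections to $\star$, $\flat$ and $i$ are $\mathrm{O}(\abs{x}^2)$ relative errors on an $\mathrm{O}(\abs{x}^{-2})$ quantity by \eqref{eq:basic_normal}, hence $\mathrm{O}(1)$; passing from $v(a)$ to $v(x)$ costs $\mathrm{O}(\abs{x}^{-1})$; and $\d\sigma_1=\mathrm{O}(1+\abs{\log\dist_g(\cdot,a)})=\mathrm{O}(\dist_g(\cdot,a)^{-1})$ by Lemma~\ref{lemma:H}. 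Your approach buys independence from Lemma~\ref{lemma:Dnu} and reduces the differentiation to a one-line flat computation, at the cost of the (immediate) chart-dependence checks; the paper's approach stays coordinate-free throughout. Both are valid and of comparable length.
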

\begin{proof}
 For simplicity, we write~$\sigma$ instead 
 of~$\sigma(\cdot, \, a, \, v(a))$.
 Recalling the definition of~$\sigma$, and writing the Green function
 in the form~\eqref{Green-splitting}, we obtain
 \begin{equation} \label{sigma-1}
  \sigma(x) = -\frac{1}{2\pi} \left(\nabla_a \log\dist_g(x, \, a), 
   \, v(a)\right)_g
  + \left(\nabla_a H(x, \, a), \, v(a)\right)_g
 \end{equation}
 The term~$(\nabla_a H(x, \, a), \, v(a))_g$ is~$\mathrm{O}(1)$
 because~$H$ is of class~$C^1$ in both variables.
 We compute $\nabla_a\log\dist_g(x, \, a)$.
 By Gauss' lemma, $\nabla_a\dist_g(x, \, a) = \tilde{\nu}(a)\in T_a M$,
 where~$\tilde{\nu}$ is the unit vector field
 that is orthogonal to the geodesic circles centered at~$x$ 
 and points outward. By the chain rule, we obtain
 \begin{equation} \label{sigma0}
  \begin{split}
   \sigma(x) = -\frac{1}{2\pi\dist_g(x, \, a)} 
   \left(\tilde{\nu}(a), \, v(a)\right)_g
   + \left(\nabla_a H(x, \, a), \, v(a)\right)_g
  \end{split}
 \end{equation}
 If~$x$ and~$a$ are close enough, there exists
 a unique minimizing geodesic~$\gamma\colon[0, \, 1]\to M$
 such that~$\gamma(0)=a$, $\gamma(1) =x$.
 Let~$\tilde{\gamma}\colon[0, \, 1]\to M$
 be the geodesic defined by~$\tilde{\gamma}(t) := \gamma(1-t)$.
 Let~$P_{x,a}\colon T_x M \to T_a M$ denote parallel
 transport along~$\tilde{\gamma}$ (see e.g.~\cite[Chapter~6, p.~234]{spivak}).
 Then,
 \[
  \tilde{\nu}(a) = \tilde{\gamma}^\prime(1) = P_{x,a}\tilde{\gamma}^\prime(0)
  = - P_{x,a}\gamma^\prime(1) = -P_{x,a}\nu(x)
 \]
 and hence, \eqref{sigma0} can be written as
 \begin{equation} \label{sigma0tris}
  \begin{split}
   \sigma(x) 
   &= \frac{1}{2\pi\dist_g(x, \, a)} 
    \left(P_{x,a}\nu(x), \, v(a)\right)_g
    + \left(\nabla_a H(x, \, a), \, v(a)\right)_g
  \end{split}
 \end{equation}
 The parallel transport  map~$P_{x,a}$ is an isometry,
 and its inverse~$P_{a,x} := P_{x,a}^{-1}\colon T_a M\to T_x M$
 is the parallel tranport along~$\gamma$. Then, \eqref{sigma0tris}
 is equivalent to
 \begin{equation} \label{sigma0bis}
  \begin{split}
   \sigma(x) 
   &= \frac{1}{2\pi\dist_g(x, \, a)} 
    \left(\nu(x), \, P_{a,x}v(a)\right)_g
    + \left(\nabla_a H(x, \, a), \, v(a)\right)_g \
  \end{split}
 \end{equation}
 Since both~$v$ and~$x\mapsto P_{a,x}v(a)$ are smooth,
 with~$P_{a,a}v(a) = v(a)$, we have
 \begin{equation} \label{Paxv}
  \abs{P_{a,x}v(a) - v(x)}_g = \O(\dist_g(x, \, a))
 \end{equation}
 and hence, \eqref{lemmasigma} follows.
 
 We pass to the proof of~\eqref{lemmadsigma}.
 By differentiating~\eqref{sigma0bis}, we obtain
 \begin{equation*} 
  \begin{split}
   \d\sigma(x) &= \frac{1}{2\pi} 
   \d\left(\frac{\nu(x)}{\dist_g(x, \, a)}, \, P_{a,x}v(a)\right)_g
   + \d\left(\nabla_a H(x, \, a), \, v(a)\right)_g
  \end{split}
 \end{equation*}
 Here and throughout the rest of the proof,
 $\d$ denotes the exterior differential with respect to~$x$.
 By Lemma~\ref{lemma:H}, $\d(\nabla_a H(x, \, a), \, v(a))_g =
 \mathrm{O}(\log\dist_g(x, \, a))$. Moreover, the vector 
 field~$x\mapsto P_{a,x} v(a)$ is smooth (in particular,
 its derivatives are bounded). Therefore,
 \begin{equation*} 
  \begin{split}
   \d\sigma(x) &= -\frac{(\nu(x), \, P_{a,x}v(a))_g}{2\pi\dist_g(x, \, a)^2}
    \, \d\left(\dist_g(x, \, a)\right) 
   + \frac{(D \nu(x), \, P_{a,x} v(a))}{2\pi\dist_g(x, \, a)}
   + \O(\dist_g(x, \, a)^{-1}) \\
   &= -\frac{(\nu(x), \, P_{a,x}v(a))_g}
    {2\pi\dist_g(x, \, a)^2} \, \nu^\flat(x)
   + \frac{(i\nu(x), \, P_{a,x} v(a))_g}{2\pi\dist_g(x, \, a)^2} (i\nu(x))^\flat
   + \O(\dist_g(x, \, a)^{-1})
  \end{split}
 \end{equation*}
 The second equality follows by Gauss' lemma 
 and Lemma~\ref{lemma:Dnu}.
 Finally, by applying~\eqref{Paxv} and Lemma~\ref{lem:vettore_ruotato},
 we deduce
 \begin{equation*} 
  \begin{split}
   \d\sigma(x) &= -\frac{(\nu(x), \, v(x))_g}
    {2\pi\dist_g(x, \, a)^2} \, \nu^\flat(x)
   + \frac{(i\nu(x), \, v(x))_g}{2\pi\dist_g(x, \, a)^2} \star\nu^\flat(x)
   + \O(\dist_g(x, \, a)^{-1})
  \end{split}
 \end{equation*}
 and the lemma follows.
\end{proof}

Our next result gives a characterisation of~$\nabla_{a_j} W(\a, \, \db, \, \xi)$
in terms of~$\xi$, $H$ and the function~$S_j$ defined in~\eqref{S_j}.

\begin{lemma} \label{lemma:gradrenoSHg}
 Let~$(\a, \db)\in\mathscr{A}^n$, $\xi\in\mathcal{L}(\a, \, \db)$.
 For any~$j\in\{1, \, \ldots, \, n\}$, we have
 \[
  \begin{split}
   \nabla_{a_j} W(\a, \, \db, \, \xi)
   = 2\pi d_j \left(\nabla S_j(a_j) + 2\pi d_j \, \nabla H(a_j, \, a_j) 
   + i\xi^\#(a_j) \right)
  \end{split}
 \]
\end{lemma}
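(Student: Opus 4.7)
By Proposition~\ref{prop:renormalized}, I split
\[
 W(\b, \db, \Xi(\b)) = W_0(\b, \db) + \tfrac{1}{2}\int_M\abs{\Xi(\b)}_g^2\,\Vg + \mathrm{const},
\]
where $W_0$ collects the terms involving only $G$, $H$ and $\psi_0$. Repeating the calculation in the proof of Lemma~\ref{lemma:gradrenoSH} (using that $S_j$ in~\eqref{S_j} already absorbs the $\psi_0$-contribution) gives $\nabla_{a_j} W_0(\a, \db) = 2\pi d_j\bigl(\nabla S_j(a_j) + 2\pi d_j\,\nabla H(a_j, a_j)\bigr)$. Differentiating the harmonic term along a smooth curve $\b(t)$ with $\b(0)=\a$ and derivative $v$ in the $j$-th entry, and using Remark~\ref{rk:bundle},
\[
 \left.\frac{\d}{\d t}\right|_{t=0}\tfrac{1}{2}\int_M\abs{\Xi(\b(t))}_g^2\,\Vg
  = \int_M (\xi,\,\eta)_g\,\Vg, \qquad \eta := \Xi^\prime_{j,\a,v}\in\Harm^1(M),
\]
where $\eta$ is characterized by $\int_{\gamma_k}\eta = 2\pi d_j\int_{\gamma_k}\omega$ for every $k$, with $\omega := \star\,\d\sigma(\cdot, a_j, v)$ and $\sigma$ as in~\eqref{sigma_av}. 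The lemma thus reduces to the identity
\begin{equation}\label{planharm}
 \int_M (\xi,\eta)_g\,\Vg = 2\pi d_j\,\bigl(i\xi^{\#}(a_j),\,v\bigr)_g.
\end{equation}

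\textbf{An exact primitive on $M\setminus\{a_j\}$.} Since $\sigma$ is harmonic away from $a_j$, $\omega$ is closed on $M\setminus\{a_j\}$. A direct computation using Lemma~\ref{lemma:sigma} on $\partial B_\rho(a_j)$ (via $\nu^\flat(i\nu)=0$ and $\star\nu^\flat(i\nu)=1$) gives $\int_{\partial B_\rho(a_j)}\omega = -\frac{1}{2\pi\rho}\int_0^{2\pi}(\nu, v)_g\,\d\theta + \mathrm{O}(\rho)$; the leading angular integral vanishes, and since $\omega$ is closed this period is $\rho$-independent, hence exactly zero. Because $\eta$ is smooth and closed on $M$, its period around $a_j$ also vanishes. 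Combined with the period condition characterizing $\eta$, the closed $1$-form $\eta - 2\pi d_j\,\omega$ on $M\setminus\{a_j\}$ has vanishing period along every generator of $H_1(M\setminus\{a_j\};\Z)$, so it is exact: $\eta = 2\pi d_j\,\omega + \d\phi$ for some $\phi\in C^\infty(M\setminus\{a_j\})$.

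\textbf{Two boundary integrals, and the main obstacle.} I split $\int_M (\xi,\eta)_g\Vg = \int_M \eta\wedge\star\xi$ accordingly. For the $\omega$-piece, skew-adjointness of $\star$ on $1$-forms and $(\star\xi)^{\#} = i\xi^{\#}$ give $(\star\d\sigma,\xi)_g = -(\nabla\sigma, i\xi^{\#})_g$, while $\div(i\xi^{\#}) = -\d^*(\star\xi) = 0$ by harmonicity of~$\xi$. Integration by parts on $M\setminus B_\rho(a_j)$ therefore leaves only a boundary term; substituting $\sigma = (\nu,v)_g/(2\pi\dist_g(\cdot, a_j)) + \mathrm{O}(1)$ from Lemma~\ref{lemma:sigma} and evaluating the angular integral $\int_0^{2\pi}(\nu, v)_g(i\xi^{\#}(a_j), \nu)_g\,\d\theta = \pi\,(i\xi^{\#}(a_j), v)_g$ yields $\int_M \omega\wedge\star\xi = \tfrac12\,(i\xi^{\#}(a_j), v)_g$. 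For the $\d\phi$-piece, $\d(\star\xi) = 0$ and Stokes give $\int_M\d\phi\wedge\star\xi = -\lim_{\rho\to 0}\int_{\partial B_\rho(a_j)}\phi\,\star\xi$; the main technical point is to identify the singular profile of $\phi$. In geodesic normal coordinates centered at $a_j$, the metric is Euclidean up to $\mathrm{O}(\abs{x}^2)$, and a Cauchy--Riemann computation in the plane shows that the singular part of $\omega$ coincides, up to a bounded term, with the Euclidean exact form $\d\bigl(-(iv, x)/(2\pi\abs{x}^2)\bigr)$; hence $\phi(x) = d_j\,(iv, x)/\abs{x}^2 + \mathrm{O}(1)$ near~$a_j$. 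Using $\star\xi(i\nu) = (\xi^{\#}, \nu)_g$ and the analogous angular integration then gives $\int_M\d\phi\wedge\star\xi = \pi d_j\,(i\xi^{\#}(a_j), v)_g$. Adding the two contributions (with the $\omega$-piece weighted by $2\pi d_j$) produces~\eqref{planharm}, which combined with the expression for $\nabla_{a_j} W_0$ completes the proof.
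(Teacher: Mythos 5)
Your argument is, in substance, the paper's own proof of this lemma: the same splitting of $W$ into the $\xi$-independent part (Lemma~\ref{lemma:gradrenoSH}) plus $\frac12\int_M\abs{\Xi}^2_g\,\Vg$, the same characterisation of $\Xi^\prime_{j,\a,v}$ by its periods (Remark~\ref{rk:bundle}), the same exactness argument on $M\setminus\{a_j\}$ giving $\Xi^\prime_{j,\a,v}=2\pi d_j\star\d\sigma+\d\phi$, and the same two boundary integrals evaluated with the angular average $\int_0^{2\pi}(\nu,a)_g(\nu,b)_g\,\d\theta=\pi(a,b)_g$. The identification of the singular profiles and the final bookkeeping ($2\pi d_j\cdot\frac12+\pi d_j=2\pi d_j$) are correct.

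There is, however, one step you have not justified and which, as stated, is not available: ``$\phi(x)=d_j(iv,x)/\abs{x}^2+\mathrm{O}(1)$''. Lemma~\ref{lemma:sigma} only gives $\star\d\sigma=\d\bigl(-(iv,x)/(2\pi\abs{x}^2)\bigr)+\mathrm{O}(\dist_g(\cdot,a_j)^{-1})$, i.e.\ the discrepancy is an \emph{unbounded} $1$-form, not a bounded one. Integrating it radially from a fixed base point down to $\partial B_\rho(a_j)$ can produce a contribution of order $\abs{\log\rho}$, so all you may conclude is $\phi=d_j(iv,x)/\abs{x}^2+C_\rho+\mathrm{O}(1)$ on $\partial B_\rho(a_j)$, with an unknown (possibly unbounded) constant $C_\rho$ on each circle; only the oscillation of the error along the circle is $\mathrm{O}(1)$, because the arc length is $\mathrm{O}(\rho)$. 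The conclusion survives because $C_\rho$ pairs to zero against $\star\xi$: Stokes' theorem and the harmonicity of $\xi$ give $\int_{\partial B_\rho(a_j)}\star\xi=\int_{B_\rho(a_j)}\d(\star\xi)=0$, and the remaining $\mathrm{O}(1)$ oscillating part contributes $\mathrm{O}(\rho)$. You need to insert this observation explicitly (it is exactly the role of the constant $C_\eta$ in Step~4 of the paper's proof). A smaller imprecision of the same kind: in your check that $\omega$ has zero period around $a_j$, the error term is $\mathrm{O}(1)$, not $\mathrm{O}(\rho)$, so ``$\rho$-independent'' does not by itself yield zero; the clean argument is that $\partial B_\rho(a_j)$ bounds $M\setminus B_\rho(a_j)$, where $\omega$ is closed, so the period vanishes by Stokes --- and in fact this period is not needed at all, since the curves $\gamma_k$ already generate $H_1(M\setminus\{a_j\};\Z)$.
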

\begin{proof}
 We consider the expression for~$W$ given by Proposition~\ref{prop:renormalized}.
 We computed already, in Lemma~\ref{lemma:gradrenoSH}, the gradient 
 of the terms that do not depend on~$\xi$; now, we 
 focus our attention on the term that depends on~$\xi$.
 We take a tangent vector~$v\in T_{a_j} M$.
 We consider the smooth map~$\b\mapsto\Xi(\b)$,
 defined locally in a neighbourhood of~$\a$ in~$M^n$,
 such that~$\Xi(\a) = \xi$ and~$\Xi(\b)\in\mathcal{L}(\b, \, \db)$
 for any~$\b$, as given by Lemma~\ref{lemma:bundle}. 
 It suffices to prove that
 \begin{equation} \label{gradrenoSHgmain}
  \frac{1}{2}\left(\nabla_{a_j} \int_M \abs{\Xi(\a)}^2 \, \Vg, \, v \right)_g
  = 2\pi d_j \left(i\xi^{\#}(a_j), \, v\right)_g
 \end{equation}
 Combining~\eqref{gradrenoSHgmain} with Lemma~\ref{lemma:gradrenoSH},
 the result will follow.

 \setcounter{step}{0}
 \begin{step}
  Let~$\gamma_1$, \ldots, $\gamma_{2\gf}$ is a family of 
  simple, closed, smooth curves whose homology classes generate
  the homology group~$H_1(M; \, \Z)$. 
  As before, we assume (up to a perturbation of the~$\gamma_k$'s)
  that the curves~$\gamma_k$ do not contain any of the points~$a_j$.
  Let~$\Xi^\prime := (\nabla_{a_j} \Xi(\a), \, v)_g$ be the derivative
  of~$\Xi$ with respect to the~$a_j$-variable, in the direction of~$v$.
  We have seen in Remark~\ref{rk:bundle}, Equation~\eqref{derivativePsi},
  that~$\Xi^\prime$ is the unique harmonic $1$-form such that
  \begin{equation}  \label{gradrenSHg2}
   \int_{\gamma_k} \Xi^\prime
   = 2\pi d_j \int_{\gamma_k} \star\d\sigma(\cdot, \, a_j, \, v)
  \end{equation}
  for any~$k\in\{1, \, \ldots, \, 2\gf\}$, 
  where~$\sigma(\cdot, \, a_j, \, v)$ is given by~\eqref{sigma_av}.
  From now on, we write~$\sigma$ instead of~$\sigma(\cdot, \, a_j, \, v)$.
  Since~$\sigma$ is smooth in~$M\setminus\{a_j\}$, the $1$-form
  \begin{equation} \label{gradrenSHg3}
   \beta := \frac{\Xi^\prime}{2\pi d_j} - \star\d\sigma
  \end{equation}
  is smooth in~$M\setminus\{a_j\}$ and satisfies
  \begin{equation} \label{gradrenSHg4}
   \int_{\gamma_k} \beta = 0 
   \qquad \textrm{for any } k\in\{1, \, \ldots, \, 2\gf\}
  \end{equation}
  due to~\eqref{gradrenSHg2}. By differentiating~\eqref{gradrenSHg3},
  and keeping in mind that a harmonic form is closed, we obtain
  \[
   \d\beta = -\d\star\d\sigma = \star\d^*\d\sigma = \star\Delta\sigma = 0
   \qquad \textrm{in } M\setminus\{a_j\},
  \]
  i.e.~$\beta$ is closed in~$M\setminus\{a_j\}$.
  Moreover, the homology classes of the curves~$\gamma_1$, \, \ldots, \, $\gamma_{2\gf}$
  are also generators of~$H_1(M\setminus\{a_j\}; \, \Z)$,
  because the inclusion~$M\setminus\{a_j\}\hookrightarrow M$
  induces a group isomorphism~$H_1(M\setminus\{a_j\}; \, \Z)\to H_1(M; \, \Z)$.
  This may be seen by writing the surface~$M$ as a quotient of a polygon
  (more precisely, a~$(4\gf)$-gon) with suitable identifications 
  on the boundary; see e.g.~\cite[Chapter~1, Section~5]{Massey}
  and Figure~\ref{fig:CW}.
  $M\setminus\{a_j\}$ retracts by deformation
  onto the $1$-skeleton of~$M$ and~$M$ is obtained by attaching 
  a~$2$-cell along a trivial cycle of the~$1$-skeleton,
  so $H_1(M\setminus\{a_j\}; \, \Z)\simeq H_1(M; \, \Z)$.\footnote{An 
  alternative argument: there is an exact sequence
  \[
   H_2(M; \, \Z) \to H_2(M, \, M\setminus\{a_j\}; \, \Z) 
   \to H_1(M\setminus\{a_j\}; \, \Z) \xrightarrow{\chi_*} H_1(M; \, \Z)
   \to 
   0
  \]
  of relative homology groups with integer coefficients
  (see e.g.~\cite[Theorem~2.16]{Hatcher}). Here $\chi_*$
  is the group homomorphism induced by the 
  inclusion $\chi\colon M\setminus\{a_j\}\hookrightarrow M$.
  As~$M$ is closed and orientable, the first arrow is an isomorphism
  (see e.g.~\cite[Theorem~3.26]{Hatcher}); it follows that~$\chi_*$ is an isomorphism, too. }
  Then, thanks to~\eqref{gradrenSHg4}, $\beta$ is not only
  closed but also exact in~$M\setminus\{a_j\}$. As a consequence,
  there exists a function~$f\in C^\infty(M\setminus\{a_j\})$ such that
  \begin{equation} \label{gradrenSHg5}
   \frac{\Xi^\prime}{2\pi d_j} = \star\d\sigma + \d f
   \qquad \textrm{in } M\setminus\{a_j\}
  \end{equation}
 \end{step}

 \begin{step}
  Let
  \begin{equation*}
   I := \frac{1}{4\pi d_j}
   \left(\nabla_{a_j} \int_M \abs{\Xi(\a)}^2 \, \Vg, \, v \right)_g
  \end{equation*}
  Let~$\eta>0$ be a small parameter and let~$B_\eta(a_j)$
  be the geodesic disk with radius~$\eta$ and center~$a_j$. 
  We may evaluate~$I$ as
  \begin{equation*}
   \begin{split}
    I = \frac{1}{2\pi d_j} \int_M (\xi, \, \Xi^\prime)_g \, \Vg
    &= \frac{1}{2\pi d_j} \lim_{\eta\to 0}\int_{M\setminus B_\eta(a_j)}
     (\xi, \, \Xi^\prime)_g \, \Vg \\
    &\hspace{-.2cm} \stackrel{\eqref{gradrenSHg5}}{=}
    \lim_{\eta\to 0}\int_{M\setminus B_\eta(a_j)}
     (\xi, \, \star\d\sigma + \d f)_g \, \Vg \\
    &= \lim_{\eta\to 0}\int_{M\setminus B_\eta(a_j)}
     (\xi, \, -\d^*(\star\sigma) + \d f)_g \, \Vg
   \end{split}
  \end{equation*}
  We integrate by parts in the right-hand side, 
  by applying~\eqref{eq:in_parts_form}:
  \begin{equation} \label{gradrenSHg6}
   I = \lim_{\eta\to 0}\int_{\partial B_\eta(a_j)}
     \left(-\sigma\xi - f(\star\xi)\right)
  \end{equation}
  The volume integrals are equal to zero because~$\xi$
  is harmonic, hence~$\d\xi=0$, $\d^*\xi=0$.
  The signs account for orientations effects,
  as~$\partial B_\eta(a_j)$ is given the orientation induced by~$B_\eta(a_j)$,
  not by~$M\setminus B_\eta(a_j)$. Let~$\nu$ be the outward-pointing
  unit normal to~$\partial B_\eta(a_j)$. Then, $i\nu$ is a unit-norm,
  tangent vector field on~$\partial B_\eta(a_j)$, consistent with the orientation
  induced by~$B_\eta(a_j)$. By definition, we have
  \begin{equation} \label{gradrenSHg6.5}
   \int_{\partial B_\eta(a_j)} \left(-\sigma\xi - f(\star\xi)\right)
   = \int_{\partial B_\eta(a_j)} \left(-\sigma(\xi^\#, \, i\nu)_g
    - f((\star\xi)^\#, \, i\nu)_g\right) \d\H^1
  \end{equation}
  and, by Lemma~\ref{lem:vettore_ruotato}, $(\star\xi)^\# = i\xi^\#$.
  Therefore, from~\eqref{gradrenSHg6} and~\eqref{gradrenSHg6.5} we obtain
  \begin{equation*}
   \begin{split}
    I &= \lim_{\eta\to 0} \left(-\int_{\partial B_\eta(a_j)}
     \sigma \, (\xi^\#, \, i\nu)_g \, \d\H^1
     - \int_{\partial B_\eta(a_j)}
     f (i\xi^\#, \, i\nu)_g \, \d\H^1 \right) 
   \end{split}
  \end{equation*}
  or, since~$i$ is a rotation by~$\pi/2$ in each tangent plane, 
  \begin{equation} \label{gradrenSHg7}
   \begin{split}
    I &= \lim_{\eta\to 0} \left(\int_{\partial B_\eta(a_j)}
     \sigma \, (i\xi^\#, \, \nu)_g \, \d\H^1
     - \int_{\partial B_\eta(a_j)}
     f (i\xi^\#, \, i\nu)_g \, \d\H^1 \right) 
   \end{split}
  \end{equation}
 \end{step}

 \begin{figure}[t]
  \includegraphics[height=.23\textheight]{./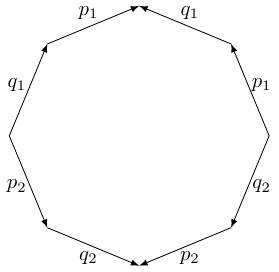}
   \caption{A closed surface of genus~$\gf > 0$ can be written
   as the quotient of a polygon, modulo a suitable equivalence 
   on the boundary. In this example, $\gf = 2$. Boundary
   edges with same label are identified to each other,
   in the direction given by the arrows. The boundary of
   the polygon carries the cycle 
   $p_1 + q_1 - p_1 - q_1 + p_2 + q_2 - p_2 - q_2 = 0$.}
  \label{fig:CW}
 \end{figure}
 
 \begin{step}
  In order to proceed further, we need to estimate~$\sigma$
  and~$f$ on~$\partial B_\eta(a_j)$.
  We work in normal geodesic coordinates centered at~$a_j$.
  We identify points in~$B_\eta(a_j)$ with points in~$\R^2$
  and vector fields with maps~$B_\eta(a_j)\subseteq\R^2\to\R^2$
  (in particular, we identify $v$ with a constant~$v\in\R^2$).
  An estimate for~$\sigma$ is given by Lemma~\ref{lemma:sigma}:
  \begin{equation} \label{gradrenSHg8}
   \sigma = \frac{1}{2\pi\eta} (\nu, \, v)_g + \mathrm{O}(1)
    \qquad \textrm{on } \partial B_\eta(a_j)
  \end{equation}
  As for~$f$, we combine~\eqref{gradrenSHg5} with~\eqref{lemmadsigma}
  to obtain
  \begin{equation}  \label{gradrenSHg9}
   \d f = -\star\d\sigma + \mathrm{O}(1)
   = \frac{1}{2\pi\eta^2}\left((i\nu, \, v)_g\nu^\flat
    + (\nu, \, v)_g\star\nu^\flat\right)
    + \mathrm{O}(\eta^{-1})
   \qquad \textrm{on } \partial B_\eta(a_j)
  \end{equation}
  Let~$x_\eta\in\partial B_\eta(a_j)$ be a reference point, say
  $x_\eta = a_j + (\eta, \, 0)$. For any~$x\in \partial B_\eta(a_j)$,
  let~$\gamma_x$ be the circular arc in~$\partial B_\eta(a_j)$
  with endpoints~$x_\eta$ and~$x$, oriented in the anticlockwise 
  direction. Then
  \[
   f(x) = f(x_\eta) + \int_{\gamma_x} \d f 
   \stackrel{\eqref{gradrenSHg9}}{=} f(x_\eta) 
   + \frac{1}{2\pi\eta^2}\int_{\gamma_x}
    (\nu, \, v)_g\star\nu^\flat + \mathrm{O}(1)
  \]
  By Gauss' lemma, when we work in geodesic coordinates
  we have~$\nu(x) = (x-a_j)/\abs{x-a_j}$ and moreover, the metric
  tensor satisfies~$g_{ij}(x) = \delta_{ij} + \mathrm{O}(\eta^2)$.
  Therefore, writing~$x = a_j + \eta(\cos\theta_x, \, \sin\theta_x)$
  for some~$\theta_x\in(0, \, 2\pi)$, we may evaluate the integral as
  \begin{equation} \label{gradrenSHg10}
   \begin{split}
    f(x) &= f(x_\eta) + \frac{1}{2\pi\eta} \int_0^{\theta_x}
    \left(v^1\cos\theta + v^2\sin\theta\right) \d\theta
    + \mathrm{O}(1) \\
    &= \frac{1}{2\pi\eta}\left(v^1\sin\theta_x - v^2\cos\theta_x\right) 
     + C_\eta + \mathrm{O}(1)
    = -\frac{1}{2\pi\eta}(i\nu(x), \, v)_g + C_\eta + \mathrm{O}(1)
   \end{split}
  \end{equation}
  for any~$x\in\partial B_\eta(a_j)$, where~$C_\eta$ depends on~$\eta$
  but not on~$x$.
 \end{step}
 
 \begin{step}
  We combine~\eqref{gradrenSHg7}, \eqref{gradrenSHg8}
  and~\eqref{gradrenSHg10}:
  \begin{equation} \label{gradrenSHg11}
   \begin{split}
    I = \lim_{\eta\to 0} \bigg(\frac{1}{2\pi\eta}
     \int_{\partial B_\eta(a_j)} (i\xi^\#, \, \nu)_g \, (\nu, \, v)_g \, \d\H^1
     &+ \frac{1}{2\pi\eta} \int_{\partial B_\eta(a_j)}
     (i\xi^\#, \, i\nu)_g \, (i\nu, \, v)_g \, \d\H^1 \\
    + \frac{C_\eta}{2\pi\eta} 
     \int_{\partial B_\eta(a_j)} (i\xi^\#, \, i\nu)_g \, \d\H^1 
     &+ \mathrm{O}(\eta) \bigg) 
   \end{split}
  \end{equation}
  where~$C_\eta$ is an (unknown) function of~$\eta$. However,
  Stokes' theorem implies
  \[
   \begin{split}
    \int_{\partial B_\eta(a_j)} (i\xi^\#, \, i\nu)_g \, \d\H^1 
    = \int_{\partial B_\eta(a_j)} \star\xi 
    = \int_{B_\eta(a_j)} \d(\star\xi) = 0
   \end{split}
  \]
  because~$\xi$ is harmonic.
  Therefore, \eqref{gradrenSHg11} simplifies to
  \begin{equation*}
   \begin{split}
    I &= \lim_{\eta\to 0} \frac{1}{2\pi\eta}
     \int_{\partial B_\eta(a_j)}
     \left(i\xi^\#, \, (\nu, \, v)_g \, \nu +(i\nu, \, v)_g \, i\nu\right)_g\d\H^1
     = \lim_{\eta\to 0} \frac{1}{2\pi\eta}
     \int_{\partial B_\eta(a_j)} (i\xi^\#, \, v)_g \,\d\H^1
   \end{split}
  \end{equation*}
  As~$\H^1(\partial B_\eta(a_j)) = 2\pi\eta + \mathrm{O}(\eta)$
  (see e.g.~\cite[Proposition 10]{spivak}),
  the equality~\eqref{gradrenoSHgmain} follows. \qedhere
 \end{step}
\end{proof}

\begin{proof}[Proof of Proposition~\ref{prop:gradient_reno}] 
 Let~$(\a, \, \db)\in\mathscr{A}^n$, $\xi\in\mathcal{L}(\a, \, \db)$ 
 and let~$u_*$ be a canonical harmonic field for~$(\a, \, \db, \, \xi)$.
 Let~$e$ be a smooth, divergence-free vector field, defined in a neighbourhood
 of~$a_j$, and
 \[
  I_\eta(u_*)(a_j)
  := \int_{\partial B_\eta(a_j)}(D_{\nu} u_*, \, D_{e}u_*)_g\,\d\H^1
  - \frac{1}{2} \int_{\partial B_\eta(a_j)}\vert D u_*\vert^2_g 
   (\nu, \, e)_g\,\d\H^1
 \]
 We recall that, as a consequence of~\eqref{eq:current},
 we have $\vert D u_*\vert^2 = \vert j(u_*)\vert^2$, 
 $D_v u_* = j(u_*)(v) \, iu_*$ for any smooth vector field~$v$.
 Moreover, the definition of canonical harmonic field
 (Definition~\ref{def:canonical_harmonic})
 gives~$j(u_*) = \d^*\Psi + \xi$ (where~$\Psi :=\Psi(\a, \, \db)$
 is given by~\eqref{Phiad}). Then, we may evaluate~$I_\eta(u_*)(a_j)$ as
 \begin{equation} \label{grdrenog1}
  \begin{split}
   I_\eta(u_*)(a_j)
   &= \int_{\partial B_\eta(a_j)} \left(\d^*\Psi(\nu) \, \d^*\Psi(e) 
   - \frac{1}{2} \abs{\d^*\Psi}^2_g (\nu, \, e)_g\right)\d\H^1  \\
   &\qquad + \int_{\partial B_\eta(a_j)} \left(\xi(\nu) \, \xi(e) 
   - \frac{1}{2} \abs{\xi}^2_g (\nu, \, e)_g\right)\d\H^1 \\
   &\qquad + \int_{\partial B_\eta(a_j)} 
   \left(\d^*\Psi(\nu) \, \xi(e) + \xi(\nu) \, \d^*\Psi(e)
   - (\d^*\Psi, \, \xi)_g (\nu, \, e)_g\right)\d\H^1
  \end{split}
 \end{equation} 
 We call~$J^1_\eta$, $J^2_\eta$ and~$J^3_\eta$
 the three integrals at the right-hand side of~\eqref{grdrenog1},
 respectively. The limit of~$J^1_\eta$ was calculated already in
 the the previous section, when we studied the case~$\gf=0$; we have
 \begin{equation} \label{grdrenog-J1}
  \lim_{\eta\to 0} J^1_\eta = 2\pi d_j \left(\nabla S_j(a_j) 
   + 2\pi d_j \, \nabla H(a_j, \, a_j), \, e(a_j)\right)_g
 \end{equation}
 The form~$\xi$ is smooth in~$M$, so
 \begin{equation} \label{grdrenog-J2}
  \lim_{\eta\to 0} J^2_\eta = 0
 \end{equation}
 It only remains to estimate~$J^3_\eta$.
 Similarly to the case~$\gf = 0$, we write
 \[
  \begin{split}
   \d^*\Psi = -\star \d(\star \Psi)
   &\stackrel{\eqref{S_j}}{=} -\star \d(S_j + 2\pi d_j G(\cdot, a_j)) \\
   &\stackrel{\eqref{Green-splitting}}{=}
    -\star\d S_j + d_j \star \d\left(\log \dist_g(\cdot, \, a_j)\right)
    - 2\pi d_j \star \d H(\cdot, \, a_j) \\
   &= d_j \star \d\left(\log \dist_g(\cdot, \, a_j)\right) + \mathrm{O}(1)
  \end{split}
 \]
 As we have seen before (see Equation~\eqref{logdist}),
 $\nabla(\log \dist_g(\cdot, \, a_j)) = \dist_g(\cdot, \, a_j)^{-1} \nu$.
 With the help of~\eqref{eq:ruotato} we obtain
 \begin{equation} \label{grdrenog2}
  \begin{split}
   \d^*\Psi &= \frac{d_j}{\eta} (i\nu)^\flat + \mathrm{O}(1)
  \end{split}
 \end{equation}
 We substitute~\eqref{grdrenog2} in the definition of~$J^3_\eta$:
 \begin{equation} \label{grdrenog3}
  \begin{split}
   J^3_\eta &= \frac{d_j}{\eta} \int_{\partial B_\eta(a_j)} 
    \left( (\xi^\#, \, \nu)_g \, (i\nu, \, e)_g
   - (\xi^\#, \, i\nu)_g (\nu, \, e)_g\right)\d\H^1 + \mathrm{O}(\eta)
  \end{split}
 \end{equation} 
 Since~$\{\nu, \, i\nu\}$ is an orthonormal frame,
 we can write~$i\xi^\# = (i\xi^\#, \, \nu)_g\nu + (i\xi^\#, \, i\nu)_g\,i\nu$.
 By the properties of~$i$, we deduce
 \[
  \begin{split}
   (\xi^\#, \, \nu)_g \, (i\nu, \, e)_g - (\xi^\#, \, i\nu)_g (\nu, \, e)_g
   &= (i\xi^\#, \, i\nu)_g \, (i\nu, \, e)_g + (i\xi^\#, \, \nu)_g (\nu, \, e)_g
   = (i\xi^\#, \, e)_g
  \end{split}
 \]
 and hence
 \begin{equation} \label{grdrenog-J3}
  \begin{split}
   J^3_\eta
   &= \frac{d_j}{\eta} \int_{\partial B_\eta(a_j)} 
    (i\xi^\#, \, e)_g \,\d\H^1 + \mathrm{O}(\eta)
   = 2\pi d_j \, (i\xi^\#(a_j), \, e(a_j))_g + \mathrm{O}(\eta)
  \end{split}
 \end{equation} 
 Combining~\eqref{grdrenog-J1}, \eqref{grdrenog-J2} and~\eqref{grdrenog-J3},
 we obtain 
 \begin{equation*}
  \begin{split}
   \lim_{\eta\to 0} I_\eta(u_*)(a_j)
   &= 2\pi d_j \left(\nabla S_j(a_j) 
   + 2\pi d_j \, \nabla H(a_j, \, a_j) + i\xi^\#(a_j), 
   \, e(a_j)\right)_g
  \end{split}
 \end{equation*} 
 and the proposition follows by Lemma~\ref{lemma:gradrenoSHg}.
\end{proof}

\section{Vortices: the stationary case}
\label{sect:stationary}

\subsection{Compactness and Product Estimates}
\label{ssect:product_space}
In this section, we gather some preliminary results
on the Ginzburg-Landau energy for time-independent vector fields 
--- most importantly, the compactness results of Ignat \& Jerrard \cite[Proposition 8.1 and Corollary 8.3]{JerrardIgnat_full}
and the product estimates of Sandier \& Serfaty \cite[Theorem 1]{SS-product}.

\begin{prop}
\label{prop:comp_prod_est_space}
Let $u_\eps\in H^1_{\tang}(M)$ be a sequence such that 
\[
F_\eps(u_\eps)\le C\abs{\log\eps}.
\]
Then, there exists a (non-relabelled) subsequence such that
\begin{equation}
\label{eq:compact_space}
\omega(u_\eps) \to \mu = 2\pi \sum_{j=1}^n d_j \delta_{a_j}
\qquad \textrm{in } W^{-1,p}(M) \quad \textrm{for any } p\in (1, \, 2).
\end{equation}
for $n$ distinct points $a_1,\ldots, a_n$ and nonzero integers $d_1,\ldots,d_n$ 
such that $\sum_{j=1}^n d_j = \chi(M)$, $\pi \sum_{j=1}^n\abs{d_j}\le C$ and 
\begin{equation}
\label{eq:gammaliminf_stat}
\liminf_{\eps\to 0}\frac{1}{\pi \abs{\log\eps}}F_\eps(u_\eps)\ge \sum_{j=1}^n \abs{d_j}.
\end{equation}
Moreover, 
for any vector fields 
$X = X^k\frac{\partial }{\partial x_k}$ and $Y = Y^k\frac{\partial }{\partial x_k}$
with $X^k$ and $Y^k$ in $C^{0}_{c}(M)$, there holds
\begin{equation}
\label{eq:product_curvo_section}
\liminf_{\eps\to 0}\frac{1}{\abs{\log\eps}}\left(\int_{M}\vert D_{X}u_\eps\vert_g^2 \Vg\right)^{1/2}\left(\int_{M}\vert D_{Y} u_\eps\vert^2_{g}\Vg\right)^{1/2}\ge\abs{\frac{1}{2}\int_{M}\mu[X,Y]}
\end{equation}
\end{prop}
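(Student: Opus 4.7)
The first half of the proposition—compactness of the vorticities~$\omega(u_\eps)$ in $W^{-1,p}(M)$, the structure of the limit $\mu$ as a finite sum of Dirac masses with nonzero integer weights summing to~$\chi(M)$, and the energy lower bound~\eqref{eq:gammaliminf_stat}—is contained in Proposition~8.1 and Corollary~8.3 of~\cite{JerrardIgnat_full}, applied to the given sequence. I would simply invoke these results.

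The product estimate~\eqref{eq:product_curvo_section} is the Riemannian counterpart of Sandier and Serfaty's Euclidean product estimate~\cite[Theorem~1]{SS-product}, and the plan is to reduce to the latter by localization. Since $X^k,Y^k$ are compactly supported in a coordinate chart, I would first employ a partition of unity to reduce to the case where $X,Y$ are supported in a single geodesic ball $B_\delta(p)$, noting that the $\liminf$ of sums of products of $L^2$-norms is bounded below by the analogous quantity for each piece through Cauchy--Schwarz and subadditivity of~$\liminf$. In $B_\delta(p)$, I would use geodesic normal coordinates $\Phi$ together with an orthonormal moving frame $\{\tau_1,\tau_2\}$ normalized so that $\mathcal{A}(p)=0$ (as in~\eqref{eq:Ap0}), and represent $u_\eps$ by a Euclidean map $v_\eps\colon B_\delta(0)\to\R^2$ as in~\eqref{uv}. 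Expanding $|D_Z u_\eps|_g^2$ in this frame as in~\eqref{eq:normD1} gives
\[
 |D_Z u_\eps|_g^2 = |(\bar Z\cdot\nabla) v_\eps|^2 + \mathrm{O}\!\left(|\mathcal{A}|\,|\nabla v_\eps|\,|v_\eps|\right) + \mathrm{O}\!\left(|\mathcal{A}|^2\,|v_\eps|^2\right),
\]
where $\bar Z$ denotes the coordinate representation of $Z$. Since $\|u_\eps\|_{L^\infty(M)}\leq 1+o(1)$ (a consequence of the energy bound together with standard clearing-out arguments) and $|\mathcal{A}|=\mathrm{O}(|x|)$ by~\eqref{eq:smallA}, the error terms contribute at most $o(|\log\eps|)$ after integration and Cauchy--Schwarz.

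Applying~\cite[Theorem~1]{SS-product} to $v_\eps$ with Euclidean test vector fields $\bar X,\bar Y$ then yields
\[
 \liminf_{\eps\to 0}\frac{1}{|\log\eps|}\Big(\textstyle\int |\bar X\cdot\nabla v_\eps|^2 \, \d x\Big)^{1/2}\Big(\textstyle\int|\bar Y\cdot\nabla v_\eps|^2\,\d x\Big)^{1/2} \geq \Big|\tfrac{1}{2}\textstyle\int (\bar X\wedge\bar Y)\,\d\bar\mu\Big|,
\]
where $\bar\mu$ is the limit Euclidean Jacobian of $v_\eps$. The last step is to identify $\bar\mu$ with the local representation of $\mu$ in the chart: this follows by comparing $\omega(u_\eps)=\d j(u_\eps)+\kappa\Vg$ with the Euclidean vorticity $\d j(v_\eps)=2(\det\nabla v_\eps)\,\d x^1\wedge\d x^2$, using that in normal coordinates $\Vg=(1+\mathrm{O}(|x|^2))\d x$ and $\mathcal{A}=\mathrm{O}(|x|)$, so all connection and curvature correction terms vanish in the $W^{-1,p}$ limit.

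The main obstacle is the delicate bookkeeping of the error terms in the comparison $|D_Z u_\eps|_g^2\approx |\bar Z\cdot\nabla v_\eps|^2$: one must simultaneously exploit the smallness of the connection~$1$-form near~$p$ (guaranteed by~\eqref{eq:Ap0}), the volume-form distortion of order $|x|^2$ in normal coordinates, and the concentration of $|\nabla v_\eps|^2$ near the vortices, and verify that none of these contributions, once Cauchy--Schwarz is applied, spoils the sharpness of the inequality at the logarithmic scale. The specific choice of geodesic normal coordinates and of a frame with $\mathcal{A}(p)=0$ is precisely what makes this control possible.
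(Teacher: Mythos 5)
Your proposal is correct and follows essentially the same route as the paper, which invokes Ignat--Jerrard for the compactness part and reduces the product estimate to the Euclidean one of Sandier--Serfaty by the very localization scheme you describe (partition of unity, geodesic normal coordinates, a frame with $\mathcal{A}(p)=0$, the energy-density comparison of Lemma~\ref{lemma:normD}, and the identification of the limit vorticity via Lemma~\ref{lemma:pullback}), as carried out in detail for the space-time version in Proposition~\ref{th:comp_prod_est}. The only cosmetic difference is that you recombine the local estimates by Cauchy--Schwarz and superadditivity of the $\liminf$, whereas the paper uses the weighted sum $\lambda_\eps a^2+\lambda_\eps^{-1}b^2$ and optimizes over $\lambda_\eps$ at the end; both work, and the $\mathrm{O}(\abs{v_\eps}^2)$ error is in any case controlled by the potential term of the energy, so the $L^\infty$ bound you mention is not actually needed.
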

\begin{proof}
The compactness result is borrowed from Ignat\& Jerrard \cite[Theorem 2.1, Proposition 8.1 and Corollary 8.3]{JerrardIgnat_full}. 

The proof of the product estimate \eqref{eq:product_curvo_section} is based on the validity of the analogous product estimate in the Euclidean two-dimensional setting and is similar to the proof of the space-time product estimate in Proposition \ref{th:comp_prod_est}, therefore we skip it. 
\end{proof}

We state a few consequences of the product
estimate~\eqref{eq:product_curvo_section}, analogous to the results 
of \cite[Corollary 4]{SS-product}. The proof is (almost) identical.

\begin{lemma}
\label{lem:corollario4SS}
 Let~$u_\eps\in H^1_{\tang}(M)$ be a sequence of vector fields
 such that $\omega(u_\eps) \to 2\pi\sum_{j=1}^n d_j \delta_{a_j}$ in~$W^{-1,1}(M)$, where
$a_1$, \ldots, $a_n$ are distinct points in $M$ and $d_1$, \ldots, $d_n$ are non-zero integers.
Moreover, assume that $u_\eps$ is uniformly bounded in~$L^\infty(M)$ and that
 \begin{equation} \label{eq:upper_bound_coro}
  \frac{1}{2}\int_{M}\vert D u_\eps\vert^2_g \, \Vg 
  \le \pi n\abs{\log\eps} + \mathrm{o}_{\eps\to 0}(\abs{\log\eps}).
 \end{equation}
 Let~$r>0$ be small enough, in such a way that the balls~$B_r(a_j)$ 
 are pairwise disjoint. For any~$B_r(a_j)$, let $\{\tau_1, \, \tau_2\}$
 be a local orthonormal tangent frame on~$B_r(a_j)$,
 with~$\tau_2 = i\tau_1$. Let $X$ and $Y$
 be smooth vector fields on $M$. Then, $\abs{d_j} = 1$ for any $j$ and the following hold:
 \begin{align}
  &\lim_{\eps\to 0}\frac{1}{\abs{\log\eps}}\int_{\bigcup_{j=1}^n B_r(a_j)}\vert D_{\tau_j} u_\eps\vert^2_g \Vg = \pi n,\,\,\,\,\,\,\,\,\,\,\,j=1,2\label{eq:energy_projection}\\
  & \lim_{\eps\to 0}\frac{1}{2\abs{\log\eps}} \int_{M}\vert D u_\eps\vert^2_g \Vg
  = \lim_{\eps\to 0}\frac{1}{2\abs{\log\eps}} \int_{\bigcup_{j=1}^n B_r(a_j)}\vert D u_\eps\vert^2_g \Vg = n\pi, \label{eq:conv_energy_palle_lemma}\\
  &\lim_{\eps\to 0}\frac{1}{\abs{\log\eps}}\int_{M}\abs{D_{X}u_\eps}_g^2 \Vg = \pi \sum_{j=1}^n\abs{X(a_j)}_g^2,\label{eq:polarization}\\
  &\lim_{\eps\to 0}\frac{1}{\abs{\log\eps}}\int_{M}(D_{X}v_\eps,D_{Y}v_\eps)_g \Vg = \pi \sum_{j=1}^n(X(a_j),Y(a_j))_g. \label{eq:polarization2}
 \end{align}
\end{lemma}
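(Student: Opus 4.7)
The plan is to mimic the Euclidean argument of \cite[Corollary 4]{SS-product}, localising the analysis to each geodesic ball $B_r(a_j)$ equipped with its orthonormal tangent frame $\{\tau_1,\tau_2\}$, and then invoking the Riemannian product estimate \eqref{eq:product_curvo_section} with several choices of test vector fields.

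First, combining the lower bound \eqref{eq:gammaliminf_stat} with the upper bound \eqref{eq:upper_bound_coro} gives $\pi\sum_j\abs{d_j}\le \liminf F_\eps(u_\eps)/\abs{\log\eps}\le \pi n$. Since each $d_j$ is a nonzero integer and there are exactly $n$ of them, this forces $\abs{d_j}=1$ for every $j$ and moreover $F_\eps(u_\eps)=\pi n\abs{\log\eps}+\mathrm{o}(\abs{\log\eps})$. Applying Proposition~\ref{prop:comp_prod_est_space} locally on each $B_r(a_j)$ (where $\omega(u_\eps)$ concentrates on $2\pi d_j \delta_{a_j}$) yields $\liminf \tfrac{1}{2\abs{\log\eps}}\int_{B_r(a_j)}\abs{Du_\eps}^2_g\Vg \ge \pi \abs{d_j}=\pi$; summing over $j$ and comparing with the total energy bound yields \eqref{eq:conv_energy_palle_lemma} and the fact that the energy on $M\setminus \bigcup_j B_r(a_j)$ is $\mathrm{o}(\abs{\log\eps})$.

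Next, I prove \eqref{eq:energy_projection}. Choose a smooth cut-off $\chi_j$ with $\chi_j\equiv 1$ on $B_{r/2}(a_j)$ and $\spt \chi_j\subset B_r(a_j)$, and apply \eqref{eq:product_curvo_section} with $X=\chi_j\tau_1$, $Y=\chi_j\tau_2$. Interpreting the right-hand side pairing via $\Vg(\tau_1,\tau_2)(a_j)=1$, it evaluates (in the limit $r\to 0$) to $\abs{\tfrac{1}{2}\cdot 2\pi d_j}=\pi$, so that
\[
 \liminf_{\eps\to 0}\frac{1}{\abs{\log\eps}}
 \left(\int_{B_r(a_j)}\abs{D_{\tau_1}u_\eps}^2_g\Vg\right)^{1/2}
 \left(\int_{B_r(a_j)}\abs{D_{\tau_2}u_\eps}^2_g\Vg\right)^{1/2}\ge \pi.
\]
Because $\abs{Du_\eps}^2_g=\abs{D_{\tau_1}u_\eps}^2_g+\abs{D_{\tau_2}u_\eps}^2_g$, Step~1 yields $\tfrac{1}{\abs{\log\eps}}(\int_{B_r(a_j)}\abs{D_{\tau_1}u_\eps}^2+\int_{B_r(a_j)}\abs{D_{\tau_2}u_\eps}^2)\to 2\pi$. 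The AM-GM inequality together with the lower bound above then forces both integrals to converge to $\pi\abs{\log\eps}$ individually, and summation in $j$ gives \eqref{eq:energy_projection}.

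Finally, the polarisation identities \eqref{eq:polarization}-\eqref{eq:polarization2} follow by the same argument applied to every rotated orthonormal pair $\{V,iV\}$ in $T_{a_j}M$, using $\Vg(V,iV)=\abs{V}_g^2$. This shows that the quadratic form $V\mapsto \lim \frac{1}{\abs{\log\eps}}\int_{B_r(a_j)}\abs{D_V u_\eps}^2_g\Vg$ (where $V$ is extended to $B_r(a_j)$ with constant components in the frame) equals $\pi$ on every unit vector, hence equals $\pi\abs{V}^2_g$; polarising in $V$ and $W$ gives the corresponding bilinear identity for $(D_V u_\eps,D_W u_\eps)_g$. Writing a general smooth $X$ on $B_r(a_j)$ as $X=X(a_j)+\mathrm{O}(\dist_g(\cdot,a_j))$ in the frame and letting $r\to 0$ (using the energy concentration from Step~1 to handle the complement of the balls), we conclude \eqref{eq:polarization} and \eqref{eq:polarization2}.

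The main obstacle is a careful justification of the right-hand side of \eqref{eq:product_curvo_section} under the local cut-off $\chi_j$, i.e. verifying that the pairing $\mu[X,Y]$ localises correctly to $2\pi d_j\, \Vg(X(a_j),Y(a_j))$ in the limit $r\to 0$, and that replacing $X$ by a frame-constant extension of $X(a_j)$ introduces only controlled errors. This is essentially a bookkeeping issue, but it must be handled carefully because the lower-order terms coming from the $x$-dependence of $\{\tau_1,\tau_2\}$ and of $X$ could, a priori, contribute at the critical $\abs{\log\eps}$ scale and spoil the equipartition argument.
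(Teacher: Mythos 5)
Your argument is essentially correct, and for \eqref{eq:energy_projection} and \eqref{eq:conv_energy_palle_lemma} it coincides with the paper's proof: localized test fields $f\tau_1$, $g\tau_2$ in the product estimate \eqref{eq:product_curvo_section}, then the AM--GM equality case combined with the upper bound \eqref{eq:upper_bound_coro} to force equipartition and $\abs{d_j}=1$. Where you genuinely diverge is in \eqref{eq:polarization}--\eqref{eq:polarization2}: the paper passes to geodesic normal coordinates, uses Lemma~\ref{lemma:normD} to convert $\abs{D_X u_\eps}^2_g$ into its Euclidean counterpart up to $\mathrm{O}(\delta^2)$ errors, and then quotes the Euclidean polarization result of \cite[Corollary 4]{SS-product} directly on the small ball. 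You instead re-derive the polarization intrinsically by running the equipartition argument on every rotated frame $\{\tilde V, i\tilde V\}$ (frame-constant extensions of unit vectors at $a_j$), which is legitimate since any such pair is again an orthonormal positively oriented frame with $\Vg(\tilde V,i\tilde V)=1$, so $\abs{D_{\tilde V}u_\eps}^2_g+\abs{D_{i\tilde V}u_\eps}^2_g=\abs{Du_\eps}^2_g$ and the same squeeze applies; the quadratic-form/parallelogram identity then gives the bilinear limit, and freezing $X$ at $a_j$ costs only $\mathrm{O}(r)$ by Cauchy--Schwarz against the $\mathrm{O}(\abs{\log\eps})$ energy, which disappears as $r\to 0$ using \eqref{eq:fuori_palle}. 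This buys you a proof that never invokes the Euclidean polarization statement, only the product estimate itself; the paper's route is shorter because it outsources exactly that step. Your closing worry about frame-dependence is indeed resolved by this Cauchy--Schwarz bookkeeping.

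One imprecision worth fixing: in your first step you upper-bound $\liminf F_\eps(u_\eps)/\abs{\log\eps}$ by $\pi n$ using \eqref{eq:upper_bound_coro}, but that hypothesis controls only the Dirichlet part, not the potential term appearing in $F_\eps$ and in the lower bound \eqref{eq:gammaliminf_stat}; moreover \eqref{eq:gammaliminf_stat} is a global statement and does not by itself localize to a ball. The correct (and purely Dirichlet) derivation of $\abs{d_j}=1$ is the one you effectively carry out in your second step: the product estimate gives $\liminf \frac{1}{2\abs{\log\eps}}\int_{B_r(a_j)}\abs{Du_\eps}^2_g\,\Vg\ge\pi\abs{d_j}$ for each $j$ \emph{before} knowing $\abs{d_j}=1$; summing over the disjoint balls and comparing with \eqref{eq:upper_bound_coro} yields $\sum_j\abs{d_j}\le n$ and hence $\abs{d_j}=1$. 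You should therefore not write the right-hand side of the product estimate as $\pi$ at the outset, but as $\pi\abs{d_j}$, and extract $\abs{d_j}=1$ from the comparison. With that reordering the proof is complete.
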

\begin{proof}
 Let~$f,g\in C_c(B_r(a_j))$ be such that~$\abs{f} \leq 1$,
 $\abs{g}\leq 1$ and~$f(a_j) = g(a_j) = 1$. 
 Let~$\mu := 2\pi\sum_{j=1}^{n}d_j \delta_{a_j}$.
 By taking~$X = f\tau_1$, $Y=g\tau_2$ in the product
 estimate~\eqref{eq:product_curvo_section}, we obtain
 \begin{equation} \label{eq:coro41}
  \begin{split}
   &\liminf_{\eps\to 0}\frac{1}{\abs{\log\eps}}\left(\int_{B_r(a_j)}\vert D_{\tau_1}u_\eps\vert_g^2 \Vg\cdot\int_{B_r(a_j)}\vert D_{\tau_2} u_\eps\vert^2_{g}\Vg\right)^{1/2} \\
   &\qquad\qquad\qquad 
   \ge\abs{\frac{1}{2}\int_{M}\mu[f\tau_1, \, g\tau_2]} = \pi \abs{d_j} f(a_j) \, g(a_j)  = \pi \abs{d_j}
  \end{split}
 \end{equation}
 In particular, since 
 \[
  \int_{B_r(a_j)}\vert D u_\eps\vert^2_g \Vg = \int_{B_r(a_j)}\vert D_{\tau_1}u_\eps\vert_g^2 +  \vert D_{\tau_2} u_\eps\vert^2_{g}\Vg
 \]
 and 
 \[
  \left(\int_{B_r(a_j)}\vert D_{\tau_1}u_\eps\vert_g^2 \Vg\cdot\int_{B_r(a_j)}\vert D_{\tau_2} u_\eps\vert^2_{g}\Vg\right)^{1/2}\le  \frac{1}{2}\int_{B_r(a_j)}\vert D_{\tau_1}u_\eps\vert_g^2  +  \vert D_{\tau_2} u_\eps\vert^2_{g}\Vg
 \]
 we obtain the lower bound 
 \begin{align*}
  \frac{1}{2\abs{\log\eps}}\int_{B_r(a_j)}\vert D u_\eps\vert^2_g \Vg &\ge
  \frac{1}{\abs{\log\eps}}\left(\int_{B_r(a_j)}\vert D_{\tau_1}u_\eps\vert_g^2   \Vg\cdot\int_{B_r(a_j)}\vert D_{\tau_2} u_\eps\vert^2_{g}\Vg\right)^{1/2}\\
 &\ge\pi\abs{d_j} + \mathrm{o}_{\eps\to 0}(1)
 \end{align*}
 On the other hand, recalling the assumption~\eqref{eq:upper_bound_coro},
 we conclude that $\abs{d_j} = 1$ for any $j$ and
 \begin{equation} \label{eq:conv_palle}
  \begin{split}
   \frac{1}{2\abs{\log\eps}}\int_{B_r(a_j)}\vert D u_\eps\vert^2_g \Vg & =
   \frac{1}{\abs{\log\eps}}\left(\int_{B_r(a_j)}\vert D_{\tau_1}u_\eps\vert_g^2 \Vg\cdot\int_{B_r(a_j)}\vert D_{\tau_2} u_\eps\vert^2_{g}\Vg\right)^{1/2}\\
   &=\pi + \mathrm{o}_{\eps\to 0}(1)
  \end{split}
 \end{equation}
 and thus
 \[
  \frac{1}{\vert\log\eps\vert}\left(\left(\int_{B_r(a_j)}\vert D_{\tau_1}u_\eps\vert_g^2\right)^{1/2} -\left(\int_{B_r(a_j)}\vert D_{\tau_2}u_\eps\vert_g^2\right)^{1/2} \right)^{2} = \mathrm{o}_{\eps\to 0}(1).
 \]
 As a result, we obtain 
 \[
  \frac{1}{\abs{\log\eps}}\int_{B_r(a_j)}\vert D_{\tau_1}u_\eps\vert_g^2 = 
  \frac{1}{\abs{\log\eps}}\int_{B_r(a_j)}\vert D_{\tau_2}u_\eps\vert_g^2 + \mathrm{o}_{\eps\to 0}(1)
 \]
 and thus 
 \[
 \frac{1}{\abs{\log\eps}}\int_{B_r(a_j)}\vert D_{\tau_k}u_\eps\vert_g^2 = \pi + \mathrm{o}_{\eps\to 0}(1) \qquad \textrm{for } k=1,2.
 \]
 Therefore, \eqref{eq:energy_projection}
 and~\eqref{eq:conv_energy_palle_lemma} follow. 
 The estimate~\eqref{eq:conv_energy_palle_lemma},
 combined with the assumption~\eqref{eq:conv_palle}, imply
 \begin{equation}
 \label{eq:fuori_palle}
  \lim_{\eps\to 0}\frac{1}{2\abs{\log\eps}}
  \int_{M\setminus\bigcup_{j=1}^n B_r(a_j)}\vert D u_\eps\vert^2_g \Vg =0.
 \end{equation}

 Now we prove~\eqref{eq:polarization}.
 Let~$X$ be a smooth vector field on~$M$. 
 For~$\delta \in (0, \, r)$, we have
 \begin{equation}
  \label{eq:1}
  \int_{B_r(a_j)}\abs{D_X u_\eps}^2_{g}\Vg = \int_{B_r(a_j)\setminus B_\delta(a_j)}\abs{D_X u_\eps}^2_{g}\Vg + \int_{B_\delta(a_j)}\abs{D_X u_\eps}^2_{g}\Vg.
 \end{equation}
 As in Subsection~\ref{sssec:normal}, we work in geodesic normal
 coordinates with center at~$a_j$ and represent~$u_\eps$, $X$
 as vector fields~$v_\eps\colon B_\delta(0)\subseteq\R^2\to\R^2$, 
 $\bar{X}\colon B_\delta(0)\to\R^2$, as in~\eqref{uv}.
 Assume that the orthonormal frame~$\{\tau_1, \, \tau_2\}$
 satisfies~\eqref{eq:smallA}. Then, Lemma~\ref{lemma:normD} implies
 \[
  \int_{B_\delta(a_j)}\abs{D_X u_\eps}^2_{g}\Vg 
  = \left(1 + \O(\delta^2)\right)
   \int_{B_{\delta}(0)} \left( \abs{\bar{X}\cdot\nabla v_\eps}^2 
   + \O(\vert u_\eps\vert^2) \right) \d x,
 \] 
 On the other hand, \cite[Corollary 4]{SS-product} gives that 
 \[
  \lim_{\eps\to 0}\frac{1}{\abs{\log\eps}}
  \int_{B_{\delta}(0)}\abs{\bar{X}\cdot\nabla v_\eps}^2\d x 
  = \pi \abs{\bar{X}(0)}^2 = \pi\abs{X(a_j)}_g^2.
 \]
 Therefore, recalling that~$u_\eps$ is bounded in~$L^\infty(M)$ by assumption,
 \[
 \lim_{\eps\to 0}\frac{1}{\abs{\log\eps}}\int_{B_\delta(a_j)}\abs{D_X u_\eps}^2_{g} \d x
 = \pi \vert X(a_j)\vert^2 + O(\delta^2). 
 \]
 Moreover, \eqref{eq:fuori_palle} implies
 \[
  \limsup_{\eps\to 0}\frac{1}{\abs{\log\eps}} \int_{B_r(a_j)\setminus B_\delta(a_j)}\abs{D_X u_\eps}^2_{g} \Vg\le
  \limsup_{\eps\to 0}\frac{1}{\abs{\log\eps}}  \int_{B_r(a_j)\setminus B_\delta(a_j)}\vert D u_\eps\vert^2\Vg = 0
 \]
 for any~$\delta$. Therefore, we have
 \[
  \lim_{\eps\to 0}\frac{1}{\abs{\log\eps}} \int_{B_r(a_j)}\abs{D_X u_\eps}^2_{g} \Vg= \pi \abs{X(a_j)}^2_g + \O(\delta^2)
 \]
 and, letting~$\delta\to 0$,
 \begin{equation} \label{prodest2}
  \lim_{\eps\to 0}\frac{1}{\abs{\log\eps}} \int_{B_r(a_j)}\abs{D_X u_\eps}^2_{g} \Vg= \pi \abs{X(a_j)}^2_g .
 \end{equation}
 Equation~\eqref{eq:polarization} now follows by~\eqref{prodest2}
 and~\eqref{eq:fuori_palle}. Equation~\eqref{eq:polarization2}
 follows by applying~\eqref{eq:polarization} to $X-Y$ and to $X+Y$
 and taking the difference. 
\end{proof}

If the vector fields~$u_\eps$ satisfy an energy upper bound
that is stronger than~\eqref{eq:upper_bound_coro},
then we can prove even stronger results --- in particular,
we can show that the energy of~$u_\eps$ is uniformly bounded
away from a finite set of singularities. These types of results are classical
in the context of Ginzburg-Landau functionals, but we need to
adapt them to our context. Our proof relies crucially on 
the analysis in~\cite{JerrardIgnat_full}. We consider
a sequence of vector fields~$(u_\eps)_{\eps>0}$ in~$H^1_{\tang}(M)$
that satisfies
\begin{equation} \label{hp:H1bounds-flat} 
 \begin{split}
  \omega(u_\eps) \to 2\pi\sum_{j=1}^n d_j \delta_{a_j} \qquad 
  \textrm{in } W^{-1,1}(M) \quad  \textrm{as } \eps\to 0,
 \end{split} 
\end{equation}
where $a_1$, \ldots, $a_n$ are disctinct points in $M$ and $d_1$, \ldots, $d_n$
are non-zero integers. We also assume that
\begin{gather}
 \norm{u_\eps}_{L^\infty(M)} \leq C_0 \label{hp:H1bounds-Linfty} \\
 \int_{M} \left(\frac{1}{2}\abs{D u_\eps}^2_g 
  + \frac{1}{4\eps^2}(1 -  \abs{u_\eps}^2_g)^2\right)\Vg 
  \le \pi n \abs{\log\eps}  + C_0 \label{hp:H1bounds-energy}
\end{gather}
for some $\eps$-independent constant~$C_0$.
By Lemma~\ref{lem:corollario4SS}, we have $\abs{d_j}=1$ for any~$j$.

\begin{lemma} \label{lemma:H1bounds}
 Let~$u_\eps\in H^1_{\tang}(M)$ be a sequence of vector fields
 that satisfies~\eqref{hp:H1bounds-flat}, 
 \eqref{hp:H1bounds-Linfty}, \eqref{hp:H1bounds-energy}.
 Then, for any~$r>0$ small enough there exists~$\eps_0(r)>0$
 such that, for~$\eps\in (0, \, \eps_0(r))$, we have
 \begin{equation} \label{H1bound}
  \int_{M\setminus\bigcup_{j=1}^n B_r(a_j)} \left( \frac{1}{2}\abs{D u_\eps}^2_g
   + \frac{1}{4\eps^2}(\abs{u_\eps}_g^2 - 1)^2 \right) \Vg 
   \le \pi n \abs{\log r} + C,
 \end{equation}
 for some constant~$C$ that depends only on~$M$, $C_0$ and~$n$ (not on~$\eps$, $r$).
\end{lemma}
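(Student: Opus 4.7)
The strategy combines the global upper bound~\eqref{hp:H1bounds-energy} with a matching sharp lower bound for the energy inside each ball~$B_r(a_j)$. Specifically, I aim to show that for every $j\in\{1,\ldots,n\}$, for $r$ small and $\eps$ small relative to $r$,
\[
 \int_{B_r(a_j)}\!\Big(\tfrac{1}{2}\abs{Du_\eps}^2_g + \tfrac{1}{4\eps^2}(\abs{u_\eps}^2_g-1)^2\Big)\Vg \,\geq\, \pi\abs{d_j}\log(r/\eps) - C,
\]
with $C$ depending only on $M$, $C_0$ and $n$. Since Lemma~\ref{lem:corollario4SS} already forces $\abs{d_j}=1$ for every $j$, summing the $n$ lower bounds and subtracting from~\eqref{hp:H1bounds-energy} immediately yields~\eqref{H1bound}.

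For the local lower bound, I would work in geodesic normal coordinates centered at~$a_j$, together with an orthonormal tangent frame satisfying~\eqref{eq:smallA}, and represent $u_\eps$ on $B_r(a_j)$ by a Euclidean map $v_\eps\colon B_r(0)\subseteq\R^2\to\R^2$ as in~\eqref{uv}. The hypotheses~\eqref{hp:H1bounds-flat}, \eqref{hp:H1bounds-Linfty}, \eqref{hp:H1bounds-energy} transfer to $v_\eps$, producing a uniform $L^\infty$ bound, a vorticity concentrating to $2\pi d_j\delta_0$, and a local energy upper bound. The classical sharp Jerrard--Sandier lower bound in the flat setting then yields $\int_{B_r(0)}\bar e_\eps(v_\eps)\,\d x \geq \pi\abs{d_j}\log(r/\eps)-C$, which is transferred back to $B_r(a_j)$ using Lemma~\ref{lemma:normD} and~\eqref{eq:energy_normal_coordinates}.

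The main obstacle is that the multiplicative error $1+\O(\abs{x}^2)$ in Lemma~\ref{lemma:normD}, when integrated against an energy density that concentrates at~$a_j$, could \emph{a priori} produce an additive error of order $r^2\abs{\log\eps}$, which does not vanish as $\eps\to 0$ with $r$ fixed. To circumvent this I would first apply the Euclidean lower bound on a much smaller geodesic ball $B_\rho(a_j)$, with $\rho$ chosen so that $\rho^2 \abs{\log\eps}$ stays bounded, obtaining $\int_{B_\rho(a_j)}e_\eps(u_\eps)\Vg \geq \pi\log(\rho/\eps)-C$ with constants uniform in $\eps$. Then I would propagate this estimate outward to $B_r(a_j)$ by choosing, via a Fubini/mean-value argument on the radius, a good circle $\partial B_{r'}(a_j)\subset B_r\setminus B_\rho$ on which $\abs{u_\eps}_g\geq 1/2$ and the index equals $d_j$ (this is possible thanks to the vorticity concentration~\eqref{hp:H1bounds-flat}), and then invoking the standard degree-one Dirichlet lower bound on the annulus, which adds the remaining $\pi\log(r/\rho)-C$. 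Alternatively, a lower bound of this precise form is essentially already contained in the analysis of~\cite{JerrardIgnat_full}, on which this whole section openly relies, and may be invoked directly.
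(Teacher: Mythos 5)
Your overall architecture coincides with the paper's: combine the global upper bound~\eqref{hp:H1bounds-energy} with a sharp lower bound $\pi\log(r/\eps)-C$ for the energy in each $B_r(a_j)$ (using $\abs{d_j}=1$ from Lemma~\ref{lem:corollario4SS}) and subtract. The divergence is in how the local lower bound is produced, and there your argument has a genuine gap. The paper never passes to Euclidean coordinates for this step: it invokes the \emph{Riemannian} ball construction of \cite[Lemma~8.2]{JerrardIgnat_full}, which directly produces on $M$ a family of small disjoint balls with $\abs{u_\eps}_g\ge 1/2$ outside them and per-ball energy at least $\abs{\ind(u_\eps,\partial B_{k,\eps})}(\pi\log(r/\eps)-C)$, so there is no metric error to absorb at all. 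Your route through normal coordinates creates the $1+\O(\abs{x}^2)$ problem you correctly identify, but the two-scale remedy does not close it: (i) the sharp inner bound $\pi\log(\rho/\eps)-C$ on a ball of radius $\rho=\rho(\eps)\sim\abs{\log\eps}^{-1/2}$ requires knowing that the degree carried inside $B_{\rho(\eps)}(a_j)$ is $\pm 1$, and neither~\eqref{hp:H1bounds-flat} nor Lemma~\ref{lemma:indexflat} (stated for a fixed radius) supplies degree information at a scale shrinking with $\eps$; (ii) even granting that, the annulus contribution is of size $\pi\log(r/\rho)\sim\tfrac{1}{2}\log\abs{\log\eps}$, so a global multiplicative error $1+\O(r^2)$ on the annulus leaves a remainder $\O(r^2\log\abs{\log\eps})$ that diverges as $\eps\to 0$, incompatible with a constant $C$ independent of $\eps$ in~\eqref{H1bound}. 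Point (ii) is repairable by estimating the annulus circle-by-circle (error $\O(s^2)\cdot\O(1/s)$ at radius $s$, integrating to $\O(r^2)$); point (i) is the real obstruction, and the clean fix is to abandon the Euclidean reduction in favour of the intrinsic ball construction.

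The other place where you are too quick is the parenthetical ``this is possible thanks to the vorticity concentration.'' The hypotheses give no uniform bound on the total variation of $\omega(u_\eps)$, so the $W^{-1,1}$ convergence~\eqref{hp:H1bounds-flat} does \emph{not} directly yield the index of $u_\eps$ on circles around $a_j$, nor does it force the nonzero-index balls of the construction to lie inside $\bigcup_j B_{r/2}(a_j)$. That localization is precisely the content of Lemma~\ref{lemma:indexflat} (whose proof occupies Appendix~B and requires building a bounded-mass grid approximation of the vorticity) together with the two contradiction arguments in the paper's proof, which use~\eqref{eq:conv_energy_palle_lemma} to exclude stray vortices and to show that each $a_j$ captures at least one nonzero-index ball. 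You may legitimately cite Lemma~\ref{lemma:indexflat}, since it is stated before the present lemma, but your proposal treats the degree localization as automatic when it is in fact where the substance of the proof lies; and the fallback of quoting~\cite{JerrardIgnat_full} wholesale is exactly what the paper declines to do, since the statement does not appear there explicitly.
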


Estimates similar to~\eqref{H1bound} are already contained
in Ignat and Jerrard's work~\cite{JerrardIgnat_full} 
(see for instance Lem\-ma~8.2 and Proposition~9.1).
However, since the statement of Lemma~\ref{lemma:H1bounds}
does not appear explicitely in~\cite{JerrardIgnat_full}, 
we include a proof, for the sake of completeness.
We will deduce Lemma~\ref{lemma:H1bounds} from the so-called
`ball construction', which was first introduced in the 
Euclidean setting by Sandier~\cite{Sandier} and Jerrard~\cite{Jerrard}
(see~\cite[Lemma~8.2]{JerrardIgnat_full} for the extension
to the Riemannian setting). We will also need an auxiliary result:

\begin{lemma} \label{lemma:indexflat}
 Let~$u_\eps\in H^1_{\tang}(M)$ be a sequence of vector fields
 that satisfies~\eqref{hp:H1bounds-flat}, 
 \eqref{hp:H1bounds-Linfty}, \eqref{hp:H1bounds-energy}.
 Let~$\rho>0$ be small enough, so that the closed balls~$\bar{B}_{\rho}(a_j)$
 are pairwise disjoint. Given an index~$j\in\{1, \, \ldots, \, n\}$, 
 we assume that there exist a constant~$C$ such that
 \[
  \int_{\partial B_\rho(a_j)} \left(\frac{1}{2}\abs{D u_\eps}^2_g
  + \frac{1}{4\eps^2}(1 - \abs{u_\eps}^2_g)^2\right) \d\mathscr{H}^1
  \lesssim \frac{C\abs{\log\eps}}{\rho}
 \]
 for any~$\eps$ small enough. Then, for any~$\eps$
 small enough we have~$\abs{u_\eps}_g \geq 1/2$ 
 on~$\partial B_\rho(a_j)$ and
 \[
  \ind(u_\eps, \, \partial B_\rho(a_j)) = d_j
 \]
\end{lemma}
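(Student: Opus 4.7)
The plan is a two-step argument: first establish the pointwise modulus bound $\abs{u_\eps}_g\geq 1/2$ on $\partial B_\rho(a_j)$ from the one-dimensional energy estimate in the hypothesis; then identify the index with $d_j$ by combining the ball construction of \cite[Lemma~8.2]{JerrardIgnat_full} with the additivity of topological degree and the $W^{-1,1}$ convergence of the vorticity.

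For Step~1, I would exploit that $\partial B_\rho(a_j)$ is one-dimensional, so that the hypothesis together with the $L^\infty$ bound on $u_\eps$ places $1-\abs{u_\eps}^2_g$ in $H^1(\partial B_\rho(a_j))$ with $H^1$-norm of order $\sqrt{\abs{\log\eps}/\rho}$ and $L^2$-norm of order $\eps\sqrt{\abs{\log\eps}/\rho}$. The one-dimensional Sobolev--Morrey estimate controls the oscillation of $1-\abs{u_\eps}^2_g$ on an arc of length $s$ by $\O(\sqrt{s\,\abs{\log\eps}/\rho})$. If $\abs{u_\eps(x_0)}_g<1/2$ at some $x_0\in\partial B_\rho(a_j)$, then $1-\abs{u_\eps}^2_g>3/4$ at $x_0$, so $(1-\abs{u_\eps}^2_g)^2$ stays above a fixed positive constant on an arc of length $\gtrsim\rho/\abs{\log\eps}$, contradicting the $L^2$-bound $\lesssim\eps^2\abs{\log\eps}/\rho$ for $\eps$ small with respect to the fixed $\rho$.

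For Step~2, I would apply the ball construction to obtain, for each sufficiently small $\eps$, pairwise disjoint closed geodesic balls $B^\eps_1,\ldots,B^\eps_{m_\eps}$ covering $\{\abs{u_\eps}_g<1/2\}$, with total radius tending to zero and with discrete vorticity $2\pi\sum_i d^\eps_i\,\delta_{p^\eps_i}$ converging to $2\pi\sum_k d_k\,\delta_{a_k}$ in $W^{-1,1}(M)$, where $p^\eps_i$ and $d^\eps_i:=\ind(u_\eps,\partial B^\eps_i)$ are the centers and indices. Since the total radius vanishes, each $B^\eps_i$ eventually lies entirely inside $B_\rho(a_j)$ or entirely outside $\bar B_\rho(a_j)$; together with Step~1 this produces an annular region on which $\abs{u_\eps}_g\geq 1/2$, and additivity of topological degree yields
\[
 \ind(u_\eps,\partial B_\rho(a_j)) = \sum_{i:\,B^\eps_i\subset B_\rho(a_j)} d^\eps_i.
\]
Testing the $W^{-1,1}$ convergence of the discrete vorticities against a Lipschitz cutoff that equals $1$ near $a_j$ and vanishes near every other $a_k$, the right-hand side converges to $d_j$; being integer-valued, it must equal $d_j$ for all small $\eps$.

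The main obstacle is extracting from \cite{JerrardIgnat_full} the precise cluster-sum statement that the $d^\eps_i$ with $p^\eps_i\to a_j$ add up exactly to $d_j$; this is implicit in the $W^{-1,1}$ convergence of the discrete vorticity described above. A more self-contained alternative, avoiding the ball construction, is to compute $\int_{B_\rho(a_j)}\omega(u_\eps)$ directly by Stokes' theorem: Step~1 combined with Cauchy--Schwarz makes the error $\int_{\partial B_\rho}\bigl(1-\abs{u_\eps}^{-2}_g\bigr)\,j(u_\eps)$ of order $\eps\abs{\log\eps}/\rho$, and the $W^{-1,1}$ convergence applied to the Lipschitz test function $\max\bigl(0,\rho-\dist_g(\cdot,a_j)\bigr)$, together with a Fubini/slicing argument in $\rho$, identifies the limit as $2\pi d_j$.
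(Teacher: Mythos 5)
Your Step~1 is correct and is essentially what the paper does: the pointwise bound $\abs{u_\eps}_g\geq 1/2$ on $\partial B_\rho(a_j)$ follows from a one-dimensional interpolation estimate for $1-\abs{u_\eps}_g^2$ on the circle (the proof in Appendix~\ref{app:flatindex} invokes \cite[Lemma~A.1]{JerrardIgnat_full} for exactly this, see~\eqref{ubH13-1}). The skeleton of your Step~2 --- replace $\omega(u_\eps)$ by a discrete, integer-weighted vorticity, pass to the limit, invoke integrality --- is also the strategy of the paper. The problem is that the single point you flag as ``the main obstacle'' is the entire content of the lemma. The hypotheses \eqref{hp:H1bounds-flat}--\eqref{hp:H1bounds-energy} do \emph{not} imply that the measures $\omega(u_\eps)$, nor the discrete vorticities $2\pi\sum_i d_i^\eps\delta_{p_i^\eps}$, have uniformly bounded total variation. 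This is precisely why the paper does not quote a ready-made ``Jacobian estimate'' but spends Steps~2--4 of its proof constructing a grid $\mathscr{G}_\eps$ of mesh $\abs{\log\eps}^{-2}$, proving the mass bounds \eqref{ubH13-5}--\eqref{ubH13-6} cell by cell via the Sandier--Jerrard lower bounds, and then verifying by hand that the resulting discrete measure approximates $\d\bar{\jmath}(v_\eps)$ in $W^{-1,1}$ (Step~4 of that proof). Until both the uniform mass bound and the $W^{-1,1}$ approximation are established, ``testing against a Lipschitz cutoff'' does not yield $\sum_{i:\,B_i^\eps\subset B_\rho}d_i^\eps\to d_j$: the cutoff only sees the weighted sum $\sum_i d_i^\eps\psi(p_i^\eps)$, and the contribution of cells lying in the transition annulus of $\psi$ must be killed either by a mass bound there (the paper's \eqref{ubH13-6}) or by showing such cells carry no degree, which again requires combining the lower bound \eqref{ubH13-4} with the $\mathrm{o}(\abs{\log\eps})$ energy estimate away from the vortices.

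Your ``self-contained alternative'' fails for the same reason. Stokes' theorem together with your Step~1 does reduce the index to $\frac{1}{2\pi}\int_{B_\rho(a_j)}\omega(u_\eps)+\mathrm{o}_{\eps\to 0}(1)$, but the $W^{-1,1}$ convergence of $\omega(u_\eps)$ tested against $\psi_s:=\max\bigl(0,s-\dist_g(\cdot,a_j)\bigr)$ only identifies the limit of the radial \emph{averages} of $s\mapsto\int_{B_s(a_j)}\omega(u_\eps)$. To pass from a difference quotient of such averages to the fixed radius $s=\rho$ you must control $\frac{1}{\delta}\int_\rho^{\rho+\delta}\bigl(\int_{B_s(a_j)\setminus B_\rho(a_j)}\omega(u_\eps)\bigr)\d s$, and the only available bound on $\abs{\omega(u_\eps)}\bigl(B_{\rho+\delta}(a_j)\setminus B_\rho(a_j)\bigr)$ is $\mathrm{o}(\abs{\log\eps})$ (see \eqref{ubH1-0}), not $\mathrm{O}(1)$. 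So the Fubini/slicing argument does not close without re-importing exactly the bounded-mass surrogate that the grid construction is designed to provide.
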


We will prove Lemma~\ref{lemma:indexflat} in Appendix~\ref{app:flatindex}.

\begin{proof}[Proof of Lemma~\ref{lemma:H1bounds}]
 By a density argument
 (as in~\cite[Lemma~5.1 and Proposition~8.1, Step~1]{JerrardIgnat_full}), 
 we may assume without loss of generality that the fields~$u_\eps$
 are smooth. Then, we apply~\cite[Lemma~8.2]{JerrardIgnat_full}. 
 For a given~$r > 0$ (sufficiently small) and any~$\eps$,
 we find a finite collection of pairwise disjoint
 balls~$B_{1,\eps}, \, \ldots, \, B_{N_\eps, \eps}$,
 of centers~$b_{k,\eps}$ and radii~$r_{k,\eps}$, 
 that satisfy the following properties:
 \begin{gather} 
  \sum_{k=1}^{N_\eps} r_{k,\eps} \leq \frac{r}{16} \label{H1b1} \\
  \abs{u_\eps(x)}_g \geq \frac{1}{2} \qquad 
   \textrm{for } x\in M\setminus \bigcup_{k=1}^{N_\eps} B_{k,\eps} \label{H1b2} \\
  \int_{B_{k,\eps}}\left(\frac{1}{2}\abs{D u_\eps}^2_g 
   + \frac{1}{4\eps^2}(1 - \abs{u_\eps}_g^2)^2\right)\Vg 
   \geq \abs{\ind(u_\eps, \, \partial B_{k,\eps})} 
   \left(\pi\log\frac{r}{\eps} - C\right) \label{H1b3}
 \end{gather}
 for any~$k$, $\eps$ and some constant~$C$ that depends only on~$M$, $C_0$.
 We claim that, for any~$\eps$ small enough and any $k = 1, \, \ldots, \, N_\eps$,
 there holds
 \begin{equation} \label{H1b4}
  \ind(u_\eps, \, \partial B_{k,\eps})\neq 0 
  \qquad \Longrightarrow \qquad 
  b_{k,\eps}\in \bigcup_{j=1}^n B_{r/2}(a_j)
 \end{equation}
 (We recall that the~$a_j$'s are the points in the support
 of the limit of~$\omega(u_\eps)$, i.e. $\omega(u_\eps) \to
 2\pi\sum_{j=1}^n d_j \delta_{a_j}$ in~$W^{-1,1}(M)$.)
 Suppose, towards a contradiction, that~\eqref{H1b4}
 is false. Then, up to a permutation of the indices~$k$
 and extraction of a subsequence~$\eps\to 0$, we may assume that
 \[
   \ind(u_\eps, \, \partial B_{1,\eps})\neq 0, \qquad 
   b_{1,\eps}\in M\setminus \bigcup_{j=1}^n B_{r/2}(a_j)
 \]
 By Equation~\eqref{H1b1}, we obtain
 $B_{1,\eps}\subseteq M\setminus\bigcup_{j=1}^n B_{r/4}(a_j)$
 and hence, thanks to~\eqref{H1b3},
 \begin{equation} \label{H1b5}
  \begin{split}
   \int_{M\setminus\bigcup_{j=1}^n B_{r/4}(a_j)}
    \left(\frac{1}{2}\abs{D u_\eps}^2_g 
    + \frac{1}{4\eps^2}(1 - \abs{u_\eps}_g^2)^2\right)\Vg 
   \geq \pi\log\frac{r}{\eps} - C
  \end{split}
 \end{equation}
 However, the assumption~\eqref{hp:H1bounds-energy} and 
 the lower energy bound~\eqref{eq:conv_energy_palle_lemma} 
 from Proposition~\ref{lem:corollario4SS}, combined, give
 \begin{equation} \label{H1b6}
  \begin{split}
   \int_{M\setminus\bigcup_{j=1}^n B_{r/4}(a_j)}
    \left(\frac{1}{2}\abs{D u_\eps}^2_g 
    + \frac{1}{4\eps^2}(1 - \abs{u_\eps}_g^2)^2\right)\Vg 
   = \mathrm{o}_{\eps\to 0}(\abs{\log\eps})
  \end{split}
 \end{equation}
 Equation~\eqref{H1b6} contradicts~\eqref{H1b5}, and~\eqref{H1b4} follows.
 
 Next, we claim that, for any~$\eps$ small enough and any~$j=1, \, \ldots, \, n$,
 there exists~$k_* = k_*(j, \, \eps) \in\{1, \, \ldots, \, N_\eps\}$ such that
 \begin{equation} \label{H1b7}
  \ind(u_\eps, \, \partial B_{k_*,\eps})\neq 0,
  \qquad b_{k_*,\eps}\in B_{r/2}(a_j)
 \end{equation}
 Suppose, again towards a contradiction, that~\eqref{H1b7}
 does not hold. Then, there exists an index~$j\in\{1, \, \ldots, \, n\}$
 and a countable subsequence~$\eps_m\to 0$
 such that, for any~$m$ and any~$k\in\{1, \, \ldots, \, N_{\eps_m}\}$,
 \begin{equation} \label{H1b8}
  b_{k,\eps_m}\in B_{r/2}(a_j) \qquad \Longrightarrow \qquad 
  \ind(u_{\eps_m}, \, \partial B_{k,\eps_m}) = 0
 \end{equation}
 By Fatou's lemma, we have
 \[
  \begin{split}
   &\mathscr{H}^1\left(\bigcup_{M\in\N}\bigcap_{m\geq M}
   \left\{\rho\in (r/4, \, r/2)\colon \partial B_\rho(a_j) \cap 
   \bigcup_{k=1}^{N_{\eps_m}} B_{k,\eps_m} \neq\emptyset \right\}\right) \\
   &\qquad\qquad \leq \liminf_{m\to +\infty} \mathscr{H}^1
   \left\{\rho\in (r/4, \, r/2)\colon \partial B_\rho(a_j)\cap 
   \bigcup_{k=1}^{N_{\eps_m}} B_{k,\eps_m} \neq\emptyset \right\}
   \stackrel{\eqref{H1b1}}{\leq} \frac{r}{8}
  \end{split}
 \]
 Therefore, there exists a radius~$\rho\in (r/4, \, r/2)$ and a (non-relabelled)
 subsequence~$\eps_m\to 0$ such that 
 \begin{equation} \label{H1b9}
  \partial B_\rho(a_j)\cap \bigcup_{k=1}^{N_{\eps_m}} B_{k,\eps_m} = \emptyset
 \end{equation}
 By Fubini theorem, we may also assume that
 \[
  \int_{\partial B_\rho(a_j)} \left(\frac{1}{2}\abs{D u_{\eps_m}}^2_g
  + \frac{1}{4\eps^2}(1 - \abs{u_{\eps_m}}^2_g)^2\right) \d\mathscr{H}^1
  \lesssim \frac{C\abs{\log\eps_m}}{\rho}
 \]
 From~\eqref{H1b1}, \eqref{H1b8} and~\eqref{H1b9}, we deduce that
 $\abs{u_{\eps_m}(x)}_g\geq 1/2$ for any~$x\in\partial B_\rho(a_j)$ and that 
 \begin{equation*}
  \ind(u_{\eps_m}, \, \partial B_\rho(a_j)) = 0
 \end{equation*}
 However, Lemma~\ref{lemma:indexflat} implies
 \[
  \ind(u_{\eps_m}, \, \partial B_\rho(a_j)) = d_j \neq 0
 \]
 for any~$m$ large enough. This is a contradiction, so~\eqref{H1b7} follows.
 
 We can now conclude the proof of the lemma.
 Without loss of generality, by taking~$r$ small enough, we may
 suppose that the balls~$B_r(a_j)$ are pairwise disjoint. 
 By Equation~\eqref{H1b7}, for any~$\eps$
 small enough there exist at least~$n$ values of~$k$ such that
 $\ind(u_\eps, \, \partial B_{k,\eps})\neq 0$. By~\eqref{H1b1} 
 and~\eqref{H1b4}, for these values of~$k$ we have
 $B_{k,\eps}\subseteq \bigcup_{j=1}^n B_r(a_j)$. 
 Then, \eqref{H1b3} implies
 \begin{equation} \label{H1blast}
  \int_{\bigcup_{j=1}^n B_r(a_j)}\left(\frac{1}{2}\abs{D u_\eps}^2_g 
   + \frac{1}{4\eps^2}(1 - \abs{u_\eps}_g^2)^2\right)\Vg 
   \geq n\left(\pi\log\frac{r}{\eps} - C\right)
 \end{equation}
 Combining~\eqref{H1blast} with~\eqref{hp:H1bounds-energy}, the lemma follows.
\end{proof}

Lemma~\ref{lemma:H1bounds} implies a global upper bound 
for the sequence~$u_\eps$ in~$W^{1,p}(M)$, for~$p\in (1, \, 2)$

\begin{corollary} \label{cor:W1,pbounds}
 Let~$u_\eps\in H^1_{\tang}(M)$ be a sequence of vector fields
 that satisfies~\eqref{hp:H1bounds-flat}, 
 \eqref{hp:H1bounds-Linfty}, \eqref{hp:H1bounds-energy}.
 Then, for any~$p\in (1, \, 2)$ we have
 \[
  \norm{D u_\eps}_{L^p(M)} \leq C_p,
 \]
 where~$C_p$ depends only on~$M$, $p$, $n$ and the constant~$C_0$
 that appears in~\eqref{hp:H1bounds-energy}.
\end{corollary}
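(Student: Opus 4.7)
The plan is to establish the $L^p$ bound by localizing around the singularities and using a dyadic annular decomposition, combined with H\"older's inequality to trade volume decay against the logarithmic blow-up of the $L^2$ energy allowed by Lemma~\ref{lemma:H1bounds}. Fix $p\in(1,2)$. First, choose a radius $r_0>0$ small enough that Lemma~\ref{lemma:H1bounds} applies and that the balls $B_{r_0}(a_j)$ are pairwise disjoint. On the complement $M\setminus\bigcup_j B_{r_0}(a_j)$, the lemma gives a uniform $L^2$ bound on $Du_\eps$, and H\"older's inequality immediately yields a uniform $L^p$ bound there.

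Next, on each ball $B_{r_0}(a_j)$, I would set $r_k := 2^{-k} r_0$ and consider the dyadic annuli $A_k := B_{r_k}(a_j)\setminus B_{r_{k+1}}(a_j)$ for $k=0, 1, 2, \ldots, K_\eps$, where $K_\eps$ is the largest integer for which Lemma~\ref{lemma:H1bounds} can be applied at scale $r_{K_\eps+1}$ (so that $r_{K_\eps+1}\gtrsim \eps$). For each such $k$, Lemma~\ref{lemma:H1bounds} gives
\[
\int_{A_k}|D u_\eps|^2_g\, \Vg \leq \int_{M\setminus B_{r_{k+1}}(a_j)}|D u_\eps|^2_g\,\Vg \leq 2\pi n\abs{\log r_{k+1}} + 2C \lesssim k+1.
\]
Since $|A_k|\lesssim r_k^2$, H\"older's inequality on $A_k$ yields
\[
\int_{A_k}|Du_\eps|^p_g\,\Vg \leq |A_k|^{1-p/2}\left(\int_{A_k}|Du_\eps|^2_g\,\Vg\right)^{p/2} \lesssim r_0^{2-p}\,2^{-k(2-p)}(k+1)^{p/2}.
\]
Summing in $k$ gives a convergent series precisely because $2-p>0$, producing a bound independent of $K_\eps$ and hence of $\eps$.

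For the innermost core $B_{r_{K_\eps+1}}(a_j)$, which has radius of order $\eps$, I would drop the careful local bound and just use the global energy upper bound \eqref{hp:H1bounds-energy} together with H\"older:
\[
\int_{B_{r_{K_\eps+1}}(a_j)}|Du_\eps|^p_g\,\Vg \lesssim \eps^{2-p}\bigl(\pi n\abs{\log\eps}+C_0\bigr)^{p/2} \xrightarrow{\eps\to 0} 0,
\]
since $2-p>0$ dominates the logarithmic factor. Adding up the three contributions gives the desired $\eps$-independent bound on $\|Du_\eps\|_{L^p(M)}$.

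The main (mild) obstacle is ensuring that Lemma~\ref{lemma:H1bounds} actually applies uniformly at every dyadic scale $r_{k+1}$ for $k\le K_\eps$; this amounts to checking that the threshold $\eps_0(r)$ in that lemma behaves in a sufficiently controlled way as $r$ decreases toward $\eps$, so that the ball-construction estimates behind Lemma~\ref{lemma:H1bounds} remain valid on annuli of width comparable to $\eps$. Once the dyadic range is truncated at $r_{K_\eps+1}\gtrsim\eps$ and the core is handled separately by the global energy bound, this point is harmless, but it is where care is needed to keep the constants uniform in $\eps$.
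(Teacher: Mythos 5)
Your argument is correct and is precisely the argument the paper invokes: the text proves the corollary by citing Struwe's dyadic-annulus argument (as written out in Ignat--Jerrard, Lemma~12.3), which is exactly your decomposition into dyadic annuli, the bound $\int_{A_k}|Du_\eps|^2_g\lesssim k+1$ from Lemma~\ref{lemma:H1bounds}, H\"older's inequality with the factor $|A_k|^{1-p/2}\lesssim 2^{-k(2-p)}$, and the global energy bound on the $\eps$-scale core. The uniformity issue you flag (needing the annulus bound at all scales $r\in[\lambda\eps,r_0]$ for a single $\eps$, not just for each fixed $r$ separately) is real but is exactly what the ball construction underlying Lemma~\ref{lemma:H1bounds} delivers, so your proof coincides with the paper's intended one.
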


Corollary~\ref{cor:W1,pbounds} follows from Lemma~\ref{lemma:H1bounds}
by an argument due to Struwe~\cite{struwe94}; details may be found,
e.g., in~\cite[Lemma~12.3]{JerrardIgnat_full}.

\subsection{Compactness for solutions of an inhomogeneous Ginzburg-Landau equation}
\label{ssec:inhomGL}

The results in Section \ref{ssect:product_space} are valid for any sequence
of vector fields that satisfies appropriate bounds. Instead, in this section
we consider vector fields~$u_\eps$ on~$M$ that solve an 
inhomogeneous Ginzburg-Landau equation, that is
\begin{equation} \label{inhomGL}
 -\Delta_g u_\eps + \frac{1}{\eps^2} \left(\abs{u_\eps}^2_g - 1\right) u_\eps 
  = f_\eps, 
\end{equation}
where~$(f_\eps)_{\eps>0}$ is a given sequence of vector fields on~$M$.
For the time being, there is still no dependence on time
in Equation~\eqref{inhomGL}.
However, later on (Section~\ref{sec:dynavortex}) we will apply these results
to the parabolic Ginzburg-Landau equation~\eqref{eq:GL-intro},
by working at fixed time~$t$ and taking~$f_\eps = -\partial_t u_\eps(t)/\abs{\log\eps}$.

As before, we assume that~$u_\eps$ satisfies
\begin{equation} \label{hp:H1bounds-flat-bis} 
 \begin{split}
  \omega(u_\eps) \to 2\pi\sum_{j=1}^n d_j \delta_{a_j} \qquad 
  \textrm{in } W^{-1,1}(M) \quad  \textrm{as } \eps\to 0,
 \end{split} 
\end{equation}
where~$a_1$, \ldots, $a_n$ are distinct points in $M$ and $d_1$, \ldots, $d_n$
are non-zero integers, and that
\begin{gather}
 \norm{u_\eps}_{L^\infty(M)} \leq C_0 \label{hp:H1bounds-Linfty-bis} \\
 \int_{M} \left(\frac{1}{2}\abs{D u_\eps}^2_g 
  + \frac{1}{4\eps^2}(1 -  \abs{u_\eps}^2_g)^2\right)\Vg 
  \le \pi n \abs{\log\eps}  + C_0 \label{hp:H1bounds-energy-bis}
\end{gather}
for some $\eps$-independent constant~$C_0$.
Lemma~\ref{lemma:H1bounds} and Corollary~\ref{cor:W1,pbounds}
imply that, up to extraction of a subsequence, 
\[
 u_\eps\rightharpoonup u_0 \qquad \textrm{weakly in } W^{1,p}(M)
 \textrm{ and in } H^1_{\mathrm{loc}}(M\setminus\{a_1, \, \ldots, \, a_n\})
\]
for some limit vector field~$u_0$, such that $\abs{u_0}_g = 1$ a.e.~in~$M$.
We are going to prove stronger convergence results.

\begin{lemma} \label{lemma:clearingout}
 Let~$u_\eps\in H^1_{\tang}(M)$ be a sequence of solutions of~\eqref{inhomGL}
 that satisfies~\eqref{hp:H1bounds-flat-bis}, 
 \eqref{hp:H1bounds-Linfty-bis}, \eqref{hp:H1bounds-energy-bis}.
 Assume that the sequence~$(f_\eps)_{\eps>0}$ is bounded in~$L^2_{\tang}(M)$. Then, 
 \[
  \abs{u_\eps}_g\to 1
 \]
 locally uniformly in $M\setminus\{a_1, \, \ldots, \, a_n\}$.
\end{lemma}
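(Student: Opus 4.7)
The strategy is a blow-up argument on the natural vortex scale $\eps$, reducing matters to a classification of finite-energy entire solutions of the Euclidean Ginzburg-Landau equation.

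Argue by contradiction: if the conclusion fails, there exist a compact set $K \subset M \setminus \{a_1,\ldots,a_n\}$, a constant $\delta>0$, a subsequence $\eps_k \to 0$ and points $x_k \in K$ with $x_k \to x_0 \in K$ and $\bigl|\,|u_{\eps_k}(x_k)|_g^2 - 1\bigr| \ge \delta$. For each fixed $\eps$, elliptic bootstrap applied to \eqref{inhomGL} using the $L^\infty$ bound \eqref{hp:H1bounds-Linfty-bis} and $f_\eps \in L^2$ yields $u_\eps \in W^{2,2} \hookrightarrow C^{0,\alpha}$, so pointwise values are meaningful. Choose $r > 0$ small enough so that $B_{2r}(x_0)$ is disjoint from $\bigcup_j B_r(a_j)$. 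Lemma~\ref{lemma:H1bounds} then furnishes
\[
 \int_{B_r(x_0)} e_{\eps_k}(u_{\eps_k}) \, \Vg \le C_r,
\]
with $C_r$ independent of $k$.

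Now perform the blow-up. In geodesic normal coordinates centered at $x_k$, with an orthonormal frame $\{\tau_1,\tau_2\}$ chosen so that the connection $1$-form satisfies \eqref{eq:Ap0}, write $u_{\eps_k} = v_k^1 \tau_1 + v_k^2 \tau_2$ as in \eqref{uv}, and set $w_k(y) := v_k(\eps_k y)$ on $B_R(0) \subseteq \R^2$ for arbitrary fixed $R$ and $k$ large. A change of variables together with \eqref{eq:energy_normal_coordinates} gives
\[
 \int_{B_R(0)} \bar{e}_1(w_k) \, \d y \;=\; \bigl(1 + \O(\eps_k^2)\bigr) \int_{B_{\eps_k R}(x_k)} e_{\eps_k}(u_{\eps_k}) \, \Vg \;\le\; 2 C_r,
\]
uniformly in $R$. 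Rewriting \eqref{inhomGL} for $w_k$ by means of \eqref{eq:rough_frame} and the expansions underlying Lemma~\ref{lemma:normD} produces
\[
 -\Delta w_k + (|w_k|^2 - 1) w_k \;=\; \eps_k^2 \tilde f_k \,+\, \mathcal{R}_k,
\]
where $\tilde f_k(y) = f_{\eps_k}\!\circ\!\Phi^{-1}(\eps_k y)$ is the pulled-back source and $\mathcal{R}_k$ collects Christoffel, curvature and connection-form corrections. A change of variables gives $\|\eps_k^2 \tilde f_k\|_{L^2(B_R)} \le \eps_k \|f_{\eps_k}\|_{L^2(M)} \to 0$, while \eqref{eq:basic_normal} and \eqref{eq:smallA} yield $\mathcal{R}_k = \O(\eps_k^2)$ pointwise.

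Finally, pass to the limit. The uniform bounds $\|w_k\|_{L^\infty} \le C_0$ and $\Delta w_k$ uniformly bounded in $L^2_{\mathrm{loc}}(\R^2)$ give $w_k$ uniformly bounded in $W^{2,2}_{\mathrm{loc}} \hookrightarrow C^{0,\alpha}_{\mathrm{loc}}$. Up to a further subsequence, $w_k \to w_\ast$ in $C^0_{\mathrm{loc}}(\R^2)$, and the limit solves
\[
 -\Delta w_\ast + (|w_\ast|^2 - 1) w_\ast = 0 \qquad \text{in } \R^2, \qquad |w_\ast| \le C_0,
\]
with $\int_{\R^2} \bar{e}_1(w_\ast)\, \d y \le 2C_r < \infty$ by Fatou's lemma and the $R$-independent energy bound. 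The classical classification of finite-energy entire solutions (Brezis-Merle-Rivi\`ere) then forces $w_\ast$ to be constant with $|w_\ast|\equiv 1$. However, $|w_\ast(0)| = \lim_k |u_{\eps_k}(x_k)|_g$ differs from $1$ by at least $\delta$, a contradiction. The main technical obstacle is the bookkeeping that verifies that all geometric corrections to the covariant Laplacian and the energy collapse into their Euclidean counterparts under the $\eps_k$-rescaling, so that the limit equation is genuinely $-\Delta w_\ast + (|w_\ast|^2-1)w_\ast=0$; this is routine given the expansions of Subsection~\ref{sssec:normalGL}, but requires care.
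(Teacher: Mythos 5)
Your proof is correct, but it follows a genuinely different route from the paper's. The paper never blows up: it first proves a H\"older estimate $[\abs{u_\eps}_g]_{C^\alpha(M_{\sigma/2})}\lesssim \eps^{-\alpha}\abs{\log\sigma}^{\alpha/2}$ away from the singularities (elliptic estimates, Gagliardo--Nirenberg, plus an extension lemma for the modulus), so that a pointwise deviation $\bigl|\abs{u_\eps(x_\eps)}_g-1\bigr|\geq\delta$ propagates to a ball of radius $\sim\eps\abs{\log\sigma}^{-1/2}$ and forces the potential energy to be at least a fixed multiple of $\abs{\log\sigma}^{-1}$; it then applies the Pohozaev-type identity of Lemma~\ref{lemma:pohozaev} on a mesoscale $\rho_\eps\in(\eps^{1/2},\eps^{1/4})$ chosen by an averaging argument to show that the potential energy there is in fact $\mathrm{o}(1)$, a contradiction. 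You instead rescale to the unit vortex scale and invoke a Liouville-type classification of the entire limit; this is conceptually cleaner and equally valid, but it outsources the decisive step to that classification, whereas the paper's argument is self-contained and quantitative (its Step~2 H\"older bound plays exactly the role of your compactness of the blow-ups). Two points to tighten. First, Brezis--Merle--Rivi\`ere is a quantization result for solutions with finite \emph{potential} energy only; what you actually need --- and what holds by a more elementary argument --- is that an entire solution with finite \emph{total} energy has $\abs{w_*}\equiv 1$: apply the Euclidean Pohozaev identity (Lemma~\ref{lemma:pohozaev} with $\eps=1$, $f=0$) on balls $B_{R_j}$ along a sequence $R_j\to\infty$ for which $R_j\int_{\partial B_{R_j}}\bar{e}_1(w_*)\,\d\H^1\to 0$; constancy of $w_*$ is not needed for the contradiction. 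Second, the remainder $\mathcal{R}_k$ is not pointwise $\O(\eps_k^2)$: it contains terms of the form $\O(\eps_k^2\abs{y})\abs{\nabla w_k}$ and $\O(\eps_k^2\abs{y}^2)\abs{\nabla^2 w_k}$, which are handled by the scale-invariant Dirichlet bound and by absorbing the small second-order coefficient into the interior elliptic estimate; this is precisely the bookkeeping you flag, and it does go through.
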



The proof of Lemma~\ref{lemma:clearingout} is based on classical arguments from
the Ginzburg-Landau theory, see~\cite{BBH}, and depends on some auxiliary results.
We recall that~$e_\eps(u)$ denotes the Ginzburg-Landau energy density
of a vector field~$u$, as defined in~\eqref{eq:energy_density}.

\begin{lemma} \label{lemma:stressenergy}
 Let~$u_\eps$ be a solution of~\eqref{inhomGL}, with $f_\eps\in L^2_{\tang}(M)$.
 Let~$U\subseteq M$ be a smooth, open set. Let~$X$ be a smooth vector field, defined in
 a neighbourhood of~$\overline{U}$, and let $\{\tau_1, \, \tau_2\}$
 be a smooth orthonormal tangent frame, defined in a neighbourhood
 of~$\overline{U}$. Then,
 \[
  \begin{split}
   &\int_U\left(e_\eps(u_\eps) \, (\div X) - (D_{\tau_j}u_\eps, \, D_{\tau_k}u_\eps)_g 
    \, (\tau_j, \, D_{\tau_k} X)_g \right) \Vg \\
   &\qquad = \int_U \left((D_{\tau_j} u_\eps, \, D_{\tau_k}u_\eps)_g \, (D_{\tau_k}\tau_j, \, X)_g  
   - (D_X u_\eps, \, f_\eps)_g \right) \Vg \\
   &\qquad -\int_{U}\left((R(\tau_j,\tau_k)u_\eps,D_k u_\eps) +(D_{[\tau_j,\tau_k]}u_\eps,D_k u_\eps)\right) \, (X,\tau_j)_g\,\Vg\\
   &\hspace{3cm} + \int_{\partial U}\left(e_\eps(u_\eps) (X, \, \nu)_g
   - (D_X u_\eps, \, D_\nu u_\eps)_g\right) \d\H^1
  \end{split}
 \]
where~$\nu$ is the outer normal to~$\partial U$.
\end{lemma}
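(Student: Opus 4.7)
The plan is to derive the identity as a Pohozaev-type stress-energy identity: take the inner product of the equation \eqref{inhomGL} against the test field $D_X u_\eps$, integrate over $U$, and integrate by parts twice. First I would handle the nonlinear term. Since $(u_\eps, D_X u_\eps)_g = \frac{1}{2} X(|u_\eps|_g^2)$, the potential contribution reduces to $\int_U X(\varphi)\,\Vg$ with $\varphi := \frac{1}{4\eps^2}(|u_\eps|_g^2 - 1)^2$, and the divergence theorem (applied to the vector field $\varphi X$) converts this into $-\int_U \varphi\,\div X\,\Vg + \int_{\partial U} \varphi (X, \nu)_g \, \d\H^1$.

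Next I would treat the Laplacian term. Applying \eqref{eq:ip_covariant} with $u = D_{\tau_k} u_\eps$, $v = D_X u_\eps$, $w = \tau_k$, summing in $k$, and invoking the frame identity $\sum_k(\div\tau_k)\tau_k + \sum_k D_{\tau_k}\tau_k = 0$ (obtained from $\tau_j(\tau_h,\tau_k)_g = 0$, exactly as in the derivation of \eqref{eq:rough_frame}) to cancel the spurious divergence-of-frame contributions, I will obtain
\[
 \int_U (-\Delta_g u_\eps,\, D_X u_\eps)_g \,\Vg
 = \int_U (Du_\eps,\, D(D_X u_\eps))_g \,\Vg
 - \int_{\partial U} (D_\nu u_\eps,\, D_X u_\eps)_g \, \d\H^1.
\]
Then I would expand $D_{\tau_k}(D_X u_\eps) = (D_{\tau_k} Du_\eps)(X) + D_{D_{\tau_k} X} u_\eps$ via the Leibniz rule for $Du_\eps$ viewed as a section of $\mathrm{Hom}(TM, TM)$, and contract against $D_{\tau_k} u_\eps$; this directly isolates the term $(D_{\tau_j} u_\eps, D_{\tau_k} u_\eps)_g (\tau_j, D_{\tau_k} X)_g$ that appears on the left-hand side of the identity.

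Finally, to recover the $\frac{1}{2}|Du_\eps|_g^2\,\div X$ contribution, the frame-dependent correction $(D_{\tau_j}u_\eps, D_{\tau_k} u_\eps)_g (D_{\tau_k}\tau_j, X)_g$, and the energy-density boundary piece $\int_{\partial U} e_\eps(u_\eps)(X,\nu)_g\,\d\H^1$, I will apply the divergence theorem to the vector field $\frac{1}{2}|Du_\eps|_g^2\, X$ and reorganise using the antisymmetry of $(\tau_j, D_X \tau_k)_g$ in $j,k$ (again a consequence of $X(\tau_j,\tau_k)_g = 0$), whose contraction with the symmetric tensor $(D_{\tau_j}u_\eps, D_{\tau_k}u_\eps)_g$ vanishes and thereby eliminates the unwanted $D_X \tau_k$ pieces. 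The main obstacle will be precisely this careful bookkeeping of the frame-dependent connection terms arising from differentiating $\{\tau_1,\tau_2\}$: they generate the term $(D_{\tau_j}u_\eps, D_{\tau_k}u_\eps)_g (D_{\tau_k}\tau_j, X)_g$ in the statement, with the same origin as the lower-order correction in \eqref{eq:rough_frame}, and must be tracked carefully through each integration by parts rather than absorbed into error terms.
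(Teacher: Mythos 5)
Your overall strategy is essentially the paper's: both arguments amount to testing \eqref{inhomGL} against $D_X u_\eps$ and integrating by parts, the only difference being that the paper first computes $\nabla_j e_\eps(u_\eps)$ pointwise from the equation and then integrates the left-hand side of the identity by parts, whereas you integrate the tested equation directly. Your treatment of the potential term and of the two boundary contributions is correct.

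There is, however, a concrete gap at the step where you pass from $\sum_k (D_{\tau_k}u_\eps, (D_{\tau_k}Du_\eps)(X))_g$ to the divergence of $\tfrac12\abs{Du_\eps}^2_g\,X$. That passage requires exchanging the two arguments of the second covariant derivative, and for a \emph{vector field} the Hessian is symmetric only up to curvature: $(D_Y Du)(Z)-(D_Z Du)(Y)=R(Y,Z)u$. Carrying your computation through therefore produces a volume term $\sum_k (D_{\tau_k}u_\eps, R(\tau_k, X)u_\eps)_g$ which your plan never mentions; it is precisely the term $\int (Du_\eps, R(\cdot, e_k)u_\eps)_g\,\Vg$ that survives in the coordinate-frame version of this identity, Lemma~\ref{lem:int_parti2}. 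Moreover, the frame term $(D_{\tau_j}u_\eps, D_{\tau_k}u_\eps)_g(D_{\tau_k}\tau_j, X)_g$ cannot arise where you say it does: every contribution coming from differentiating the frame contracts the symmetric tensor $(D_{\tau_j}u_\eps, D_{\tau_k}u_\eps)_g$ against the antisymmetric matrix $(\tau_j, D_Y\tau_k)_g$ and hence vanishes identically --- by the very antisymmetry argument you invoke to discard the ``unwanted'' pieces. So, as described, your bookkeeping closes on an identity carrying a curvature term in place of the stated frame term, not on the identity in the statement. You should track the commutator $[D_{\tau_j},D_{\tau_k}]u = R(\tau_j,\tau_k)u + D_{[\tau_j,\tau_k]}u$ explicitly, as in Lemma~\ref{lem:int_parti2}, and then reconcile what you obtain with the statement; note that the paper's own proof performs the same silent exchange of covariant derivatives in the line $(D_jD_k u_\eps, D_k u_\eps)_g = \nabla_k(D_j u_\eps, D_k u_\eps)_g - (D_j u_\eps, D_kD_k u_\eps)_g$, so this is a point to be handled with care rather than taken for granted.
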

\begin{proof}
 This lemma is a variant of the so-called stress-energy identity,
 see e.g.~\cite[Theorem 1.3.6]{helein} and 
 \cite[Proposition 3.7]{SS-book}.
 To simplify the notation, we write $D_j := D_{\tau_j}$.
 For a scalar function~$\varphi$, we write 
 $\nabla_j := (\nabla\varphi, \, \tau_j)_g = \d\varphi(\tau_j)$.
 Under the assumption~$f_\eps\in L^2_{\tang}(M)$, the second 
 derivatives of~$u_\eps$ exist in~$L^2_{\tang}(M)$, by elliptic regularity.
 Therefore, we can compute the gradient of the energy 
 density~$e_\eps(u_\eps)$.
 First of all, we recall that 
 \[
 D_j D_k u_\eps = D_k D_j u_\eps +R(\tau_j,\tau_k)u_\eps + D_{[\tau_j,\tau_k]}u_\eps,
 \]
 where $R(\cdot,\cdot)$ is the Riemannian curvature tensor and $[\cdot,\cdot]$ are the Lie Brackets, namely $[\tau_j,\tau_k]:=D_{\tau_j}\tau_k-D_{\tau_k}\tau_j$ since the Levi-Civita connection $D$ is symmetric by construction (i.e. torsion free).
 We have (recall that $\left\{\tau_1,\tau_2\right\}$ is an orthonormal frame)
 \[
  \begin{split}
   \nabla_j \left(\frac{1}{2}\abs{D u_\eps}^2_g\right)
   &= (D_jD_k u_\eps, \, D_k u_\eps)_g = (D_kD_j u_\eps,D_k u_\eps) +(R(\tau_j,\tau_k)u_\eps,D_k u_\eps) +(D_{[\tau_j,\tau_k]}u_\eps,D_k u_\eps)\\
&= \nabla_k (D_j u_\eps, \, D_k u_\eps)_g - (D_j u_\eps, \, D_kD_k u_\eps)_g + (R(\tau_j,\tau_k)u_\eps,D_k u_\eps) +(D_{[\tau_j,\tau_k]}u_\eps,D_k u_\eps)\\
   &\hspace{-0.3cm}\stackrel{\eqref{eq:rough_frame}}{=}
   \nabla_k (D_j u_\eps, \, D_k u_\eps)_g - (D_j u_\eps, \, \Delta_g u_\eps)_g 
    \\
    &+ (\div\tau_k) \, (D_j u_\eps, \, D_k u_\eps)_g + (R(\tau_j,\tau_k)u_\eps,D_k u_\eps) +(D_{[\tau_j,\tau_k]}u_\eps,D_k u_\eps)
  \end{split}
 \]
The equation~\eqref{inhomGL} implies
 \[
  \begin{split}
   \nabla_j \left(\frac{1}{2}\abs{D u_\eps}^2_g\right)
   &= \nabla_k (D_j u_\eps, \, D_k u_\eps)_g + (D_j u_\eps, \, f_\eps)_g
    - \frac{1}{\eps^2}(\abs{u_\eps}^2 - 1) \, (D_j u_\eps, \, u_\eps) \\
    &+ (\div\tau_k) \, (D_j u_\eps, \, D_k u_\eps)_g + (R(\tau_j,\tau_k)u_\eps,D_k u_\eps) +(D_{[\tau_j,\tau_k]}u_\eps,D_k u_\eps)
  \end{split}
 \]
 and hence,
 \begin{equation} \label{stressen1}
  \begin{split}
   \nabla_j e_\eps(u_\eps)
   &= \nabla_k (D_j u_\eps, \, D_k u_\eps)_g + (D_j u_\eps, \, f_\eps)_g
    + (\div\tau_k) \, (D_j u_\eps, \, D_k u_\eps)_g \\
    &+ (R(\tau_j,\tau_k)u_\eps,D_k u_\eps) +(D_{[\tau_j,\tau_k]}u_\eps,D_k u_\eps)
  \end{split}
 \end{equation}
 We integrate by parts, as in~\eqref{eq:ip_div} and~\eqref{eq:ip_covariant}:
 \[
  \begin{split}
   &\int_U\left(e_\eps(u_\eps) \, (\div X) - (D_{j}u_\eps, \, D_{k}u_\eps)_g 
    \, (\tau_j, \, D_{k} X)_g \right) \Vg \\
   &\quad = \int_U \left( - \nabla_j e_\eps(u_\eps)\, (X, \, \tau_j)_g
    + \nabla_k (D_{j}u_\eps, \, D_{k}u_\eps)_g \, (\tau_j, \, X)_g 
    +  (D_{j}u_\eps, \, D_{k}u_\eps)_g \, (D_k\tau_j, \, X)_g  \right) \Vg \\
   &\quad\quad + \int_U (\div\tau_k) \, (D_{j}u_\eps, \, D_{k}u_\eps)_g \, (\tau_j, \, X)_g \, \Vg \\
   &\hspace{3cm} + \int_{\partial U} \left(e_\eps(u_\eps) \, (X, \, \nu)_g - (D_{j}u_\eps, \, D_{k}u_\eps)_g 
    \, (\tau_j, \, X)_g \, (\tau_k, \, \nu)_g \right) \d\H^1
  \end{split}
 \]
 By substituting~\eqref{stressen1} in the right-hand side, 
 the lemma follows.
\end{proof}

\begin{lemma} \label{lemma:pohozaev}
 Let~$u_\eps$ be a solution of~\eqref{inhomGL}, 
 with $f_\eps\in L^2_{\tang}(M)$. 
 Let~$B_\rho(p)\subseteq M$ be a geodesic ball in~$M$,
 with radius~$\rho$ smaller than the injectivity radius of~$M$.
 Then, 
 \[
  \begin{split}
   \frac{1}{\eps^2}\int_{B_\rho(p)} (\abs{u_\eps}_g^2 - 1)^2 \,  \Vg 
   &\lesssim \rho \norm{f_\eps}_{L^2(B_\rho(p))} \norm{D u_\eps}_{L^2(B_\rho(p))} \\
   &\qquad\qquad + \rho^2 \int_{B_\rho(p)} e_\eps(u_\eps) \, \Vg 
   + \rho \int_{\partial B_\rho(p)} e_\eps(u_\eps) \, \d\H^1
  \end{split}
 \]
\end{lemma}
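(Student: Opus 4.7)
The plan is to deduce the estimate as a Pohozaev-type identity, obtained by applying the stress-energy identity from Lemma~\ref{lemma:stressenergy} to the ``radial'' vector field
\[
 X := \nabla\!\left(\tfrac{1}{2}\dist_g(\cdot,p)^2\right)
\]
on the ball $U = B_\rho(p)$, together with an orthonormal tangent frame $\{\tau_1,\tau_2\}$ defined near $p$ and chosen so that its connection $1$-form $\mathcal{A}$ vanishes at $p$ (as in~\eqref{eq:Ap0}--\eqref{eq:smallA}). This is the intrinsic analogue of multiplying the equation by $(x-p)\cdot\nabla u_\eps$ in the Euclidean setting.

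I will work in geodesic normal coordinates centered at $p$, where by Gauss' lemma $r:=\dist_g(\cdot,p)=|x|$ and the outward radial unit vector $\nu$ is the Euclidean radial direction. Using $g_{jk}(x) = \delta_{jk} + \O(|x|^2)$ together with the expression $\Hess(\tfrac{1}{2}r^2)(\tau_j,\tau_k) = \partial_j\partial_k(\tfrac{1}{2}r^2) - \Gamma^l_{jk}\,\partial_l(\tfrac{1}{2}r^2)$, and observing that $\Gamma^l_{jk}(x) = \O(|x|)$ in normal coordinates, one finds
\[
 (\tau_j,\,D_{\tau_k}X)_g = \delta_{jk}+\O(r^2),\qquad \div X = 2 + \O(r^2),\qquad |X|_g \le r.
\]
By the choice of frame, $|D\tau_j|_g = |\mathcal{A}|_g = \O(r)$, whence $(D_{\tau_k}\tau_j,\,X)_g = \O(r^2)$.

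Substituting these estimates into the stress-energy identity, the left-hand side becomes
\[
 \int_U\bigl(2e_\eps(u_\eps)-|Du_\eps|^2_g\bigr)\Vg + \O(\rho^2)\int_U e_\eps(u_\eps)\Vg
 = \frac{1}{2\eps^2}\int_U\bigl(|u_\eps|^2_g-1\bigr)^2\Vg + \O(\rho^2)\int_U e_\eps(u_\eps)\Vg,
\]
where I used $2e_\eps(u)-|Du|^2_g = \tfrac{1}{2\eps^2}(|u|^2_g-1)^2$. For the right-hand side, the volume term $\int_U (D_{\tau_j}u_\eps,D_{\tau_k}u_\eps)_g\,(D_{\tau_k}\tau_j,X)_g\Vg$ contributes $\O(\rho^2)\int_U e_\eps(u_\eps)\Vg$, the forcing term satisfies
\[
 \biggl|\int_U (D_X u_\eps,\,f_\eps)_g\,\Vg\biggr| \le \rho\,\|Du_\eps\|_{L^2(U)}\,\|f_\eps\|_{L^2(U)}
\]
by Cauchy--Schwarz since $|X|_g\le\rho$, and on $\partial B_\rho(p)$ we have $(X,\nu)_g = \rho$ and $|(D_X u_\eps,D_\nu u_\eps)_g|\le\rho|Du_\eps|^2_g$, so both boundary integrands are controlled by $\rho\,e_\eps(u_\eps)$. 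Moving the $\O(\rho^2)\int_U e_\eps$ contributions to the right-hand side yields the desired inequality.

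The main technical point is the verification of the Hessian expansion $(\tau_j,D_{\tau_k}X)_g = \delta_{jk}+\O(r^2)$ together with $\div X = 2+\O(r^2)$, i.e.~that $\tfrac{1}{2}\dist_g(\cdot,p)^2$ behaves Euclideanly to second order; this is standard and follows from the computations in Subsection~\ref{sssec:normal} and the argument underlying Lemma~\ref{lemma:Dnu}. Everything else is a careful bookkeeping exercise combining Cauchy--Schwarz with the bound $|X|_g\le\rho$ and the identity on $\partial B_\rho(p)$.
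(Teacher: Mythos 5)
Your proof is correct and follows essentially the same route as the paper: both apply the stress-energy identity of Lemma~\ref{lemma:stressenergy} on $B_\rho(p)$ to a radial vector field in geodesic normal coordinates (the paper takes $X(x)=x$ in coordinates, which agrees with your $\nabla(\tfrac12\dist_g(\cdot,p)^2)$ up to $\O(|x|^3)$), using $g_{ij}=\delta_{ij}+\O(|x|^2)$ and $\Gamma^k_{ij}=\O(|x|)$ to get $\div X=2+\O(|x|^2)$ and $D_{\tau_k}X=\tau_k+\O(|x|^2)$. Your write-up merely spells out the final bookkeeping (the identity $2e_\eps-|Du_\eps|^2_g=\tfrac{1}{2\eps^2}(|u_\eps|^2_g-1)^2$, Cauchy--Schwarz on the forcing term, and the boundary bounds) that the paper leaves implicit with ``the lemma follows.''
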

\begin{proof}
 We work in geodesic normal coordinates centered at~$p$
 and identify~$B_\rho(p)$ with a Euclidean ball, $B_\rho(0)\subseteq\R^2$.
 We recall that the metric tensor and the Christoffel symbols,
 in normal coordinates, satisfy $g_{ij}(x) = \delta_{ij} + \O(|x|^2)$
 and $\Gamma^k_{ij}(x) = \O(\abs{x})$, respectively.
 We consider the orthonormal tangent frame
 \begin{equation*}
  \tau_1 := \frac{\partial_1}{\abs{\partial_1}_g}, \qquad
  \tau_2 := \frac{\partial_2 - (\partial_2, \, \tau_1)_g \, \tau_1}
   {\abs{\partial_2 - (\partial_2, \, \tau_1)_g \, \tau_1}_g}
 \end{equation*}
 (where~$\partial_1$, $\partial_2$ are the coordinate fields,
 $\partial_j := \partial/\partial x^j$). This frame satisfies
 $D_{\tau_k}\tau_j = \O(\abs{x})$ for any~$j$, $k$ and thus $[\tau_j,\tau_k]= D_{\tau_j}\tau_k-D_{\tau_k}\tau_j = \O(\abs{x})$. 
 We apply Lemma~\ref{lemma:stressenergy} to the vector field 
 defined in coordinates as $X(x) := x$. We have
 \[
  D_{\tau_k} X = \left(\frac{\partial X^i}{\partial x^j}\tau_k^j
   + \Gamma^i_{jh} X^h \tau_k^j\right) \frac{\partial}{\partial x^i} 
  = \tau_k + \O(\abs{x}^2), \qquad 
   \div X = 2 + \O(\abs{x}^2)
 \]
 and the lemma follows.
\end{proof}

\begin{proof}[Proof of Lemma~\ref{lemma:clearingout}]
 For any~$\sigma > 0$, we define
 \[
  M_\sigma := M\setminus \bigcup_{j=1}^n B_{\sigma}(a_j)
 \]
 We fix~$\sigma > 0$. Our task is to prove that $\abs{u_\eps}_g\to 1$
 uniformly in~$M_\sigma$. We denote by~$C$
 a generic positive constant, that may depend on~$M$, $n$, $C_0$
 but not on~$\eps$, $\sigma$.
 As a preliminary remark, we observe that the assumptions
 \eqref{hp:H1bounds-flat}, \eqref{hp:H1bounds-Linfty}, \eqref{hp:H1bounds-energy}  
 and Lemma~\ref{lemma:H1bounds} imply
 \begin{equation} \label{clear0}
  \int_{M_{\sigma/4}}
   \left(\frac{1}{2}\abs{\nabla u_\eps}^2_g 
   + \frac{1}{4\eps^2}(1 - \abs{u_\eps}^2_g)^2\right) \Vg
  \leq \pi n \abs{\log\sigma} + C
 \end{equation}
 We split the proof into several steps. 
 
 \setcounter{step}{0}
 \begin{step}[An extension result]
  Let~$q\in (1, \, +\infty)$. We claim that there exists a 
  constant~$C_q$ such that, for any~$\sigma > 0$ smaller than 
  the injectivity radius of~$M$, any~$x_0\in M$ and any scalar
  function~$\rho\in W^{1,q}(B_\sigma(x_0)\setminus B_{\sigma/2}(x_0))$,
  there exists an extension~$\tilde{\rho}\in W^{1,q}(B_\sigma(x_0))$ 
  of~$\rho$ such that
  \begin{equation} \label{clear-ext}
   \norm{\nabla\tilde{\rho}}_{L^q(B_\sigma(x_0))} \leq C_q 
   \norm{\nabla\rho}_{L^q(B_\sigma(x_0)\setminus B_{\sigma/2}(x_0))}
  \end{equation}
  By working in normal geodesic coordinates centered at~$x_0$,
  without loss of generality we may identify~$B_\sigma(x_0)$
  with an Euclidean ball, $B_\sigma(0)\subseteq\R^2$.
  As the inequality~\eqref{clear-ext} is scale-invariant,
  by a scaling argument it suffices to consider the case~$\sigma = 1$.
  If the average~$\fint_{B_1(0)\setminus B_{1/2}(0)}\rho = 0$,
  then~\eqref{clear-ext} follows by standard extension results
  and the Poincar\'e inequality. In general, we first construct an 
  extension~$\tilde{\rho}_0$ of~$\rho_0 := \rho - \fint_{B_1(0)\setminus B_{1/2}(0)}\rho$, then define~$\tilde{\rho} := \tilde{\rho}_0 + \fint_{B_1(0)\setminus B_{1/2}(0)}\rho$.
 \end{step}

 \begin{step}[H\"older bounds on~$\abs{u_\eps}_g$]
  The equation~\eqref{inhomGL} and the assumption~\eqref{hp:H1bounds-Linfty} imply
  \begin{equation*} 
   \begin{split}
    \norm{\Delta_g u_\eps}_{L^2(M_{\sigma/4})}
    &\leq \frac{1}{\eps^2} \norm{\abs{u_\eps}^2_g - 1}_{L^2(M_{\sigma/4})}
      \norm{u_\eps}_{L^\infty(M)} + \norm{f_\eps}_{L^2(M)} \\
    &\leq \frac{C}{\eps^2} \norm{\abs{u_\eps}^2_g - 1}_{L^2(M_{\sigma/4})} + C
   \end{split}
  \end{equation*}
  (The norm~$\norm{f_\eps}_{L^2(M)}$ is bounded by assumption.)
  On the other hand, the energy bound~\eqref{clear0} gives
  \[
   \norm{\abs{u_\eps}^2_g - 1}_{L^2(M_{\sigma/4})} \leq C \eps \abs{\log\sigma}^{1/2}
  \]
  therefore
  \begin{equation} \label{clear1-1}
   \begin{split}
    \norm{\Delta_g u_\eps}_{L^2(M_{\sigma/4})}
    &\leq C \eps^{-1} \abs{\log\sigma}^{1/2}
   \end{split}
  \end{equation}
  Let~$\alpha\in [1/2, \, 1)$ and
  \[
   q := \frac{2}{1 - \alpha} \geq 4
  \]
  By applying elliptic regularity results
  and the Gagliardo-Nirenberg interpolation inequality, we deduce
  \begin{equation*} 
   \begin{split}
    \norm{D u_\eps}_{L^q(M_{\sigma/2})}
    &\leq C\norm{\Delta_g u_\eps}_{L^2(M_{\sigma/4})}^\alpha
     \norm{u_\eps}_{L^\infty(M)}^{1-\alpha} + C\norm{u_\eps}_{L^\infty(M)}\\
    &\hspace{-.65cm}
    \stackrel{\eqref{hp:H1bounds-Linfty}, \, \eqref{clear1-1}}{\leq} 
     C \eps^{-\alpha} \abs{\log\sigma}^{\alpha/2}
   \end{split}
  \end{equation*}
  Due to~\eqref{clear-ext}, there exists a sequence of scalar 
  functions $\tilde{\rho}_\eps\in W^{1,q}(M)$ such that
  $\tilde{\rho}_\eps = \abs{u_\eps}_g$ 
  out of~$\bigcup_{j=1}^n B_{\sigma/2}(a_j)$ and
  \begin{equation*} 
   \begin{split}
    \norm{\nabla\tilde{\rho}_\eps}_{L^q(M)}
    &\leq C \eps^{-\alpha} \abs{\log\sigma}^{\alpha/2}
   \end{split}
  \end{equation*}
  Then, by applying the Sobolev embedding $W^{1,q}(M)\hookrightarrow C^\alpha(M)$
  to~$\tilde{\rho}_\eps$, we conclude
  \begin{equation} \label{clear1-2}
   \begin{split}
    [\abs{u_\eps}_g]_{C^\alpha(M_{\sigma/2})}
    &\leq C \eps^{-\alpha} \abs{\log\sigma}^{\alpha/2},
   \end{split}
  \end{equation}
  where $[\abs{u_\eps}_g]_{C^\alpha(M_{\sigma/2})}$ denotes
  the $\alpha$-H\"older seminorm of~$u_\eps$ on~$M_{\sigma/2}$,
  that is
  \[
   [\abs{u_\eps}_g]_{C^\alpha(M_{\sigma/2})}
   := \sup_{x, \, y\in M_{\sigma/2}, \ x\neq y}
   \frac{\abs{\abs{u_\eps(x)}_g - \abs{u_\eps(y)}_g}}{\dist_g(x, \, y)^\alpha}
  \]
 \end{step}
 
 \begin{step}
  We show that $\abs{u_\eps}_g\to 1$ uniformly in~$M_\sigma$.
  Towards a contradiction, suppose this claim is false.
  Then, there exists a number~$\delta > 0$, a (non-relabelled) subsequence
  $\eps\to 0$ and, for any~$\eps$ in the subsequence, a pont~$x_\eps\in M_\sigma$
  such that 
  \begin{equation} \label{clear3-1} 
    \abs{\abs{u_\eps(x_\eps)}_g - 1} \geq \delta
  \end{equation}
  Due to~\eqref{clear1-2}, we deduce that $\abs{u_\eps}_g$
  remains close  to~$1 - \delta$ on a ball, centered at~$x_\eps$,
  of radius comparable to~$\eps\abs{\log\sigma}^{-1/2}$. 
  More precisely, we have
  \[
    \abs{\abs{u_\eps(x)}_g - 1} \geq \frac{\delta}{2}
    \qquad \textrm{for any } x\in B_{\lambda\eps\abs{\log\sigma}^{-1/2}}(x_\eps),
  \]
  where $\lambda > 0$ is a positive parameter that depends only
  on~$\delta$ and the constant~$C$ in~\eqref{clear1-2}
  --- in particular, $\lambda$ is independent of~$\eps$ and~$\sigma$.
  As a consequence, we have
  \begin{equation} \label{clear3-4}
   \frac{1}{\eps^2} \int_{B_{\lambda\eps\abs{\log\sigma}^{-1/2}}(x_\eps)} 
    \left(\abs{u_\eps}^2_g - 1\right)^2 \, \Vg 
    \geq \frac{C_\delta}{\abs{\log\sigma}}
  \end{equation}
  where~$C_\delta$ is a strictly positive constant, depending on~$\delta$
  but not on~$\eps$, $\sigma$.
 
  Now, let~$\eta > 0$ be a small parameter, to be chosen later on.
  For any~$\eps$ small enough, there exists a 
  radius~$\rho_\eps\in (\eps^{1/2}, \, \eps^{1/4})$ such that
  \begin{equation} \label{clear3-2}
   \int_{B_{\rho_\eps}(x_\eps)}  \left(\frac{1}{2}\abs{D u_\eps}_g^2
   + \frac{1}{4\eps^2}(\abs{u_\eps}^2_g - 1)^2\right) \d\H^1 \leq \frac{\eta}{\rho_\eps} 
  \end{equation} 
  --- for otherwise, we would have
  \[ 
   \int_{B_{\eps^{1/4}}(x_\eps)}  \left(\frac{1}{2}\abs{D u_\eps}_g^2
   + \frac{1}{4\eps^2}(\abs{u_\eps}^2_g - 1)^2\right) \Vg 
   \geq \int_{\eps^{1/2}}^{\eps^{1/4}} \frac{\eta}{\rho} \, \d\rho
   =  \frac{\eta}{4} \abs{\log\eps} 
  \]
  which contradicts the energy bound~\eqref{clear0} for~$\eps$ small enough.
  We can estimate the integral of~$(\abs{u_\eps}^2_g - 1)^2$ 
  on~$B_{\rho_\eps}(x_\eps)$ by applying Lemma~\ref{lemma:pohozaev}.
  We obtain, recalling~\eqref{clear3-2} and the energy bound~\eqref{clear0},
  \begin{equation} \label{clear3-3}
   \frac{1}{\eps^2}\int_{B_{\rho_\eps}(x_\eps)} (\abs{u_\eps}_g^2 - 1)^2 \,  \Vg 
   \lesssim \eps^{1/4} 
    \abs{\log\sigma}^{1/2} + \eps^{1/2} \abs{\log\sigma} + \eta
  \end{equation}
  If we choose~$\eta$ small enough, depending on~$\delta$ and~$\sigma$ only,
  then the inequality~\eqref{clear3-3} contradicts~\eqref{clear3-4}
  when~$\eps$ is small. The lemma follows. \qedhere
 \end{step}
\end{proof}

The next lemma further discusses the convergence of $u_\eps$
as~$\eps\to 0$, away from the singularities. Our proof follows closely 
F. H. Lin \cite{lin1} and \cite{lin2}. 

\begin{lemma} \label{lem:lontano_sing}
 Let~$u_\eps\in H^1_{\tang}(M)$ be a sequence of solutions of~\eqref{inhomGL}
 that satisfies~\eqref{hp:H1bounds-flat-bis}, 
 \eqref{hp:H1bounds-Linfty-bis}, \eqref{hp:H1bounds-energy-bis}.
 Assume that the sequence~$(f_\eps)_{\eps>0}$ is bounded in~$L^2_{\tang}(M)$.
 Then, up to extraction of a subsequence, we have
 \begin{align} 
   &u_\eps\to u_0 \hspace{2.25cm} \textrm{strongly in }  
   H^1_{\mathrm{loc}}(M\setminus\{a_1, \, \ldots, \, a_n\})
   \label{eq:forte_lontano_sing} \\
   &u_\eps\to u_0 \hspace{2.25cm} \textrm{locally uniformly in } 
   M\setminus \{a_1, \, \ldots, \, a_n\}
   \label{eq:unif_lontano_sing} \\
   &\frac{1}{\eps}(\abs{u_\eps}_g^2 - 1)\to 0 \qquad \textrm{strongly in }  
   L^2_{\mathrm{loc}}(M\setminus\{a_1, \, \ldots, \, a_n\}) 
   \label{eq:forte_lontano_sing_abs}
  \end{align}
\end{lemma}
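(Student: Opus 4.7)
The plan is to combine the weak compactness already established with an elliptic bootstrap using the equation, following the classical blueprint of Lin~\cite{lin1,lin2}. By Lemma~\ref{lemma:H1bounds} and Corollary~\ref{cor:W1,pbounds}, after extracting a subsequence we get $u_\eps\rightharpoonup u_0$ weakly in $W^{1,p}(M)$ for every $p\in(1,2)$ and in $H^1_{\mathrm{loc}}(M\setminus\{a_j\})$, hence strongly in $L^q_{\mathrm{loc}}(M\setminus\{a_j\})$ for every $q<\infty$ by Rellich. Lemma~\ref{lemma:clearingout} gives $|u_\eps|_g\to 1$ locally uniformly on $M\setminus\{a_j\}$, so $|u_0|_g=1$ a.e.

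The heart of the argument is the strong $H^1_{\mathrm{loc}}$-convergence. I fix a cutoff $\phi\in C^\infty_{\mathrm{c}}(M\setminus\{a_j\})$ and test \eqref{inhomGL} against $(u_\eps-u_0)\phi^2$. Integration by parts as in \eqref{eq:ip_rough}, together with the weak convergence of $Du_\eps$ in $L^2_{\mathrm{loc}}$, the strong $L^2_{\mathrm{loc}}$-convergence of $u_\eps-u_0$ and the $L^2$-bound on $f_\eps$, kills all cross terms and reduces the Dirichlet part to $\int\phi^2(|Du_\eps|^2_g-|Du_0|^2_g)\Vg+o(1)$. For the potential, the algebraic identity
\[
 (u_\eps,\,u_\eps-u_0)_g = \tfrac{1}{2}(|u_\eps|^2_g-1)+\tfrac{1}{2}|u_\eps-u_0|^2_g,
\]
valid because $|u_0|_g=1$, allows to split
\[
 \tfrac{1}{\eps^2}\int\phi^2(1-|u_\eps|^2_g)(u_\eps,u_\eps-u_0)_g\Vg
 = -\tfrac{1}{2\eps^2}\int\phi^2(1-|u_\eps|^2_g)^2\Vg
   + \tfrac{1}{2\eps^2}\int\phi^2(1-|u_\eps|^2_g)|u_\eps-u_0|^2_g\Vg.
\]
The first summand is non-positive and supplies the coercive control; the identity then reads $\int\phi^2|D(u_\eps-u_0)|^2_g\Vg + \tfrac{1}{2\eps^2}\int\phi^2(1-|u_\eps|^2_g)^2\Vg = (\text{cross term})+o(1)$, so if the cross term goes to zero both \eqref{eq:forte_lontano_sing} and \eqref{eq:forte_lontano_sing_abs} follow in one stroke.

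The main technical obstacle is showing that the cross term is $o(1)$. My plan is to obtain a quantitative rate $\||u_\eps|^2_g-1\|_{L^\infty(K)}\leq C(K)\,\eps^\alpha$ on each compact $K\Subset M\setminus\{a_j\}$, for some $\alpha>0$. Taking the scalar product of \eqref{inhomGL} with $u_\eps$ and using the Bochner-type identity $\tfrac12\Delta(|u_\eps|^2_g)=|Du_\eps|^2_g+(u_\eps,\Delta_g u_\eps)_g$ yields the scalar equation
\[
 -\tfrac12\Delta(|u_\eps|^2_g) + |Du_\eps|^2_g
 + \tfrac{|u_\eps|^2_g(|u_\eps|^2_g-1)}{\eps^2} = (f_\eps,u_\eps)_g,
\]
to which the Pohozaev-type bound of Lemma~\ref{lemma:pohozaev}, combined with the local $H^1$-bound of Lemma~\ref{lemma:H1bounds} and Moser iteration, should produce the desired $L^\infty$ rate. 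With this rate in hand, the cross term is controlled by $\eps^{\alpha-2}\|u_\eps-u_0\|_{L^2(\mathrm{supp}\,\phi)}^2$; a Gagliardo-Nirenberg interpolation between the Hölder seminorm estimate \eqref{clear1-2} and the strong $L^2$-convergence of $u_\eps-u_0$ promotes this to $o(1)$, closing the strong $H^1$-convergence.

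For the locally uniform convergence \eqref{eq:unif_lontano_sing}, I rewrite \eqref{inhomGL} as $-\Delta_g u_\eps = f_\eps+\eps^{-2}(1-|u_\eps|^2_g)u_\eps$; the previously-obtained $L^\infty$ rate on $1-|u_\eps|^2_g$, coupled with the $L^2$-boundedness of $f_\eps$, bounds the right-hand side in some $L^p_{\mathrm{loc}}$ with $p>2$, so interior $W^{2,p}$-estimates for the rough Laplacian and the Sobolev embedding in dimension two give a uniform $C^{0,\beta}_{\mathrm{loc}}$ bound on $u_\eps$ away from singularities; Ascoli together with the already-established $L^2_{\mathrm{loc}}$-convergence of $u_\eps$ to $u_0$ completes the proof. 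The delicate step on which everything hinges is the quantitative rate $\||u_\eps|^2_g-1\|_{L^\infty(K)}=O(\eps^\alpha)$, which is where the geometry-dependent modifications to the Euclidean arguments of~\cite{BBH,lin1} need to be carried out carefully.
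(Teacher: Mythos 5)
Your overall strategy (weak compactness plus an energy identity obtained by testing the equation against $(u_\eps-u_0)\phi^2$, with the potential term split via $|u_0|_g=1$) is a legitimate classical route, but as written it has two gaps that I do not see how to close with the estimates available in this setting, and the paper takes a different path precisely to avoid them.

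First, the cross term. Your identity reduces everything to showing
\[
 \frac{1}{2\eps^2}\int\phi^2\bigl(1-\abs{u_\eps}^2_g\bigr)\abs{u_\eps-u_0}^2_g\,\Vg=\mathrm{o}(1),
\]
and your bound for it is $\eps^{\alpha-2}\norm{u_\eps-u_0}^2_{L^2}$. This is $\mathrm{o}(1)$ only if either $\alpha\geq 2$ or $\norm{u_\eps-u_0}^2_{L^2}=\mathrm{o}(\eps^{2-\alpha})$. Neither is available. With $f_\eps$ bounded only in $L^2$, a pointwise bound $\norm{1-\abs{u_\eps}^2_g}_{L^\infty(K)}=O(\eps^2)$ is out of reach (the maximum-principle arguments that give it in the Euclidean minimizer setting need $L^\infty$ control of $Du_\eps$ or of the forcing), and the route you propose does not produce any useful $\alpha>0$: the H\"older estimate~\eqref{clear1-2} gives $[\abs{u_\eps}_g]_{C^\alpha}\lesssim\eps^{-\alpha}$, and interpolating a seminorm of order $\eps^{-\alpha}$ with the $L^2$ rate $\norm{1-\abs{u_\eps}^2_g}_{L^2}=O(\eps)$ from Lemma~\ref{lemma:pohozaev} yields only $\norm{1-\abs{u_\eps}^2_g}_{L^\infty}=O(1)$ (the exponents cancel exactly). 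As for a rate on $\norm{u_\eps-u_0}_{L^2}$, none follows from the compactness arguments. Second, the uniform convergence: the right-hand side $f_\eps+\eps^{-2}(1-\abs{u_\eps}^2_g)u_\eps$ is \emph{not} bounded in $L^2_{\mathrm{loc}}$, let alone in $L^p$ for $p>2$ ($f_\eps$ is only in $L^2$, and the Pohozaev estimate gives $\eps^{-2}\norm{1-\abs{u_\eps}^2_g}_{L^2(B)}=O(\eps^{-1})$), so the interior $W^{2,p}$ bootstrap does not start.

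The paper's proof circumvents both problems by writing $u_\eps=\rho_\eps v_\eps$ with $\rho_\eps=\abs{u_\eps}_g$ and lifting $v_\eps$ to a scalar phase $\alpha_\eps$ (possible on balls away from the $a_j$ once Lemma~\ref{lemma:clearingout} guarantees $\rho_\eps\geq 1/2$). Testing the modulus equation with $1-\rho_\eps$ puts the singular term $\eps^{-2}\int\rho_\eps(1-\rho_\eps)^2(1+\rho_\eps)$ on the \emph{good} side of the identity, so no rate is ever needed: one gets $\nabla\rho_\eps\to 0$ in $L^2$ and $\eps^{-2}\int(1-\abs{u_\eps}^2_g)^2\to 0$ directly. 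The phase solves a uniformly elliptic divergence-form equation with $L^2$ right-hand side, so $\alpha_\eps$ is bounded in $H^2_{\mathrm{loc}}$; compact embedding gives strong $H^1$ convergence of $Dv_\eps$, hence of $Du_\eps$, and the $H^2_{\mathrm{loc}}$ bound on $v_\eps$ plus Sobolev embedding gives the locally uniform convergence. I recommend you restructure your argument along these lines, or find a genuinely quantitative substitute for the $L^\infty$ rate on $1-\abs{u_\eps}^2_g$, which I do not believe exists under the hypothesis $f_\eps\in L^2$ only.
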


\begin{remark} \label{rk:strongW1,p}
 Corollary~\ref{cor:W1,pbounds} and the strong
 convergence~\eqref{eq:forte_lontano_sing} imply,
 via the H\"older inequality, that $u_\eps\to u_0$ strongly 
 in~$W^{1,p}(M)$ for any~$p\in [1, \, 2)$.
\end{remark}

\begin{proof}[Proof of Lemma~\ref{lem:lontano_sing}]
Let~$B\csubset M\{a_1, \, \ldots, \, a_n\}$
be a geodesic ball, of radius smaller than the
injectivity radius of~$M$. By Lemma~\ref{lemma:H1bounds}
and Fubini's theorem, we can make sure that
\begin{equation} \label{eq:energy_bound_palla}
  \begin{split}
   \int_{B} \left(\abs{D u_\eps}^2_g + \frac{1}{2\eps^2}
   (1 - \abs{u_\eps}_g^2)^2\right)\Vg
   + \int_{\partial B} \left(\abs{D u_\eps}^2_g + \frac{1}{2\eps}
   (1 - \abs{u_\eps}_g^2)^2\right) \d\H^1 \le C 
  \end{split}
\end{equation}
The constant~$C$ does depend on the ball~$B$ (in particular,
on the distance of~$B$ from the singular points~$a_1$, \ldots, $a_n$),
but not on~$\eps$. Moreover, Lemma~\ref{lemma:clearingout} implies
\begin{equation}\label{hp:lontano_sing_B-abs}
  \abs{u_\eps}_g \to 1  \qquad \textrm{uniformly in } \overline{B}.
\end{equation}

\setcounter{step}{0}
\begin{step}
We let $\rho_\eps:=\vert u_\eps\vert_g$ and $v_\eps:=\frac{u_\eps}{\vert u_\eps\vert_g}$. 
The field~$v_\eps$ is well-defined and belongs to~$H^1_{\tang}(B)$,
thanks to~\eqref{hp:lontano_sing_B-abs}. 
By lifting results (see e.g.~\cite[Lemma 6.1]{ssvM3AS}), there exists a scalar 
function $\alpha_\eps\in H^1(B)$ such that
\begin{equation} 
v_\eps = \cos(\alpha_\eps) \hat{e}_1 + i\sin(\alpha_\eps) \hat{e}_1,
\end{equation}
where $\left\{\hat{e}_1,\hat{e}_2\right\}=\left\{\hat{e}_1,i\hat{e}_1\right\}$
is an orthonormal moving frame in $B$ such that $\div \mathbb{A}=0$
(see~\eqref{eq:div_free_A}).
We rewrite~\eqref{eq:GL_intro} in terms of~$\rho_\eps$ and~$\alpha_\eps$. 
First of all, note that 
\[
\Delta_g u_\eps = \Delta_g (\rho_\eps v_\eps) = \rho_\eps \Delta_g v_\eps + v_\eps\Delta_g \rho_\eps + 2 g^{ij}\partial_i \rho_\eps D_j v_\eps.
\]
Moreover (see \cite[Lemma 6.2 and Lemma 6.3]{ssvM3AS})
\[
D v_\eps = (\d \alpha_\eps -\mathcal{A})iv_\eps,
\]
and ($\mathbb{A}:=\mathcal{A}^{\sharp}$)
\[
\Delta_g v_\eps = (\Delta \alpha_\eps) iv_\eps -\vert \nabla \alpha -\mathbb{A}\vert^2_g v_\eps.
\]
Therefore, 
\begin{equation}
\label{eq:Laplacian_deviation}
\Delta_g u_\eps = \Big(\rho_\eps \Delta \alpha_\eps + 2(\nabla \alpha_\eps -\mathbb{A},\nabla \rho_\eps)_g\Big)iv_\eps + \Big(\Delta_g \rho_\eps -\vert\nabla\alpha_\eps -\mathbb{A}\vert^2_g \Big)v_\eps.
\end{equation}
We decompose $f_\eps = (f_\eps, \, v_\eps)_g\,v_\eps + (f_\eps, \, iv_\eps)_gi v_\eps
=: f_\eps^1 \, v_\eps + f_\eps^2  \, i v_\eps$. Since we have assumed that
$f_\eps$ is bounded in $L^2_{\tang}(M)$, we have
\begin{equation}
\label{eq:rhsB}
\norm{f^1_\eps}_{L^2(B)} + \norm{f^2_\eps}_{L^2(B)} \leq C.
\end{equation}   
We can thus rewrite \eqref{eq:GL_intro} as the following system
\begin{equation}
\label{eq:alpha+rho}
\begin{cases}
\displaystyle\div(\rho_\eps^2(\nabla \alpha_\eps -\mathbb{A})) = f^1_\eps,\\
\\
\displaystyle\Delta_g \rho_\eps + \frac{\rho_\eps}{\eps^2}(1-\rho_\eps^2) -\rho_\eps\vert\nabla \alpha_\eps\vert^2_g = f^2_\eps.
\end{cases}
\end{equation}
\end{step}
\begin{step}
Since $\rho_\eps\ge 1/2$ and $f^1_\eps\in L^2(B)$, it turns out that $\alpha_\eps\in H^2(B)$ and thus $h_\eps:=f^2_\eps +\rho_\eps\vert \nabla \alpha_\eps\vert^2_g\in L^2$. 
Then, we multiply the equation for $\rho_\eps$ with $1-\rho_\eps$ and we integrate on $B$. We get
\begin{align}
\label{eq:stima_palla}
\int_{B}\vert \nabla \rho_\eps\vert^2_g \Vg + \frac{1}{\eps^2}\int_{B}\rho_\eps (1-\rho_\eps)^2(1+\rho_\eps)\Vg \nonumber\\
= \int_{B}h_\eps (1-\rho_\eps) \Vg -\int_{\partial B}\partial_\nu \rho_\eps(1-\rho_\eps)\, \d\H^1.
\end{align} 
Since $\rho_\eps\ge 1/2$
\[
\frac{1}{\eps^2}\int_{B}\rho_\eps (1-\rho_\eps)^2(1+\rho_\eps)\Vg \ge \frac{3}{4\eps^2}\int_{B}(1-\rho_\eps)^2\Vg.
\]
On the other hand, since $h_\eps$ is in $L^2$ we get
\[
\Big\vert \int_{B}h_\eps (1-\rho_\eps)\Vg\Big\vert \le \frac{\delta}{2\eps^2}\int_{B}(1-\rho_\eps)^2\Vg + \frac{\eps^2}{2\delta}\int_{B} h_\eps^2 \Vg
\]
and (thanks to \eqref{eq:energy_bound_palla} and~\eqref{hp:lontano_sing_B-abs})
\[
\int_{\partial B}\partial_\nu \rho_\eps (1-\rho_\eps)\, \d\H^1 \le C\|\rho_\eps -1\|_{L^\infty(\partial B)}.
\]
Choosing $\delta =1/2$, we conclude that 
\[
\int_{B}\vert \nabla \rho_\eps\vert^2_g \Vg + \frac{1}{4\eps^2}\int_{B}(1-\rho_\eps)^2\Vg \le C\big(\eps^2 \|h_\eps\|_{L^2}^2 + \|\rho_\eps -1\|_{L^\infty(\partial B)}\big).
\]
Therefore,
\begin{equation}
\label{eq:gradrho0}
 \rho_\eps \to 1 \qquad  \textrm{strongly in } H^1(B)
\end{equation}
Moreover, since
\[
 (1-x^2)^2 = (1-x)^2(1+x)^2\le 4(1-x)^2,
\]
for $x\in (1/2,1)$, we conclude that
\begin{equation}
\label{eq:zero_pote_palla}
\lim_{\eps\to 0}\frac{1}{\eps^2}\int_{B}(1-\vert u_\eps\vert_g^2)^2 \, \Vg =0
\end{equation}
\end{step}
\begin{step}
The first equation of~\eqref{eq:Laplacian_deviation} can be
equivalently written as
\[
 \Delta_g\alpha_\eps = (1 - \rho_\eps^2)\Delta_g\alpha_\eps
 + \underbrace{(\nabla(1 - \rho_\eps^2), \, \nabla\alpha_\eps)_g
 - \div(\rho_\eps^2\mathbb{A}) + f_\eps^2}_{=: g_\eps}
\]
Both~$\rho_\eps$ and~$\alpha_\eps$ are bounded in~$H^1(B)$,
due to the energy estimate~\eqref{eq:energy_bound_palla},
while~$f_\eps^1$ is bounded in~$L^2(B)$ by~\eqref{eq:rhsB}.
Therefore, $g_\eps$ is bounded in~$L^2(B)$.
Let $B^\prime\csubset B$ be a slightly smaller ball.
Thanks to~\eqref{hp:lontano_sing_B-abs}, we can apply 
elliptic regularity estimates and conclude that $\alpha_\eps$
is bounded in~$H^2(B^\prime)$. By compact Sobolev embedding,
it follows that $\alpha_\eps$ converges strongly in~$H^1(B^\prime)$
(possibly up to extraction of a subsequence).
Therefore, since
\[
D_{\hat{e}_k}v_\eps = (\d\alpha_\eps(\hat{e}_k) -\mathcal{A}(\hat{e}_k))i v_\eps,
\]
for~$k=1, \, 2$, we get that $D_{\hat{e}_k}v_\eps$ is strongly convergent in $L^2(B^\prime)$. 
As
\[
D_{\hat{e}_k} u_\eps =\rho_\eps D_{\hat{e}_k}v_\eps
  + v_\eps (\nabla\rho_\eps, \, \hat{e}_k)_g,
\]
thanks to \eqref{eq:gradrho0} we conclude that 
$D u_\eps$ converges strongly in $L^2(B^\prime)$. 
Finally, as a byproduct of the argument above,
we obtain that $u_\eps/\abs{u_\eps}$ is bounded
in~$H^2_{\mathrm{loc}}(M\setminus\{a_1, \, \ldots, \, a_n\})$.
Therefore, $u_\eps/\abs{u_\eps}$
converges locally uniformly in~$M\setminus\{a_1, \, \ldots, \, a_n\}$,
by Sobolev embeddings. By Lemma~\ref{lemma:clearingout}, we deduce that
$u_\eps\to u_0$ locally uniformly in $M\setminus\{a_1, \, \ldots, \, a_n\}$.
\qedhere
\end{step}
\end{proof}

\section{Vortices: the dynamical case}
\label{sec:dynavortex}

\subsection{Space-Time Compactness and Product Estimates}
\label{ssec:product}

In this section we discuss the compactness of sequences of vector fields $u_\eps\colon [0, \, T]\times M\to TM$ satisfying suitable energy bounds. 
These are the typical bounds satisfied by the solutions of the (rescaled) heat flow of the Ginzburg-Landau equation. 

First of all, for a smooth vector field $u\colon[0, \, T]\times M\to TM$ we define the space-time current $j_Q(u)$ as 
\begin{equation}
\label{eq:space_time_current}
 j_Q(u) 
 := (D^Q u, \, i u)_{g_\mathrm{cyl}}
 = (\partial_t u, \, iu)_{g} \, \d t + j(u),
\end{equation}
where~$D^Q$ denotes the Riemannian connection
on the cylinder~$Q := [0, \, T]\times M$
and~$g_{\mathrm{cyl}}$ denotes the product metric in~$Q$,
$g_\mathrm{cyl} := {\d t}^2 + g$.
We denote with $\d_{Q}$ the exterior derivative on~$Q$ 
while $\d_t$ denotes the exterior derivative with respect to time and~$\d$ denotes the exterior derivative with respect to 
the variables of~$M$. Recalling from~\eqref{eq:vorticity} that 
$\d j(u) = \omega(u) -\kappa \Vg$, we obtain
\begin{equation}
\label{eq:cylindrical_vorticity}
 \d_{Q} \, j_Q(u) 
 = \d\left((\partial_t u, \, iu)_g \, \d t\right) + \d_{Q}j(u) 
 = V + \omega(u) -\kappa\Vg, 
\end{equation}
where $V$ denotes the two-form on $Q$ 
\begin{equation}
\label{eq:def_V}
 V := \d\left((\partial_t u, \, iu)_g \, \d t\right) + \d_t j(u). 
\end{equation}
Then, we define the space-time vorticity
\begin{equation}
\label{eq:defJ}
 J(u):= V + \omega(u).
\end{equation}

Now, we fix some $p\in M$ and $\delta>0$ smaller than the
injectivity radius and consider the geodesic ball~$B_\delta(p)$.
The geodesic normal coordinates define a
map~$\Phi\colon B_\delta(p)\to B_\delta(0)\subseteq\R^2$.
Let~$\{\tau_1, \, \tau_2\}$ be a positively oriented,
orthonormal tangent frame
on~$B_\delta(p)$ that satisfies~\eqref{eq:smallA}.
With the notation of Subsection~\ref{sssec:normalGL}, we let 
$v=(v_1, \, v_2)\colon [0, \, t]\times B_\delta(0)\to \mathbb{R}^2$
be the unique vector field such that
\begin{align}
 \label{eq:normal_representation_space_time}
 u(t, \, \Phi^{-1}(x)) = v^1(t, \, x) \, \tau_1(\Phi^{-1}(x))
  + v^2(t, \, x) \, \tau_2(\Phi^{-1}(x))
\end{align}
for any~$(t, \, x)\in [0, \, t]\times B_\delta(0)$.
As we have seen already in Subsection~\ref{sssec:normalGL}, 
we may express the Ginzburg-Landau energy of~$u$ in~$B_\delta(p)$
in terms of the (Euclidean) Ginzburg-Landau energy of~$v$,
see Equation~\eqref{eq:energy_normal_coordinates}. 
By applying~\eqref{eq:basic_normal}, \eqref{eq:basic_normal_nabla}
and~\eqref{eq:change_volume}, we also obtain
\begin{equation}
\label{eq:partialt_normal_coordinates}
 \int_{0}^T\int_{B_\delta(p)} \abs{\partial_t u}^2_{g} \, \Vg\,\d t 
 = \left(1 + \O(\delta^2)\right)
 \int_{0}^T\int_{B_\delta(0)}\abs{\partial_t v}^2 \, \d x \, \d t
\end{equation}
Now, we write the space-time vorticity in terms of its Euclidean counterpart. 
We let $\bar{\jmath}(v)$ be the (Euclidean) current of
$v$ with respect to the space variable $x$, namely
\begin{equation}
\label{eq:flat_space_current}
 \bar{\jmath}(v) := \sum_{k=1}^2 (\partial_k v, \, iv) \, \d x^k.
\end{equation} 
We define the form
\begin{equation} \label{eq:flat_V}
 \bar{V} := \d \left((\partial_t v, \, iv)\, \d t\right)
  + \d_t \bar{\jmath}(v)
\end{equation}
and the space-time Euclidean vorticity 
\begin{equation} \label{eq:flat_vorticity}
 \bar{J}(v) := \bar{V}+ \d \bar{\jmath}(v)
\end{equation}
(which is consistent with~\cite{SS-product} up to a factor $2$).

\begin{lemma} \label{lemma:pullback}
 Let
 \begin{equation} \label{frakR}
  \mathfrak{R} := \left(1 - \abs{v}^2\right) (\Phi_Q^{-1})^*\mathcal{A},
 \end{equation}
 where~$\mathcal{A}$ is the connection~$1$-form induced by
 the frame~$\{\tau_1, \, \tau_2\}$, as in~\eqref{eq:connection1}.
 Then, we have
 \begin{align}
  (\Phi_Q^{-1})^*V &= \bar{V} + \d_t\mathfrak{R} \label{pullbackV} \\
  (\Phi_Q^{-1})^*\omega(u) &= \d\bar{\jmath}(v) + \d\mathfrak{R} 
   \label{pullbackomega} \\
  (\Phi_Q^{-1})^*J(u) &= \bar{J}(v) 
   + \d_t\mathfrak{R} +\d\mathfrak{R} \label{pullbackJ}
 \end{align}
\end{lemma}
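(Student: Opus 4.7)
The plan is to express everything in coordinates via the frame $\{\tau_1, \, \tau_2\}$ and reduce the three identities to straightforward computations with $1$-forms on $Q$. The key observation is that both the current $j(u)$ and the time-component $(\partial_t u, \, iu)_g$ have very explicit expressions in terms of $v = (v^1, \, v^2)$, with the curvature of $M$ entering only through the connection $1$-form $\mathcal{A}$.

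First I would compute $j(u)$ in the frame. Writing $u = v^1\tau_1 + v^2\tau_2$, the relation $i\tau_1 = \tau_2$, $i\tau_2 = -\tau_1$ (which follows from \eqref{eq:prop-i}) gives $iu = v^1\tau_2 - v^2\tau_1$. Using the covariant derivative formulas \eqref{eq:prop_connection} for $D_X\tau_k$, one finds
\[
 D_X u = \bigl(X v^1 + v^2\mathcal{A}(X)\bigr)\tau_1 + \bigl(X v^2 - v^1\mathcal{A}(X)\bigr)\tau_2
\]
and hence, taking the inner product with $iu$ and pulling back through $\Phi_Q$,
\[
 (\Phi_Q^{-1})^* j(u) = \bar{\jmath}(v) - \abs{v}^2\, (\Phi_Q^{-1})^*\mathcal{A}.
\]
At the same time, because the frame is $t$-independent, $\partial_t u = (\partial_t v^1)\tau_1 + (\partial_t v^2)\tau_2$, so
$(\partial_t u, \, iu)_g = v^1\partial_t v^2 - v^2\partial_t v^1 = (\partial_t v, \, iv)$.

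Next I would differentiate. For \eqref{pullbackomega}, I apply $d$ to the pullback of $j(u)$, using that pullback commutes with $d$ and that $d\mathcal{A} = \kappa \Vg$ by \eqref{eq:dconnection}. This yields
\[
 (\Phi_Q^{-1})^*(dj(u)) = d\bar{\jmath}(v) - d(\abs{v}^2)\wedge (\Phi_Q^{-1})^*\mathcal{A} - \abs{v}^2(\Phi_Q^{-1})^*(\kappa\Vg).
\]
Adding $(\Phi_Q^{-1})^*(\kappa\Vg)$, the curvature terms combine to $(1 - \abs{v}^2)(\Phi_Q^{-1})^*(\kappa\Vg)$; together with $-d(\abs{v}^2)\wedge (\Phi_Q^{-1})^*\mathcal{A}$, this is exactly $d\mathfrak{R}$, by the Leibniz rule applied to $\mathfrak{R} = (1-\abs{v}^2)(\Phi_Q^{-1})^*\mathcal{A}$ and another use of $d\mathcal{A} = \kappa\Vg$. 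This proves \eqref{pullbackomega}.

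For \eqref{pullbackV} I expand $V = d\bigl((\partial_t u, \, iu)_g\,\d t\bigr) + \d_t j(u)$ and use that $(\Phi_Q^{-1})^*\mathcal{A}$ has no $\d t$-component and is $t$-independent, so $\d_t((\Phi_Q^{-1})^*\mathcal{A}) = 0$. The only surviving correction comes from $\d_t(\abs{v}^2 (\Phi_Q^{-1})^*\mathcal{A}) = \d t\wedge\partial_t(\abs{v}^2)\,(\Phi_Q^{-1})^*\mathcal{A}$, which matches $\d_t\mathfrak{R}$ after accounting for the sign from $1 - \abs{v}^2$. The identity \eqref{pullbackJ} is then obtained by summing \eqref{pullbackV} and \eqref{pullbackomega} and recalling $J(u) = V + \omega(u)$, $\bar{J}(v) = \bar{V} + d\bar{\jmath}(v)$.

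There is no real obstacle here: the proof is an exercise in differential-form bookkeeping. The only point requiring care is the separation of $\d_Q = \d + \d_t$ on mixed forms, in particular making sure the curvature contribution from $\d\mathcal{A} = \kappa\Vg$ is absorbed into the spatial piece $\d\mathfrak{R}$ (not $\d_t\mathfrak{R}$) and that $\partial_t$ acts only on the factor $\abs{v}^2$ in $\mathfrak{R}$.
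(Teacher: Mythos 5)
Your proof is correct and follows essentially the same route as the paper: first compute $(\Phi_Q^{-1})^*j(u) = \bar{\jmath}(v) - \abs{v}^2(\Phi_Q^{-1})^*\mathcal{A}$ and $(\partial_t u, iu)_g = (\partial_t v, iv)$ in the moving frame, then obtain \eqref{pullbackomega} by differentiating and absorbing the curvature via $\d\mathcal{A}=\kappa\Vg$, and \eqref{pullbackV} by noting that $(\Phi_Q^{-1})^*\mathcal{A}$ is $t$-independent. The only cosmetic difference is that the paper reaches \eqref{pullbackomega} by comparing $\d_Q$ of the full space-time current with the pullback of the cylindrical vorticity identity, whereas you differentiate the spatial pullback directly — the computations are identical in substance.
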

\begin{proof}
 As a preliminary step, we will show that
 \begin{equation} \label{eq:pull_back}
   (\Phi_Q^{-1})^* j_Q(u) = (\partial_t v, iv) \, \d t 
    + \bar{\jmath}(v) -\vert v\vert^2 (\Phi_Q^{-1})^*\mathcal{A}.
 \end{equation}
 We take the pull-back via~$\Phi_Q^{-1}$ of both sides of~\eqref{eq:space_time_current}:
 \begin{equation} \label{plbk1}
  (\Phi_Q^{-1})^* j_Q(u) = 
   (\Phi_Q^{-1})^* \left((\partial_tu, \, iu)_g \, \d t \right)
    + (\Phi_Q^{-1})^* j(u)
 \end{equation}
 We study separately the terms in the right-hand side of~\eqref{plbk1}.
 By differentiating both sides of~\eqref{eq:normal_representation_space_time}
 with respect to~$t$, we obtain
 \begin{equation} \label{plbk12}
   (\partial_t u)\circ\Phi_Q^{-1} 
   = \partial_t v^1 \, (\tau_1\circ\Phi^{-1})
    + \partial_t v^2 \, (\tau_2\circ\Phi^{-1})
 \end{equation}
 As a consequence,
 \begin{equation} \label{plbk13}
  \begin{split}
   (\Phi_Q^{-1})^* \left((\partial_tu, \, iu)_g \, \d t \right)
    &= ((\partial_tu, \, iu)_g\circ\Phi_Q^{-1}) \, \d t \\
    &= \left((\partial_t u)\circ\Phi_Q^{-1}, 
    \, i (u\circ\Phi_Q^{-1}) \right)_g \, \d t
   \stackrel{\eqref{eq:normal_representation_space_time},\eqref{plbk12}}{=}
    (\partial_t v, \, iv) \, \d t 
  \end{split}
 \end{equation}
 Now, we consider the second term in the right-hand side of~\eqref{plbk1}.
 Let~$u^k := v^k\circ\Phi_Q$ for~$k=1,2$. 
 Equation~\eqref{eq:normal_representation_space_time}
 may be written as
 \[
  u(t, \, y) = u^1(t, \, y) \, \tau_1(y) + u^2(t, \, y) \, \tau_2(y)
 \]
 for any~$(t, \, y)\in [0, \, T]\times B_\delta(p)$.
 Then, the current of~$u$ (with respect to the space variables only)
 may be written as
 \begin{equation} \label{plbk14}
   \begin{split}
    j(u) = (Du, \, iu)_g 
     &= \left(\d u^1 \otimes \tau_1 + u^1 \, D\tau_1 
     + \d u^2 \otimes \tau_2 + u^2 \, D \tau_2, 
     \, u^1 \, \tau_2 - u^2 \, \tau_1\right)_g \\
     &= u^1 \d u^2 - u^2 \d u^1 - \abs{u}^2_g\mathcal{A}
   \end{split}
 \end{equation}
 We take the pull-back via~$\Phi_Q^{-1}$ of both sides of~\eqref{plbk14}.
 Since the pull-back commutes with the exterior differential,
 and since $(\Phi_Q^{-1})^*u^k = u^k\circ\Phi_Q^{-1} = v^k$ for~$k=1,2$,
 we obtain
 \begin{equation} \label{plbk15}
   (\Phi_Q^{-1})^* j(u) = \bar{\jmath}(v) - \abs{v}^2 (\Phi_Q^{-1})^*\mathcal{A},
 \end{equation}
 Equation~\eqref{eq:pull_back} now follows from~\eqref{plbk12}, 
 \eqref{plbk13} and~\eqref{plbk15}.
 
 \medskip
 \noindent
 \emph{Proof of~\eqref{pullbackV}.}
 We take the pull-back via~$\Phi_Q^{-1}$ of both sides of~\eqref{eq:def_V}.
 As the pull-back operator is linear and commutes with the exterior derivative,
 we obtain
 \[
  \begin{split}
    (\Phi_Q^{-1})^* V 
    &= (\Phi_Q^{-1})^* \d\left((\partial_t u, \, iu)_g \, \d t\right)
     + (\Phi_Q^{-1})^*\d_t j(u) \\
    &= \d\left((\Phi_Q^{-1})^*((\partial_t u, \, iu)_g \, \d t)\right)
     + \d_t \left((\Phi_Q^{-1})^*j(u)\right) \\
  \end{split}
 \]
 We apply~\eqref{plbk13} and~\eqref{plbk15} to the right-hand side:
 \begin{equation} \label{plbk22}
  \begin{split}
    (\Phi_Q^{-1})^* V 
    &= \d \left((\partial_t v, \, iv) \, \d t\right)
     + \d_t \bar{\jmath}(v) 
     - \d_t\left(\abs{v}^2 (\Phi_Q^{-1})^*\mathcal{A}\right) 
    \stackrel{\eqref{eq:flat_V}}{=} \bar{V} 
     - \d_t\left(\abs{v}^2 (\Phi_Q^{-1})^*\mathcal{A}\right) 
  \end{split}
 \end{equation}
 As the form~$(\Phi_Q^{-1})^*\mathcal{A}$ only depends 
 on the variables in~$M$, not on~$t$, we have
 \begin{equation} \label{plbk23}
  - \d_t\left(\abs{v}^2 (\Phi_Q^{-1})^*\mathcal{A}\right)
  = \d_t\left((1 - \abs{v}^2) (\Phi_Q^{-1})^*\mathcal{A}\right)
  = \d_t\mathfrak{R}
 \end{equation}
 Equation~\eqref{pullbackV} follows from~\eqref{plbk22} and~\eqref{plbk23}.
 
 \medskip
 \noindent
 \emph{Proof of~\eqref{pullbackomega}.}
 We take the exterior differential~$\d_Q$ of both sides of~\eqref{eq:pull_back}:
 \begin{equation*}
  \begin{split}
   (\Phi_Q^{-1})^* \d_Q j_Q(u) = \d\left((\partial_t v, iv) \, \d t\right) 
    + \d_t\bar{\jmath}(v) + \d\bar{\jmath}(v)
    - \d_t\left(\abs{v}^2(\Phi_Q^{-1})^*\mathcal{A}\right)
    - \d\left(\abs{v}^2(\Phi_Q^{-1})^*\mathcal{A}\right)
  \end{split}
 \end{equation*}
 The right-hand side may be further simplified
 by applying~\eqref{eq:def_V} and~\eqref{plbk22}:
 \begin{equation} \label{plbk31}
  \begin{split}
   (\Phi_Q^{-1})^* \d_Q j_Q(u) 
   &= \bar{V} + \d\bar{\jmath}(v)
    - \d_t\left(\abs{v}^2(\Phi_Q^{-1})^*\mathcal{A}\right)
    - \d\left(\abs{v}^2(\Phi_Q^{-1})^*\mathcal{A}\right) \\
   &= (\Phi_Q^{-1})^* V + \d\bar{\jmath}(v)
    - \d\left(\abs{v}^2(\Phi_Q^{-1})^*\mathcal{A}\right) 
  \end{split}
 \end{equation}
 On the other hand, by taking the pull-back via~$\Phi_Q^{-1}$
 in both sides of~\eqref{eq:cylindrical_vorticity}, we obtain
 \begin{equation} \label{plbk32}
  (\Phi_Q^{-1})^*\d_{Q} \, j_Q(u) 
  = (\Phi_Q^{-1})^*V + (\Phi_Q^{-1})^*\omega(u) 
  - (\Phi_Q^{-1})^*(\kappa\Vg)
 \end{equation}
 Comparing~\eqref{plbk31} with~\eqref{plbk32} gives
 \begin{equation*}
  \begin{split}
   (\Phi_Q^{-1})^*\omega(u)  
   = \d\bar{\jmath}(v)
    - \d\left(\abs{v}^2(\Phi_Q^{-1})^*\mathcal{A}\right)
    + (\Phi_Q^{-1})^*(\kappa\Vg)
  \end{split}
 \end{equation*}
 and hence, recalling that~$\d\mathcal{A} = \kappa\Vg$,
 \begin{equation*}
  \begin{split}
   (\Phi_Q^{-1})^*\omega(u)  
   &= \d\bar{\jmath}(v)
    - \d\left(\abs{v}^2\right) \wedge (\Phi_Q^{-1})^*\mathcal{A}
    + \left(1 - \abs{v}^2\right) \d\left((\Phi_Q^{-1})^*\mathcal{A}\right) \\
   &= \d\bar{\jmath}(v)
    + \d\left(1 - \abs{v}^2\right) \wedge (\Phi_Q^{-1})^*\mathcal{A}
    + \left(1 - \abs{v}^2\right) \d\left((\Phi_Q^{-1})^*\mathcal{A}\right) \\
   &=\d\bar{\jmath}(v) + \d\mathfrak{R}
  \end{split}
 \end{equation*}
 
 \medskip
 \noindent
 \emph{Proof of~\eqref{pullbackJ}.}
 We take the pull-back in Equation~\eqref{eq:defJ}
 and apply~\eqref{pullbackV}, \eqref{pullbackomega}:
 \[
  \begin{split}
   (\Phi_Q^{-1})^* J(u) = (\Phi_Q^{-1})^* V 
    + (\Phi_Q^{-1})^* \omega(u)
    = \bar{V} + \d_t\mathfrak{R}
     + \bar{\jmath}(v) + \d\mathfrak{R}
  \end{split}
 \]
 Equation~\eqref{pullbackJ} follows, thanks 
 to~\eqref{eq:flat_vorticity}.
\end{proof}

In the next Proposition $\mathscr{M}_2(M)$ will denote the space of $2$-forms whose coefficients are bounded Radon measures on $M$. $\mathscr{M}_2(M)$ is endowed with the flat norm (i.e., the $W^{-1,1}$-norm).

\begin{prop}[Space-Time Compactness]
\label{prop:compactness}
Given $u_\eps:[0, \, T]\times M\to TM$ such that 
\begin{align}
& \max_{[0, \, T]\times M}\vert u_\eps\vert_g \le C,\label{eq:max_princ}\\
& F_\eps(u_\eps) = \frac{1}{2}\int_{M} \vert Du_\eps\vert^2_g + \frac{1}{2\eps^2}(1 -\vert u_\eps\vert^2_g)^2 \, \Vg \le C\vert \log\eps\vert, \,\,\,\,\forall t\in [0,T],\label{eq:ipo_comp}\\
&\int_{Q}\vert \partial_t u_\eps\vert^2_g \, \Vg \, \d t\le C\vert \log\eps\vert,\label{eq:kinetic_th}
\end{align}  
for some positive constant $C$ independent of $\eps$, we let $V_\eps$ be defined, starting from $u_\eps$, as in \eqref{eq:def_V} and let $\omega(u_\eps):=\d j(u_\eps) + \kappa \Vg$. 
Then, there exist $J\in L^2(0,T;\mathscr{M}_2(M))$,
$V\in L^2(0,T;\mathscr{M}_2(M))$ and $\mu\in C^{0,1/2}([0,T];\mathscr{M}_2(M))$ such that (up to subsequences)
\begin{align}
& J(u_\eps) \xrightarrow{\eps\to 0}J 
  \qquad \hbox{ in } W^{-1,p}(Q) \quad
  \textrm{for any } p\in (1, \, 2) \label{eq:comp_Jac}\\
& \omega(u_\eps(t)) \xrightarrow{\eps\to 0} \mu(t)
  \qquad \hbox{ in } W^{-1,p}(M) \quad
 \textrm{for any } p\in (1, \, 2) 
 \textrm{ and } t\in [0, \, T] \label{eq:convforanyt}\\
& V_\eps \xrightarrow{\eps\to 0} V 
 \qquad \hbox{ in } W^{-1,p}(Q) \quad
 \textrm{for any } p\in (1, \, 2) \label{eq:comp_time}
\end{align}
and
\begin{equation}
\label{eq:jacobian_limit}
J = \mu + V.
\end{equation}
Moreover, the measure $\mu$ has the form
\begin{equation}
\label{eq:sum_dirac}
\mu(t) = 2\pi \sum_{j=i}^{n(t)}d_j(t) \delta_{a_j(t)},
\end{equation}
where $n:[0,T]\to \mathbb{N}$ and~$d_j(t)\in \mathbb{Z}$,
$a_j(t)\in M$ for all~$t\in [0,T]$, $j=1,\ldots,n(t)$.
\end{prop}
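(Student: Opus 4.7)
The plan is to follow the Sandier--Serfaty compactness paradigm for parabolic Ginzburg--Landau gradient flows, adapted to the Riemannian setting via the Euclidean reduction afforded by Lemma~\ref{lemma:pullback}. At a high level: (i) obtain uniform $W^{-1,p}$-bounds on $J(u_\eps)$ and on $\omega(u_\eps(t))$ (uniformly in $t$); (ii) prove uniform Hölder-$1/2$ continuity of $t\mapsto\omega(u_\eps(t))$ in a (possibly weaker) dual norm; (iii) extract by Ascoli--Arzelà and identify the structure~\eqref{eq:sum_dirac} through the stationary compactness of Proposition~\ref{prop:comp_prod_est_space}.

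For (i), at each fixed $t$ the hypothesis~\eqref{eq:ipo_comp} places $u_\eps(t,\cdot)$ in the framework of Section~\ref{ssect:product_space}. The ball construction underlying Lemma~\ref{lemma:H1bounds} gives, uniformly in $(t,\eps)$, a total-variation bound on $\omega(u_\eps(t))$; Corollary~\ref{cor:W1,pbounds} then provides $\norm{Du_\eps(t,\cdot)}_{L^p(M)}\leq C_p$ for every $p\in(1,2)$. Combined with $\norm{(\partial_t u_\eps,iu_\eps)_g}_{L^2(Q)}\leq C\abs{\log\eps}^{1/2}$ from~\eqref{eq:max_princ} and~\eqref{eq:kinetic_th}, this shows that the cylindrical current $j_Q(u_\eps)=(\partial_t u_\eps,iu_\eps)_g\,\d t+j(u_\eps)$ is bounded in $L^p(Q)$. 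Consequently $J(u_\eps)=\d_Q j_Q(u_\eps)+\kappa\Vg$ is bounded in $W^{-1,p}(Q)$ and, by Rellich, a subsequence converges there to some $J$, proving~\eqref{eq:comp_Jac}.

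For (ii), testing against a scalar $\phi$ and using $\omega(u_\eps(t))-\omega(u_\eps(s))=\d\int_s^t \partial_t j(u_\eps)\,\d t'$, a first spatial integration by parts gives
\[
\langle\omega(u_\eps(t))-\omega(u_\eps(s)),\phi\rangle=-\int_s^t\int_M \d\phi\wedge \partial_t j(u_\eps)\,\d t'.
\]
Expanding $\partial_t j(u_\eps)(v)=(D_v\partial_t u_\eps,iu_\eps)_g+(D_v u_\eps,i\partial_t u_\eps)_g$ and performing another spatial integration by parts to shift the derivative off $\partial_t u_\eps$ onto $\d\phi$ and $u_\eps$, Cauchy--Schwarz combined with the product estimate~\eqref{eq:product_curvo_section} yields equi-Hölder-$1/2$ continuity of $t\mapsto\omega(u_\eps(t))$ with values in $(W^{2,\infty}(M))^*$. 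Interpolating with the $W^{-1,p}(M)$-bound from step~(i) upgrades this to equi-Hölder continuity with values in $W^{-1,p}(M)$ (modulo a mild loss of exponent). Ascoli--Arzelà then extracts a subsequence with $\omega(u_\eps(t))\to\mu(t)$ in $W^{-1,p}(M)$ for every $t\in[0,T]$, yielding~\eqref{eq:convforanyt}; applying Proposition~\ref{prop:comp_prod_est_space} at each $t$ identifies the structure~\eqref{eq:sum_dirac} and bounds $n(t)\leq C/\pi$. Finally $V_\eps=J(u_\eps)-\omega(u_\eps)$ converges in $W^{-1,p}(Q)$ by linearity, giving~\eqref{eq:comp_time} and the decomposition~\eqref{eq:jacobian_limit}.

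The main obstacle is the time equicontinuity: a naive Cauchy--Schwarz on the integrand $\abs{D\phi}\abs{\partial_t u_\eps}\abs{Du_\eps}$ yields a prefactor $\sup_{t}\norm{Du_\eps(t)}_{L^2(M)}\norm{\partial_t u_\eps}_{L^2(Q)}\lesssim\abs{\log\eps}$, which would destroy equicontinuity as $\eps\to 0$. One must exploit the cancellation built into the product estimate~\eqref{eq:product_curvo_section} --- in particular, the fact that the concentrated (vortex) component of $Du_\eps$ only enters when paired against objects that vanish at the vortices, while $\d\phi$ and $iu_\eps$ are smooth and bounded --- to absorb at least one factor of $\abs{\log\eps}^{1/2}$. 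This is the delicate estimate that makes the adaptation of Sandier--Serfaty's argument to the Riemannian setting nontrivial; everything else is a routine (if technical) transposition via geodesic normal coordinates.
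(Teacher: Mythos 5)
Your overall architecture (localize in geodesic normal coordinates, transfer to the Euclidean setting, invoke the product estimates, and identify the structure of $\mu(t)$ via Proposition~\ref{prop:comp_prod_est_space}) is the right one, but two of your key steps do not go through as stated. First, in step~(i) you invoke the ball construction and Corollary~\ref{cor:W1,pbounds} to get $\norm{Du_\eps(t,\cdot)}_{L^p(M)}\le C_p$ uniformly in $t$. That corollary (via Lemma~\ref{lemma:H1bounds}) requires \emph{both} the sharper energy bound $F_\eps\le \pi n\abs{\log\eps}+C_0$ \emph{and} the convergence $\omega(u_\eps(t))\to 2\pi\sum_j d_j\delta_{a_j}$ as hypotheses; the second is precisely the conclusion~\eqref{eq:convforanyt} you are trying to prove, so the argument is circular, and the first is not available under~\eqref{eq:ipo_comp}. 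Likewise, the claimed uniform total-variation bound on $\omega(u_\eps(t))$ is false in general: the hypotheses here do \emph{not} bound the mass of the vorticity measures (this is pointed out explicitly in Appendix~\ref{app:flatindex}). The compactness of $J(u_\eps)$ in $W^{-1,p}(Q)$ cannot be obtained from an $L^p$ bound on the current --- under~\eqref{eq:ipo_comp} one only has $\norm{j(u_\eps(t))}_{L^2(M)}\lesssim\abs{\log\eps}^{1/2}$ --- but rather from the compensated-compactness structure of the Jacobian: the intended argument localizes and invokes the Jerrard--Soner compactness theorem for Jacobians together with \cite[Theorem~3]{SS-product}, after checking via Lemma~\ref{lemma:pullback} that the curvature correction $\mathfrak{R}_\eps=(1-\abs{v_\eps}^2)(\Phi_Q^{-1})^*\mathcal{A}$ is $\mathrm{O}(\eps\abs{\log\eps})$ in $L^2$ and hence harmless in the limit.

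Second, your step~(ii) hinges on an equi-H\"older-$1/2$ estimate for $t\mapsto\omega(u_\eps(t))$ \emph{at the $\eps$ level}, and you yourself identify the needed cancellation as ``the delicate estimate'' without supplying it; as written this is a gap, not a proof. The intended argument avoids this entirely: one first passes to the limit to obtain $J$, $V$, $\mu$ with $J=\mu+V$ and $V\in L^2(0,T;\mathscr{M}_2(M))$ (the $L^2$-in-time bound on $V$ being part of the Sandier--Serfaty product-estimate package), then derives the transport equation $\partial_t\mu+\d V=0$ in the sense of distributions by differentiating the identity $\d_Qj_Q(u_\eps)=V_\eps+\d j(u_\eps)$, and only then deduces
\[
 \norm{\mu(t_2)-\mu(t_1)}_{\mathrm{flat}}\le\norm{V}_{L^2(0,T;\mathscr{M}_2(M))}\sqrt{t_2-t_1}
\]
for the \emph{limit} $\mu$. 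The pointwise-in-$t$ convergence~\eqref{eq:convforanyt} is inherited from the corresponding Euclidean statement of \cite[Theorem~3]{SS-product} applied on each coordinate chart, not from an Ascoli--Arzel\`a extraction at the $\eps$ level. If you want to salvage your route, you would have to actually prove the cancellation you allude to, which essentially amounts to reproving the Euclidean theorem; citing it after localization is both shorter and safe.
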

\begin{proof}
 We split the proof into steps.

\setcounter{step}{0}
\begin{step}[Localization]
 Let~$\delta > 0$ be a small parameter.
 Let $\mathfrak{B}=\left\{B^1_{\delta},\ldots,B^N_{\delta}\right\}$ 
 be a finite open cover of~$M$  by geodesic balls of radius~$\delta$.
 Each ball~$B^k_\delta$ is equipped with normal geodesic coordinates
 $\Phi^k\colon B^k_\delta\to B_\delta(0)\subseteq\R^2$.
 The cover~$\mathfrak{B}$ induces a finite cover of~$Q = [0, \, T]\times M$,
 \[
  [0, \, T]\times\mathfrak{B} :=
  \left\{[0, \, T]\times B^1_{\delta}, \, \ldots, \,
  [0, \, T]\times B^N_{\delta}\right\}
 \]
 and~$[0, \, T]\times B^k_\delta$ is equipped with
 coordinates~$\Phi^k_Q = (\Id, \, \Phi^k)$. For any~$k$,
 we choose an orthonormal, positively oriented,
 tangent frame~$\{\tau_1^k, \, \tau_2^k\}$ defined on~$B^j_\delta$.
 We represent~${u_\eps}_{|[0, T]\times B^k_\delta}$ by a
 map~$v_\eps^k\colon [0, \, T]\times B_\delta(0)\to\R^2$,
 as in~\eqref{eq:normal_representation_space_time}.
 The assumptions~\eqref{eq:ipo_comp} and~\eqref{eq:kinetic_th}, 
 combined with~\eqref{eq:energy_normal_coordinates}
 and~\eqref{eq:partialt_normal_coordinates}, imply
 \begin{align}
  & \frac{1}{2}\int_{B_\delta(0)} \left(|\nabla v_\eps^k|^2
   + \frac{1}{2\eps^2}(1 - |v_\eps^k|^2)^2 \right) \d x
   \le C\abs{\log\eps} \qquad \textrm{for all } t\in [0, \, T], 
   \label{eq:energy_est_normal_coord} \\
  &\int_{[0, T]\times B_\delta(0)}
   |\partial_t v_\eps^k |^2 \, \d x \, \d t
   \le C\abs{\log\eps}
 \end{align}  
 Let~$\bar{\jmath}(v_\eps^k)$, $\bar{V}_\eps^k$ and~$\bar{J}(v_\eps^k)$
 be defined as in~\eqref{eq:flat_space_current}, \eqref{eq:flat_V}
 and~\eqref{eq:flat_vorticity}, respectively.
 We can now apply the Jerrard-Soner compactness result for Jacobians 
 \cite[Theorem 3.1]{JerrardSoner-jacobians} 
 and the Sandier-Serfaty product estimates 
 \cite[Theorem 3]{SS-product}). We find a (non-relabelled) subsequence and
 \[
  \bar{\mu}^k\in L^\infty(0, \, T; \, \mathscr{M}_2(B_\delta(0))),
  \quad \bar{V}^k\in L^2(0, \, T; \, \mathscr{M}_2(B_\delta(0)))
  \qquad \textrm{for } k=1, \, \ldots, \, N
 \]
 such that for any~$k=1, \, \ldots, \, N$ and~$p\in (1, \, 2)$,
 \begin{align} 
  \d\bar{\jmath}(v_\eps^k) \xrightarrow{\eps\to 0} \bar{\mu}^k, \quad
   \bar{V}_\eps^k \xrightarrow{\eps\to 0} \bar{V}^k
   \qquad &\textrm{in } W^{-1,p}([0, \, T]\times B_\delta(0)) \label{compk} \\
  \d\bar{\jmath}(v_\eps^k(t)) \xrightarrow{\eps\to 0} \bar{\mu}^k(t)
   \qquad &\textrm{in } W^{-1,p}(B_\delta(0))  
   \quad \textrm{for any } t\in [0, \, T] \label{compk-t}
 \end{align}  
\end{step}

\begin{step}
 For any~$k$, let~$\mathcal{A}^k$ be the connection~$1$-form
 induced by the choice of the frame~$\{\tau_1^k, \, \tau_2^k\}$.
 Let
 \[
  \mathfrak{R}^k_\eps := \left(1 - |v_\eps^k|^2\right) 
   (\Phi_Q^{-1})^*\mathcal{A}^k
 \]
 The energy estimate~\eqref{eq:energy_est_normal_coord} implies that
 \begin{equation} \label{compstR}
  \sup_{t\in [0, \, T]} \|\mathfrak{R}^k_\eps\|_{L^2(B_\delta(0))}
  \lesssim \sup_{t\in [0, \, T]} \|1 - |v_\eps^k|\|_{L^2(B_\delta(0))}
  \lesssim \eps \abs{\log\eps}
 \end{equation}
 Therefore, for any~$k$ we have
 \begin{align} 
  \d_t\mathfrak{R}^k_\eps \xrightarrow{\eps\to 0} 0
   \qquad &\textrm{in } W^{-1,2}([0, \, T]\times B_\delta(0)),
   \label{compact_space_time-R} \\
  \d\mathfrak{R}^k_\eps \xrightarrow{\eps\to 0} 0
  \qquad &\textrm{in } L^\infty(0, \, T; \, W^{-1,2}(B_\delta(0))),
  \label{compact_space_time-Rt}
 \end{align}
 We combine~\eqref{compk} with~\eqref{compact_space_time-R}, 
 \eqref{compact_space_time-Rt} and Lemma~\ref{lemma:pullback} 
 to obtain
 \[
  J(u_\eps)_{|[0, T]\times B^k_\delta} = \Phi_Q^*\left(\bar{J}(v_\eps^k)
   + \d_t\mathfrak{R}_\eps^k + \d\mathfrak{R}_\eps^k\right)
  \xrightarrow{\eps\to 0} \Phi_Q^*\left(\bar{V}^k + \bar{\mu}^k\right) 
  =: J^k
 \]
 in~$W^{-1,p}([0, \, T]\times B^k_\delta)$, for any~$p\in (1, \, 2)$
 and~$k=1, \, \ldots, \, N$. (A duality argument shows that the
 pull-back via a smooth map preserves the convergence in~$W^{-1,p}$.)
 Now, let $\left\{\eta_1^2,\ldots,\eta_N^2\right\}$
 be a partition of unity subordinate to~$\mathfrak{B}$. 
 We have $\hbox{supp}\,(\eta_k^2)\subset B^k_{\delta}$ 
 for every $k=1,\ldots, N$ and hence,
 \[
  J(u_\eps) = \sum_{k=1}^N \eta_{k}^2 \, J(u_\eps)
  \xrightarrow{\eps\to 0} \sum_{k=1}^N \eta_{k}^2 \, J^k =: J
 \]
 in~$W^{-1,p}([0, \, T]\times B^k_\delta)$ for any~$p\in (1, \, 2)$.
 This proves~\eqref{eq:comp_Jac}. The proof of~\eqref{eq:convforanyt}
 and~\eqref{eq:comp_time} is analogous. 
 Since $J(u_\eps) = \omega(u_\eps) + V_\eps$ (see~\eqref{eq:defJ}),
 Equation~\eqref{eq:jacobian_limit} follows in the limit.
\end{step}

\begin{step}
 By combining Equation~\eqref{eq:cylindrical_vorticity}
 with~\eqref{eq:def_V}, we can write
 \begin{equation} \label{comp-transp1}
  \d_Q j_Q(u_\eps) = V_\eps + \d j(u_\eps)
 \end{equation}
 The smoothness of~$u_\eps$ is smooth, and hence of~$\d_Q j_Q(u_\eps)$,
 implies that $\d_{Q}(\d_Qj_Q(u_\eps)) = 0$, 
 $\d(\d j(u_\eps)) = 0$ and, by differentiating~\eqref{eq:def_V},
 $\d_t V_\eps = 0$. By taking the exterior differential~$\d_Q$
 of both sides of~\eqref{comp-transp1}, we obtain
 \[
  0 = \d V_\eps + \d_t \d j(u_\eps) 
  = \d V_\eps + \d_t \left(\omega(u_\eps) - \kappa\Vg\right)
  = \d V_\eps + \d_t\,\omega(u_\eps) 
 \]
 Therefore, by taking the limit as~$\eps\to 0$,
 we obtain
 \begin{equation}
  \label{eq:transport}
  \partial_t \mu + \d V = 0
  \qquad \hbox{in the sense of distributions. }
 \end{equation}
\end{step}

\begin{step}
 The $L^2$-in-time regularity of $V$ gives, exactly as in the Euclidean case (with similar proof) the time regularity of $\mu$, namely that 
$\mu\in C^{0,1/2}([0,T];\mathscr{M}_2(M))$. 
More precisely, 
using the transport equation \eqref{eq:transport} we obtain that (up to considering smooth approximations of $V$ and $\mu$, see \cite[Theorem 3]{SS-product}), for any smooth $\psi:M\to \mathbb{R}$ and any $0\le t_1<t_2<T$
\begin{equation}
\label{eq:trasporto_2}
\int_{[t_1,t_2]\times M} V\wedge \d_Q\psi = -\int_{[t_1,t_2]\times M} \d V\wedge \psi = \int_{[t_1,t_2]\times M}\d_t (\mu\wedge \psi) = \int_{M}(\mu(t_2)-\mu(t_1))\psi. 
\end{equation}
Therefore, since $V\in L^2(0,T;\mathscr{M}_2(M))$, for any smooth $\psi\in C^{0,1}(M)$ with $\|\nabla\psi\|_{L^\infty} \le 1$ in $M$ we have 
\begin{equation}
\label{eq:holdermu}
\abs{\int_{M}(\mu(t_2)-\mu(t_1))\psi} \le \|V\|_{L^2(0,T;\mathscr{M}_2(M))}\sqrt{t_2 -t_1}.
\end{equation}
Thus, 
\[
\norm{\mu(t_2)-\mu(t_1)}_{\text{flat}}\le \|V\|_{L^2(0,T;\mathscr{M}_2(M))}\sqrt{t_2 -t_1},
\]
namely $\mu\in C^{0,1/2}([0,T],\mathscr{M}_2(M))$. In particular, \eqref{eq:holdermu} gives that $t\mapsto \int_{M}\mu(t) \psi$ is in $C^{0,1/2}(0,T)$ for any $\psi\in C^{0,1}(M)$ (with $\|\nabla \psi\|_{L^\infty} \le 1$). 
As a result, we have \eqref{eq:convforanyt}. Moreover 
thanks to \eqref{eq:ipo_comp}, we apply for any fixed $t\in (0,T)$  
Proposition \ref{prop:comp_prod_est_space} and we conclude that 
the measure $\mu$ has the form 
\eqref{eq:sum_dirac}. \qedhere
\end{step}
\end{proof} 

\begin{prop}[Space-Time Product Estimates]
\label{th:comp_prod_est}
 Let~$u_\eps\colon[0, \, T]\times M\to TM$ be
 a sequence that satisfies~\eqref{eq:max_princ}, 
 \eqref{eq:ipo_comp}, \eqref{eq:kinetic_th}. 
 Let~$J\in L^2(0, \, T; \, \mathscr{M}_2(M))$ be given by
 \eqref{eq:jacobian_limit}. Then, 
 for any continuous vector fields $X = X^k\frac{\partial}{\partial x^k}$ 
 and~$Y = f\frac{\partial}{\partial t}$ on~$Q$, there holds
 \begin{equation} \label{eq:product_curvo}
 \liminf_{\eps\to 0}\frac{1}{\abs{\log\eps}} \left(\int_{Q}\vert D_{X}u_\eps\vert_g^2 \Vg\d t\right)^{1/2}\left(\int_{Q}f^2\vert\partial_t u_\eps\vert^2_{g}\Vg\d t\right)^{1/2}\ge\abs{\frac{1}{2}\int_{Q}J[X,Y]}.
 \end{equation}
\end{prop}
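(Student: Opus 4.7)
The proof will parallel closely the spatial product estimate (Proposition~\ref{prop:comp_prod_est_space}), proceeding by localization in geodesic charts and appeal to the Euclidean space-time product estimate of Sandier--Serfaty \cite[Theorem~3]{SS-product}. The setup is already in place from the proof of Proposition~\ref{prop:compactness}: cover $M$ by finitely many geodesic balls $B^1_\delta,\ldots,B^N_\delta$ of small radius $\delta$, fix in each chart an orthonormal tangent frame $\{\tau_1^k,\tau_2^k\}$ satisfying \eqref{eq:smallA}, and choose a subordinate partition of unity $\{\eta_k^2\}$. In each chart represent $u_\eps$ by the Euclidean map $v_\eps^k\colon[0,T]\times B_\delta(0)\to\R^2$ as in \eqref{eq:normal_representation_space_time}; the hypotheses~\eqref{eq:ipo_comp}--\eqref{eq:kinetic_th} transfer, via \eqref{eq:energy_normal_coordinates} and \eqref{eq:partialt_normal_coordinates}, into Euclidean bounds of order $\abs{\log\eps}$ for each $v_\eps^k$.

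The local step is to apply Lemma~\ref{lemma:normD} (and the obvious analogue for $\partial_t$, which is exact since the product metric $g_{\mathrm{cyl}}={\d t}^2+g$ has no cross terms). Setting $X_k:=\eta_k X$ and $Y_k:=\eta_k Y$, and writing $\bar X_k$, $\bar f_k$ for their Euclidean representatives, one obtains
\[
 \int_Q\abs{D_{X_k}u_\eps}^2_g\,\Vg\,\d t =(1+\O(\delta^2))\int\abs{\bar X_k\cdot\nabla v_\eps^k}^2\,\d x\,\d t+\O(1),
\]
together with the analogous identity for $\int f_k^2\abs{\partial_t u_\eps}^2_g$. Sandier--Serfaty's Euclidean product estimate then yields a lower bound by $\frac{1}{2}\abs{\int\bar J(v_\eps^k)[\bar X_k,\bar Y_k]}$ in the limit. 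By Lemma~\ref{lemma:pullback} and the estimates \eqref{compact_space_time-R}--\eqref{compact_space_time-Rt}, the pullback $(\Phi_Q^{-1})^*J(u_\eps)$ and $\bar J(v_\eps^k)$ differ by a $W^{-1,2}$-infinitesimal; since $\bar X_k,\bar Y_k$ are continuous and compactly supported, duality with~$J(u_\eps)\to J$ in $W^{-1,p}$ (from Proposition~\ref{prop:compactness}) gives
\[
 \lim_{\eps\to 0}\int \bar J(v_\eps^k)[\bar X_k,\bar Y_k]=\int_Q J[X_k,Y_k].
\]

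The final step is to reassemble the local estimates into the global one. Bilinearity of the $2$-form $J$ and $\sum_k\eta_k^2=1$ give $\int_Q J[X,Y]=\sum_k\int_Q J[X_k,Y_k]$. On the energy side one uses $D_{X_k}u_\eps=\eta_k D_X u_\eps+X(\eta_k)u_\eps$ (and the analogous identity for $Y_k$), so that the $L^\infty$-bound~\eqref{eq:max_princ} converts $\int_Q\abs{D_{X_k}u_\eps}^2_g$ into $\int_Q\eta_k^2\abs{D_Xu_\eps}^2_g$ up to a bounded error (which becomes negligible after division by $\abs{\log\eps}$). A Cauchy--Schwarz on the sum over $k$, namely
\[
 \sum_k\Bigl(\int\eta_k^2 A_\eps\Bigr)^{1/2}\Bigl(\int\eta_k^2 B_\eps\Bigr)^{1/2}\le\Bigl(\sum_k\int\eta_k^2 A_\eps\Bigr)^{1/2}\Bigl(\sum_k\int\eta_k^2 B_\eps\Bigr)^{1/2}=\Bigl(\int A_\eps\Bigr)^{1/2}\Bigl(\int B_\eps\Bigr)^{1/2},
\]
then combines the local lower bounds into~\eqref{eq:product_curvo}, with curvature error $\O(\delta^2)$ absorbed by letting $\delta\to 0$ after the limit in $\eps$.

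The main technical obstacle is precisely this last reassembly step: the product estimate is intrinsically nonlinear and does not localize term-by-term. The workable device---already used by Sandier--Serfaty in the Euclidean setting and adapted in Proposition~\ref{prop:comp_prod_est_space}---is to insert the weights $\eta_k^2$ \emph{before} applying Cauchy--Schwarz and to control the commutators $[D_X,\eta_k]=X(\eta_k)\cdot$ and $[\partial_t,\eta_k]=0$ by the $L^\infty$-bound on $u_\eps$ (which costs only $\O(1)$ in energy, hence vanishes after division by $\abs{\log\eps}$). With this in place the argument is essentially a mechanical transcription of the spatial proof, the only new ingredient being the pullback identity of Lemma~\ref{lemma:pullback} that converts the Euclidean space-time Jacobian $\bar J(v_\eps^k)$ into $J(u_\eps)$ modulo negligible terms.
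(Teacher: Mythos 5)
Your proposal is correct and follows essentially the same route as the paper: localization in geodesic charts with a partition of unity $\{\eta_k^2\}$, transfer to Euclidean vector fields $v_\eps^k$ via Lemma~\ref{lemma:normD}, identification of $\bar{J}(v_\eps^k)$ with the pullback of $J(u_\eps)$ through Lemma~\ref{lemma:pullback} and the vanishing of the $\mathfrak{R}_\eps^k$ terms, and the Euclidean product estimate of Sandier--Serfaty applied to the weighted fields. The only mechanical difference is in the reassembly: you invoke the Cauchy--Schwarz superadditivity $\sum_k(\int\eta_k^2A)^{1/2}(\int\eta_k^2B)^{1/2}\le(\int A)^{1/2}(\int B)^{1/2}$ directly, whereas the paper linearizes the product as $\tfrac12(\lambda_\eps\int A+\lambda_\eps^{-1}\int B)$, exploits the additivity of this expression over the cover, and optimizes $\lambda_\eps$ at the end; these two devices are equivalent (the $\lambda$-trick is precisely a proof of the inequality you quote), and both yield \eqref{eq:product_curvo} after treating the absolute value via the triangle inequality or by replacing $X$ with $-X$. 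One minor slip: there is no commutator $[D_X,\eta_k]=X(\eta_k)\cdot$ to control, since the covariant derivative is tensorial in the direction slot, so $D_{\eta_kX}u_\eps=\eta_kD_Xu_\eps$ exactly; this only makes your error estimate unnecessary, not wrong.
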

\begin{proof}
 The proof again uses a localization argument and the analogous lower bound 
 in the Euclidean case. The details are as follows. 
 
 As in Proposition~\ref{prop:compactness}, we consider a finite cover
 $\mathfrak{B}=\{B_\delta^1, \, \ldots, \, B_\delta^N\}$ by geodesic balls
 and a partition of the unity $\{\eta_1^2, \, \ldots, \, \eta_N^2\}$
 subordined to~$\mathfrak{B}$. We represent~${u_\eps}_{|[0, T]\times B^k_\delta}$
 by a map~$v_\eps^k\colon [0, \, T]\times B_\delta(0)\to\R^2$,
 as in~\eqref{eq:normal_representation_space_time}.
 By equation~\eqref{pullbackJ} (Lemma~\ref{lemma:pullback}),
 we have
 \begin{equation} \label{pe1}
  (\Phi_{k,Q}^{-1})^*J(u_\eps) = \bar{J}(v_\eps^k) 
   + \d_t\mathfrak{R}_\eps^k +\d\mathfrak{R}_\eps^k 
 \end{equation}
 where~$\Phi_{k,Q}\colon [0, \, T]\times B^k_\delta\to [0, \, T]\to B_\delta(0)$
 is the coordinate map and
 \begin{equation} \label{pe2}
  \mathfrak{R}_\eps^k := \left(1 - |v_\eps^k|^2\right)
   (\Phi_{k,Q}^{-1})^*\mathcal{A}
  \xrightarrow{\eps\to 0} 0 \qquad \textrm{in } L^2([0, \, t]\times B_\delta(0))
 \end{equation}
 as in~\eqref{compstR}. By~\eqref{eq:comp_Jac} and~\eqref{compk}, 
 we may extract a subsequence in such a way that $J(u_\eps)\to J$
 in~$W^{-1,p}(Q)$ and~$\bar{J}(v_\eps)\to\bar{J}^k$ 
 in~$W^{-1,p}([0, \, T]\times B_\delta(0))$, for any~$k$.
 Due to~\eqref{pe1}, \eqref{pe2}, we must have
 \begin{equation}
  (\Phi_{k,Q}^{-1})^*J = \bar{J}^k 
 \end{equation}
 Then, for any continuous fields $X = X^k\frac{\partial}{\partial x^k}$ 
 and~$Y = f\frac{\partial}{\partial t}$ on~$Q$,
 \begin{equation} \label{eq:normal_rhs}
  \begin{split}
   \int_Q J[X, \, Y] \, \Vg \, \d t
   &= \sum_{k=1}^{N} \int_{[0, \, T]\times B^k_{\delta}} 
    \eta_k^2 \, J[X, \, Y] \, \Vg \, \d t \\
   &= \sum_{k=1}^{N} \int_{[0, \, T]\times B^k_{\delta}} 
    \bar{\eta}_k^2 \, \bar{J}^k[\bar{X}_k, \, \bar{Y}_k] \, \Vg \, \d t
  \end{split}
 \end{equation}
 where~$\bar{\eta}_k := \eta_k\circ\Phi_k^{-1}$ and
 $\bar{X}_k$, $\bar{Y}_k$ are the push-forward of~$X$, $Y$ 
 via the coordinate map~$\Phi_{k,Q}$.

 Let~$\lambda_\eps$ be any sequence of positive numbers.
 We have
 \begin{equation} \label{eq:product_1}
  \begin{split}
   &\frac{1}{2}\int_Q \lambda_\eps\vert D_X u_\eps\vert^2_g +\frac{1}{\lambda_\eps}\vert f\partial_t u_\eps\vert^2_g \, \Vg \, \d t \\
   &\qquad\qquad\qquad = \sum_{k=1}^{N}\frac{1}{2} 
   \int_{[0, \, T]\times B^k_{\delta}} \lambda_\eps\vert\eta_k D_X u_\eps\vert^2_g +\frac{1}{\lambda_\eps}\vert \eta_k f\partial_t u_\eps\vert^2_g \, \Vg \, \d t
  \end{split}
 \end{equation}
 For any~$k$, we have (see~\eqref{eq:energy_dens_normal})
 \begin{equation} \label{eq:local_product}
  \begin{split}
   &\frac{1}{2} \int_{[0, \, T]\times B^k_{\delta}} \lambda_\eps\vert\eta_k D_X u_\eps\vert^2_g +\frac{1}{\lambda_\eps}\vert \eta_k f\partial_t u_\eps\vert^2_g \, \Vg \, \d t \\
   &\qquad\qquad = \frac{1}{2} \left(1+\O(\delta^2)\right)
   \int_{[0, \, T]\times\bar{B}_{\delta}}\lambda_\eps\vert\bar{\eta}_k
   \nabla_{\bar{X}_k} v_\eps\vert^2 
   + \frac{1}{\lambda_\eps}\vert \bar{\eta}_k \bar{f}_k\partial_t v_\eps\vert^2 
   \, \d x \, \d t \\
   &\qquad\qquad\qquad + \left(1+\O(\delta^2)\right)
    \int_{[0, \, T]\times\bar{B}_{\delta}} \O(\vert v_\eps\vert^2) \, \d x \, \d t
  \end{split}
 \end{equation}
 where~$\bar{f}_k := \eta_k\circ\Phi_{k,Q}^{-1}$. The last integral
 in the right-hand side of~\eqref{eq:local_product} is~$\O(\delta^2)$,
 because of the assumption~\eqref{eq:max_princ}.
 Now, the product estimate of Sandier \& Serfaty
 \cite[Theorem 3]{SS-product}, applied on~$[0, \, T]\times B_\delta(0)$,
 implies
 \begin{equation}\label{eq:flat_product}
  \begin{split}
   &\liminf_{\eps\to 0}\frac{1}{2\abs{\log\eps}}
    \int_{[0, \, T]\times\bar{B}_{\delta}} \lambda_\eps\vert\bar{\eta}_k
    \nabla_{\bar{X}_k} v_\eps\vert^2 
    + \frac{1}{\lambda_\eps}\vert \bar{\eta}_k \bar{f}_k\partial_t v_\eps\vert^2 
    \, \d x \, \d t\\
   &\geq \liminf_{\eps\to 0}\frac{1}{\abs{\log\eps}}
   \left(\int_{[0, \, T]\times B_\delta(0)} \vert \bar{\eta}_k 
    \nabla_{\bar{X}_k} v_\eps\vert^2 \, \d x \, \d t\right)^{1/2}
   \left(\int_{[0, \, T]\times B_\delta(0)} \vert \bar{\eta}_k
   \bar{f}_k \partial_t v_\eps\vert^2 \, \d x \, \d t\right)^{1/2} \\
   &\geq \frac{1}{2}\int_{[0,T]\times \bar{B}_{\delta}}
    \bar{\eta}^2_k \, \bar{J}[\bar{X}_k, \, \bar{Y}_k] \, \Vg \, \d t 
  \end{split}
 \end{equation}
 As a result, combining \eqref{eq:flat_product}, 
 \eqref{eq:local_product} and \eqref{eq:product_1} we obtain 
 \begin{equation} \label{eq:local_lower_bound}
  \begin{split}
   &\liminf_{\eps\to 0}\frac{1}{2\abs{\log\eps}}
   \int_{[0, \, T]\times B^k_{\delta}}\lambda_\eps\vert\eta_k D_X u_\eps\vert^2_g 
   +\frac{1}{\lambda_\eps}\vert \eta_k f\partial_t u_\eps\vert^2_g \, \Vg \, \d t \\
   &\qquad\qquad \ge \left(1+ \O(\delta^2)\right)
   \int_{[0, \, T]\times B^k_{\delta}} \frac{1}{2}\eta_k^2 \, J[X, \, Y] \, \Vg \, \d t.
  \end{split}
 \end{equation}
 We sum over~$k$, apply~\eqref{eq:normal_rhs} and let~$\delta\to 0$.
 We obtain
 \begin{equation*}
  \begin{split}
   \liminf_{\eps\to 0}
   \frac{1}{2\abs{\log\eps}}\int_Q\lambda_\eps\vert D_X u_\eps\vert^2_g +\frac{1}{\lambda_\eps}\vert f\partial_t u_\eps\vert^2_g \, \Vg \, \d t\ge \frac{1}{2}\int_Q J[X, \, Y] \, \Vg \, \d t
  \end{split}
 \end{equation*}
 The same argument, applied with~$-X$ instead of~$X$, shows that
 \begin{equation} \label{eq:quasi_prod}
  \begin{split}
   \liminf_{\eps\to 0}
   \frac{1}{2\abs{\log\eps}}\int_Q\lambda_\eps\vert D_X u_\eps\vert^2_g 
   +\frac{1}{\lambda_\eps}\vert f\partial_t u_\eps\vert^2_g \, \Vg \, \d t
   \ge \abs{\frac{1}{2}\int_Q J[X, \, Y] \, \Vg \, \d t}
  \end{split}
 \end{equation}
 Finally, we take
 \[
  \lambda_\eps :=
  \left(\int_Q \vert D_X u_\eps\vert^2_g \, \Vg \, \d t\right)^{-1/2}
  \left(\int_Q \vert f\partial_t u_\eps\vert^2_g \, \Vg \, \d t\right)^{1/2}
 \]
 if both integrals are strictly positive; otherwise, 
 if either of the integrals is equal to zero,
 we choose~$\lambda_\eps>0$ in such a way that
 \[
  \int_Q\lambda_\eps\vert D_X u_\eps\vert^2_g
   +\frac{1}{\lambda_\eps}\vert f\partial_t u_\eps\vert^2_g \, \Vg \, \d t \leq \eps
 \]
 Then, \eqref{eq:product_curvo} follows from~\eqref{eq:quasi_prod}.
\end{proof}

\begin{prop}
\label{prop:kinetic}
Let $u_\eps:[0, \, T]\times M\to TM$ be a sequence of vector fields satysfying 
\begin{align*}
& \sup_{t\in [0,T]}F_\eps(u_\eps(t))\le C\vert \log\eps\vert,\\
\\
 &\int_{[0, \, T]\times M}\vert \partial_t u_\eps\vert^2_g \, \Vg \, \d t \le C\vert \log\eps\vert
\end{align*}
and 
\begin{equation}
\label{eq:ipovelocity2}
 \frac{1}{2} \sup_{t\in [0, \, T]} \int_{M}\vert D u_\eps(t)\vert^2_g \, \Vg \le \pi n\vert \log\eps\vert(1 + o_{\eps\to 0}(1)) 
\end{equation}
Assume that there exist a continuous curve $\a:[0,T]\to M^n$ and a vector 
$\db =(d_1,\ldots,d_n)$ with $d_j =\pm 1$ for $j=1,\ldots, n$ such that for any $t\in [0,T]$
\begin{equation}
\label{eq:ipovelocity1}
\omega(u_\eps(t))\to 2\pi \sum_{j=1}^{n}d_j \delta_{a_j(t)} \,\,\,\,\,\,\,\,\hbox{ in }\,\,(C^{0,\alpha}_{c}(M))',\,\,\,\,\,\forall \alpha\in (0,1).
\end{equation}
Let~$[t_1, \, t_2]\subset [0,T]$ be an interval such that $a_j(t)\neq a_k(t)$ for any~$t\in [t_1, \, t_2]$ and~$j\neq k$. Then,
we have ${\bf a}\in H^1(t_1, \, t_2; \, M^n)$ and 
\begin{equation}
\label{eq:velocity}
\liminf_{\eps\to 0}\frac{1}{\vert \log\eps\vert}\int_{t_1}^{t_2} \int_{M}\vert \partial_t u_\eps\vert^2_g \,  \Vg \, \d s\ge \pi \sum_{k=1}^n\int_{t_1}^{t_2} \vert a_k'\vert^2_{g}(s) \, \d s = \pi\int_{t_1}^{t_2}\vert {\bf a}'\vert^2_{g}(s) \, \d s.
\end{equation}
\end{prop}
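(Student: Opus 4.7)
The conclusion will follow by applying the space-time product estimate (Proposition~\ref{th:comp_prod_est}) to a suitably chosen test vector field. First, I would invoke Proposition~\ref{prop:compactness} (extracting a subsequence if needed) to pass to space-time limits $J$, $V$, $\mu$ on $Q$, with $\mu(t) = 2\pi\sum_k d_k\delta_{a_k(t)}$ fixed by hypothesis~\eqref{eq:ipovelocity1}. Since $V_\eps = \d((\partial_t u_\eps, iu_\eps)_g\,\d t) + \d_t j(u_\eps)$ has only mixed $\d t\wedge \d x^k$ components, so does its limit $V$, allowing me to write $V = \d t\wedge\alpha$ for some $\alpha\in L^2(0,T;\mathscr{M}_1(M))$. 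The transport equation from Proposition~\ref{prop:compactness} then reduces to $\d\alpha_t = \partial_t\mu(t)$ in the distributional sense on $M$ for a.e.\ $t$.

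\textbf{Step 1 ($H^1$ regularity of $\mathbf{a}$).} Pairing the transport equation against any $\psi\in C^\infty(M)$ and integrating in time shows that $t \mapsto 2\pi\sum_k d_k\psi(a_k(t))$ lies in $H^1(0,T)$, with derivative $\int_M \d\psi\wedge\alpha_t$ of $L^2(0,T)$-norm at most $\|\nabla\psi\|_{L^\infty}\|V\|_{L^2(\mathscr{M}_2)}$. Localizing $\psi$ near the trajectory of a single $a_k$ on $[t_1,t_2]$, where by hypothesis the vortices remain separated, and testing against $\psi$'s whose gradients at $a_k(t)$ span $T_{a_k(t)}M$, upgrades each $a_k$ to an $H^1$ curve on $[t_1,t_2]$. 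Moreover, the same pairing gives
\[
\int_M \d\psi\wedge\alpha_t = 2\pi d_k\, \langle\nabla\psi(a_k(t)), a_k'(t)\rangle_g \quad \text{for a.e.\ } t\in[t_1,t_2]
\]
whenever $\psi\in C^\infty(M)$ is supported in a neighbourhood of $\{a_k(t):t\in[t_1,t_2]\}$ disjoint from the other vortices.

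\textbf{Step 2 (Test field and product estimate).} With $\mathbf{a}\in H^1$ in hand, I construct a time-dependent spatial test field on $[t_1,t_2]\times M$. Fix $r>0$ so that the balls $B_r(a_k(t))$ remain pairwise disjoint on $[t_1,t_2]$, choose bumps $\chi_k(t,x)$ supported in $B_r(a_k(t))$ and equal to $1$ on $B_{r/2}(a_k(t))$, and in normal coordinates at $a_k(t)$ set $\psi_k(t,x):=\chi_k(t,x)\,\langle\mathrm{Exp}^{-1}_{a_k(t)}(x), a_k'(t)\rangle_g$, so that $\nabla\psi_k(t,a_k(t)) = a_k'(t)$. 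Then define $X := \sum_k d_k\nabla^\perp\psi_k$ and $Y := f(t)\partial_t$, where $f$ is a smooth cutoff approximating $\chi_{[t_1,t_2]}$. A direct calculation using $V = \d t\wedge\alpha$ and the vanishing of $\omega(X,Y)$ yields $J[X,Y] = -f\,\alpha_t(X)$, and by Step 1,
\[
\int_Q J[X,Y] = -\int_{t_1}^{t_2} f(t)\sum_k d_k\int_M \d\psi_k\wedge\alpha_t\,\d t = -2\pi\int_{t_1}^{t_2} f(t)\,|\mathbf{a}'(t)|^2_g\,\d t.
\]
Meanwhile, $|X(t,a_k(t))|_g = |a_k'(t)|_g$, so the polarization identity~\eqref{eq:polarization2} applied slicewise in $t$ and integrated via dominated convergence (justified by~\eqref{eq:ipovelocity2}) gives
\[
\frac{1}{|\log\eps|}\int_Q |D_X u_\eps|^2_g\,\Vg\,\d t \;\longrightarrow\; \pi\int_{t_1}^{t_2} |\mathbf{a}'|^2_g\,\d t.
\]
Inserting these into~\eqref{eq:product_curvo} and letting $f\nearrow\chi_{[t_1,t_2]}$ yields
\[
\pi\int_{t_1}^{t_2} |\mathbf{a}'|^2_g\,\d t \le \sqrt{\pi\int_{t_1}^{t_2}|\mathbf{a}'|^2_g\,\d t}\, \sqrt{\liminf_\eps \frac{1}{|\log\eps|}\int_{t_1}^{t_2}\!\int_M |\partial_t u_\eps|^2_g\,\Vg\,\d t},
\]
which squares to~\eqref{eq:velocity} (the case $\mathbf{a}'\equiv 0$ being trivial).

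\textbf{Main obstacles.} The principal difficulty is Step~1: upgrading the mere $C^{0,1/2}$ regularity of $\mu$ furnished by Proposition~\ref{prop:compactness} to genuine $H^1$ regularity of the trajectories, which relies on the $L^2$-in-time (rather than $L^\infty$-in-time) integrability of $V$. A subsidiary delicate point is the rigorous identification of the measure pairing $\int_M \d\psi_k\wedge\alpha_t$ with $2\pi d_k\langle\nabla\psi_k(a_k), a_k'\rangle_g$, which requires approximating $\psi_k$ in a way that remains compatible with the measure-valued nature of $\alpha$. A final technical point is justifying the polarization limit for an $X$ that is only Lipschitz in~$t$, which is handled by approximation by piecewise-constant-in-$t$ fields together with the $\eps$-uniform energy bound.
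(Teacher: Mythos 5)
Your proposal is correct in substance but takes a genuinely different route from the paper. The paper (following Sandier--Serfaty, Corollary~7 of the product-estimate paper) discretizes time from the outset: it partitions $[t_1,t_2]$ into slabs of length $\tau$, applies the product estimate on each slab with \emph{time-independent} test fields $X=\nabla^\perp\psi$, $\|\nabla\psi\|_{L^\infty}\le 1$, uses~\eqref{eq:trasporto_2} and~\eqref{eq:polarization} to get the slab-wise bound $\liminf_\eps\frac{1}{|\log\eps|}\int_{t^k}^{t^{k+1}}\!\int_M|\partial_t u_\eps|^2\ge\pi\sum_j\dist_g^2(a_j(t^{k+1}),a_j(t^k))/\tau$, and then obtains \emph{both} the $H^1$ regularity of $\mathbf a$ \emph{and} the kinetic lower bound simultaneously by weak $L^2$ compactness of the discrete metric derivatives together with Proposition~\ref{prop:metric_derivative}. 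You instead extract $H^1$ regularity up front from the transport equation $\partial_t\mu+\d V=0$ and the $L^2$-in-time bound on $V$ (pairing with coordinate/distance-type test functions and invoking Proposition~\ref{prop:metric_derivative}), and then apply the product estimate once with the ``optimal'' time-dependent field built from $\mathbf a'$. Both arguments are valid; yours is conceptually cleaner and makes the duality between velocity and slope more transparent, while the paper's discretization avoids ever having to test with a field of low time regularity.

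That last point is the one place where you understate the difficulty. Your $X$ contains $a_k'(t)$, which after Step~1 is only $L^2(t_1,t_2)$ -- not Lipschitz in $t$ as you claim -- whereas Proposition~\ref{th:comp_prod_est} is stated for \emph{continuous} vector fields on $Q$. So the product estimate cannot be applied to your $X$ directly; the piecewise-constant-in-$t$ approximation you mention in passing is not an optional refinement but the essential step, and once you carry it out (apply the estimate slab by slab with a frozen field, use~\eqref{eq:polarization} slicewise, pass $\tau\to0$ using $a_k'\in L^2$ and dominated convergence via~\eqref{eq:ipovelocity2}) you have effectively reconstructed the paper's argument. Please also record the elementary but necessary remark that in~\eqref{eq:product_curvo} the left-hand side is the $\liminf$ of a \emph{product}: you may split it as $(\lim A_\eps)\cdot(\liminf B_\eps)$ only because the first factor genuinely converges (to $(\pi|\log\eps|\int|\mathbf a'|^2)^{1/2}$ after normalization), with the degenerate case $\int|\mathbf a'|^2=0$ treated separately, as you note.
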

\begin{proof}
The proof follows the one in the Euclidean case in \cite[Corollary 7]{SS-product}. 
Let $0\le t_1<t_2\le T$ as in the hypothesis, let $N\in \mathbb{N}$ and set $\tau:=\frac{t_2-t_1}{N}$ and $t^k:=t_1+\tau k$ for $k=0,1,\ldots,N$. 
In the whole interval $[0,T]$ there is a finite number of vortices. Moreover
in the interval $[t_1,t_2]$ 
the vortices $a_j$  ($j=1,\ldots,n$) remain distinct. Therefore,
taking~$N$ large enough,
for any interval $[t^{k},t^{k+1})$ we can find $n$ balls $B_j$ ($j=1,\ldots,n$) such that each of them contains exactly one vortice $a_j(t)$ for $t\in [t^{k},t^{k+1})$. 

Now, for any $k=1,\ldots, N$ and for any $j=1,\ldots, d$, we let $\psi\in C^1_{c}(B_j)$ with $\vert \nabla \psi\vert \le 1$. 
Then, \eqref{eq:product_curvo} with $X=\nabla^\perp \psi$, $Y = \frac{\partial}{\partial t}$, together with \eqref{eq:star_grad_ruotato} (with $\omega=V$) give 
\[
 \begin{split}
  &\liminf_{\eps\to 0}\left(\frac{1}{\vert \log\eps\vert^2}\int_{B_{j}\times[t^{k},t^{k+1}]}\vert D_{X}u_\eps\vert_g^2 \, \Vg \, \d t
  \int_{B_{j}\times[t^k,t^{k+1}]}\vert\partial_t u_\eps\vert^2_{g} \, \Vg \, \d t\right) \\
  &\hspace{4cm} 
  \ge\abs{\frac{1}{2}\int_{B_{j}\times [t^k,t^{k+1}]} V\wedge \d \psi}^2
 \end{split}
\]
Moreover, \eqref{eq:trasporto_2} gives that 
\[
\int_{B_{j}\times[t^{k},t^{k+1}]} V\wedge \d \psi = 2\pi d_j \left( \psi(a_j(t^{k+1}) -a_j(t^k)\right).
\]
Therefore, 
\[
 \begin{split}
  &\liminf_{\eps\to 0}\left(\frac{1}{\vert \log\eps\vert^2}\int_{B_{j}\times[t^k,t^{k+1}]}\vert D_{X}u_\eps\vert_g^2 \Vg\d t\int_{B_{j}\times[t^k,t^{k+1}]}\vert\partial_t u_\eps\vert^2_{g}\Vg\d t\right)\\
  &\hspace{4cm} \ge \pi^2\abs{\psi(a_j(t^{k+1})) -\psi_\delta(a_j(t^k))}^2.
 \end{split}
\]
Now, we apply Lemma \ref{lem:corollario4SS} (specifically \eqref{eq:polarization} with $X=Y=\nabla^\perp \psi_\delta$)
with $v_\eps = u_\eps(t)$ (with $t$ fixed in $[t^k,t^{k+1}]$) and we get 
\[
\lim_{\eps\to 0}\frac{1}{\vert \log\eps\vert}\int_{B_j}\vert D_X u_\eps(t)\vert^2_g\Vg = \pi \vert \nabla^\perp\psi(a_j(t))\vert^2_g=\pi \vert \nabla\psi(a_j(t))\vert^2_g,
\]
therefore, since $\vert \nabla \psi\vert\le 1$, 
\begin{align*}
&\liminf_{\eps\to 0}\frac{\tau\pi}{\vert \log\eps\vert}\int_{B_{j}\times[t^k,t^{k+1}]}\vert\partial_t u_\eps\vert^2_{g}\Vg\d t\ge\\
&\liminf_{\eps\to 0}\Big(\frac{1}{\vert \log\eps\vert^2}\int_{B_{j}\times[t^k,t^{k+1}]}\vert D_{X}u_\eps\vert_g^2 \Vg\d t\int_{B_{j}\times[t^k,t^{k+1}]}\vert\partial_t u_\eps\vert^2_{g}\Vg\d t\Big) \\
&\ge \pi^2\abs{\psi(a_j(t^{k+1})) -\psi(a_j(t^k))}^2.
\end{align*}
Thus, if we take the supremum over $\psi\in C^1_{c}(M)$ with $\norm{\nabla \psi}_{L^\infty}\le 1$, 
we get
\[
\liminf_{\eps\to 0}\frac{1}{\vert \log\eps\vert}\int_{B_{j}\times[t^k,t^{k+1}]}\vert\partial_t u_\eps\vert^2_{g}\Vg\d t \ge \pi\frac{\dist_g^2(a_j(t^{k+1}),a_j(t^k))}{\tau}, 
\]
and therefore, summing on $j=1,\ldots, n$, 
\begin{equation}
\label{eq:metric_deriv_1}
\liminf_{\eps\to 0}\frac{1}{\vert \log\eps\vert}\int_{\Omega\times[t^k,t^{k+1}]}\vert\partial_t u_\eps\vert^2_{g}\Vg\d t \ge \pi \sum_{j=1}^{n}\frac{\dist_g^2(a_j(t^{k+1}),a_j(t^k))}{\tau}.
\end{equation}
Given ${\bf a} = (a_1, \ldots,a_n)$, 
we define its piecewise constant interpolant $\bar{\a}_{\tau}$ by 
\[
\bar{{\bf a}}_{\tau}(t) := {\bf a}(t^{k+1})\,\,\,\,\,\,\,\hbox{ for }t \in [t^{k},t^{k+1}).
\]
Note that since ${\bf a}$ is continuous we have that $\bar{{\bf a}}_\tau\xrightarrow{\tau \to 0}a$ uniformly in $[t_1,t_2]$. 
Moreover, we define
\[
\abs{\bar{{\bf a}}_{\tau}'}_g(t):= \left(\sum_{j=1}^n\frac{\dist_g^2(a_j(t^{k+1}),a_j(t^{k}))}{\tau^2}\right)^{1/2},\,\,\,\,\hbox{ for }t\in [t^{k},t^{k+1}).
\]
Note that, as observed in \cite[Eq. 2.3.4.]{Amb-Gi-Sav}, the notation is justified since when one reduces to a linear framework, e.g. $M=\mathbb{R}^{2n}$, then $\abs{\bar{\a}_{\tau}'}_g(t)$ is the norm of the derivative of the piecewise interpolation of the values $\a(t^k)$, for $k=1,\ldots, N$. 

We set 
\[
L:= \liminf_{\eps\to 0}\frac{1}{\pi \vert \log\eps\vert}\int_{\Omega\times[t_1,t_2]}\vert\partial_t u_\eps\vert^2_{g}\Vg\d t,
\]
and thus \eqref{eq:metric_deriv_1} rewrites as 
\[
\int_{t_1}^{t_2}\vert \bar{{\bf a}}_{\tau}'\vert^2(t)\d t \le L. 
\]
We assume that $L<+\infty$ (otherwise there is nothing to prove). Therefore, there exists $A\in L^2(t_1,t_2)$ and a subsequence of $\tau$ (not relabelled) such that 
\[
\vert \bar{{\bf a}}_{\tau}'\vert \xrightarrow{\tau \to 0}A\,\,\,\,\,\hbox{ weakly in } L^2(t_1,t_2),
\]
and thus 
\begin{equation*}
\int_{t_1}^{t_2}\vert A\vert^2 \d t \le L.
\end{equation*}
Now, we show that ${\bf a}\in H^1(t_1,t_2;M)$ with metric derivative $\vert a'\vert_g(t)$ such that 
\[
\vert {\bf a}'\vert_g(t) \le A(t)\,\,\,\,\hbox{ for a.a. }t\in (t_1,t_2). 
\]
We fix $t_1 \le s<t\le t_2$ and we set
\[
\overline{s}_\tau:= \max\left\{t^k\colon t^k\le s\right\}\,\,\,\,\,\,\hbox{ and }\,\,\,\,\,\,\,\underline{t}_\tau:= \min\left\{t^k\colon t^k\ge t\right\}.
\]
Note that $\overline{s}_\tau\xrightarrow{\tau \to 0}s$ and $\underline{t}_\tau\xrightarrow{\tau \to 0}t$.
Then, the triangle inequality gives 
\[
\sum_{j=1}^n \dist_g(a_j(\overline{s}_\tau),a_j(\underline{t}_\tau))\le \int_{\overline{s}_\tau}^{\underline{t}_\tau}\vert \bar {\bf a}'\vert_g(r) \d r.
\]
Therefore, if we let $\tau\searrow 0$, we get 
\[
\sum_{j=1}^n \dist_g(a_j(s),a_j(t)) \le\int_{s}^t A(r)\d r, 
\]
and thus since $A\in L^2(t_1,t_2)$, thanks to Proposition \ref{prop:metric_derivative} we get that ${\bf a}\in H^1(t_1,t_2;M)$
and
\[
\vert {\bf a}'\vert_g(t) \le A(t)\,\,\,\,\,\hbox{ for a.a. } t\in (t_1,t_2).
\]
Therefore, we conclude
\[
\pi \int_{t_1}^{t_2}\vert {\bf a}'\vert_g^2(t)\d t \le \liminf_{\eps\to 0}\frac{1}{\vert \log\eps\vert}\int_{\Omega\times[t_1,t_2]}\vert\partial_t u_\eps\vert^2_{g}\Vg\d t. \qedhere
\]
\end{proof}

Given a~$1$-form $\omega\in L^2(M, \,  \T^*M)$, we denote by~$\P\omega$
the $L^2$-orthogonal projection of~$\omega$ onto the space of
harmonic $1$-forms~$\Harm^1(M)$. Equivalently, $\P\omega$ is the unique
element of~$\Harm^1(M)$ such that
\begin{equation} \label{harmproj}
 \int_M\left(\omega - \P\omega, \, \eta\right)_g \, \Vg = 0
 \qquad \textrm{ for any } \eta\in\Harm^1(M)
\end{equation}
Since the space~$\Harm^1(M)$ is finite-dimensional, we can choose
an orthonormal basis~$(\eta_\ell)_{\ell=1}^{2\gf}$ of~$\Harm^1(M)$
and write~$\P\omega$ as
\begin{equation} \label{harmproj-basis}
 \P\omega = \sum_{\ell=1}^{2\gf} \left( 
  \int_M(\omega, \, \eta_\ell)_g \, \Vg\right) \eta_\ell
 \qquad \textrm{for any } \omega\in L^2(M, \, \T^*M).
\end{equation}
The representation~\eqref{harmproj-basis} shows that~$\P$
extends to a linear, bounded operator~$L^1(M, \,  T'M)\to\Harm^1(M)$.

\begin{prop} \label{prop:harmproj}
 Let~$u_\eps:[0, \, T]\times M\to TM$ be a sequence 
 that satisfies the assumptions~\eqref{eq:max_princ},
 \eqref{eq:kinetic_th}, \eqref{eq:ipovelocity1} and
 \begin{equation} \label{eq:ipovelocity3}
  \sup_{t\in [0, \, T]} F_\eps(u_\eps(t)) \leq \pi n \abs{\log\eps} + C
 \end{equation}
 for some~$\eps$-independent constant~$C$.
 Then, there exists a map~$\xi\in H^1(0, \, T; \, \Harm^1(M))$ 
 and a (non-relabelled) subsequence such that
 \begin{equation} \label{harmproj-conv}
  \P j(u_\eps(t)) \xrightarrow{\eps\to 0} \xi(t) 
  \qquad \textrm{for any } t\in [0, \, T].
 \end{equation}
\end{prop}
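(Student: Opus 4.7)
I would fix an orthonormal basis $(\eta_\ell)_{\ell=1}^{2\gf}$ of $\Harm^1(M)$ and set $c_\ell^\eps(t) := \int_M (j(u_\eps(t)), \eta_\ell)_g \, \Vg$, so that $\P j(u_\eps(t)) = \sum_\ell c_\ell^\eps(t) \, \eta_\ell$. Unpacking the pairing in terms of the metric dual $\eta_\ell^\#$ of $\eta_\ell$ gives $c_\ell^\eps(t) = \int_M (D_{\eta_\ell^\#} u_\eps, \, i u_\eps)_g \, \Vg$. At each fixed $t$, the hypotheses \eqref{eq:max_princ}, \eqref{eq:ipovelocity1}, and \eqref{eq:ipovelocity3} allow me to invoke Corollary~\ref{cor:W1,pbounds} to obtain $\|D u_\eps(t)\|_{L^p(M)} \leq C_p$ for any $p \in (1, 2)$; H\"older's inequality then yields a uniform bound $|c_\ell^\eps(t)| \leq C$, independent of $\eps$ and $t$.

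The core of the argument is a pre-limit identity for $\partial_t c_\ell^\eps$. Since $\eta_\ell$ is harmonic, $\d^* \eta_\ell = 0$ and so $\eta_\ell^\#$ is divergence-free; moreover the almost complex structure $i$ is parallel with respect to the Levi-Civita connection because it is determined by the metric and orientation alone. Differentiating $c_\ell^\eps$ in $t$ and integrating by parts in space using these two facts gives
\[
 \partial_t c_\ell^\eps(t) = 2 \int_M (D_{\eta_\ell^\#} u_\eps, \, i \partial_t u_\eps)_g \, \Vg.
\]
A direct computation from the definition~\eqref{eq:def_V} of $V_\eps$, together with the vanishing of $\omega(u_\eps)$ on pairs of the form $(X, \partial_t)$ with $X$ spatial, gives $V_\eps(X, \partial_t) = -2 (D_X u_\eps, \, i \partial_t u_\eps)_g$. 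Thus $\partial_t c_\ell^\eps(t) = -\int_M V_\eps(t)(\eta_\ell^\#, \partial_t) \, \Vg$, or equivalently, writing $V_\eps = \gamma_\eps \wedge \d t$ with $\gamma_\eps(t)$ a $1$-form on $M$, $\partial_t c_\ell^\eps = -\int_M \gamma_\eps(t) \wedge \star \eta_\ell$.

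For any test function $\phi \in C^\infty_c((0, T))$ this identity rewrites as $-\int_0^T \phi'(t) \, c_\ell^\eps(t) \, \d t = \int_Q \phi \, V_\eps \wedge \star \eta_\ell$, a continuous linear functional of $V_\eps$ in the $W^{-1,p}$-topology since $\phi \, \star \eta_\ell$ is a smooth, compactly-supported $1$-form on $Q$. Proposition~\ref{prop:compactness} gives $V_\eps \to V$ in $W^{-1,p}(Q)$ for $p \in (1, 2)$ and $V \in L^2(0, T; \mathscr{M}_2(M))$. Combined with the uniform $L^\infty(0, T)$ bound and Banach--Alaoglu, this yields a subsequence along which $c_\ell^\eps \weaks c_\ell$ in $L^\infty(0, T)$, with distributional derivative $c_\ell' = -\int_M \gamma(t) \wedge \star \eta_\ell$ lying in $L^2(0, T)$. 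Hence $c_\ell \in H^1(0, T)$ and $\xi := \sum_\ell c_\ell \, \eta_\ell \in H^1(0, T; \Harm^1(M))$.

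The last step is to promote weak-$*$ to pointwise convergence at every $t$. For each fixed $t$ the static theory of Ignat--Jerrard applies to $u_\eps(t)$ (its hypotheses follow from \eqref{eq:max_princ}, \eqref{eq:ipovelocity1}, and \eqref{eq:ipovelocity3} at the fixed time $t$), so every subsequential limit of $\P j(u_\eps(t))$ belongs to the affine lattice $\mathcal{L}(\a(t), \db)$. A diagonal extraction on a countable dense subset of $[0, T]$, combined with the local discreteness of $\mathcal{L}(\a(t), \db)$, the continuous dependence of the lattice bundle on $\a$ (Lemma~\ref{lemma:bundle}), and the uniqueness of the continuous lift (Corollary~\ref{cor:uniqueness}(i)), forces every subsequential limit at every $t$ to coincide with $\xi(t)$. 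This pointwise upgrade is the main obstacle: the naive bound $|\partial_t c_\ell^\eps(t)| \lesssim \|D u_\eps(t)\|_{L^2} \|\partial_t u_\eps(t)\|_{L^2}$ is only of order $|\log \eps|^{1/2} \|\partial_t u_\eps\|_{L^2}$, so no uniform $W^{1, 1}(0, T)$ bound on $c_\ell^\eps$ is available and classical Rellich compactness alone is insufficient; the rigidity coming from the discrete lattice structure of the target must be invoked.
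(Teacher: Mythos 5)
Your first three steps coincide with the paper's argument: the uniform $L^\infty(0,T;L^p(M))$ bound on $j(u_\eps)$ via Corollary~\ref{cor:W1,pbounds}, the pre-limit identity expressing $\frac{\d}{\d t}\left(\P j(u_\eps),\eta_\ell\right)_g$ through $V_\eps$ paired with $\star\eta_\ell$ (your pointwise formula $V_\eps(X,\partial_t)=-2(D_X u_\eps, i\partial_t u_\eps)_g$ is the coordinate-free version of the paper's computation $I_\eps=\int_Q\varphi \, V_\eps\wedge\star\eta$, which uses $\d(\star\eta)=0$ and Stokes), and the passage to the limit using $V_\eps\to V$ in $W^{-1,p}(Q)$ with $V\in L^2(0,T;\mathscr{M}_2(M))$ to conclude $\xi\in H^1(0,T;\Harm^1(M))$.

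The genuine gap is in your last step. Weak$^*$ convergence of $c_\ell^\eps$ in $L^\infty(0,T)$ combined with pointwise subsequential convergence on a countable dense set $D$ gives no link between the two limits: $D$ has measure zero, so the values $\tilde\xi(t):=\lim_k\P j(u_{\eps_k}(t))$ for $t\in D$ are invisible to the weak$^*$ limit $\xi$. To invoke Corollary~\ref{cor:uniqueness}(i) you would need both that $t\mapsto\tilde\xi(t)$ is continuous --- which requires an $\eps$-uniform modulus of continuity for $t\mapsto\P j(u_\eps(t))$, exactly what you correctly observe is unavailable --- and that $\xi(t)\in\mathcal{L}(\a(t),\db)$ for every $t$, which is not known at this stage (it is established only later, in Proposition~\ref{prop:main_limit_proc}, using the PDE structure). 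Discreteness of the lattice cannot substitute for either: two elements of the same discrete lattice need not coincide, and knowing that each $\tilde\xi(t)$ lies in a discrete set does not identify it with $\xi(t)$. The paper closes this step with the Sandier--Serfaty freezing argument: fix $t_0$, pass to a subsequence with $\P j(u_\eps(t_0))\to\zeta$, set $\bar u_\eps(t):=u_\eps(t_0)$ for $t\le t_0$ and $\bar u_\eps(t):=u_\eps(t)$ for $t>t_0$ on $[-1,T]$; the extended sequence satisfies the same hypotheses, so its limit $\bar\xi$ is again in $H^1$, hence continuous, and equals $\zeta$ for a.e.\ $t<t_0$ and $\xi(t)$ for a.e.\ $t>t_0$, whence $\zeta=\xi(t_0)$ by continuity. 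You should replace the lattice-rigidity step with this (or an equivalent device that genuinely identifies the pointwise subsequential limits with the weak$^*$ limit).
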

As~$\Harm^1(M)$ is a finite dimensional space, the
convergence~\eqref{harmproj-conv} holds in any norm.
\begin{proof}[Proof of Proposition~\ref{prop:harmproj}]
 Due to the assumptions~\eqref{eq:max_princ}, \eqref{eq:ipovelocity1}
 and~\eqref{eq:ipovelocity3}, we can apply Corollary~\ref{cor:W1,pbounds}
 to the sequence~$u_\eps(t)$, at any fixed~$t\in [0, \, T]$. As
 a consequence, for any~$p\in (1, \, 2)$ there exists a constant~$C_p$
 such that
 \[
  \norm{\nabla u_\eps}_{L^\infty(0, \, T; \, L^p(M))} \leq C_p
 \]
 We deduce
 \[
  \norm{j(u_\eps)}_{L^\infty(0, \, T; \, L^p(M))}
  \leq \norm{u_\eps}_{L^\infty((0, \, T)\times M)}
   \norm{\nabla u_\eps}_{L^\infty(0, \, T; \, L^p(M))} \leq C_p
 \]
 and hence, we can extract a subsequence such that
 \[
  j(u_\eps) \rightharpoonup^* j \qquad \textrm{weakly}^*
  \quad \textrm{in } L^\infty(0, \, T; \, L^p(M))
 \]
 As the operator~$\P\colon L^1(M, \, T'M)\to\Harm^1(M)$
 is linear and bounded, we also have
 \begin{equation} \label{harmproj-convst}
  \P j(u_\eps) \rightharpoonup^* \P j =: \xi \qquad \textrm{weakly}^*
  \quad \textrm{in } L^\infty(0, \, T; \, \Harm^1(M))
 \end{equation}
 We claim that the limit~$\xi$ belongs to~$H^1(0, \, T; \, \Harm^1(M))$
 (and hence, is continuous in time).
 Let~$V_\eps$ be the $2$-form on~$Q := [0, \, T]\times M$ 
 associated with~$u_\eps$, as in~\eqref{eq:def_V}.
 Let~$\eta\in\Harm^1(M)$, and let~$\varphi\in C^\infty_{\mathrm{c}}(0, \, 1)$
 be a (scalar) test function. We have
 \begin{equation*} 
  \begin{split}
   I_\eps &:= \int_Q \varphi(t) \, \frac{\d}{\d t}
    (\P j(u_\eps), \, \eta)_g \, \Vg \, \d t
   \stackrel{\eqref{harmproj}}{=} \int_Q \varphi(t) \, \, \frac{\d}{\d t} 
    (j(u_\eps), \, \eta)_g \, \Vg \, \d t \\
   &= \int_Q \varphi(t) \, \, \d_t \left(j(u_\eps)\wedge \star\eta\right)
   = \int_Q \varphi(t) \, \, (\d_t j(u_\eps))\wedge \star\eta
  \end{split}
 \end{equation*}
 (we recall that~$\star$ denotes the Hodge dual operator in~$M$).
 Let~$\alpha_\eps := (\partial_t u_\eps, \, i u_\eps)_g$.
 By~\eqref{eq:def_V}, we know that
 \[
  V_\eps = \d(\alpha_\eps \, \d t) + \d_t j(u_\eps)
  = \d\alpha_\eps\wedge\d t + \d_t j(u_\eps)
 \]
 Therefore,
 \begin{equation*} 
  \begin{split}
   I_\eps  = \int_Q \varphi(t) \, \, V_\eps \wedge \star\eta
   - \int_Q \varphi(t) \, \, \d \alpha_\eps \wedge \d t \wedge \star\eta
  \end{split}
 \end{equation*}
 The form~$\eta$ is harmonic, hence co-closed, so~$\d(\star\eta) = 0$.
 This implies
 \[
  \d\alpha_\eps\wedge\d t\wedge\star\eta 
  = - \d\alpha_\eps\wedge\star\eta\wedge\d t
  = -\d(\alpha_\eps \star \eta) \wedge \d t
 \]
 and hence,
 \begin{equation} \label{harmproj-eq} 
  \begin{split}
   I_\eps  = \int_Q \varphi(t) \, \, V_\eps \wedge \star\eta
   + \int_0^T \varphi(t) \left( \int_M \d (\alpha_\eps \star\eta)\right) \d t 
   = \int_Q \varphi(t) \, \, V_\eps \wedge \star\eta
  \end{split}
 \end{equation}
 by Stokes theorem. Proposition~\ref{prop:compactness} states
 that there exists~$V\in L^2(0,T;\mathscr{M}_2(M))$ such that~$V_\eps\to V$
 in~$W^{-1,p}(Q)$, for any~$p\in (1, \, 2)$. On the other hand,
 the convergence~\eqref{harmproj-convst} implies
 $\partial_t \P j(u_\eps)\rightharpoonup^* \partial_t\xi$ 
 in~$\mathscr{D}^\prime(0, \, T; \, \Harm^1(M))$.
 Therefore, we can pass to the limit as~$\eps\to 0$ in~\eqref{harmproj-eq}:
 \begin{equation*}
  \begin{split}
   \int_Q \varphi(t) \, \frac{\d}{\d t} (\xi, \, \eta)_g \, \Vg \, \d t 
   = \int_Q \varphi(t) \, \, V \wedge \star\eta
  \end{split}
 \end{equation*}
 We deduce
 \begin{equation*}
  \begin{split}
   \int_Q \varphi(t) \, (\partial_t\xi, \, \eta)_g \, \Vg \, \d t 
   \leq \norm{V}_{L^2(0, \, T; \,\mathscr{M}_2(M))}
   \norm{\varphi}_{L^2(0, \, T)} \norm{\eta}_{L^\infty(M)}
  \end{split}
 \end{equation*}
 and, since~$\varphi$ and~$\eta$ are arbitrary, 
 $\partial_t\xi\in L^2(0, \, T; \, \Harm^1(M))$.
 
 It only remains to prove~\eqref{harmproj-conv}; we apply the arguments 
 from~\cite[Theorem 3]{SS-product}. Let~$t_0\in [0, \, T)$.
 Corollary~\ref{cor:W1,pbounds} implies, as above, that
 $\norm{j(u_\eps(t_0))}_{L^p(M)} \leq C_p$. Then,
 we can extract a subsequence (depending on~$t_0$)
 in such a way that $\P j(u_\eps(t_0))\to\zeta\in\Harm^1(M)$.
 We claim that~$\zeta = \xi(t_0)$. If we prove this claim,
 we will deduce that all the subsequences of~$\P j(u_\eps(t_0))$
 must converge to the same limit, and~\eqref{harmproj-conv} will follow.  
 Consider the sequence $\bar{u}_\eps\colon [-1, \, T]\times M \to \T M$,
 \[
  \bar{u}_\eps(t) := \begin{cases}
                      u_\eps(t_0) & \textrm{if } t \leq t_0 \\
                      u_\eps(t)   & \textrm{if } t > t_0
                     \end{cases}
 \]
 The sequence~$\bar{u}_\eps$ satisfies the
 assumptions~\eqref{eq:max_princ}, \eqref{eq:kinetic_th}
 and~\eqref{eq:ipovelocity3}. Proposition~\ref{prop:compactness}
 implies that $\omega(\bar{u}_\eps(t))\to\bar{\mu}(t)$ in~$W^{-1,p}(M)$
 for any~$t\in [-1, \, T]$, where
 \[
  \bar{\mu}(t) := \begin{cases}
                      2\pi \sum_{j=1}^n d_j \delta_{a_j(t_0)}
                       & \textrm{if } t \leq t_0 \\
                      2\pi \sum_{j=1}^n d_j \delta_{a_j(t)} 
                       & \textrm{if } t > t_0
                     \end{cases}
 \]
 Then, \eqref{eq:ipovelocity1} is satisfied, too.
 By the previous arguments, we can extract a subsequence
 in such a way that
 \[
  \P j(\bar{u}_\eps) \rightharpoonup^* \bar{\xi} \qquad 
  \textrm{weakly}^* \quad \textrm{in } L^\infty(0, \, T; \, \Harm^1(M)) 
 \]
 Using the definition of weak$^*$ convergence, we can check that
 \[
  \bar{\xi}(t) = \begin{cases}
                  \zeta  & \textrm{for a.e. } t < t_0 \\
                  \xi(t) & \textrm{for a.e. } t > t_0
                 \end{cases}
 \]
 However, the arguments above imply that both~$\xi$ and~$\bar{\xi}$
 are continuous functions of~$t$; therefore, we must have $\zeta=\xi(t_0)$,
 as claimed. In case~$t = T$, we define 
 $\bar{u}_\eps\colon [0, \, T+1]\times M \to \T M$,
 \[
  \bar{u}_\eps(t) := \begin{cases}
                      u_\eps(t) & \textrm{if } t < T \\
                      u_\eps(T) & \textrm{if } t \geq T
                     \end{cases}
 \]
 and proceed similarly.
\end{proof}

\subsection{Vortex dynamics: Proof of Theorem \ref{th:main1}}
\label{ssec:limit}
In this Subsection we finally prove our main result, Theorem \ref{th:main1}. 

The proof of Theorem \ref{th:main1} is based on the abstract scheme developed by Sandier \& Serfaty \cite[Theorem 1.4]{SS-GF}). The core of the Proof is contained in the next Propositions \ref{prop:main_limit_proc}, \ref{prop:liminfgrad} and \ref{prop:teorema}. 

We consider {\itshape well-prepared initial conditions}.
To ease the reading, we recall the definition.
Given~$n\in\Z$, $n\geq 1$, we 
consider~$(\a^0, \, \db)\in\mathscr{A}^n$ (see \eqref{admissible})
such that $d_j=\pm 1$ for any $j=1,\ldots, n$
and $\xi^0\in \mathcal{L}(\a^0, \, \db)$. 
We assume that the initial conditions
$u^0_{\eps}\in H^1_{\tang}(M)$ satisfy
\begin{align}
 & \omega (u^{0}_\eps)\xrightarrow{\eps \to 0} 2\pi \sum_{j=1}^n d_j \delta_{a_j^{0}} \qquad \hbox{ in } W^{-1,p}(M) \quad
 \textrm{for any } p\in (1, \, 2) \label{eq:initial_vorticity}\\
 & F_\eps(u_\eps^{0})\le \pi n\vert \log\eps\vert + W(\a^0, \, \db, \, \xi^0) + n\gamma + o(1)\label{eq:well_prepared}, \\
 &  \norm{u^0_\eps}_{L^\infty(M)} \leq 1.\label{eq:initial_Linfty}
\end{align} 

\medskip
By standard parabolic theory, we know that for any $\eps>0$ there exists a smooth solution $u_\eps$ of \eqref{eq:GL_intro}.

\begin{lemma}
\label{lem:lemma1}
Let $u_\eps$ be a solution of \eqref{eq:GL_intro} with $u_{\eps}^0$ satisfying 
\eqref{eq:well_prepared} and~\eqref{eq:initial_Linfty}.
Then, 
\begin{equation}
\label{eq:boundLinfty}
\| u_\eps\|_{L^\infty(M\times(0,T))}\le 1,
\end{equation}
\begin{equation}
\label{eq:energy_bound1_lemma}
F_\eps(u_\eps(t)) \le \pi n \vert \log\eps\vert + C, \,\,\,\,\hbox{ for a.a. }t\in [0,T].
\end{equation}
and   
\begin{equation}
\label{eq:boundut_lemma}
\int_{0}^{T} \int_{M}\vert \partial_t u_\eps\vert^2_g \, \Vg \d t \le C\vert\log\eps\vert.
\end{equation}
for some constant $C>0$ independent of $\eps$. 
\end{lemma}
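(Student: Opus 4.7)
The plan is to combine three ingredients: a maximum principle for an auxiliary scalar, the standard energy dissipation identity for gradient flows, and a sharp $\Gamma$-liminf at fixed times. For the $L^\infty$ bound \eqref{eq:boundLinfty}, I would set $\phi_\eps := \abs{u_\eps}^2_g - 1$. A direct computation in a local orthonormal frame (using metric-compatibility of $D$) gives the identity $\Delta_g\abs{u_\eps}^2_g = 2(\Delta_g u_\eps, u_\eps)_g + 2\abs{D u_\eps}^2_g$; combined with \eqref{eq:GL-intro} paired pointwise against $u_\eps$, this yields
\[
\frac{1}{\abs{\log\eps}}\partial_t\phi_\eps - \Delta_g\phi_\eps + 2\abs{D u_\eps}^2_g + \frac{2}{\eps^2}\phi_\eps(\phi_\eps+1) = 0.
\]
Since $\phi_\eps(0) \le 0$ by \eqref{eq:initial_Linfty} and the nonlinear term $\phi_\eps(\phi_\eps+1)$ is strictly positive on $\{\phi_\eps > 0\}$, a maximum principle argument (either pointwise at an interior positive maximum of $\phi_\eps$, or by testing against $(\phi_\eps)_+$ and applying Gronwall) produces $\phi_\eps \le 0$ on $M\times[0,T]$.

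For \eqref{eq:energy_bound1_lemma}, I would $L^2$-test \eqref{eq:GL-intro} against $\partial_t u_\eps$ and use \eqref{eq:ip_rough} to derive the standard gradient-flow identity
\[
F_\eps(u_\eps(t)) + \frac{1}{\abs{\log\eps}}\int_0^t\!\! \int_M \abs{\partial_t u_\eps}^2_g\, \Vg\, \d s = F_\eps(u_\eps^0).
\]
The monotone decay $F_\eps(u_\eps(t)) \le F_\eps(u_\eps^0)$ combined with the upper bound from well-preparedness \eqref{eq:well_prepared} yields \eqref{eq:energy_bound1_lemma} at once.

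The kinetic estimate \eqref{eq:boundut_lemma} reduces, via the same identity, to a uniform lower bound of the form $F_\eps(u_\eps(t)) \ge \pi n\abs{\log\eps} - C'$ for $t\in[0,T]$. I would establish this by applying Proposition~\ref{prop:comp_prod_est_space} at each fixed $t$ (its hypotheses are furnished by the previous two steps): up to a subsequence $\omega(u_\eps(t))\to 2\pi\sum_k d_k(t)\delta_{a_k(t)}$ in $W^{-1,p}(M)$, with $\sum_k d_k(t)=\chi(M)$ and $\pi\sum_k\abs{d_k(t)} \le \liminf_{\eps\to 0} F_\eps(u_\eps(t))/\abs{\log\eps}$ via \eqref{eq:gammaliminf_stat}. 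Sharpening this liminf to the $O(1)$-accurate Ignat--Jerrard lower bound and comparing with the energy upper bound of the previous paragraph forces $\sum_k\abs{d_k(t)}=n$ with $\abs{d_k(t)}=1$ for every $k$, which in turn provides the sharp lower bound and hence \eqref{eq:boundut_lemma}.

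The principal obstacle is this last step: one must exclude the possibility that vortex collisions dissipate $O(\abs{\log\eps})$ of energy and reduce the effective vortex count below $n$. The resolution exploits the tight matching of $\Gamma$-liminf and $\Gamma$-limsup built into the well-prepared hypothesis \eqref{eq:well_prepared}, so that any strict decrease of $\sum_k\abs{d_k(t)}$ would produce an energy gap incompatible with the dissipation identity of the energy-estimate step.
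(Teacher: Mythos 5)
Your treatment of \eqref{eq:boundLinfty} and \eqref{eq:energy_bound1_lemma} is correct and coincides with the paper's: the parabolic inequality for $\phi_\eps=\abs{u_\eps}^2_g-1$ plus the maximum principle (the paper does the same after a cosmetic time rescaling), and the gradient-flow dissipation identity combined with \eqref{eq:well_prepared}.

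The kinetic estimate \eqref{eq:boundut_lemma} is where your argument has a genuine gap, and it is precisely at the point you flag as ``the principal obstacle.'' Your proposed resolution is circular. The dissipation identity reads
\begin{equation*}
F_\eps(u_\eps(t))+\frac{1}{\abs{\log\eps}}\int_0^t\!\!\int_M\abs{\partial_t u_\eps}^2_g\,\Vg\,\d s=F_\eps(u_\eps^0),
\end{equation*}
so if the effective vortex count drops from $n$ to $m<n$ at some time $t$, the identity is perfectly consistent with this: the released energy $\sim\pi(n-m)\abs{\log\eps}$ is simply absorbed into the kinetic term, which then is of order $\abs{\log\eps}^2$ --- exactly the scenario you need to exclude. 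There is no ``energy gap incompatible with the dissipation identity''; an identity cannot forbid dissipation, and well-preparedness constrains only $F_\eps(u_\eps^0)$, not $F_\eps(u_\eps(t))$ for $t>0$. Moreover, Proposition~\ref{prop:comp_prod_est_space} applied at a fixed $t$ only yields $\sum_k d_k(t)=\chi(M)$ and $\pi\sum_k\abs{d_k(t)}\le\liminf F_\eps(u_\eps(t))/\abs{\log\eps}$; nothing in it forces $\sum_k\abs{d_k(t)}=n$, so the sharp lower bound $F_\eps(u_\eps(t))\ge\pi n\abs{\log\eps}-C$ cannot be extracted this way.

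What actually closes the argument (the paper defers to \cite[Lemma~3.4]{SS-GF} and states the argument ``uses a contradiction argument combined with the product estimate'') is a stopping-time device: set $T_\eps:=\sup\{t:\int_0^t\!\int_M\abs{\partial_t u_\eps}^2_g\,\Vg\,\d s\le\abs{\log\eps}\}$. On $[0,T_\eps]$ the kinetic bound holds \emph{by definition}, so the space-time compactness machinery (in particular the $C^{0,1/2}$-in-time continuity of the limiting vorticity in the flat norm, Proposition~\ref{prop:compactness}) is available there and shows that if $T_\eps\to0$ the vortices cannot have moved or collided by time $T_\eps$; the $O(1)$-accurate lower bound then gives $F_\eps(u_\eps(T_\eps))\ge\pi n\abs{\log\eps}+W(\a(T_\eps),\db,\xi(T_\eps))+n\gamma+\mathrm{o}(1)$, whence the dissipation identity forces $\int_0^{T_\eps}\!\int_M\abs{\partial_t u_\eps}^2_g=\mathrm{o}(\abs{\log\eps})$, contradicting the defining equality at $T_\eps$. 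This yields $T_\eps\ge T_0>0$ uniformly, which is the missing ingredient; your proposal contains neither the stopping time nor the short-time no-collision argument that makes the sharp liminf applicable.
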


\begin{proof}
The fact that $u_\eps$ verifies \eqref{eq:energy_bound1_lemma} is a consequence of the fact that
any solution of \eqref{eq:GL_intro} with the specified regularity satisfies the following energy estimate. 
\begin{equation}
\label{eq:energy_est}
\frac{1}{\vert \log\eps\vert}\int_{0}^T\int_{M}\vert\partial_t u_\eps\vert^2_g \, \Vg\d t + F_\eps(u_\eps(t)) = F_\eps(u_0)\le \pi n\vert \log\eps\vert + C,
\end{equation}
where $C$ denote a constant independent of $\eps$. 
Therefore \eqref{eq:energy_bound1_lemma} follows.
Set $v_\eps(x,t\vert \log\eps\vert) := u_\eps(x,t)$ for $(x,t)\in [0, \, T]\times M$. Then, $v_\eps$ solves 
\[
\begin{cases}
\displaystyle\partial_t v_\eps -\Delta_g v_\eps + \displaystyle\frac{1}{\eps^2}(\vert v_\eps\vert^2_g-1)v_\eps = 0,\,\,\,\,\hbox{ a.e. in }M\times (0,T)\\[.3cm]
\displaystyle v_\eps(x,0) = u_0\,\,\,\,\hbox{ a.e. in }M. 
\end{cases}
\]
Then, test (in the scalar product $(\cdot, \cdot)_g$) with $v_\eps$. 
We get 
\[
\partial_t \vert v_\eps\vert^2_g -\Delta_g \vert v_\eps\vert^2_g + 2 \vert \D v_\eps\vert^2_g + \frac{2}{\eps^2}\vert v_\eps\vert^2_g (\vert v_\eps\vert^2_g -1) =0. 
\] 
Therefore, if we set $w_\eps:=\vert v_\eps\vert^2_g -1$, we obtain that 
\[
\partial_t w_\eps -\Delta_g w_\eps + \frac{2}{\eps^2}w_\eps \le 0,
\]
and thus $w_\eps(t)\le w_\eps(0) \le 0$, namely $\vert v_\eps\vert_g \le 1$. 
By rescaling back with respect to time we obtain \eqref{eq:boundLinfty}.

Finally, we discuss the validity of \eqref{eq:boundut_lemma}. In particular, note that that it is sufficient to prove that there exists some $T_0>0$ such that 
\[
\int_{0}^{T_0}\int_{M}\vert \partial_t u_\eps\vert^2_g \, \Vg \d t \le \vert \log\eps\vert,
\]
but this has been already proved in \cite[Lemma 3.4]{SS-GF}. The argument uses a contradiction argument combined with the {\itshape product estimate} \eqref{eq:product_curvo} and can be replicated verbatim. 
%
\end{proof}

\begin{prop}
\label{prop:main_limit_proc}
Let $u_\eps $ be a solution of \eqref{eq:GL_intro} with $u_{\eps}^0$ satisfying 
\eqref{eq:initial_vorticity}, \eqref{eq:well_prepared} and~\eqref{eq:initial_Linfty}.
Then, there exist $T^*\in (0,T]$, a curve ${\bf a}:[0,T^*)\to M^n$
with ${\bf a}=(a_1,\ldots,a_n)\in H^1(0,T^*;M^n)$ integers~$\mathbf{d} = (d_1, \, \ldots, \, d_n)\in\Z^n$
such that~$(\a(t),\db)\in\mathscr{A}^n$ for any $t\in [0,T^*)$ and a curve $\xi\in H^1(0,T^*;\Harm^1(M))$ such that for any $t\in [0,T^*)$
\begin{align}
\label{eq:conv_vorticity_prop}
& \omega(u_\eps(t)) \to 2\pi\sum_{j=1}^n d_j\delta_{a_j(t)}\qquad \hbox{ in } W^{-1,p}(M) \quad
 \textrm{for any } p\in (1, \, 2),\\
& \P j(u_\eps(t)) \xrightarrow{\eps\to 0} \xi(t),\qquad\hbox{ with } \qquad\xi(t)\in \mathcal{L}(\a(t),\db),
\label{eq:convergence_xi}\\
 &\liminf_{\eps\to 0}\frac{1}{\vert \log\eps\vert}\int_{0}^t \int_{M}\vert \partial_t u_\eps\vert^2_g \,  \Vg\d t\ge \pi
  \int_{0}^t \vert {\bf a}'\vert^2_g(s) \, \d s,
  \label{eq:liminfut} 
 \end{align}
 Moreover, for almost any $t\in [0,T^*)$ there exists a subsequence $\eps_{h}(t)$ and a vector field $u^{*}(t)\in W^{1,p}_{\tang}(M)$
($p\in [1,2)$) such that $\abs{u^*(t)}_g =1$ a.e. in $M$ and 
\begin{align}
& u_{\eps_{h}(t)}(t)\weak u^*(t)\qquad \hbox{ in } W^{1,p}_{\tang}(M) \label{eq:convu} \quad
 \textrm{for any } p\in (1, \, 2),\\
&\d j(u^*(t)) = - \kappa \Vg + 2\pi \sum_{k=1}^n d_j\delta_{a_k(t)}\label{eq:vorticityustar}\\
&\d^* j(u^*(t)) = 0 \,\,\,\,\,\,\hbox{ in }M,\label{eq:hodge_canonical}
\end{align}
namely $u^*(t)$ is a canonical harmonic vector field for $(\a(t), \, \db, \, \xi(t))$.
\end{prop}
\begin{proof}
\setcounter{step}{0}
We divide the proof in several steps. 
\begin{step}[Compactness]
Thanks to Lemma \ref{lem:lemma1} and to Proposition \ref{prop:compactness}
(see in particular \eqref{eq:convforanyt}) we have that the exist 
$\mu\in C^{0,1/2}([0,T];\mathscr{M}_2(M))$ such that for any $t\in [0,T]$
\begin{equation}
\omega(u_\eps(t)) \to \mu(t)\qquad \hbox{ in } W^{-1,p}(M) \quad
 \textrm{for any } p\in (1, \, 2)
\end{equation}
Thanks to Proposition \ref{prop:comp_prod_est_space} for any $t\in (0,T)$ the measure $\mu(t)$ is of the form
\begin{equation}
\label{eq:limit_measure0}
\mu(t) = 2\pi \sum_{j=1}^{n(t)}d_j(t) \delta_{a_j(t)}, \,\,\,\hbox{ where } d_j\in \mathbb{Z},
\end{equation}
where for any $t\in [0,T]$ $n(t)\in \mathbb{N}$ and $a_j(t)\in M$ for $j=1,\ldots, n(t)$ with $a_j(t)\neq a_k(t)$ for $j\neq k$ and satisfies 
\begin{equation}
\label{eq:gamma_conv0}
\liminf_{\eps\to 0}\frac{F_\eps(u_\eps(t))}{\pi\vert\log\eps\vert}\ge \sum_{j=1}^{n(t)}\vert d_j(t)\vert.
\end{equation}
Therefore, since 
\[
F(u_\eps(t))\le F_\eps (u_\eps^{0})\le \pi n\vert\log\eps\vert +C,
\]
we conclude that for $t\in [0,T)$,
\begin{equation}
\sum_{j=1}^{n(t)} \vert d_j(t)\vert \le   \sum_{j=1}^{n}\vert d_j\vert.
\end{equation}
The fact that the evolution decreases the total charge combined with the continuity of $t\mapsto \int_{M}\mu(t)\zeta$, for $\zeta\in C^{1}_c(M)$, 
give (see \cite[Proposition 3.2]{SS-GF}) that there exists $T^*\in (0,T]$
\footnote{Note that since $u_\eps$ can be defined in the whole $(0,+\infty)$, we can always assume that $T^*\le T$.}
and $\a:[0,T^*)\to M^n$ with $\a\in H^1(0,T^*;M^n)$ such that $a_j(t)\neq a_k(t)$ for $j\neq k$ and for any $t\in [0,T^*)$. Moreover, $d_j(t) = d_j(0) =\pm 1$ for $t\in [0,T^*)$ and thus 
%
%
 the measure $\mu$ has the form 
\begin{equation}
\label{eq:limit_measure}
\mu(t) =2\pi \sum_{j=1}^n d_j\delta_{a_j(t)}
\end{equation}
 for any $t\in [0,T^*)$.
Therefore, we are in the position to apply Proposition \ref{prop:kinetic} to obtain \eqref{eq:liminfut}.

Finally, we consider the sequence $\xi_\eps:=\P(j(u_\eps))$. 
The sequence $u_\eps$ verifies all the hypothesis of Proposition \ref{prop:harmproj} and therefore there exists $\xi\in H^1(0,T^*;\Harm^1(M))$ such that   
\begin{equation}
\label{eq:conv_harmo1form}
\P(j(u_\eps(t)))\xrightarrow{\eps\to 0} \xi(t)\qquad \hbox{ for any }\quad t\in (0,T^*).
\end{equation}

\end{step}

\begin{step}[Convergence to a harmonic vector field]
 We have
\[
\int_{0}^T\int_{M}\abs{\Delta_g u_\eps + \frac{1}{\eps^2}(\vert 1- \vert u_\eps\vert^2)u_\eps}^2_{g} \Vg \d t = \frac{1}{\vert\log\eps\vert^2}\int_{0}^{T}\int_M\vert \partial_t u_\eps\vert^2_g \, \Vg\d t,
\]
and hence, thanks to \eqref{eq:boundut_lemma},
\[
\int_{0}^T \int_{M}\abs{\Delta_g u_\eps + \frac{1}{\eps^2}\left( 1- \abs{u_\eps}_g^2\right)u_\eps}_{g}^2 \, \Vg \d t \le \frac{C}{\vert \log\eps\vert}.
\]
Therefore 
\begin{equation}
\label{eq:Deltazero}
\Delta_g u_\eps + \frac{1}{\eps^2}\left(1-\abs{u_\eps}_g^2\right)u_\eps \to 0\,\,\,\,\hbox{ in }L^2(0,T;L^2(M)),
\end{equation}
and, up to subsequence, 
\begin{equation}
\label{eq:Deltazero_point}
\Delta_g u_\eps + \frac{1}{\eps^2}\left(1-\abs{u_\eps}_g^2\right)u_\eps \to 0 \qquad \hbox{ in } L^2_{\tang}(M)\quad \hbox{ and for almost any }t\in (0,T^*). 
\end{equation}
We let $C\subseteq (0,T^*)$ the set of those $t$ in which the above convergence holds. 
 We fix a $t\in C$.  
Thanks to Lemma \ref{lemma:H1bounds} and to Corollary \ref{cor:W1,pbounds},
there exists a subsequence $\eps_h(t)\xrightarrow{h\to +\infty} 0$,
which may depend on the chosen $t$, and a vector field $u^*(t)\in H^1_{\tang ,\text{loc}}(M\setminus\left\{a_1(t),\ldots,a_d(t)\right\})\cap W^{1,p}_{\tang}(M)$ such that $\abs{u^*(t)}_g=1$ and 
\begin{align}
& u_{\eps_h(t)}(t) \to u^*(t)\,\,\,\hbox{ weakly in } H^1_{\tang ,\text{loc}}(M\setminus\left\{a_1(t),\ldots,a_d(t)\right\}),
\label{eq:conv_ueps_weak2}\\
& u_{\eps_h(t)}(t)\to u^*(t)\,\,\,\hbox{ weakly in } W^{1,p}_{\tang}(M)\,\,\,\,p\in [1,2),\label{eq:conv_ueps_weakp}\\
& u_{\eps_h(t)}(t)\to u^*(t)\,\,\,\hbox{ strongly in } L^p_{\tang}(M)\,\,\,\,p\in [1,+\infty)\label{eq:conv_ueps_strong}.
\end{align}
Since $D u_{\eps_h(t)}(t)$ converges weakly in $L^{p}_{\tang}(M)$ for $p<2$ and $u_{\eps_h(t)}(t)$ converges strongly in $L^p_{\tang}(M)$ for any $p<+\infty$ we get that 
\begin{equation}
\label{eq:conv_current_t}
j(u_{\eps_h(t)}(t))\weak j(u^*(t))\qquad \hbox{ in }\quad L^p(M)\quad \hbox{ for any }p\in [1,2). 
\end{equation}
Therefore, thanks to \eqref{eq:conv_vorticity_prop}, we have that 
\begin{equation}
\label{eq:jacobians1}
\d j(u^*(t)) = -\kappa \Vg + 2\pi\sum_{j=1}^n d_j\delta_{a_j(t)},\,\,\,\,\,\,\hbox{ for a.a. }t\in [0,T^*).
\end{equation}
As a result, we have that
\[
0 \stackrel{\text{Stokes}}= \int_{M}\d j(u^*(t)) = 2\pi \sum_{j=1}^n d_j -\int_{M}\kappa \Vg\stackrel{\text{Gauss-Bonnet}}= 2\pi \sum_{j=1}^n d_j -2\pi \chi(M),
\]
and thus we have that $(\a(t), \db)\in \mathscr{A}^n$.

Now we prove that $\d^* j(u^*(t)) =0$. This will imply that $u^*(t)$ is indeed a canonical harmonic vector field. 
To this end we let $\psi:M\to \mathbb{R}$ be a smooth function. 
Then, since 
\[
\frac{1}{\eps^2}(1-\vert u_{\eps_h(t)}\vert^2)(u_{\eps_h}(t),i u_{\eps_h(t)})_g\psi = 0\qquad\hbox{ in } M
\]
we readily have that 
\[
\lim_{h\to +\infty}\int_{M}\left(\Delta_g u_{\eps_h(t)},iu_{\eps_h(t)}(t)\right)_g\psi \Vg = 0
\]
and thus, thanks to Lemma \ref{lem:ibp} below, that
\begin{equation}
\label{eq:harmonic1}
\lim_{h\to +\infty}\int_{M}\psi \,\d^*j(u_{\eps_h(t)}(t)) \Vg = 0.
\end{equation}
Thus, thanks to \eqref{eq:conv_current_t}, we have 
\[
0 = \lim_{h\to +\infty}\int_{M}\psi \,\d^*j(u_{\eps_h(t)}(t)) \Vg = 
\lim_{h\to +\infty}\int_{M}\d\psi \,j(u_{\eps_h(t)}(t)) \Vg =
\int_{M}\d \psi\,j(u^*(t))\Vg,
\]
namely
\begin{equation}
\label{eq:codiffjstar}
\d^*j(u^*(t)) = 0.
\end{equation}
Thus we conclude that $u^*(t)$ is a canonical harmonic vector field.
Since $t$ was indeed arbitrary on $C$ we have proved that for almost any $t\in (0,T^*)$  we have that $(\a(t),\db)\in \mathscr{A}^n$ with associated a canonical harmonic vector field  $u^*(t)$. 
Note that $u^*(t)$ is obtained as a limit of $u_\eps(t)$ along a selected subsequence.
The continuity of $\a: [0,T^*)\to M^n$ actually implies that $(\a(t), \db)\in \mathscr{A}$ for any $t\in [0,T^*)$. 
The fact that at time $t\in C$ the vector field $u^*(t)$ is a canonical harmonic vector field implies that the harmonic $1$-form $\xi(t)\in \mathcal{L}(\a(t), \db)$. Indeed, on the one hand the convergences \eqref{eq:conv_harmo1form} and \eqref{eq:conv_current_t} imply that 
\[
\xi(t) = \P j(u^*(t)).
\]
On the other hand
we let $\Phi_t=\Phi(\a(t),\db)$ be the $2$-form such that
\begin{equation}
\label{eq:PHI}
\begin{cases}
-\Delta \Phi = -\kappa \Vg + 2\pi\displaystyle\sum_{j=1}^n d_j\delta_{a_j(t)}\\
\displaystyle\int_{M}\Phi =0.
\end{cases}
\end{equation}
Then, 
\eqref{eq:jacobians1} and \eqref{eq:codiffjstar} 
imply that the $1$-form $j(u^*(t))- d^*\Phi_t \in \text{Harm}^1(M)$. Therefore \cite[Theorem 2.1]{JerrardIgnat_full}, and the uniqueness of the projection imply that 
\[
\xi(t) = j(u^*(t))- \d^*\Phi_t, \qquad \xi(t) \in \mathcal{L}(\a(t), \db)\quad \text{ for almost any }t\in [0,T^*).
\]
The continuity of $\xi:[0,T^*)\to \Harm^1(M)$ and of $\a:[0,T^*)\to M^n$ and the continuity of the lattice $\mathcal{L}$ with respect to $(\a,\db)$ (see \cite[Lemma 2.3]{JerrardIgnat_full}) give that 
\begin{equation}
\label{eq:xi_forany}
\xi(t)\in \mathcal{L}(\a(t),\db)\qquad\hbox{ for any }\quad t\in [0,T^*).
\end{equation}
Therefore, \cite[Theorem 2.1]{JerrardIgnat_full} gives that 
for any $t\in [0,T]$ there exists a canonical harmonic vector field $u^*(t)$. 
\qedhere

\end{step}

\end{proof}
\begin{remark}
\label{oss:campo_armonico}
Note that  \eqref{eq:conv_current_t} can be extended to the whole interval $[0,T^*)$ since $j(u^*(t))$ is uniquely identified by its differential, its co-differential and the harmonic $1$-form $\xi$, all of which are continuous with respect to time.
On the contrary, the vector field $u^*(t)$ may not be obtained as a limit of $u_\eps(t)$ for $t\in [0,T]\setminus C$. Moreover, we have no information regarding the regularity of $u^*$ with respect to time. Indeed the form $j(u^*(t))$ identifies $u^*(t)$ only up to an arbitrary global rotation.
\end{remark}

\begin{prop} \label{prop:liminfgrad}
 Let~$u_\eps $ be a solution of~\eqref{eq:GL_intro}, with~$u_{\eps}^0$ satisfying
 \eqref{eq:initial_vorticity}, \eqref{eq:well_prepared} and~\eqref{eq:initial_Linfty}.
 Let~$\a\colon [0, \, T^*)\to M^n$, $\xi\in H^1(0,T^*;\Harm^1(M))$ be
 as in Proposition~\ref{prop:main_limit_proc}. Then, for a.e.~$t\in (0, \, T^*)$, we have
 \begin{equation*} 
  \begin{split}
   \liminf_{\eps\to 0} \frac{\abs{\log\eps}}{2}
    \int_{M}\abs{ -\Delta_g u_\eps 
     + \frac{1}{\eps^2}(\vert u_\eps\vert^2-1)u_\eps}^2_{g}\,\Vg
     \geq  \frac{1}{2\pi} \sum_{j=1}^n
     \abs{\nabla_{a_j} W(\a(t), \, \mathbf{d}, \, \xi(t))}_g^2
  \end{split}
 \end{equation*} 
\end{prop}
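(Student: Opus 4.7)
The plan is to run a Cauchy–Schwarz argument with a carefully chosen test vector field of the form $Z_\eps = D_X u_\eps$, where $X$ is designed to pick up the directions $\nabla_{a_j} W$ at the vortices. We fix $t\in(0, T^*)$ for which the conclusions of Proposition~\ref{prop:main_limit_proc} and Lemma~\ref{lem:lontano_sing} hold and, in addition, $\|h_\eps(t)\|_{L^2(M)}^2 \leq C/|\log\eps|$ along a (not relabelled) subsequence, where $h_\eps := -\Delta_g u_\eps + \eps^{-2}(|u_\eps|_g^2 - 1)u_\eps$. This last property is satisfied for a.e.~$t$, by Fatou's lemma applied to~\eqref{eq:boundut_lemma} in view of the equation $h_\eps = -|\log\eps|^{-1}\partial_t u_\eps$; in particular $\|h_\eps(t)\|_{L^2(M)}\to 0$. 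For each $j$, set $e_j := \nabla_{a_j} W(\a(t), \db, \xi(t)) \in T_{a_j}M$. We pick pairwise disjoint open neighbourhoods $U_j$ of $a_j$ and scalars $\psi_j \in C^\infty_{\mathrm{c}}(U_j)$ with $\nabla\psi_j(a_j) = -ie_j$; the fields $X_j := \nabla^\perp\psi_j$ are then smooth, supported in $U_j$, divergence-free on $M$, with $X_j(a_j) = e_j$. We set $X := \sum_j X_j$.

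By the polarization identity~\eqref{eq:polarization} of Lemma~\ref{lem:corollario4SS},
\[
 \lim_{\eps\to 0}\frac{1}{|\log\eps|}\int_M |D_X u_\eps|_g^2\,\Vg = \pi \sum_{j=1}^n |e_j|_g^2.
\]
The core claim will be
\begin{equation}\label{propkey}
 \lim_{\eps\to 0}\int_M (h_\eps, D_X u_\eps)_g\, \Vg = -\sum_{j=1}^n|e_j|_g^2.
\end{equation}
Granting~\eqref{propkey}, Cauchy–Schwarz gives
\[
 |\log\eps|\int_M|h_\eps|^2_g\, \Vg \geq \frac{\left|\int_M(h_\eps, D_X u_\eps)_g\,\Vg\right|^2}{\frac{1}{|\log\eps|}\int_M|D_X u_\eps|^2_g\,\Vg} = \frac{1}{\pi}\sum_{j=1}^n|e_j|^2_g + o(1),
\]
and dividing by $2$ concludes the proof.

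To establish~\eqref{propkey}, I would apply the stress-energy identity (Lemma~\ref{lemma:stressenergy}) on $M\setminus\bigcup_j B_\eta(a_j)$ for a small $\eta>0$. Cauchy–Schwarz combined with $\|h_\eps\|_{L^2(M)}\to 0$ and the uniform bound $\|D_X u_\eps\|_{L^2(M\setminus\cup_j B_\eta(a_j))}\leq C$ (a consequence of Lemma~\ref{lem:lontano_sing}) show that $\int_{M\setminus\cup_j B_\eta(a_j)}(h_\eps, D_X u_\eps)_g\Vg\to 0$ as $\eps \to 0$. The strong $H^1_{\mathrm{loc}}$-convergence $u_\eps \to u^*(t)$ from Lemma~\ref{lem:lontano_sing}, together with the local convergence $\eps^{-2}(1-|u_\eps|_g^2)^2\to 0$, allows one to pass to the limit in the bulk integral and in the boundary flux integrals on $\partial B_\eta(a_j)$. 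Letting $\eta \to 0$, Proposition~\ref{prop:gradient_reno} identifies the limit of the sum of boundary integrals as $-\sum_j(\nabla_{a_j}W, X_j(a_j))_g = -\sum_j |e_j|_g^2$.

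The main obstacle will be to show that the residual contribution $\sum_j\int_{B_\eta(a_j)}(h_\eps, D_X u_\eps)_g\Vg$ is negligible (i.e.\ $o_{\eta\to 0}(1)$ after $\eps\to 0$), despite $|Du_\eps|^2$ concentrating at each vortex. Working in geodesic normal coordinates centred at $a_j$ with an orthonormal frame coinciding with the coordinate frame at $a_j$, the Christoffel symbols vanish at $a_j$, and the bulk integrand of Lemma~\ref{lemma:stressenergy} reduces to $(D_{\tau_i}u_\eps, D_{\tau_k}u_\eps)_g\, B_X^{ik}(x)$, where $B_X^{ik}(a_j) = (\hat{e}_i, D_{\hat{e}_k} X_j)(a_j)$ satisfies the vanishing-trace condition $\sum_i B_X^{ii}(a_j) = \div X_j(a_j) = 0$. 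The polarization identity~\eqref{eq:polarization2} together with~\eqref{eq:conv_energy_palle_lemma} yields $\int_{B_\eta(a_j)}(D_{\tau_i}u_\eps, D_{\tau_k}u_\eps)_g\Vg = \pi|\log\eps|\delta^{ik} + O_\eta(1)$; the vanishing trace of $B_X^{ik}(a_j)$ cancels the logarithmic divergence, while a parity argument in normal coordinates handles the next-order contribution linear in $x$, leaving an $O(\eta)$ remainder that vanishes as $\eta\to 0$.
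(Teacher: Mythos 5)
Your overall architecture --- Cauchy--Schwarz against $D_Xu_\eps$ for a divergence-free test field, a Pohozaev/stress-energy identity converting the pairing into boundary fluxes on $\partial B_\eta(a_j)$, the polarization identity \eqref{eq:polarization} for the denominator, and Proposition~\ref{prop:gradient_reno} to identify the boundary limit --- is exactly the paper's strategy (the paper tests ball-by-ball with two orthonormal coordinate directions and takes a supremum over directions rather than building $X(a_j)=\nabla_{a_j}W$ in advance; also the sign in your key limit claim for $\int_M(h_\eps,D_Xu_\eps)_g\,\Vg$ appears to be reversed, but both points are immaterial since only the modulus enters the Cauchy--Schwarz quotient). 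The genuine gap is in the step you yourself flag as the main obstacle: the bulk term $\int_{B_\eta(a_j)}(D_{\tau_i}u_\eps,D_{\tau_k}u_\eps)_g\,B^{ik}_X\,\Vg$ produced by Lemma~\ref{lemma:stressenergy} for a general divergence-free $X_j=\nabla^\perp\psi_j$, which is \emph{quadratic} in $Du_\eps$ and hence concentrates like $|\log\eps|$. Your cancellation rests on the claim $\int_{B_\eta(a_j)}(D_{\tau_i}u_\eps,D_{\tau_k}u_\eps)_g\,\Vg=\pi|\log\eps|\,\delta^{ik}+O_\eta(1)$, but \eqref{eq:polarization2} and \eqref{eq:conv_energy_palle_lemma} control these integrals only after division by $|\log\eps|$, i.e.\ the error is $o(|\log\eps|)$, not $O_\eta(1)$; once the vanishing trace of $B_X(a_j)$ removes the leading $\pi|\log\eps|$ part you are left with an uncontrolled $o(|\log\eps|)$ remainder, which may still diverge. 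Likewise, the ``parity argument'' for the contribution of $B^{ik}_X(x)-B^{ik}_X(a_j)$ linear in $x$ would require refined local asymptotics of the tensor $(D_{\tau_i}u_\eps,D_{\tau_k}u_\eps)_g$ near the core (e.g.\ approximate radial symmetry of the energy density) that neither the cited lemmas nor the rest of the paper provide; a crude bound only gives $O(\eta\,|\log\eps|)$.

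The paper sidesteps this difficulty entirely by taking the test fields near $a_j$ to be \emph{coordinate} fields $e_k=\partial/\partial x^k$ of a divergence-free (Darboux) chart, Lemma~\ref{lemm:free_coordinates}. Since coordinate fields commute, the exchange of covariant derivatives in the integration by parts (Lemma~\ref{lem:int_parti2}) produces no term quadratic in $Du_\eps$: the second-order part is an exact divergence, hence a pure boundary term by $\div e_k=0$, and the only interior remainder is the curvature term $\int_{B_\eta}(Du_\eps,R(\cdot,e_k)u_\eps)_g\,\Vg$, which is \emph{linear} in $Du_\eps$, controlled uniformly by the $W^{1,1}$ bound of Corollary~\ref{cor:W1,pbounds}, and $o_{\eta\to 0}(1)$ in the limit. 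To repair your argument, replace each $X_j$ near $a_j$ by such a coordinate field (or run the per-ball, per-direction version and take the supremum over unit vectors at $a_j$); the remainder of your scheme then goes through.
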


\begin{proof}
 Let $t\in (0, \, T^*)$ be fixed, in such a way that 
 $\frac{1}{\abs{\log\eps}}\partial_t u_\eps(t)\to 0$ in $L^2(M)$ as $\eps\to 0$.
 First of all, we extract a subsequence~$\eps_h(t)\to 0$
 (possibly depending on~$t$) in such a way that
 $u_{\eps_h(t)}(t)$ converges to a canonical harmonic field $u^*(t)$ and
 \[
  \begin{split}
   &\liminf_{\eps\to 0} \frac{\abs{\log\eps}}{2}
    \int_{M}\abs{ -\Delta_g u_\eps 
     + \frac{1}{\eps^2}(\vert u_\eps\vert^2-1)u_\eps}^2_{g}\,\Vg \\
   &\qquad\qquad = \lim_{h\to +\infty} \frac{\abs{\log\eps_h(t)}}{2}
    \int_{M}\abs{ -\Delta_g u_{\eps_h(t)} 
     + \frac{1}{\eps_h(t)^2}(\vert u_{\eps_h(t)}\vert^2-1)
     u_{\eps_h(t)}}^2_{g}\,\Vg
  \end{split}
 \]
 We will then be able to extract further subsequences later on.
 We work at fixed~$t$ and drop the dependence on~$t$ from the notation;
 we write $\eps$ instead of~$\eps_h(t)$.
 Let~$j\in\{1, \, \ldots, \, n\}$ be fixed, and let~$\eta>0$
 be a small parameter. We choose an orthonormal basis
 $\{\hat{e}_1, \, \hat{e}_2\}$ of~$T_{a_j}M$. 
 We fix geodesic normal coordinates $\{x^1, \, x^2\}$ centered in $a_j$ in such a way that the fields 
 ~$e_k := \partial/\partial x^k$ satisfy
 \[
 e_k(a_j) =\hat{e}_k, \qquad   \div e_k(a_j)=0, \qquad \Gamma_{i,k}^m(a_j) =0 \quad \forall i,m,k,
 \]
 where $\Gamma_{ik}^m$ are the Cristhoffel symbols of the metric $g$. We let $B_\eta(a_j)$ be the geodesic ball centered at $a_j$ (to simplify the notation, we write $B_\eta$
 instead of~$B_\eta(a_j)$) and let~$\nu$ be the outward-pointing unit normal to ~$\partial B_\eta$.

%
%
 Let~$k\in\{1, \, 2\}$ be fixed. 
 We multiply the equation~\eqref{eq:GL_intro} by~$D_{k} u_\eps$ ($D_k$, as usual, stands for $D_{e_k}$)
 and integrate over~$B_\eta$. We obtain
 \begin{equation}
 \label{eq:stimafond}
  \begin{split}
   \int_{B_{\eta}}\left(\frac{\partial_t u_\eps}{\vert \log\eps\vert}, D_{k}  u_\eps\right)_g \Vg &+ \int_{B_\eta}\left(-\Delta_g u_\eps, D_{k} u_\eps\right)_g \Vg \\
   &+ \frac{1}{\eps^2}\int_{B_\eta}\left((\vert u_\eps\vert^2_g-1)u_\eps, D_{k} u_\eps\right)_g \Vg = 0. 
  \end{split}
 \end{equation}
 We integrate by parts in the second and in 
 the third term in the left hand side.
 The details of the computations are given in
 Lemma~\ref{lem:int_parti1}, Lemma~\ref{lem:int_parti2} 
 (see Appendix~\ref{app:int_byparts} below). We obtain
 \begin{equation}
 \label{eq:stimafond2}
  \begin{split}
   \frac{1}{\vert \log\eps\vert}\int_{B_\eta}(\partial_t u_\eps, D_{k} u_\eps)_g \Vg
   &= \frac{1}{4\eps^2}\int_{\partial B_\eta}(\vert u_\eps\vert_g^2 - 1)^2\, \d\H^1 
   + \frac{1}{2}\int_{\partial B_\eta}\vert D u_\eps\vert^2_g (\nu,e_k)_g \, \d\H^1 \\
   &\qquad- \int_{\partial B_\eta}(D_{k} u_\eps, D_{\nu}u_\eps)_g\, \d\H^1
   +\int_{B_\eta}(D u_\eps, R(\cdot,e_k)u_\eps)_g\Vg\\
   &  -\frac{1}{2}\int_{B_\eta(p)}\abs{D u_\eps}^2_g \div(e_k)\,\Vg
   -\frac{1}{2}\int_{B_\eta(p)}(\partial_k g^{ij})(D_i u_\eps, D_j u_\eps)_g \,\Vg
  \end{split}
 \end{equation}
 where~$R(\cdot, \, e_k)$ is the Riemann curvature tensor of~$M$.
 The left hand side  is controlled by 
 \[
  \begin{split}
   &\frac{1}{\vert \log\eps\vert}\int_{B_\eta}(\partial_t u_\eps, D_{k} u_\eps)_g \Vg \\
   &\qquad\qquad\le \left(\int_{B_\eta}\vert \log\eps\vert\abs{ -\Delta_g u_\eps + \frac{1}{\eps^2}(\vert u_\eps\vert_g^2-1)u_\eps}_g^2\Vg \right)^{1/2}\left(\int_{B_\eta}\frac{\vert D_{k} u_\eps\vert^2}{\vert \log\eps\vert}\Vg \right)^{1/2},
  \end{split}
 \]
 and thus, 
 \begin{equation} \label{eq:liminf_local_grad_ene}
  \begin{split}
  &\left(\int_{B_\eta}\vert \log\eps\vert\abs{ -\Delta_g u_\eps + \frac{1}{\eps^2}(\vert u_\eps\vert_g^2-1)u_\eps}_g^2\Vg\right) \left(\int_{B_\eta}\frac{\vert D_{k} u_\eps\vert_g^2}{\vert \log\eps\vert}\Vg\right)\\
  & \ge \abs{\frac{1}{4\eps^2}\int_{\partial B_\eta}(\vert u_\eps\vert_g^2 - 1)^2\, \d\H^1 + 
   \frac{1}{2}\int_{\partial B_\eta}\vert D u_\eps\vert_g^2 (\nu,e_k)_g\, \d\H^1 - \int_{\partial B_\eta}(D_{k} u_\eps, D_{\nu}u_\eps)_g\, \d\H^1\right.\\
  &\left.+\int_{B_\eta}(D u_\eps, R(\cdot,e_k)u_\eps)_g\Vg
  -\frac{1}{2}\int_{B_\eta(p)}\abs{D u_\eps}^2_g \div(e_k)\,\Vg
   -\frac{1}{2}\int_{B_\eta(p)}(\partial_k g^{ij})(D_i u_\eps, D_j u_\eps)_g \,\Vg}^2.
 \end{split}
 \end{equation}
 Thanks to \eqref{eq:polarization} (see Lemma~\ref{lem:corollario4SS}) we have that 
 \[
  \lim_{\eps\to 0}\frac{1}{\vert\log\eps\vert}\int_{B_\eta}\vert D_{k} u_\eps\vert_g^2\Vg = \lim_{\eps\to 0}\frac{1}{\vert \log\eps\vert}\int_{B_\eta(a_j)}(D u_\eps,e_k)_g^2\Vg  = \pi\vert e_k(a_j)\vert^2_g = \pi,
 \]
since $\left\{e_1, \, e_2\right\}$ are orthonormal in $a_j(t)$. 
Corollary~\ref{cor:W1,pbounds} implies that, up to extraction of a subsequence, 
$D u_\eps$ converges weakly in $L^p(M)$ for any $p\in [1,2)$ 
and $u_\eps$ converges strongly in $L^q(M)$ for any $q\in [1,+\infty)$
(except, possibly, for a negligible set of times~$t$). As a consequence, we have
\[
\lim_{\eps\to 0}\int_{B_\eta}(D u_\eps, R(\cdot,e_k)u_\eps)_g\Vg = \int_{B_\eta}(D u^*,R(\cdot,e_k)u^*)_g\Vg.
\]
On the other hand, Lemma~\ref{lem:lontano_sing} implies that 
\[
\lim_{\eps\to 0}\frac{1}{4\eps^2}\int_{\partial B_\eta}(\vert u_\eps\vert_g^2 - 1)^2 \, \d\H^1=0
\]
and  
\begin{align*}
\lim_{\eps\to 0}\frac{1}{2}\int_{\partial B_\eta}\vert D u_\eps\vert^2_g (\nu,e_k)_g\, \d\H^1
&- \int_{\partial B_\eta}(D_{k} u_\eps, D_{\nu}u_\eps)_g\, \d\H^1	\\
& =\frac{1}{2}\int_{\partial B_\eta}\vert D u^*\vert_g^2 (\nu,e_k)_g\, \d\H^1 
- \int_{\partial B_\eta}(D_{k} u^*, D_{\nu}u^*)_g\, \d\H^1
\end{align*}
for almost every~$\eta$. 
Finally, we concentrate on the last two terms in the right hand side of \eqref{eq:liminf_local_grad_ene}.
First of all, we observe that both terms are controlled by quantities of the form
\[
\int_{B_\eta}e_\eps(u_\eps) f \,\Vg, 
\]
where $e_\eps(u_\eps):=\frac{1}{2}\abs{D u_\eps}_g^2 + \frac{1}{4\eps^2}\left(\abs{u_\eps}_g^2 -1\right)^2$ is the density of the Ginzburg Landau energy and $f:B_\eta(p)\to \R$ is a smooth function such that $f(p)=0$ (in the first term $f= \div(e_k)$ and in the second $f= \partial_k g^{ij}$, see \eqref{eq:crist0} e \eqref{eq:div0}). 
We show that 
\begin{equation}
\label{eq:dyadicPallaeta}
\lim_{\eta \to 0}\,\sup_{\eps>0}\int_{B_\eta}e_\eps(u_\eps) f \,\Vg = 0. 
\end{equation}
To this end, we dyadically decompose $B_\eta$ to obtain that 
\[
\int_{B_\eta}e_\eps(u_\eps) f \,\Vg = \sum_{j=1}^{\infty}\int_{B_{2^{-j}\eta}\setminus B_{2^{-j-1}\eta}}e_\eps(u_\eps) f\,\Vg. 
\]  
On the one hand the smoothness of $f$ and the fact that $f(a_j)=0$ imply $\abs{f}\le \eta2^{-j}$ on each anulus $B_{2^{-j}\eta}\setminus B_{2^{-j-1}\eta}$. 
On the other hand, Lemma \ref{lemma:H1bounds} imply that, for some constant $c$ independent of $\eta$ and of $\eps$, 
\[
\int_{B_{2^{-j}\eta}\setminus B_{2^{-j-1}\eta}}e_\eps(u_\eps)\,\Vg \le c\left(\abs{\log\left(\eta 2^{-j-1}\right)} +1\right).
\]
Therefore,
\[
\abs{\int_{B_\eta}e_\eps(u_\eps) f \,\Vg}\le c\eta 
\sum_{j=1}^{\infty}2^{-j}\left(\abs{\log\left(\eta 2^{-j-1}\right) +1}\right). 
\]
Since the series in the right hand side converges, \eqref{eq:dyadicPallaeta} follows.

Therefore, taking the $\liminf$ in \eqref{eq:liminf_local_grad_ene} produces 
\begin{equation} \label{eq:liminfgrad}
 \begin{split}
  &\liminf_{\eps\to 0}\frac{\vert \log\eps\vert}{2}\int_{B_\eta}\abs{ -\Delta_g u_\eps + \frac{1}{\eps^2}(\vert u_\eps\vert_g^2-1)u_\eps}^2_{g}\,\Vg \\
  &\qquad\ge  \frac{1}{2\pi}\abs{\frac{1}{2}\int_{\partial B_\eta}\vert D u^*\vert^2_g (\nu,e_k)_g\, \d\H^1 - \int_{\partial B_\eta}(D_{e_k} u^*, D_{\nu}u^*)_g\right.\, \d\H^1
  \\
  &\qquad\qquad + \left.\int_{B_\eta}(D u^*,R(\cdot,e_k)u^*)_g\,\Vg  + o_{\eta\to 0}(1)
  }^2
 \end{split}
\end{equation}
Recall that the canonical harmonic vector field $u^*\in W^{1,p}_{\tang}(M)\cap L^\infty_{\tang}(M)$ for $p\in [1,2)$ and the curvature tensor is smooth. Therefore, as~$\eta\to 0$, we have that 
\[
\int_{B_\eta}(D u^*,R(\cdot,e_k)u^*)_g \Vg \to 0. 
\] 
Thanks to Proposition~\ref{prop:gradient_reno} 
we have
\begin{equation*}
 \begin{split}
  \liminf_{\eps\to 0}\frac{\vert \log\eps\vert}{2}\int_{B_{\eta}}\abs{ -\Delta_g u_\eps + \frac{1}{\eps^2}(\vert u_\eps\vert_g^2-1)u_\eps}^2_{g}\,\Vg
  \ge \frac{1}{2\pi} \abs{\left(\nabla_{a_j} W(\a, \, \db, \, \xi),
  \, {\hat{\be}}_k\right)_g}^2 + \mathrm{o}_{\eta\to 0}(1)
 \end{split}
\end{equation*}
As~$\{\hat{\be}_1, \, \hat{\be}_2\}$ is an arbitrary orthonormal basis
for~$\T_{a_j}(M)$, we also deduce
\begin{equation*}
 \begin{split}
  \liminf_{\eps\to 0}\frac{\vert \log\eps\vert}{2}\int_{B_{\eta}}\abs{ -\Delta_g u_\eps + \frac{1}{\eps^2}(\vert u_\eps\vert_g^2-1)u_\eps}^2_{g}\,\Vg
  \ge \frac{1}{2\pi} \abs{\left(\nabla_{a_j} W(\a, \, \db, \, \xi),
  \, {\hat{\be}}\right)_g}^2 + \mathrm{o}_{\eta\to 0}(1)
 \end{split}
\end{equation*}
for any unit vector~$\hat{\be}\in\T_{a_j}(M)$. By taking the
supremum over~$\hat{\be}$, we obtain
\begin{equation*}
 \begin{split}
  \liminf_{\eps\to 0}\frac{\vert \log\eps\vert}{2}\int_{B_{\eta}}\abs{ -\Delta_g u_\eps + \frac{1}{\eps^2}(\vert u_\eps\vert_g^2-1)u_\eps}^2_{g}\,\Vg
  \ge \frac{1}{2\pi} \abs{\nabla_{a_j} W(\a, \, \db, \, \xi)}_g^2 + \mathrm{o}_{\eta\to 0}(1)
 \end{split}
\end{equation*}
By taking the sum over~$j$, the proposition follows.
\end{proof}

We can finally prove our main result, Theorem \ref{th:main1},
which is an immediate consequence of the following proposition.
We recall that the constant~$\gamma > 0$ is the core energy,
implicitely defined in Equation~\eqref{eq:F-Gamma}
(see~\cite[Section~2.3]{JerrardIgnat_full} for more details).

\begin{prop}
\label{prop:teorema}
 Let $u_\eps $ be a solution of \eqref{eq:GL_intro} with $u_{\eps}^0$ satisfying 
 \eqref{eq:initial_vorticity}, \eqref{eq:well_prepared} and~\eqref{eq:initial_Linfty}.
 Let~$\a\colon [0, \, T^*)\to M^n$, $\xi\in H^1(0,T^*;\Harm^1(M))$ be
 as in Proposition~\ref{prop:main_limit_proc}. Then, the following statements hold.
 \begin{enumerate}[label=(\roman*), ref=\roman*]
  \item For any~$t\in (0,T^*)$, $u_\eps(t)$ is a recovery sequence for $W$, that is
  \begin{gather*}
   \omega(u_\eps(t))\to 2\pi \sum_{j=1}^n d_j \delta_{a_j(t)}
   \qquad \textrm{in } W^{-1,p}(M) \textrm{ for any } p in [1, \, 2) \\
   F_\eps(u_\eps(t))\leq \pi n\abs{\log\eps} + W(\a(t),\db,\xi(t)) + n\gamma + \mathrm{o}_{\eps\to 0}(1)
  \end{gather*}
  \item For any~$t\in (0,T^*)$, 
  \[
    \P j(u_\eps(t))\xrightarrow{\eps\to 0}\xi(t) 
    \qquad \text{ and }\quad \xi(t) \in \mathcal{L}(\a(t), \, \db)
  \]
  \item The vortex curve~$\a$ satisfies for any~$t\in (0,T^*)$
  \begin{equation*}
  \label{eq:gradflowW_propo}
   \frac{\pi}{2}\int_{0}^t\abs{\a'}_{g}^2(s)\d s + \frac{1}{2\pi}\int_{0}^t\abs{\nabla_{\a} W(\a(s),\db,\xi(s))}^2_{g}\d s   + W(\a(t),\db,\xi(t)) =  W(\a^0,\db,\xi_0)
  \end{equation*}
  Equivalently, $\a$ is a solution of the gradient flow of the renormalized energy
  \begin{equation}
    \label{eq:grad_flow_differential}
    \begin{cases}
     \displaystyle\frac{\d}{\d t} \a(t) = -\frac{1}{\pi}\nabla_{\a} W(\a(t),\d(t),\xi(t))\,\,\,\,\textrm{for any } t\in (0, \, T^*),\\
      \\
     \a(0) = \a^0.
    \end{cases}
  \end{equation}
 \end{enumerate}
\end{prop}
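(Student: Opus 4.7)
The plan is to follow the Sandier--Serfaty abstract scheme by exploiting the parabolic structure of~\eqref{eq:GL-intro} to split the dissipation into two halves that are matched, respectively, by the velocity lower bound~\eqref{eq:liminfut} of Proposition~\ref{prop:main_limit_proc} and the gradient lower bound of Proposition~\ref{prop:liminfgrad}. Note that part~(ii) is already contained in Proposition~\ref{prop:main_limit_proc}, so I only need to establish (i) and (iii); moreover, the vorticity convergence in~(i) is also provided by Proposition~\ref{prop:main_limit_proc}, so the real work is the matching of the energy upper bound and the proof of~(iii).

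First, I would write down the energy dissipation identity. Testing~\eqref{eq:GL-intro} against $\partial_t u_\eps$ yields
\[
 F_\eps(u_\eps^0) - F_\eps(u_\eps(t))
 = \frac{1}{\abs{\log\eps}} \int_0^t\!\!\int_M \abs{\partial_t u_\eps}_g^2 \, \Vg\,\d s.
\]
Using again the equation in the form $\abs{\partial_t u_\eps}_g^2 = \abs{\log\eps}^2 \abs{-\Delta_g u_\eps + \eps^{-2}(\abs{u_\eps}_g^2-1)u_\eps}_g^2$, this right-hand side can be split as
\[
 \frac{1}{\abs{\log\eps}}\int_0^t\!\!\int_M \abs{\partial_t u_\eps}_g^2 \Vg\,\d s
 = \frac{1}{2\abs{\log\eps}}\int_0^t\!\!\int_M \abs{\partial_t u_\eps}_g^2 \Vg\,\d s
 + \frac{\abs{\log\eps}}{2}\int_0^t\!\!\int_M \abs{-\Delta_g u_\eps + \tfrac{1}{\eps^2}(\abs{u_\eps}_g^2-1)u_\eps}_g^2 \Vg\,\d s.
\]
This identity is the heart of the Sandier--Serfaty scheme: each half can be bounded below by exactly the right quantity.

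Next, I would combine the well-prepared assumption~\eqref{eq:well_prepared} with the $\Gamma$-liminf of Ignat--Jerrard applied to $u_\eps(t)$ (which is legitimate thanks to Proposition~\ref{prop:main_limit_proc}, so that $F_\eps(u_\eps(t)) \geq \pi n\abs{\log\eps} + W(\a(t),\db,\xi(t)) + n\gamma + \mathrm{o}(1)$) to deduce
\[
 \limsup_{\eps\to 0}\left(F_\eps(u_\eps^0) - F_\eps(u_\eps(t))\right) \leq W(\a^0,\db,\xi^0) - W(\a(t),\db,\xi(t)).
\]
For the liminf side, Proposition~\ref{prop:main_limit_proc}\eqref{eq:liminfut} controls the first half, while Proposition~\ref{prop:liminfgrad} together with Fatou's lemma in time (legitimate since the integrand is non-negative and the pointwise liminf bound holds a.e.) controls the second half. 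This gives
\[
 \liminf_{\eps\to 0}\frac{1}{\abs{\log\eps}}\int_0^t\!\!\int_M \abs{\partial_t u_\eps}_g^2 \Vg\,\d s
 \geq \frac{\pi}{2}\int_0^t \abs{\a'}_g^2(s)\,\d s
 + \frac{1}{2\pi}\int_0^t \abs{\nabla_\a W(\a(s),\db,\xi(s))}_g^2\,\d s.
\]
Chaining these two estimates with the energy identity, I obtain the \emph{inequality}
\[
 \frac{\pi}{2}\int_0^t \abs{\a'}_g^2\,\d s + \frac{1}{2\pi}\int_0^t \abs{\nabla_\a W}_g^2\,\d s + W(\a(t),\db,\xi(t)) \leq W(\a^0,\db,\xi^0).
\]

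To close the argument I need the reverse inequality. Here Lemma~\ref{lemma:bundle} and the definition~\eqref{def:gradW} are essential: since $\xi(s) = \Xi(\a(s))$ along the curve, the map $s\mapsto W(\a(s),\db,\xi(s))$ coincides with $s\mapsto Z(\a(s))$ where $Z(\b):=W(\b,\db,\Xi(\b))$ is smooth, and the chain rule yields $\tfrac{\d}{\d s}W(\a(s),\db,\xi(s)) = (\nabla_\a W(\a(s),\db,\xi(s)), \a'(s))_g$ a.e. (here I use $\a\in H^1(0,T^*;M^n)$). Integrating and applying Cauchy--Schwarz together with Young's inequality $ab \leq \tfrac{\pi}{2} a^2 + \tfrac{1}{2\pi}b^2$ gives
\[
 W(\a^0,\db,\xi^0) - W(\a(t),\db,\xi(t))
 = -\int_0^t (\nabla_\a W, \a')_g \, \d s
 \leq \frac{\pi}{2}\int_0^t \abs{\a'}_g^2 \, \d s + \frac{1}{2\pi}\int_0^t \abs{\nabla_\a W}_g^2 \, \d s.
\]
Combining with the previous inequality forces all of them to be equalities, which yields the energy identity of~(iii). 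Equality in Young's inequality at a.e. $s$ forces $\a'(s) = -\tfrac{1}{\pi}\nabla_\a W(\a(s),\db,\xi(s))$ (with the correct sign because the scalar product must equal $-\abs{\a'}\abs{\nabla_\a W}$, i.e.\ the two vectors are anti-parallel), which gives the gradient flow~\eqref{eq:grad_flow_differential}. Finally, the equality case also forces $F_\eps(u_\eps^0) - F_\eps(u_\eps(t)) \to W(\a^0,\db,\xi^0) - W(\a(t),\db,\xi(t))$, which, combined with the well-prepared upper bound on $F_\eps(u_\eps^0)$, gives the energy upper bound for $u_\eps(t)$ stated in~(i).

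The main obstacle I anticipate is the careful justification of the pointwise-in-time application of Proposition~\ref{prop:liminfgrad} and the use of Fatou: Proposition~\ref{prop:liminfgrad} is stated for a.e.\ $t$, with a subsequence of $\eps$ depending on $t$, whereas here I need a single subsequence valid for almost every $t$ simultaneously. This is handled by noting that the pointwise liminf (along the full family $\eps\to 0$) already gives the correct bound when the $t$-dependent subsequence is shown to be the one achieving it, and by using the monotone convergence/Fatou framework on the non-negative integrand in time. A minor additional care is needed to verify the chain rule for $s\mapsto W(\a(s),\db,\xi(s))$ on the $H^1$ curve $\a$, which uses the smoothness of $Z = W\circ(\Id,\db,\Xi)$ from Lemma~\ref{lemma:bundle} together with the chain rule for $H^1$ curves composed with $C^1$ functions.
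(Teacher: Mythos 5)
Your proposal is correct and follows essentially the same route as the paper: the paper phrases the dissipation identity in the Sandier--Serfaty ``curve of maximal slope'' language on the rescaled space $X_\eps = L^2_{\tang}(M)$ with norm $|\log\eps|^{-1/2}\|\cdot\|_{L^2}$, which is exactly your split of $\frac{1}{|\log\eps|}\int|\partial_t u_\eps|^2$ into the two halves via the equation, and then closes the argument with the same ingredients (the kinetic lower bound \eqref{eq:liminfut}, Proposition~\ref{prop:liminfgrad} plus Fatou, the $\Gamma$-liminf together with well-preparedness for the reverse inequality, and the chain rule with Young's inequality to force equality and extract the ODE). Your worry about the $t$-dependent subsequence is a non-issue, since Proposition~\ref{prop:liminfgrad} gives a liminf along the full family for a.e.\ $t$, so Fatou applies directly to the non-negative integrand, exactly as in the paper.
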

\begin{proof}
The proof is based on the abstract scheme developed by Sandier \& Serfaty (see \cite[Theorem 1.4]{SS-GF}).

Equation \eqref{eq:GL_intro} is the $L^2$-gradient flow of the Ginzburg-Landau energy $F_\eps$. 
More precisely, for any $\eps>0$ and for any $v\in H^1_{\tang}(M)$ we let 
\[
\displaystyle E_\eps(v):=
\begin{cases}
F_\eps(v) -\pi n\vert \log\eps\vert-n\gamma,\,\,\,\,\,\,&\hbox{ if }D v\in L^2_{\tang}(M),\,\,\,\frac{1}{\eps^2}(1-\vert v\vert^2)^2\in L^1(M)\\
\\
+\infty\,\,\,\,\,&\hbox{otherwise in }L^2_{\tang}(M). 
\end{cases}
\]
Recall that $E_\eps$ $\Gamma$-converges to $W$ and therefore, for any $t\in [0,T)^*$,
\begin{equation}
\label{eq:gamma_conv_foranyt}
\liminf_{\eps\to 0}E_\eps(u_\eps(t)) \ge W(\a(t),\,\db(t),\,\xi(t)).
\end{equation}
We set $X_\eps:= L^2_{\tang}(M)$. We endow $X_\eps$ with the norm $\norm{v}_{X_\eps}:=\frac{1}{\vert \log\eps\vert^{1/2}}\norm{v}_{L^2(M)}$.
It is easy to check that the subdifferential of $E_\eps$ (with respect to the scalar product in $X_\eps$) is singlevalued and is given by  
\[
\partial_{X_\eps} E_\eps(v) =\abs{\log\eps}\left(-\Delta v + \frac{1}{\eps^2}(\vert v\vert_g^2 -1)v\right).
\]
Therefore the Ginzburg-Landau evolution \eqref{eq:GL_intro} rewrites as the gradient flow
\[
\begin{cases}
\partial_t u_\eps(t) + \partial_{X_\eps}E_\eps(u_\eps) = 0\\
\\
u_\eps(0) = u_{\eps}^0.
\end{cases}
\]
Equivalently, $u_\eps$ is a {\itshape curve of maximal slope} for the energy $E_\eps$ with respect to the slope $\| \partial_{X_\eps}E_\eps\|_{X_\eps}$ and thus it satisfies
\begin{align}
\label{eq:gradfloweps}
\frac{1}{2}\int_{0}^t\norm{\partial_t u_\eps}_{X_\eps}^2 \d s
 + \frac{1}{2}\int_{0}^{t}\norm{\partial_{X_\eps}E_\eps(u_\eps(s))}^2_{X_\eps} \d s + E_\eps(u_\eps(t)) = E_\eps(u_\eps^{0})
\end{align}
for any~$t\in [0, \, T^*)$.
Thanks to Proposition \ref{prop:main_limit_proc} (see in particular \eqref{eq:liminfut}) 
we have 
\begin{equation}
\label{eq:kinetic}
\liminf_{\eps\to 0}\frac{1}{2}\int_{0}^t \norm{\partial_t u_\eps(s)}^2_{X_\eps}\d s \ge \frac{\pi}{2} \int_{0}^t \abs{\a'}^2_{g}(s)\d s. 
\end{equation}
On the other hand, Proposition~\ref{prop:liminfgrad} and Fatou's Lemma imply
\begin{equation}
\liminf_{\eps\to 0}\frac{1}{2}\int_{0}^t \norm{\partial_{E_\eps} u_\eps(s)}_{X_\eps}^2
\d s \ge \frac{1}{2\pi}\int_{0}^t \abs{\nabla_{\a}W(\a(s), \, \db, \, \xi(s))}_g^2 \d s.
\end{equation}
Therefore, for any $t\in [0,T^*)$ we obtain 
\begin{equation} \label{eq:maximal_slope1}
 \begin{split}
  E_\eps(u_\eps^0) -E_\eps(u_\eps(t)) &\ge  \frac{\pi}{2} \int_{0}^t \abs{\a'}^2_{g}(s)\d s + \frac{1}{2\pi}\int_{0}^t   \abs{\nabla_{\a}W(\a(s), \, \db, \, \xi(s))}_g^2 \d s +\mathrm{o}_{\eps\to 0}(1) \\
  & \ge\int_{0}^t \sum_{j=1}^n \left(\frac{\d a_j}{\d t},-\nabla_{a_j}W(\a, \, \db, \, \xi)\right)_g \d s+ \mathrm{o}_{\eps\to 0}(1)\\
& = W(\a(0),\,\db,\,\xi) - W(\a(t),\,\db,\,\xi(t)) + \mathrm{o}_{\eps\to 0}(1). 
 \end{split}
\end{equation}
On the other hand, since the initial conditions are well prepared according to 
\eqref{eq:initial_vorticity}, \eqref{eq:well_prepared} and we have the $\Gamma$-convergence of $E_\eps$ along $u_\eps(t)$ (see \eqref{eq:gamma_conv_foranyt}), 
we have 
\[
E_\eps(u_\eps^0) -E_\eps(u_\eps(t)) \le W(\a(0),\,\db,\,\xi) - W(\a(t),\,\db,\,\xi(t)) + \mathrm{o}_{\eps\to 0}(1).
\]
Therefore, the inequalities in \eqref{eq:maximal_slope1} become equalities. 
In particular, we obtain 
\[
 \frac{\pi}{2} \int_{0}^t \abs{\a'}^2_{g}(s)\d s + \frac{1}{2\pi}\int_{0}^t \abs{\nabla_{\a}W(\a(s), \, \db, \, \xi(s))}_g^2 \d s + W(\a(t),\,\db,\,\xi(t)) = W(\a(0),\,\db,\,\xi),
\]
that is \eqref{eq:gradflowW_propo}. 
Moreover, we also obtain that 
\[
E_\eps(u_\eps(t)) = W(\a(t),\,\db,\,\xi(t)) + \mathrm{o}_{\eps\to 0}(1)
\qquad \textrm{for any } t\in [0,T^*),
\]
namely that $u_\eps$ remains well prepared along the evolution, or, in other words, that $u_\eps(t)$ is a recovery sequence for the renormalized energy $W$ for any $t\in (0,T^*)$. 

Statement~(ii) is already contained in
Proposition~\ref{prop:main_limit_proc}.
Finally, the fact that \eqref{eq:gradflowW_propo} is indeed equivalent to the differential formulation of the gradient flow \eqref{eq:grad_flow_differential} is standard. More precisely, 
since the chain rule implies
\begin{align*}
\int_{0}^t \sum_{j=1}^n \left(\frac{\d a_j}{\d t},-\nabla_{a_j}W(\a, \, \db, \, \xi)\right)_g\d s
 = W(\a(0),\,\db,\,\xi) - W(\a(t),\,\db,\,\xi(t)),
\end{align*}
we obtain that for any $t\in (0,T^*)$ there holds
\begin{align*}
 \frac{\pi}{2} \int_{0}^t \abs{\a'}^2_{g}(s)\d s + \frac{1}{2\pi}\int_{0}^t \abs{\nabla_{\a}W(\a(s), \, \db, \, \xi(s))}_g^2 \d s = \int_{0}^t \sum_{j=1}^n \left(\frac{\d a_j}{\d t},-\nabla_{a_j}W(\a, \, \db, \, \xi)\right)_g\d s,
\end{align*}
which means
\[
\int_{0}^t\abs{\displaystyle\frac{\d}{\d t}\a(s) +\frac{1}{\pi}\nabla_{a_j}W(\a(s), \, \db, \, \xi(s))}_g^2 \d s= 0,
\]
and thus
\[
\displaystyle\frac{\d}{\d t}\a(t) = -\frac{1}{\pi}\nabla_{a_j}W(\a(t), \, \db, \, \xi(t))\qquad \text{ for any }\quad t\in (0,T^*). \qedhere
\]
\end{proof}

\section*{Acknowledgements}
{ \small GC \& AS are members of the GNAMPA (Gruppo Nazionale per l'Analisi Matematica, la Probabilit\`a e le loro Applicazioni)
group of INdAM.  
AS acknowledges the partial support of the MIUR-PRIN Grant 2017 ''Variational methods for stationary and evolution problems with singularities and interfaces''.
GC acknowledges partial support from the Leverhulme Trust, through the Research Project Grant ORPG-9787 ``Unravelling the Mysteries of Complex Nematic Solution Landscapes'',
from the Agence Nationale de la Recherche, through the project ANR-22-CE40-0006 ``Singularités d'applications \`a valeurs vectorielles minimisant une \'energie'',
and from the University of Verona, through RIBA~2019 No. RBVR199YFL ``Geometric Evolution of Multi-Agent Systems''.
}

\section*{Conflict of interest}
{\small The authors declare that they have no conflict of interest.}

\begin{appendix}
\section{Integration by parts}
\label{app:int_byparts}
 We collect some integration by parts results needed in our argument. 
  
\begin{lemma}
\label{lem:ibp}
 Let $(M, \, g)$ be a compact two dimensional Riemannian manifold. 
 Let $v$ be a smooth vector field. Then, for any smooth 
 $\psi\colon M\to \mathbb{R}$ there holds
 \begin{equation}
 \label{eq:ibp}
  \int_M (\Delta_g v, \, i v)_g \, \psi \, \Vg
  = -\int_{M} \psi \, \d^*j(v).
 \end{equation}
\end{lemma}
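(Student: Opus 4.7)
\medskip
\noindent\textbf{Proof plan.}
The plan is to reduce both sides to the common quantity $\int_M (D_{\nabla\psi}v, iv)_g\,\Vg$.

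First I would rewrite the right-hand side using that $\d^*$ is the $L^2$-adjoint of $\d$ on the closed manifold $M$: since $\psi$ is a $0$-form and $j(v)$ is a $1$-form,
\[
\int_M \psi \, \d^* j(v)\,\Vg = \int_M (\d\psi,\, j(v))_g \, \Vg.
\]
Working in a local orthonormal tangent frame $\{\tau_1,\tau_2\}$, the definition $j(v) = (Dv, iv)_g = (D_{\tau_k}v, iv)_g\, \tau_k^\flat$ combined with the expansion $\nabla\psi = \d\psi(\tau_k)\,\tau_k$ gives
\[
(\d\psi,\, j(v))_g = \d\psi(\tau_k)\,(D_{\tau_k}v, iv)_g = (D_{\nabla\psi}v,\, iv)_g,
\]
so the right-hand side of the lemma equals $-\int_M (D_{\nabla\psi}v,\,iv)_g\,\Vg$.

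For the left-hand side I would use the defining property \eqref{eq:ip_rough} of the rough Laplacian, applied to the pair of vector fields $v$ and $\psi\, iv$:
\[
\int_M (\Delta_g v,\, iv)_g \, \psi\, \Vg
= -\int_M (D^*Dv,\, \psi\, iv)_g\,\Vg
= -\int_M (Dv,\, D(\psi\,iv))_g\,\Vg.
\]
Expanding by the Leibniz rule, $D(\psi\, iv) = \d\psi \otimes iv + \psi\, D(iv)$. The key point here is that on a $2$-dimensional oriented Riemannian manifold the almost complex structure $i$ (rotation by $\pi/2$ in each tangent plane, determined by the metric and the orientation, both parallel under the Levi-Civita connection) is itself parallel, so $D_X(iv) = i\,D_X v$ for any tangent vector $X$. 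Therefore, taking the inner product of $(1,1)$-tensors in the orthonormal frame,
\[
(Dv,\, D(\psi\, iv))_g
= \d\psi(\tau_k)\,(D_{\tau_k}v,\, iv)_g + \psi\,(D_{\tau_k}v,\, i D_{\tau_k}v)_g.
\]
The second summand vanishes pointwise, because $i$ is an isometric rotation, so by \eqref{eq:prop-i} we have $(w, iw)_g = 0$ for every $w\in T_pM$. The first summand collapses to $(D_{\nabla\psi}v, iv)_g$ exactly as in the previous paragraph.

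Combining the two computations yields
\[
\int_M (\Delta_g v, iv)_g \, \psi \, \Vg
= -\int_M (D_{\nabla\psi}v, iv)_g\,\Vg
= -\int_M \psi\, \d^* j(v),
\]
which is the claim. The only nontrivial ingredient is the parallelism of $i$, and this is standard in two dimensions; apart from that, everything reduces to the integration-by-parts identities \eqref{eq:in_parts_form} and \eqref{eq:ip_rough} already stated in Subsection~\ref{sssect:diff_op}.
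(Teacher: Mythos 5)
Your proof is correct and follows essentially the same route as the paper's: integrate by parts via \eqref{eq:ip_rough}, expand $D(\psi\, iv)$ by Leibniz, kill the $\psi\,(D_{\tau_k}v,\, iD_{\tau_k}v)_g$ term using $(w,\, iw)_g=0$, identify the remainder with $(j(v),\, \d\psi)_g$, and conclude by the adjointness of $\d^*$. The only cosmetic difference is that you justify $D(iv)=i\,Dv$ by the parallelism of the almost complex structure (a valid standard fact, since $g$ and $\Vg$ are parallel), whereas the paper verifies the same identity by a direct computation with the connection $1$-form in a moving frame.
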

\begin{proof}
 We integrate by parts, and apply~\eqref{eq:ip_rough} 
 \begin{equation}
  \begin{split}
   - \int_M (\Delta_g v, \, i v)_g \, \psi \, \Vg
   &= \int_{M} (D v, \, D(\psi \, i v))_g \, \Vg \\
   &= \int_{M} (D v, \, iv\otimes\d\psi)_g \, \Vg 
     + \int_{M} (D v, \, D(i v))_g \, \psi \, \Vg 
  \end{split}
 \end{equation}
 Here, $iv\otimes\d\psi$ is the (fibre-wise linear) operator
 $TM\to TM$ that maps a vector field~$w$ to the vector field~$i v \, \d\psi(w)$.
 We claim that
 \begin{equation} \label{ibp1}
  (D v, \, D(i v))_g = 0 \qquad \textrm{at any point of } M.
 \end{equation}
 Indeed, let $\left\{\tau_1,\tau_2\right\}$ be a 
 moving frame on an open set~$U\subseteq M$.
 There holds that $\tau_2 = i\tau_1$ and thus 
 \begin{align*}
  v &=  v^1\tau_1 +iv^2\tau_1\\
  iv &= -v^2\tau_1 +v^1\tau_2.
 \end{align*}
 Therefore (see Section \ref{sssec:connection1} in this paper and \cite[Section 5.2]{JerrardIgnat_full}) for $j=1,2$, 
 \begin{align*}
  D_{\tau_j}(iv) &= -D_{\tau_j}(v^2\tau_1) + D_{\tau_j}(v^1\tau_2) \\
  &=
  -(\d v^2(\tau_j) -\mathcal{A}(\tau_j)v^2)\tau_1 + (\d v^1(\tau_j) -\mathcal{A}(\tau_j)v^1)\tau_2\\
  &= iD_{\tau_j} v\,\,\,\,\,\,\hbox{ in }U,
 \end{align*}
 from which~\eqref{ibp1} follows. 
 
 Now, we consider~$(Dv, \, iv\otimes\d\psi)_g$.
 In terms of the orthonormal frame~$\{\tau_1, \, \tau_2\}$,
 we have
 \[
  (Dv, \, iv\otimes\d\psi)_g 
  = (D_{\tau_1} v, \, i v)_g \,  \d\psi(\tau_1)
   + (D_{\tau_2} v, \, i v)_g \,  \d\psi(\tau_2)
 \]
 Recalling the definition of~$j(v)$, Equation~\ref{eq:current},
 we can write
 \[
  (Dv, \, iv\otimes\d\psi)_g 
  = j(v)(\tau_1) \,  \d\psi(\tau_1)
   + j(v)(\tau_2) \,  \d\psi(\tau_2)
  = (j(v), \, \d\psi)_g
 \]
 Therefore,
 \[
  - \int_M (\Delta_g v, \, i v)_g \, \psi \, \Vg
  = \int_{M} (j(v), \, \d\psi)_g \, \Vg 
  \stackrel{\eqref{eq:in_parts_form}}{=}
  \int_{M} \d^*j(v) \, \psi \, \Vg  \qedhere
 \]
\end{proof}

\begin{lemma}
\label{lem:int_parti1}
 Let~$p\in M$ and let~$u$, $v$ be smooth vector 
 fields defined in a neighbourhood of~$p$. Then, for any~$\eps>0$
 and any~$\eta>0$ small enough, there holds 
 \begin{equation}
 \label{eq:int_parti1}
  \begin{split}
   &\frac{1}{\eps^2}\int_{B_\eta(p)} ((\vert u\vert^2_g-1)u, \, D_v u)_g \Vg \\
   &\qquad\qquad = 
   -\frac{1}{4\eps^2} \int_{B_\eta(p)} (\vert u\vert^2_g-1)^2 
    \, \div(v) \, \Vg 
   + \frac{1}{4\eps^2} \int_{\partial B_\eta(p)}
    (\vert u\vert^2_g-1)^2 \, (\nu, \, v)_g \, \d\H^1,
  \end{split}
 \end{equation}
 where $\nu$ is the outer normal to $\partial B_\eta$.
\end{lemma}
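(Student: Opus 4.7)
The key observation is that since the Levi-Civita connection is compatible with the metric, we have $(u,D_v u)_g = \tfrac12 \d(|u|_g^2)(v)$. Multiplying by $(|u|_g^2 - 1)$ and using the chain rule converts the integrand into a pure divergence-type expression. Specifically, I would write
\[
 ((|u|_g^2 - 1)u, \, D_v u)_g = \frac{1}{2}(|u|_g^2-1)\,\d(|u|_g^2)(v) = \frac{1}{4}\,\d\!\left((|u|_g^2-1)^2\right)\!(v) = \frac{1}{4}(\nabla f, \, v)_g,
\]
where $f := (|u|_g^2 - 1)^2$. This rewriting is valid pointwise on $B_\eta(p)$ without any need to localize in coordinates.

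Once the integrand is expressed as $\tfrac{1}{4\eps^2}(\nabla f, \, v)_g$, the conclusion follows by a direct application of the divergence-type integration by parts formula already established in the paper, namely Equation~\eqref{eq:ip_div} with $\varphi = f$, applied on the smooth open set $U = B_\eta(p)$ (which requires only that $\eta$ be smaller than the injectivity radius of~$M$ at $p$, so that $B_\eta(p)$ has smooth boundary):
\[
 \int_{B_\eta(p)}(\nabla f, \, v)_g\,\Vg = -\int_{B_\eta(p)} f \,(\div v)\,\Vg + \int_{\partial B_\eta(p)} f\,(\nu, \, v)_g\,\d\H^1.
\]
Dividing by~$4\eps^2$ yields exactly the right-hand side of the claimed identity.

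There is no serious obstacle here: the statement is essentially a rewriting of the potential term as a total derivative, plus a divergence theorem. The only minor point to be careful about is the chain-rule step, which uses the metric compatibility of~$D$; this is standard and the smoothness assumptions on~$u$ and~$v$ ensure all quantities are classical. I would present the proof as two short displays followed by one citation of~\eqref{eq:ip_div}.
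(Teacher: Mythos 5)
Your proof is correct and uses essentially the same ingredients as the paper's: metric compatibility of $D$ to turn the potential term into a derivative of $(\abs{u}^2_g-1)^2$, followed by the integration-by-parts formula~\eqref{eq:ip_div}. The only (cosmetic) difference is that you pull the full square $f=(\abs{u}^2_g-1)^2$ out before integrating, whereas the paper applies~\eqref{eq:ip_div} with $\varphi=\abs{u}^2_g-1$ and the field $(\abs{u}^2_g-1)v$ and then solves for the original integral; your version is marginally cleaner.
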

\begin{proof}
 Thanks to the compatibility of the covariant
 derivative with the metric, we have 
 \begin{equation} \label{ibp11}
  (\nabla \abs{u}^2_g, \, v)_g 
  = 2(u, \, D_v u)_g
 \end{equation}
 Therefore,
 \[
  ((\abs{u}^2_g - 1)u, \, D_v u)_g
  = \frac{1}{2}(\nabla \abs{u}^2_g, \, (\abs{u}^2_g - 1)v)_g
  = \frac{1}{2}(\nabla(\abs{u}^2_g - 1), \, (\abs{u}^2_g - 1)v)_g
 \]
 We integrate by parts, by applying~\eqref{eq:ip_div}. We obtain
 \[
  \begin{split}
   &\int_{B_\eta(p)} ((\abs{u}^2_g - 1)u, \, D_v u)_g \, \Vg
   = - \frac{1}{2} \int_{B_\eta(p)}
    (\abs{u}^2_g - 1)^2 \, \div(v) \, \Vg \\
   &\qquad\qquad - \frac{1}{2} \int_{B_\eta(p)}
    (\abs{u}^2_g - 1) \, (\nabla\abs{u}^2_g, \, v)_g \, \Vg
    + \frac{1}{2} \int_{\partial B_\eta(p)}
    (\abs{u}^2_g - 1)^2 \, (\nu, \, v)_g \, \d\H^1
  \end{split}
 \]
 Taking~\eqref{ibp11} into account, we obtain~\eqref{eq:int_parti1}.
\end{proof}

\begin{lemma}
\label{lem:int_parti2}
 Let $u$ be a smooth vector field. Let~$\{x^1, \, x^2\}$
 be a local coordinate system, defined in a neighbourhood 
 of a point~$p\in M$. 
 We let $e_k := \partial/\partial x^k$. 
%
 Then, for $k=1, \, 2$, we have 
 \begin{equation}
 \label{eq:int_parti2}
  \begin{split}
   \int_{B_\eta(p)} (-\Delta u, \, D_k u)_g \Vg
   &= \frac{1}{2}\int_{\partial B_\eta(p)}
    \vert D u\vert^2_g \, (\nu, \, e_k)_g\, \d\H^1 
   - \int_{\partial B_\eta(p)} (D_\nu u, \, \D_k u)_g \, \d\H^1 \\
   & + \int_{B_\eta(p)} (D u, \, R(\cdot, \, e_k)u)_g \, \Vg  -\frac{1}{2}\int_{B_\eta(p)}\abs{D u}^2_g \div(e_k)\,\Vg\\
   &-\frac{1}{2}\int_{B_\eta(p)}(\partial_k g^{ij})(D_i u, D_j u)_d \,\Vg,
  \end{split}
 \end{equation}
 where~$R$ is the Riemann curvature tensor. 
\end{lemma}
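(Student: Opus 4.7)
My plan is to establish~\eqref{eq:int_parti2} through an integration by parts against the rough Laplacian, with the curvature bulk integral arising from the Ricci commutation identity for iterated covariant derivatives.

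I would first select an orthonormal, positively oriented tangent frame $\{\tau_1, \tau_2\}$ defined on a neighbourhood of $\overline{B_\eta(p)}$, and use~\eqref{eq:rough_frame} to write $-\Delta_g u = -\sum_j D_{\tau_j} D_{\tau_j} u + \sum_j D_{D_{\tau_j}\tau_j} u$, so that
\[
 \int_{B_\eta(p)}(-\Delta_g u, D_{e_k} u)_g \Vg = -\sum_j \int_{B_\eta(p)}(D_{\tau_j} D_{\tau_j} u, D_{e_k} u)_g\Vg + \sum_j \int_{B_\eta(p)}(D_{D_{\tau_j}\tau_j} u, D_{e_k} u)_g\Vg.
\]
Each summand in the first sum would be integrated by parts via~\eqref{eq:ip_covariant} applied to the outer $D_{\tau_j}$. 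The boundary contributions assemble, using $\nu = \sum_j(\nu, \tau_j)_g \tau_j$ and orthonormality of the frame, into the desired $-\int_{\partial B_\eta(p)}(D_\nu u, D_{e_k} u)_g\,\d\H^1$; the bulk contributions reduce to $\sum_j \int_{B_\eta(p)}(D_{\tau_j} u, D_{\tau_j} D_{e_k} u)_g\Vg$ together with correction terms involving $\div(\tau_j)$ and $D_{D_{\tau_j}\tau_j} u$.

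Next I would apply the Ricci commutation identity $D_{\tau_j} D_{e_k} u = D_{e_k} D_{\tau_j} u + R(\tau_j, e_k) u + D_{[\tau_j, e_k]} u$ to decompose the bulk term. The contribution of $D_{e_k} D_{\tau_j} u$ collapses, by metric compatibility, to $\tfrac12\, e_k(|Du|^2_g) = \tfrac12(\nabla |Du|^2_g, e_k)_g$; a further integration by parts via~\eqref{eq:ip_div}, combined with the divergence-free hypothesis $\div(e_k) = 0$, converts this scalar integral into the boundary term $\tfrac12\int_{\partial B_\eta(p)}|Du|^2_g\,(\nu, e_k)_g\,\d\H^1$. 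The contribution of $R(\tau_j, e_k) u$ sums directly over $j$ to produce the curvature bulk integral $\int_{B_\eta(p)}(Du, R(\cdot, e_k) u)_g\,\Vg$.

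The main technical obstacle will be to show that the remaining frame-dependent residue
\[
 \mathcal{E} := \sum_j (D_{\tau_j} u, D_{[\tau_j, e_k]} u)_g + \sum_j (D_{\tau_j} u, D_{e_k} u)_g\,\div\tau_j + \sum_j (D_{D_{\tau_j}\tau_j} u, D_{e_k} u)_g
\]
vanishes pointwise. A natural first reduction expresses $\div(\tau_j)$ and $\sum_j D_{D_{\tau_j}\tau_j} u$ through the connection $1$-form $\mathcal{A}$ of the frame (using~\eqref{eq:prop_connection}), giving $\div\tau_1 = -\mathcal{A}(\tau_2)$, $\div\tau_2 = \mathcal{A}(\tau_1)$, and $\sum_j D_{D_{\tau_j}\tau_j} u = -\mathcal{A}(\tau_1) D_{\tau_2} u + \mathcal{A}(\tau_2) D_{\tau_1} u$; the last two sums in $\mathcal{E}$ then cancel exactly. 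The commutator sum would then be handled by expanding $[\tau_j, e_k] = D_{\tau_j} e_k - D_{e_k}\tau_j$ in the orthonormal frame and invoking the divergence-free constraint $\div(e_k) = 0$, which forces the trace of $De_k$ in the frame to vanish. Since $\mathcal{E}$ is a difference of frame-invariant quantities (namely the left-hand side of~\eqref{eq:int_parti2} minus the intrinsic terms already identified), it is itself intrinsic and hence the vanishing can be checked in any convenient frame; this is the delicate step, and the one where I expect the bulk of the calculation to lie.
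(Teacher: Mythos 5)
Your strategy mirrors the paper's own proof: integrate by parts against the rough Laplacian, commute covariant derivatives to extract the curvature term, use metric compatibility to produce $\tfrac12\, e_k(\abs{Du}^2_g)$, and convert that into a boundary term via $\div e_k=0$. The bookkeeping up to the residue $\mathcal{E}$ is correct: the boundary contributions do assemble into $-\int_{\partial B_\eta}(D_\nu u, D_{e_k}u)_g\,\d\H^1$, the $\div\tau_j$ and $D_{D_{\tau_j}\tau_j}u$ corrections cancel exactly as you say, and $\sum_j(D_{\tau_j}u, D_{D_{e_k}\tau_j}u)_g=0$ by antisymmetry of the connection form.

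The gap is the final step, and it is not merely an omitted computation: the surviving residue does not vanish under the stated hypotheses. Writing $S_{jl}:=(D_{\tau_j}e_k,\tau_l)_g$ and $T_{jl}:=(D_{\tau_j}u,D_{\tau_l}u)_g$, what remains is
\[
 \sum_j (D_{\tau_j}u, D_{D_{\tau_j}e_k}u)_g = \sum_{j,l} S_{jl}T_{jl} = \langle S^{\mathrm{sym}}, T\rangle,
 \qquad S^{\mathrm{sym}} = \tfrac12\mathcal{L}_{e_k}g .
\]
The hypothesis $\div e_k=0$ only gives $\tr S^{\mathrm{sym}}=0$; since $T$ is symmetric but in general not a multiple of the identity, the pairing of the two trace-free parts survives. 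Frame-invariance, which you invoke, cuts the other way: the obstruction is the intrinsic deformation tensor of $e_k$ contracted with the energy tensor of $u$, and it vanishes for all $u$ only if $e_k$ is conformal Killing (hence, being also divergence-free, Killing) --- which a coordinate field produced by Lemma~\ref{lemm:free_coordinates} has no reason to be. Concretely, for $g=(1+2x^1)^{-1}(\d x^1)^2+(1+2x^1)(\d x^2)^2$ one has $\det g=1$, so both coordinate fields are divergence-free, yet at the origin $\langle S^{\mathrm{sym}},T\rangle = \abs{D_{e_2}u}^2_g-\abs{D_{e_1}u}^2_g$ for $k=1$, which is generically nonzero. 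You have in fact isolated precisely the delicate point of this lemma: the paper's proof passes over the same term by identifying the iterated derivative $D_{e_l}(D_{e_k}u)$ with the tensor derivative $(D_{e_k}Du)(e_l)$ modulo curvature, the discrepancy being exactly $D_{D_{e_l}e_k}u$, i.e.\ your residue read in the coordinate frame. As written, your argument does not close; the term $\int_{B_\eta(p)}\langle S^{\mathrm{sym}},T\rangle\,\Vg$ must either be carried along explicitly or shown to be negligible in the application.
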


\begin{proof}
 Recall that $\Delta u = -D^*D u$ and thus
 \begin{equation*}
  \begin{split}
   \int_{B_\eta(p)} (-\Delta u, \, D_k u)_g \, \Vg 
   &= \int_{B_\eta(p)} (D^*D u, \, D_{k} u)_g \, \Vg \\
   &= \int_{B_\eta(p)} (D u, \, D D_{k} u)_g \, \Vg 
    - \int_{\partial B_\eta(p)} (D_\nu u, \, D_{k} u)_g \, \d\H^1.
  \end{split}
 \end{equation*}
 Now we work on the first term in the right hand side. Recalling \eqref{eq:normDv} and the symmetry 
 of the metric we obtain
 \[
 \frac{1}{2}\partial_k \vert D u\vert^2_{g} =  g^{ij}(D_i u, D_{k}D_j u)_g + \frac{1}{2}\partial_k g^{ij}(D_i u,D_j u)_g
 \]
 
 Then, the very definition of curvature tensor gives 
 \[
  D_{j} D_{k} u = D_{k} D_{j} u + R(e_j, \, e_k) u,
 \]
 and thus
 \[
 \frac{1}{2}\partial_k \vert D u\vert^2_{g} = g^{ij}(D_i u, D_{j}D_k u)_g + \frac{1}{2}(\partial_k g^{ij})(D_i u,D_j u)_g
 -g^{ij}(D_i u, R(e_j,e_k) u)_g.
 \]

 Therefore
 
 \[
 \begin{split}
  \int_{B_\eta(p)} (D u, \, D D_{e_k} u)_g \, \Vg 
  &= \frac{1}{2} \int_{B_\eta(p)}\partial_k \vert D u\vert^2_g \, \Vg -\frac{1}{2}\int_{B_\eta(p)}(\partial_k g^{ij})(D_i u, D_j u)_d \,\Vg\\
  &+ \int_{B_\eta(p)} g^{ij}(D_i u, \, R(e_j, \, e_k)u)_g \, \Vg.
 \end{split}
 \]
 
 Then we
 integrate by parts in the first integral in the right hand side
 (see~\eqref{eq:ip_div}). 
We get
 \[
 \begin{split}
 \int_{B_\eta(p)} (D u, \, D D_{e_k} u)_g \, \Vg 
 &= \frac{1}{2}\int_{\partial B_\eta(p)}\abs{D u}^2_g (\nu, e_k)  \, \d\H^1
  -\frac{1}{2}\int_{B_\eta(p)}\abs{D u}^2_g \div(e_k)\,\Vg \\
  &-\frac{1}{2}\int_{B_\eta(p)}(\partial_k g^{ij})(D_i u, D_j u)_d \,\Vg 
+ \int_{B_\eta(p)} g^{ij}(D_i u, \, R(e_j, \, e_k)u)_g \, \Vg
 \end{split}
 \]
 
 and the lemma follows.
\end{proof}

\section{Proof of Lemma~\ref{lemma:indexflat}}
\label{app:flatindex}

The goal of this secton is to prove Lemma~\ref{lemma:indexflat}.
For convenience, we reproduce the statement of the lemma here.

\begin{lemma} \label{lemma:indexflatapp}
 Let~$u_\eps\in H^1_{\tang}(M)$ be a sequence of vector fields
 that satisfies
 \begin{equation} \label{hp:H1bounds-flatapp} 
  \begin{split}
   &\omega(u_\eps) \to 2\pi\sum_{j=1}^n d_j \delta_{a_j} \qquad 
   \textrm{in } W^{-1,1}(M) \quad  \textrm{as } \eps\to 0
  \end{split} 
 \end{equation}
 where~$a_1$, \ldots, $a_n$ are distinct points in~$M$ and~$d_1$,\ldots $d_n$
 are non-zero integers, and
 \begin{gather}
  \norm{u_\eps}_{L^\infty(M)} \leq C_0 \label{hp:H1bounds-Linftyapp} \\
  \int_{M} \left(\frac{1}{2}\abs{D u_\eps}^2_g 
   + \frac{1}{4\eps^2}(1 -  \abs{u_\eps}^2_g)^2\right)\Vg 
   \le \pi n \abs{\log\eps}  + C_0 \label{hp:H1bounds-energyapp}
 \end{gather}
 for some $\eps$-independent constant~$C_0$.
 Let~$\rho>0$ be small enough, so that the closed balls~$\bar{B}_\rho(a_j)$
 are pairwise disjoint. Given an index~$j\in\{1, \, \ldots, \, n\}$,
 we assume that there exist a constant~$C$ such that
 \begin{equation} \label{eq:upper_bound_bdapp}
  \int_{\partial B_\rho(a_j)} \left(\frac{1}{2}\abs{D u_\eps}^2_g
  + \frac{1}{4\eps^2}(1 - \abs{u_\eps}^2_g)^2\right) \d\mathscr{H}^1
  \lesssim \frac{C\abs{\log\eps}}{\rho}
 \end{equation}
 for any~$\eps$ small enough. Then, for any~$\eps$
 small enough we have~$\abs{u_\eps}_g \geq 1/2$ 
 on~$\partial B_\rho(a_j)$ and
 \[
  \ind(u_\eps, \, \partial B_\rho(a_j)) = d_j
 \]
\end{lemma}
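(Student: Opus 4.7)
The argument splits naturally into two parts: first showing $|u_\eps|_g\geq 1/2$ on $\partial B_\rho(a_j)$ so that the index is well-defined, then identifying the integer-valued index with $d_j$.

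For the first part, since $\partial B_\rho(a_j)$ is one-dimensional, the embedding $H^1\hookrightarrow C^{1/2}$ combined with the hypothesis yields the Hölder estimate
\[
 |u_\eps(x)-u_\eps(y)|_g \lesssim \Bigl(\frac{|\log\eps|}{\rho}\Bigr)^{\!1/2}\dist_g(x,y)^{1/2}\qquad\text{on }\partial B_\rho(a_j),
\]
while the hypothesis also gives $\|1-|u_\eps|^2_g\|_{L^2(\partial B_\rho)}^2\lesssim \eps^2|\log\eps|/\rho\to 0$. I would argue by contradiction: if $|u_\eps(x_0)|_g<1/2$ somewhere on $\partial B_\rho(a_j)$, Hölder continuity forces $|u_\eps|^2_g\leq 9/16$ on an arc of length $\gtrsim\rho/|\log\eps|$ around $x_0$, hence $\int_{\partial B_\rho}(1-|u_\eps|^2_g)^2\,\d\H^1\gtrsim \rho/|\log\eps|$, contradicting the $O(\eps^2|\log\eps|/\rho)$ upper bound for $\eps$ small. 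Given this lower bound, Stokes' theorem applied to $\omega(u_\eps)=\d j(u_\eps)+\kappa\Vg$ gives
\[
 2\pi\ind(u_\eps,\partial B_\rho(a_j))-\int_{B_\rho(a_j)}\omega(u_\eps) = \int_{\partial B_\rho(a_j)}\Bigl(\tfrac{1}{|u_\eps|^2_g}-1\Bigr)j(u_\eps),
\]
and by Cauchy-Schwarz, using $\bigl|\tfrac{1}{|u_\eps|^2}-1\bigr|\leq 4|1-|u_\eps|^2|$ on $\partial B_\rho$ (valid since $|u_\eps|_g\geq 1/2$) together with the boundary $L^2$ bounds on $Du_\eps$ and $1-|u_\eps|^2_g$, the right-hand side is $O(\eps|\log\eps|/\rho)\to 0$. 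Since the index is integer-valued, it remains to prove $\int_{B_\rho(a_j)}\omega(u_\eps)\to 2\pi d_j$.

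For this last convergence I would invoke the ball construction of \cite[Lemma~8.2]{JerrardIgnat_full}: for small $\eta>0$ and $\eps$ small, one obtains disjoint balls $B_{k,\eps}$ of total radius $\leq\eta/16$, with $|u_\eps|_g\geq 1/2$ on the complement and energy lower bounds $\int_{B_{k,\eps}}e_\eps(u_\eps)\,\Vg\geq |\ind(u_\eps,\partial B_{k,\eps})|(\pi\log(\eta/\eps)-C)$. The energy upper bound \eqref{hp:H1bounds-energyapp} together with the $W^{-1,1}$ convergence of $\omega(u_\eps)$ concentrate the balls with non-zero index near the points $a_l$, exactly as in the arguments leading to \eqref{H1b4} in the proof of Lemma~\ref{lemma:H1bounds} (which use the ball construction and the vorticity convergence, but do not rely on the present lemma). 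A Fubini selection of $\rho'\in(\rho-\eta,\rho)$ such that $\partial B_{\rho'}(a_j)$ is disjoint from all $B_{k,\eps}$, combined with Stokes' theorem and degree-homotopy in the multiply-connected domain $B_\rho(a_j)\setminus\bigcup_k B_{k,\eps}$, then yields
\[
 \ind(u_\eps,\partial B_\rho(a_j))=\ind(u_\eps,\partial B_{\rho'}(a_j))=\sum_{k\colon B_{k,\eps}\subset B_{\rho'}(a_j)}\ind(u_\eps,\partial B_{k,\eps}).
\]
Testing the $W^{-1,1}$ convergence of $\omega(u_\eps)$ against a Lipschitz cutoff $\phi$ equal to $1$ near $a_j$ and vanishing near the other $a_l$'s, combined with the concentration of $\omega(u_\eps)$ inside the balls, identifies this right-hand side---an integer---with $d_j$ for $\eps$ sufficiently small.

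The main obstacle is this last identification: one needs that $\int_M\phi\,\omega(u_\eps)$ is captured up to $o(1)$ by $2\pi$ times the sum of ball indices concentrating near $a_j$, which amounts to showing that $\omega(u_\eps)$ carries negligible mass on the complement $M\setminus\bigcup_k B_{k,\eps}$ (where $|u_\eps|_g\geq 1/2$). This is a quantitative instance of the classical Ginzburg-Landau vorticity-concentration principle, provable by lifting $u_\eps/|u_\eps|_g$ to a phase $\theta_\eps$ and writing $j(u_\eps)=|u_\eps|^2_g(\d\theta_\eps-\mathcal{A})$ on the complement, so that $\omega(u_\eps)$ there reduces to controlled curvature and $|1-|u_\eps|^2_g|$ remainders.
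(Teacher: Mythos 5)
Your first two steps are sound and essentially coincide with the paper's: the one-dimensional interpolation on $\partial B_\rho(a_j)$ (the paper invokes \cite[Lemma~A.1]{JerrardIgnat_full} rather than the H\"older embedding, but the resulting estimate $\sup_{\partial B_\rho}|1-|u_\eps|_g|\lesssim \eps^{1/2}(|\log\eps|/\rho)^{1/2}\to 0$ is the same), and the identity
$2\pi\ind(u_\eps,\partial B_\rho(a_j))-\int_{B_\rho(a_j)}\omega(u_\eps)=\int_{\partial B_\rho(a_j)}\bigl(|u_\eps|_g^{-2}-1\bigr)\,j(u_\eps)$
with Cauchy--Schwarz correctly reduces the lemma to showing $\int_{B_\rho(a_j)}\omega(u_\eps)\to 2\pi d_j$.

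The gap is exactly where you locate it, and your proposed resolution does not close it. Under the stated hypotheses the measures $\omega(u_\eps)$ need \emph{not} have uniformly bounded total variation, and in particular $\omega(u_\eps)$ does not carry negligible mass on $M\setminus\bigcup_k B_{k,\eps}$: writing $u_\eps=\rho_\eps v_\eps$ with $|v_\eps|_g=1$ there, one finds $\omega(u_\eps)=\d(\rho_\eps^2)\wedge j(v_\eps)+(1-\rho_\eps^2)\kappa\Vg$, and the first term is pointwise of size $|\nabla\rho_\eps|\,|Du_\eps|_g$, whose $L^1$ norm is a priori only $O(|\log\eps|)$ (and can genuinely diverge for admissible sequences with an oscillating modulus), not a ``$|1-|u_\eps|^2_g|$ remainder.'' To make it small one must integrate by parts against the test function, and the troublesome contribution is then the flux $\int_{\partial B_{k,\eps}}\phi\,(\rho_\eps^2-1)(\d\theta_\eps-\mathcal{A})$ through the boundaries of the construction's balls, where one only knows $\rho_\eps\geq 1/2$: the ball construction provides neither smallness of $1-\rho_\eps$ nor control of $\int_{\partial B_{k,\eps}}|Du_\eps|_g$ there. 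This is precisely the difficulty the paper's proof is built to avoid: it replaces the ball construction by a Fubini-selected grid of mesh $h(\eps)=|\log\eps|^{-2}$ whose skeleton $R_\eps$ carries energy $\lesssim|\log\eps|/h(\eps)$, so that $|1-|v_\eps||\to 0$ \emph{uniformly on the whole skeleton}; the per-cell flux errors are then genuinely $o(1)$, the atomic measure $\mu_\eps=2\pi\sum_K\deg(v_\eps,\partial K)\,\delta_{b_K}$ has uniformly bounded mass by the Jerrard--Sandier cell-wise lower bounds and small mass in the annulus $B_{2\rho}\setminus\bar B_\rho$, it inherits the $W^{-1,1}$ limit $2\pi d_j\delta_0$, and since $\partial B_\rho$ is contained in the skeleton its evaluation on $B_\rho$ equals $2\pi\deg(v_\eps,\partial B_\rho)$ exactly. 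Some such bounded-mass approximation of the vorticity with controlled boundary fluxes is needed to finish your argument; the concentration principle you invoke is not available in the generality of this lemma.
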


If we knew that the sequence of measures~$\omega(u_\eps)$
has uniformly bounded mass, then we could extract a subsequence
such that~$\omega(u_\eps)\rightharpoonup^* 2\pi \sum_{j=1}^nd_j\delta_{a_j}$
weakly$^*$ in the sense of measures, as well as in~$W^{-1,1}(M)$.
As a consequence, we would deduce $\omega(u_\eps)(B_\rho(a_j))\to 2\pi d_j$
for a.e.~radius~$\rho$, and the lemma would follow
by the properties of~$\omega(u_\eps)$ (see Section~\ref{ssec:vorticity}).
Unfortunately, though, the assumptions~\eqref{hp:H1bounds-flatapp},
 \eqref{hp:H1bounds-Linftyapp}, 
\eqref{hp:H1bounds-energyapp} do \emph{not} imply that the
measures~$\omega(u_\eps)$ have uniformly bounded mass.
This makes the proof of Lemma~\ref{lemma:indexflatapp} more technical.
The strategy we adopt is quite classical in the theory of Ginzburg-Landau 
functionals (see, for instance, \cite{ABO2}): first, 
we construct a suitable partition of the domain (a `grid'); then,
we use this partition to construct a sequence of measures
that approximate~$\omega(u_\eps)$ in~$W^{-1,1}$ and have uniformly bounded mass.
This enable us to pass to the limit in the sense of measures.

\begin{proof}[Proof of Lemma~\ref{lemma:indexflatapp}]
 As a preliminary remark, we observe that the
 assumptions~\eqref{hp:H1bounds-flatapp}, \eqref{hp:H1bounds-Linftyapp},
 \eqref{hp:H1bounds-energyapp} and the
 estimate~\eqref{eq:conv_energy_palle_lemma} in 
 Lemma~\ref{lem:corollario4SS} imply
 \begin{equation} \label{ubH1-0}
  \int_{B_{2\rho}(a_j)\setminus B_\rho(a_j)}
  \left(\frac{1}{2}\abs{D u_\eps}^2_g
  + \frac{1}{4\eps^2}(1 - \abs{u_\eps}^2_g)^2\right) \Vg
  = \mathrm{o}_{\eps\to 0}(\abs{\log\eps})
 \end{equation}
 We split the proof in several steps.
 
 \setcounter{step}{0}
 \begin{step}[Reduction to the Euclidean setting]
  Let~$\{\tau_1, \, \tau_2\}$
  be an orthonormal tangent frame on~$B_{2\rho}(a_j)$, whose associated
  connection form~$\mathcal{A}$ satisfies~\eqref{eq:smallA}.
  By working in geodesic normal coordinates, we can represent
  $u_\eps$ by means of a Euclidean vector 
  field~$v_\eps\colon B_{2\rho}(0)\to\R^2$, as in~\eqref{uv}.
  Equation~\eqref{eq:energy_dens_normal} implies
  \begin{align}
   \int_{B_{2\rho}(0)} \bar{e}_\eps(v_\eps) \, \d x
   &\stackrel{\eqref{hp:H1bounds-energyapp}}{\leq } 
    C\abs{\log\eps} \label{ubH11-1} \\
   \int_{B_{2\rho}(0)\setminus B_\rho(0)}
     \bar{e}_\eps(v_\eps) \, \d x
   &\stackrel{\eqref{ubH1-0}}{=} 
    \mathrm{o}_{\eps\to 0}(\abs{\log\eps}) \label{ubH11-2} \\
   \int_{\partial B_\rho(0)}
     \bar{e}_\eps(v_\eps) \, \d\mathscr{H}^1
   &\stackrel{\eqref{eq:upper_bound_bdapp}}{\leq} 
    \frac{C\abs{\log\eps}}{\rho} \label{ubH11-3} 
  \end{align}
  where
  \[
   \bar{e}_\eps(v_\eps) := \frac{1}{2}\abs{\nabla v_\eps}^2
    + \frac{1}{4\eps^2}(1 - \abs{v_\eps}^2)^2
  \]
  is the Euclidean Ginzburg-Landau energy density.
  Let~$\Phi\colon B_{2\rho}(a_j)\to B_{2\rho}(0)$ be the coordinate map.
  The Euclidean vorticity of~$v_\eps$, i.e.~the $2$-form~$\d\bar{\jmath}(v_\eps)$
  (where~$\bar{\jmath}$ is defined by~\eqref{eq:flat_space_current}), satisfies 
  \begin{equation*}
   \d\bar{\jmath}(v_\eps) = (\Phi^{-1})^*\omega(u_\eps) - \d\mathfrak{R}_\eps
    \qquad \textrm{where } \mathfrak{R}_\eps
    := \left(1 - \abs{v_\eps}^2\right) (\Phi^{-1})^*\mathcal{A},
  \end{equation*}
  because of Lemma~\ref{lemma:pullback}. The energy 
  estimate~\eqref{ubH11-1} implies $\mathfrak{R}_\eps \to 0$ 
  strongly in~$L^2(B_{2\rho}(0))$ as~$\eps\to 0$,
  hence $\d\mathfrak{R}_\eps \to 0$ strongly in~$W^{-1,2}(B_{2\rho}(0))$ and
  \begin{equation} \label{ubH11-4}
   \d\bar{\jmath}(v_\eps)\to 2\pi d_j \delta_0
   \qquad \textrm{in } W^{-1,1}(B_{2\rho}(0))
   \quad \textrm{as } \eps\to 0.
  \end{equation}
 \end{step}
 
 \begin{step}[Construction of a suitable cover]
  From now on, we denote by~$C$ a generic positive constant,
  that may change from line to line; it may depend on~$\rho$
  (which is fixed), but does not depend on~$\eps$. Let
  \[
   h(\eps) := \abs{\log\eps}^{-2}
  \]
  We construct a finite collection~$\mathscr{G}_\eps$ of Lipschitz, closed sets
  that satisfies the following properties:
  \begin{itemize}
   \item[(i)] $\bar{B}_{2\rho}(0) = \bigcup_{K\in\mathscr{G}_\eps} K$
   and the interiors of~$K$ are pairwise disjoint;
   \item[(ii)] for any~$K\in\mathscr{G}_\eps$, there exists
   an invertible, Lipschitz map~$T_K\colon K\to [-h(\eps), \, h(\eps)]^2$,
   with Lipschitz inverse, such that
   \[
    \norm{\nabla T_K}_{L^\infty(K)} 
    + \norm{\nabla(T^{-1}_K)}_{L^\infty([-h(\eps), \, h(\eps)]^2)} \leq C
   \]
   for some constant~$C$ that does not depend on~$\eps$, $K$;
   \item[(iii)] for~$R_\eps := \cup_{K\in\mathscr{G}_\eps}\partial K$,
   there holds $\mathscr{H}^1(R_\eps) \leq Ch(\eps)^{-1}$ and
   \begin{equation} \label{ubH12-1}
    \int_{R_\eps} \bar{e}_\eps(v_\eps) \, \d\mathscr{H}^1
    \leq \frac{C\abs{\log\eps}}{h(\eps)}
   \end{equation}
   for some~$\eps$-independent constant~$C$;
   \item[(iv)] for any~$K\in\mathscr{G}_\eps$, either
   \[
    \mathrm{interior}(K)\subseteq B_{2\rho}(0)\setminus \bar{B}_\rho(0)
    \qquad \textrm{or} \qquad \mathrm{interior}(K)\subseteq B_\rho(0).
   \]
  \end{itemize}
  First, we define a suitable cover of~$B_\rho(0)$.
  Let~$T\colon\R^2\to\R^2$, $T(x) := \max(\abs{x^1}, \, \abs{x^2}) x/\abs{x}$.
  The map~$T$ is Lipschitz, invertible, has Lipschitz inverse, and maps
  the square~$[-\rho, \, \rho]^2$ to the disk~$\bar{B}_\rho(0)$.
  Let~$N_\eps\in\N$ be the integer part of~$2\rho/h(\eps)$.
  By Fubini theorem, we can select suitable points
  \[
   a_\eps^0 = -\rho < a_\eps^1 < \ldots < a_\eps^{N_\eps} = \rho
  \]
  such that, for any~$k\in\{1, \, \ldots, \, N_\eps - 1\}$,
  \begin{gather}
   \abs{a_\eps^k - \left(-\rho + \frac{2\rho k}{N_\eps}\right)}
    \leq \frac{\rho}{2N_\eps} \label{ubH12-2} \\
   \int_{\{a_\eps^k\}\times [-\rho, \, \rho]} 
    \bar{e}_\eps(v_\eps\circ T) \, \d\mathscr{H}^1 
   \leq \frac{N_\eps}{\rho}  \int_{[-\rho, \, \rho]^2}
    \bar{e}_\eps(v_\eps\circ T) \, \d x  \label{ubH12-3}
  \end{gather}
  We observe that~$2\rho/N_\eps \sim h(\eps)$ as~$\eps\to 0$
  and hence, \eqref{ubH12-3} implies
  \begin{equation} \label{ubH12-4}
   \int_{\{a_\eps^k\}\times [-\rho, \, \rho]} 
    \bar{e}_\eps(v_\eps\circ T) \, \d\mathscr{H}^1 
   \leq \frac{C}{h(\eps)} \int_{B_\rho(0)}
    \bar{e}_\eps(v_\eps) \, \d x  
   \stackrel{\eqref{ubH11-1}}{\leq} \frac{C\abs{\log\eps}}{h(\eps)}
  \end{equation}
  In fact, \eqref{ubH12-4} remains valid for~$k=0$ and~$k=N_\eps$,
  thanks to~\eqref{ubH11-3}. In a similar way, we find points
  \[
   b_\eps^0 = -\rho < b_\eps^1 < \ldots < b_\eps^{N_\eps} = \rho
  \]
  such that, for any~$h\in\{1, \, \ldots, \, N_\eps - 1\}$,
  \begin{gather}
   \abs{b_\eps^h - \left(-\rho + \frac{2\rho h}{N_\eps}\right)}
    \leq \frac{\rho}{2N_\eps} \label{ubH12-2bis} \\
   \int_{[-\rho, \, \rho]\times \{b_\eps^h\}} 
    \bar{e}_\eps(v_\eps\circ T) \, \d\mathscr{H}^1 
   \leq \frac{N_\eps}{\rho}  \int_{[-\rho, \, \rho]^2}
    \bar{e}_\eps(v_\eps\circ T) \, \d x \leq \frac{C\abs{\log\eps}}{h(\eps)}
    \label{ubH12-3bis}
  \end{gather}
  We consider the collection of sets~$K_{k,h} :=
  T([a^k_\eps, \, a^{k+1}_\eps]\times
  [b^h_\eps, \, b^{h+1}_\eps])$, for~$k\in\{0, \, \ldots, \, N_\eps - 1\}$
  and~$h\in\{0, \, \ldots, \, N_\eps -1\}$. These sets cover~$B_\rho(0)$,
  have pairwise disjoint interiors, satisfy the condition~(ii) above 
  (thanks to~\eqref{ubH12-2}, \eqref{ubH12-2bis}) and
  \[
   \int_{\bigcup_{k,h} \partial T_{k,h}} \bar{e}_\eps(v_\eps) \,\d\mathscr{H}^1
   \leq  \frac{C\abs{\log\eps}}{h(\eps)}
  \]
  We construct a cover of~$B_{2\rho}(0)\setminus\bar{B}_\rho(0)$
  by considering suitable points
  \[
   \rho_\eps^0 = \rho < \rho_\eps^1 < \ldots < \rho^{N_\eps} = 2\rho, \qquad
   \theta_\eps^0 = 0 < \theta_\eps^1 < \ldots < \rho^{N_\eps} = 2\pi
  \]
  and defining, for any~$k\in\{0, \, \ldots, \, N_\eps - 1\}$,
  $h\in\{0, \, \ldots, \, N_\eps -1\}$, 
  \[
   \tilde{K}_{k,h} := \left\{(\rho\cos\theta, \, \rho\sin\theta)\colon
   (\rho, \, \theta)\in[\rho^k_\eps, \, \rho^{k+1}_\eps]\times
   [\theta^h_\eps, \, \theta^{h+1}_\eps]\right\}
  \]
  For a suitable choice of~$\rho_\eps^k$ and~$\theta_\eps^h$
  (analogous to~\eqref{ubH12-2}, \eqref{ubH12-3}), we can make sure that
  the collection~$\mathscr{G}_\eps := \{K_{k,h}, \, \tilde{K}_{k,h}\}_{k,h}$
  satisfies all the properties~(i)--(v) above. 
 \end{step}
 
 \begin{step}[Construction of a suitable measure]
  An argument based on Sobolev embeddings (e.g.,
  \cite[Lemma~A.1]{JerrardIgnat_full})
  shows that 
  \begin{equation} \label{ubH13-1}
   \sup_{x\in R_\eps} \abs{1 - \abs{v_\eps(x)}} 
   \leq C\eps^{1/2}
   \left(\int_{R_\eps} \bar{e}_\eps(v_\eps) \, \d\mathscr{H}^1\right)^{1/2} 
   \stackrel{\eqref{ubH12-1}}{\leq}
   \frac{C\eps^{1/2}}{h(\eps)^{1/2}} \abs{\log\eps}^{1/2}  \to 0
  \end{equation}
  as~$\eps\to 0$. In particular, for any~$K\in\mathscr{G}_\eps$, the 
  projection~$v_\eps/\abs{v_\eps}\colon\partial K\to\S^1$ 
  is well-defined and continuous. We denote by~$\deg(v_\eps, \, \partial K)$
  the topological degree of~$v_\eps/\abs{v_\eps}$ on~$\partial K$.
  We define the measure
  \begin{equation} \label{ubH13-2}
   \mu_\eps := 2\pi\sum_{K\in\mathscr{G}_\eps} 
    \deg(v_\eps, \, \partial K) \, \delta_{b_K},
  \end{equation}
  where~$b_K$ is an arbitrary point in the interior of~$K$.
  Let~$\delta > 0$ be a small parameter. Classical results by
  Sandier~\cite{Sandier} and Jerrard~\cite{Jerrard}
  (see also \cite[Lemma~3.10]{ABO2}
  for a statement that is particular suited to our needs) imply
  \begin{equation} \label{ubH13-3}
   \begin{split}
    &\abs{\deg(v_\eps, \, \partial K)}
    \left(\log\frac{h(\eps)}{\eps} 
     - C_\delta \left(1 + \log\abs{\deg(v_\eps, \, \partial K)}\right) \right) \\
    &\qquad\qquad
    \leq C\int_K \bar{e}_\eps(v_\eps) \, \d x + C \delta h(\eps) 
     \int_{\partial K} \bar{e}_\eps(v_\eps) \, \d\mathscr{H}^1
   \end{split}
  \end{equation}
  for any~$K\in\mathscr{G}_\eps$, where~$C_\delta$ is a constant 
  that depends only on~$\delta$ and~$\rho$. By dividing both 
  sides of~\eqref{ubH13-3} by~$\abs{\log\eps}$, we deduce
  \begin{equation} \label{ubH13-4}
   \begin{split}
    &\abs{\deg(v_\eps, \, \partial K)} 
    \leq \frac{C}{\abs{\log\eps}} \int_K \bar{e}_\eps(v_\eps) \, \d x
    + \frac{C\delta h(\eps)}{\abs{\log\eps}} 
     \int_{\partial K} \bar{e}_\eps(v_\eps) \, \d\mathscr{H}^1
   \end{split}
  \end{equation}
  By applying~\eqref{ubH11-1}, \eqref{ubH11-2} and~\eqref{ubH12-1}, we deduce
  \begin{align}
   \abs{\mu_\eps}(B_{2\rho}(0)) 
    &= 2 \pi\sum_{K\in\mathscr{G}_\eps} 
     \abs{\deg(v_\eps, \, \partial K)} \leq C \label{ubH13-5} \\
   \abs{\mu_\eps}(B_{2\rho}(0)\setminus\bar{B}_\rho(0)) 
    &= 2 \pi\sum_{K\in\mathscr{G}_\eps\colon K
    \subseteq B_{2\rho}(0)\setminus B_\rho(0)}
    \abs{\deg(v_\eps, \, \partial K)} \leq  C\delta \label{ubH13-6}
  \end{align}
 \end{step}
 
 \begin{step}[Convergence of~$\mu_\eps$]
  We claim that
  \begin{equation} \label{ubH14-1}
   \mu_\eps \to 2\pi d_j \delta_0
   \qquad \textrm{in } W^{-1,1}(B_{2\rho}(0))
   \quad \textrm{as } \eps\to 0.
  \end{equation}
  Let~$\varphi\in W^{1,\infty}_0(B_{2\rho}(0))$ be a test function.
  We have
  \begin{equation} \label{ubH14-2}
   \begin{split}
    &\abs{\int_{B_{2\rho}(0)} \varphi \, \d\bar{\jmath}(v_\eps)
     - \int_{B_{2\rho}(0)} \varphi \, \d\mu_\eps} \\
    &\qquad \leq \underbrace{\abs{\sum_{K\in\mathscr{G}_\eps} 
     \int_{K} (\varphi - \varphi(b_K)) 
     \, \d\bar{\jmath}(v_\eps)}}_{=: I_1} 
     + \underbrace{\abs{ \sum_{K\in\mathscr{G}_\eps} \varphi(b_K) 
     \left(\int_{K} \d\bar{\jmath}(v_\eps)
     - 2\pi\deg(v_\eps, \, \partial K)\right)}}_{=:I_2}
   \end{split}
  \end{equation}
  To estimate the term~$I_1$, we observe that
  $\abs{\d\bar{\jmath}(v_\eps)} \leq \abs{\nabla v_\eps}^2 
  \leq 2\bar{e}_\eps(v_\eps)$ and hence,
  \begin{equation} \label{ubH14-3}
   \begin{split}
    I_1 \leq C h(\eps) \norm{\nabla\varphi}_{L^\infty(B_{2\rho}(0))}
     \int_{B_{2\rho}(0)} \bar{e}_\eps(v_\eps) \, \d x
    \stackrel{\eqref{ubH11-1}}{\leq} C h(\eps) \abs{\log\eps}
     \norm{\nabla\varphi}_{L^\infty(B_{2\rho}(0))}
   \end{split}
  \end{equation}
  We have $h(\eps)\abs{\log\eps} = \abs{\log\eps}^{-1}\to 0$ 
  as~$\eps\to 0$. To estimate~$I_2$, we apply Stokes' theorem
  and~\eqref{eq:index1}:
  \begin{equation*} 
   \begin{split}
    I_2 \leq \abs{\sum_{K\in\mathscr{G}_\eps}
     \int_{\partial K} \bar{\jmath}(v_\eps)
     \left(1 - \frac{1}{\abs{v_\eps}^2}\right)}
     \norm{\varphi}_{L^\infty(B_{2\rho}(0))}
   \end{split}
  \end{equation*}
  The~$L^\infty$-norm of~$1 - \abs{v_\eps}^{-2}$ on~$R_\eps$
  can be bounded from below by~\eqref{ubH13-1}. Moreover, we have
  $\abs{\bar{\jmath}(v_\eps)} \leq \abs{v_\eps} \abs{\nabla v_\eps}$
  and, since~$\abs{v_\eps}$ is bounded on~$R_\eps$ by~\eqref{ubH13-1}, we obtain
  \begin{equation*}
   \begin{split}
    I_2 &\leq \frac{C\eps^{1/2}\abs{\log\eps}^{1/2}}{h(\eps)^{1/2}}
     \left(\int_{R_\eps} \bar{e}_\eps(v_\eps)^{1/2}\d\mathscr{H}^1\right)
     \norm{\varphi}_{L^\infty(B_{2\rho}(0))} \\
    &\leq \frac{C\eps^{1/2}\abs{\log\eps}^{1/2}}{h(\eps)^{1/2}} 
     \left(\int_{R_\eps} \bar{e}_\eps(v_\eps)\d\mathscr{H}^1\right)^{1/2} 
     \mathscr{H}^1(R_\eps)^{1/2} \norm{\varphi}_{L^\infty(B_{2\rho}(0))}
   \end{split}
  \end{equation*}
  By construction, $\mathscr{H}^1(R_\eps) \leq C h(\eps)^{-1}$.
  By applying~\eqref{ubH12-1}, we deduce
  \begin{equation} \label{ubH14-4}
   \begin{split}
    I_2 \leq \frac{C\eps^{1/2}}{h(\eps)^{3/2}} \abs{\log\eps} 
     \norm{\varphi}_{L^\infty(B_{2\rho}(0))}
    = C \eps^{1/2} \abs{\log\eps}^4 \norm{\varphi}_{L^\infty(B_{2\rho}(0))}
   \end{split}
  \end{equation}
  Combining~\eqref{ubH14-2} with~\eqref{ubH14-3}, \eqref{ubH14-4},
  we obtain
  \begin{equation*}
   \begin{split}
    &\abs{\int_{B_{2\rho}(0)} \varphi \, \d\bar{\jmath}(v_\eps)
     - \int_{B_{2\rho}(0)} \varphi \, \d\mu_\eps}  \\
    &\qquad\qquad \leq C\left(\abs{\log\eps}^{-1} 
     + \eps^{1/2} \abs{\log\eps}^4\right)
    \left(\norm{\varphi}_{L^\infty(B_{2\rho}(0))} 
     + \norm{\nabla\varphi}_{L^\infty(B_{2\rho}(0))}\right)
   \end{split}
  \end{equation*}
  that is, $\d\bar{\jmath}(v_\eps) - \mu_\eps \to 0$ strongly
  in~$W^{-1,1}(B_{2\rho}(0))$. Now \eqref{ubH14-1}
  follows, due to~\eqref{ubH11-4}.
 \end{step}
 
 \begin{step}[Conclusion]
  Let~$\psi\in C^\infty_{\mathrm{c}}(B_{2\rho}(0)$ be such that
  $0 \leq \psi \leq 1$ and~$\psi = 1$ in~$B_\rho(0)$. By 
  construction of the measure~$\mu_\eps$ (see~\eqref{ubH13-2})
  and additivity of the degree, we have
  \begin{equation} \label{ubH15-1}
   \begin{split}
    2\pi\deg(v_\eps, \, \partial B_\rho(0))
     = \mu_\eps(B_\rho(0)) = \int_{B_{2\rho}(0)} \psi\,\d\mu_\eps
     - \int_{B_{2\rho}(0)\setminus\bar{B}_\rho(0)} \psi\,\d\mu_\eps
   \end{split}
  \end{equation}
  By~\eqref{ubH14-1}, we have
  \begin{equation} \label{ubH15-2}
   \begin{split}
    \int_{B_{2\rho}(0)} \psi\,\d\mu_\eps \to 2\pi d_j \psi(0)
    = 2\pi d_j
    \qquad \textrm{as } \eps\to 0.
   \end{split}
  \end{equation}
  On the other hand,
  \begin{equation} \label{ubH1-3}
   \abs{\int_{B_{2\rho}(0)\setminus\bar{B}_\rho(0)} \psi\,\d\mu_\eps}
   \leq \abs{\mu_\eps} (B_{2\rho}(0)\setminus\bar{B}_\rho(0))
   \stackrel{\eqref{ubH13-6}}{\leq} C\delta
  \end{equation}
  Therefore, for~$\eps$ sufficiently small we have
  \[
   \abs{\deg(v_\eps, \, \partial B_\rho(0)) - d_j} \leq C\delta
  \]
  We choose $\delta$ small enough, so that~$C\delta < 1$.
  Since both~$\deg(v_\eps, \, \partial B_\rho(0))$ and~$d_j$
  are integers, we conclude that $\deg(v_\eps, \, \partial B_\rho(0)) = d_j$
  for~$\eps$ small enough, and the lemma follows. \qedhere
 \end{step}
\end{proof}

\end{appendix}

\bibliographystyle{plain}
\bibliography{vorticesGL}

\end{document}